\newtheorem{theorem}{Theorem}[section]
\newtheorem{thm}[theorem]{Theorem}
\newtheorem{lemma}[theorem]{Lemma}
\newtheorem{proposition}[theorem]{Proposition}
\newtheorem{assumption}{Assumption}
\newtheorem{corollary}[theorem]{Corollary}
\newtheorem{definition}[theorem]{Definition}
\newtheorem{example}[theorem]{Example}
\newtheorem{remark}[theorem]{Remark}
\newtheorem{assumptions}[theorem]{Assumptions}
\newtheorem{Lemma}[theorem]{Lemma}
\newtheorem{Corollary}[theorem]{Corollary}
\newtheorem{Definition}[theorem]{Definition}
\newtheorem{Example}[theorem]{Example}
\newtheorem{Theorem}[theorem]{Theorem}
\newtheorem{Remark}[theorem]{Remark}
\newcommand{\be}{\begin{equation}}
\newcommand{\ee}{\end{equation}}
\newcommand{\beq}{\begin{equation*}}
\newcommand{\eeq}{\end{equation*}}
\newcommand{\enq}{\end{equation}}
\newcommand{\ben}{\begin{eqnarray}}
\newcommand{\een}{\end{eqnarray}}
\newcommand{\bea}{\begin{eqnarray*}}
\newcommand{\eea}{\end{eqnarray*}}
\newcommand{\At}{ {\widetilde{A}}}
\newcommand{\Bt}{{\widetilde{B}}}
\newcommand{\Mt}{ {\widetilde{M}}}
\def\Gammat{{\widetilde{\Gamma}}}
\def\og{\overline{g}}
\def\ophi{\overline{\phi}}
\def\wt{\tilde{w}}
\newcommand{\Hc}{ {\mathcal{H}}}
\def\cK{{\mathcal K}}
\def\cH{{\mathcal H}}
\newcommand{\K}{ {\mathcal{K}}}
\newcommand{\M}{ {\mathcal{M}}}
\newcommand{\cL}{ {\mathcal{L}}}
\newcommand{\cF}{ {\mathcal{F}}}
\newcommand{\Sc}{ {\mathcal{S}}}
\newcommand{\St}{ {\widetilde{S}}}
\newcommand{\Sct}{ {\widetilde{\mathcal{S}}}}
\newcommand{\Sco}{ {\overline{\mathcal{S}}}}
\newcommand{\Scto}{ {\overline{\widetilde{\mathcal{S}}}}}
\newcommand{\Tc}{ {\mathcal{T}}}
\newcommand{\sign}{\mbox{\rm sign}}
\def\ind{{\mathrm{def\,}}}
\def\ker{{\mathrm{ker\,}}}
\def\Ran{{\mathrm{Ran\,}}}
\def\Span{{\mathrm{Span\,}}}
\newcommand{\Rr}{{\mathbb{R}}}
\newcommand{\Cc}{{\mathbb{C}}}
\newcommand{\llangle}{\left\langle}
\newcommand{\rrangle}{\right\rangle}
\renewcommand{\ll}{\left\langle}
\newcommand{\rr}{\right\rangle}
\newcommand{\eps}{\varepsilon}
\newcommand{\la}{\lambda}
\def\C{\mathbb C}
\def\R{\mathbb R}
\def\N{\mathbb N}
\def\Z{\mathbb Z}
\newcommand{\supp}{\mbox{\rm supp}}
\newcommand{\essran}{\mbox{\rm essran}}
\newcommand{\norm}[1]{\left\Vert#1\right\Vert}
\newcommand{\ian}[1]{{\color{black!80!black}#1}}
\newcommand{\change}[1]{{\color{black!80!black}#1}}
\newcommand{\changes}[1]{{\color{black!80!black}#1}}
\title[Detectable Subspace]{\change{An abstract inverse problem for boundary triples with an application to the Friedrichs Model}
}
\author{B.M.~Brown}
\address{
Cardiff School of Computer Science   and  Informatics\\ Cardiff University \\ Queen's Buildings\\ 5 The Parade\\ Cardiff CF24 3AA\\ UK}
\email{Malcolm.Brown@cs.cardiff.ac.uk}
\author{M.~Marletta}
\address{
School of Mathematics\\ Cardiff University\\ Senghennydd Road\\ Cardiff CF24 4AG\\ UK}
\email{MarlettaM@cardiff.ac.uk}
\author{S.~Naboko}
\address{
Department of Math.~Physics, Institute of Physics, St.~Petersburg State University\\ 1 Ulianovskaia, St.~Petergoff, St.~Petersburg, 198504, Russia}
   \curraddr{
   School of Mathematics, Statistics and Actuarial Science\\ University of Kent\\ Cornwallis Building\\ Canterbury CT2 7NF\\ UK}
\email{sergey.naboko@gmail.com}
\author{I.~Wood}
\address{
School of Mathematics, Statistics and Actuarial Science\\ University of Kent\\ Cornwallis Building\\ Canterbury CT2 7NF\\ UK}
\email{i.wood@kent.ac.uk}
\thanks{
M.~Marletta and S.N.~Naboko gratefully acknowledge the support of the Leverhulme
Trust, grant RPG167, and of the Wales Institute of Mathematical
and Computational Sciences. \ian{S.N.~Naboko also was partially supported by the grant  NCN t 2013/09/BST1/04319 (Poland)
and RFBR 12 - 01 - 00215-a, as well as the EC Marie Curie grant PIIF-GA-2011-299919.}
}
\begin{document}

\change{
\begin{abstract}
We discuss the detectable subspaces of an operator. We analyse the relation between the $M$-function (the abstract Dirichlet to Neumann map) and the resolvent bordered by projections onto the detectable subspaces. The abstract results are explored further by  
an extensive study of the Friedrichs model, together with illustrative applications to the Schr\"{o}dinger and Hain-L\"{u}st-type models.
\end{abstract}
\maketitle

\section{Introduction}\label{section:0}
In this paper we  consider inverse problems in a boundary triple setting involving a formally adjoint pair of operators $A$ and $\tilde{A}$, as studied in \cite{BGW09,BHMNW09,BMNW08,Lyantze,MM97,MM99,MM02}. We define, and develop formulae for, the detectable subspace \changes{(see Definition \ref{def:DS})}
 associated with the information available from the abstract Dirichlet to Neumann maps (Titchmarsh-Weyl functions) $M(\lambda)$. 
We examine the extent to which the following questions can be answered at a purely abstract level.
\begin{enumerate}
\item Is the function $M(\lambda)$ uniquely determined from a knowledge of resolvents \changes{reduced to} the detectable subspace?
\item Can the resolvent, bordered by projections onto the detectable subspaces, be determined from $M(\lambda)$?
\item What can be said about the relationship between analytic continuation of $M(\lambda)$ and analytic continuation of
 bordered resolvents?
\item What is the relationship between the rank of the jump in $M(\lambda)$ and the rank of the jump in the bordered resolvent
 across a line of essential spectrum, w.l.o.g. the real axis, when one has a limiting absorption principle?
\item \label{Q5} To what extent can the detectable subspace be explicitly described?
\end{enumerate}

Illustrative examples include the Schr\"{o}dinger operator and Hain-L\"{u}st-type models which we also examined in \cite{BHMNW09}.
However the main concrete example studied in this paper is the Friedrichs Model, discussed at length in Sections \ref{section:6}, \ref{section:7}, \ref{section:Toep}, together with the relevant appendices.  These results reveal many connections to problems in modern complex analysis, including the theory of Hankel and Toeplitz operators, and demonstrate the interplay between complex analysis and 
operator theory in the description of the detectable subspace (see e.g.~the appearance of the Riesz-Nevanlinna factorisation theorem in Theorem \ref{B2}). \changes{We consider the  Friedrichs model as a key example for the development of the theory of detectable subspaces,} because it allows a precise description of the structure of the the detectable subspace in many cases, while exhibiting such a variety of behaviours that one can hardly expect to obtain a description of the space in all cases in unique terms. It shows the problem of reconstruction of the detectable part of the operator from the $M$-function, well-known for Sturm-Liouville problems \cite{Borg49,Marchenko}, is not always possible. Our results for this example show that the detectable part of the operator can partially be recovered from the $M$-function. At least in the symmetric case we would expect this recovery to be possible up to unitary equivalence \cite{Ryzhov}.

The paper is arranged as follows. Section \ref{section:1} introduces boundary triples, $M$-functions, solution operators and the detectable subspace. Section \ref{section:2} shows the concrete realizations of these abstract objects for Schr\"{o}dinger operators, Hain-L\"ust-type operators and the Friedrichs model. Sections \ref{section:3}, \ref{section:4}, \ref{section:5} present various abstract results concerning the relationship between the bordered resolvent and the $M$-function. In particular, in Section \ref{section:3}, we prove that the $M$-function is uniquely determined by one bordered resolvent. It can be reconstructed from one bordered resolvent
and two closed solution operator ranges or by two bordered resolvents associated with different boundary conditions. We show that the bordered resolvent can be determined from the $M$-function and a family of solution operator ranges. Section \ref{section:4} examines simultaneous analytic continuation of the $M$-function and bordered resolvents, while Section \ref{section:5} deals with jumps of the $M$-function and the bordered resolvent across the essential spectrum. 

Sections \ref{section:6} onwards, including the appendices, deal with the Friedrichs model. \changes{In Section \ref{section:6} we consider the reconstruction of the $M$-function from one restricted resolvent for the Friedrichs model. Sections \ref{section:7} and \ref{section:Toep} deal with determining the detectable subspace for various combinations of the parameters of the model. In both these sections, results and techniques from complex analysis will be important; whilst in Section \ref{section:7} Hankel operators will make an appearance, the results in Section  \ref{section:Toep} rely on the theory of Toeplitz operators. Many of the proofs  from these sections can be found in the appendices.}

We conclude this introduction by mentioning that there has been an explosion of interest in boundary triples in the last decade,
in particular around their application to partial differential equations usually in the self-adjoint case (see, e.g.~\cite{AGW14,AB09,AP04,BL07,BL12,BLL13a,BLL13b,BM13,BR12,BMNW08,DM91,GMZ07,GM08,GM09,GM09b,GM11,Gru08,Gru14,HMM13,KK04,Mal10,Pos08,PR10,Ryzhov}). Some interesting ODE applications have also appeared, such as Mikhailets' and Sobolev's
study \cite{MS99} of the common eigenvalue problem. Generalisations to relations have been studied by Derkach, Hassi, Malamud and de Snoo \cite{DHMdS06,DHMdS09}. However the situation with inverse problems remains problematic: one of the striking differences between
Schr\"{o}dinger operators in dimension $d=1$ and dimension $d>1$ is that, for $d>1$, the potential can be uniquely recovered from a knowledge of the Dirichlet to Neumann map at a single value of the spectral parameter \cite{Isakov}. The fact that this is not true for
$d=1$ \cite{Borg49} already indicates that abstract techniques will generally be of limited value unless supplemented by a detailed study of the concrete operators to which they will be applied.}

\section{Definition of the detectable subspace and some properties}\label{section:1}
{We use the following assumptions and notation throughout our article.
\begin{enumerate}
  \item $A$, $\At$ are closed, densely defined operators on domains in a Hilbert space $H$.
  \item $A$ and $\At$ are an adjoint pair, i.e. $A^*\supseteq\At$ and $\At^*\supseteq A$.
\end{enumerate}
\begin{proposition}\cite[(Lyantze, Storozh '83)]{Lyantze}. For each adjoint pair of closed densely defined operators on $H$,
there exist ``boundary spaces'' $\cH$, $\cK$ and ``trace operators''
  \[ \Gamma_1:D(\At^*)\to\cH,\quad \Gamma_2:D(\At^*)\to\cK,\quad \Gammat_1:D(A^*)\to\cK\quad \hbox{ and }\quad  \Gammat_2:D(A^*)\to\cH \]
  such that for $u\in D(\At^*) $ and $v\in D(A^*)$ we have an abstract Green formula
        \begin{equation}\label{Green}
          \llangle \At^* u, v\rrangle_H - \ian{\Big\langle u,A^*v \Big\rangle_H} = \llangle\Gamma_1 u, \Gammat_2 v\rrangle_\cH - \llangle \Gamma_2 u, \Gammat_1v\rrangle_\cK.
        \end{equation} 
The trace operators $\Gamma_1$, $\Gamma_2$, $\Gammat_1$ and $  \Gammat_2 $ are bounded with respect to the graph norm. The pair $(\Gamma_1,\Gamma_2)$ is 
surjective onto $\cH\times\cK$ and $(\Gammat_1,\Gammat_2)$ is surjective onto $\cK\times\cH$. Moreover, we have 
\begin{equation}\label{domains}
	D(A)= D(\At^*)\cap\ker\Gamma_1\cap \ker\Gamma_2 \quad \hbox{ and } \quad D(\At)= D(A^*)\cap\ker\Gammat_1\cap \ker\Gammat_2.
\end{equation}
The collection $\{\cH\oplus\cK, (\Gamma_1,\Gamma_2), (\Gammat_1,\Gammat_2)\}$ is called a \ian{boundary triple} for the adjoint pair $A,\At$.
\end{proposition}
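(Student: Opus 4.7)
The plan is to construct the trace operators via an abstract Green-form / quotient-space argument, essentially following the Lyantze--Storozh strategy. First I would equip the domains $D(\At^*)$ and $D(A^*)$ with their graph-norm Hilbert structures; under the adjoint-pair assumption $A\subseteq \At^*$ and $\At\subseteq A^*$, and since $A$ and $\At$ are closed, the domains $D(A)\subseteq D(\At^*)$ and $D(\At)\subseteq D(A^*)$ are closed subspaces in these Hilbert norms. Consequently the quotient spaces
\[
 \mathcal B := D(\At^*)/D(A),\qquad \Bt := D(A^*)/D(\At)
\]
are Hilbert spaces in a natural way.

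The second step is to show that the sesquilinear form
\[
 [u,v]_G := \llangle \At^* u,v\rrangle_H - \llangle u,A^* v\rrangle_H,\qquad u\in D(\At^*),\ v\in D(A^*),
\]
is bounded with respect to the graph norms and descends to a bounded form on $\mathcal B\times \Bt$. Boundedness follows from Cauchy--Schwarz; descent follows because $[u,v]_G=0$ whenever $u\in D(A)$ (since then $\At^*u=Au$ and one can transfer to $A^*v$) or $v\in D(\At)$. The crucial point is that the descended form is \emph{non-degenerate}: if $[u,v]_G=0$ for all $v\in D(A^*)$, then by the very definition of the adjoint $\At^{**}=\At$ acting on pairs, $u\in D((A^*)^*)=D(A)$, and similarly on the other side. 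Thus $\mathcal B$ and $\Bt$ are in a bounded, non-degenerate perfect duality via $[\cdot,\cdot]_G$.

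Next I would choose a Hilbert-space orthogonal decomposition $\mathcal B = \mathcal H \oplus \mathcal K$ (any splitting into two closed subspaces works; simplest is to pick $\mathcal K=\ker\pi_1$ for some chosen bounded surjection $\pi_1:\mathcal B\to\mathcal H$), and define $\Gamma_1,\Gamma_2$ as the composition of the quotient map $D(\At^*)\to\mathcal B$ with the two projections onto $\mathcal H$ and $\mathcal K$ respectively. These are bounded in the graph norm and, by the choice of decomposition together with the open mapping theorem, the pair $(\Gamma_1,\Gamma_2):D(\At^*)\to\mathcal H\times\mathcal K$ is surjective; moreover $\ker\Gamma_1\cap\ker\Gamma_2$ coincides with $D(A)$ by construction, giving the first equality in \eqref{domains}. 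To define $\Gammat_1,\Gammat_2$, I would use the duality: the form $[u,v]_G$ on $\mathcal B\times\Bt$ induces, through the identification $\mathcal B\cong\mathcal H\oplus\mathcal K$, a canonical isomorphism $\Bt\cong \mathcal K\oplus\mathcal H$ such that the form reads exactly $\llangle h,\tilde h\rrangle_{\mathcal H}-\llangle k,\tilde k\rrangle_{\mathcal K}$. Defining $\Gammat_1$ as the $\mathcal K$-component and $\Gammat_2$ as the $\mathcal H$-component of the quotient map $D(A^*)\to\Bt$ under this identification yields \eqref{Green} by construction. The same reasoning, applied on the $\Bt$-side, gives surjectivity of $(\Gammat_1,\Gammat_2)$ onto $\mathcal K\times\mathcal H$ and the second identity in \eqref{domains}.

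The main obstacle I foresee is the bookkeeping in the final step: one must check that the canonical isomorphism $\Bt\cong\mathcal K\oplus\mathcal H$ induced by duality is genuinely realised by \emph{boundary values}, i.e.\ by the quotient projection from $D(A^*)$, and that the resulting $\Gammat_1,\Gammat_2$ are bounded with the cross-labelling matching the signs in \eqref{Green}. Making the signs and the swap $(\mathcal H,\mathcal K)\leftrightarrow(\mathcal K,\mathcal H)$ come out correctly requires one to choose the splitting of $\mathcal B$ and the corresponding splitting of $\Bt$ in a coordinated way; picking the orthogonal complement on one side and its \emph{annihilator} under $[\cdot,\cdot]_G$ on the other is the natural device to enforce exactly the anti-diagonal pairing. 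Everything else (boundedness in graph norm, surjectivity, kernel identities) then follows from standard open mapping and closed-range arguments.
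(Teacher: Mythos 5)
The paper offers no proof of this proposition; it is cited directly from Lyantze and Storozh \cite{Lyantze}, so there is nothing internal to compare against. Your quotient-space route is the standard one and most of the steps are sound: $D(A)$ is indeed closed in $D(\At^*)$ under the $\At^*$-graph norm (the two graph norms agree on $D(A)$ because $A\subseteq\At^*$, and $A$ is closed), the Green form is bounded for the graph norms by Cauchy--Schwarz and descends to the quotients, and the non-degeneracy argument via $(A^*)^*=A$ and $(\At^*)^*=\At$ is exactly right.

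The genuine gap is the jump from ``non-degenerate'' to ``perfect duality.'' A bounded sesquilinear form on a pair of Hilbert spaces that is non-degenerate in both arguments need not implement an isomorphism onto the dual: on $\ell^2\times\ell^2$ the form $[x,y]=\sum_n x_n\overline{y_n}/n$ is bounded and non-degenerate in each argument, yet the induced map has dense, non-closed range. Surjectivity of the induced map is precisely what you invoke to obtain a surjective pair $(\Gammat_1,\Gammat_2)$ and to identify $D(\At)$ as the common kernel, so the argument as written is incomplete. The missing ingredient is to show the pairing has closed range. The cleanest way is to pass to graphs: realise the two quotients as the orthogonal complements $N=G(\At^*)\ominus G(A)$ and $N'=G(A^*)\ominus G(\At)$ inside $H\oplus H$, observe that the unitary $J(u,w)=(-w,u)$ satisfies $J\,G(A^*)=G(A)^\perp$ and $J\,G(\At)^\perp=G(\At^*)$ (so $J$ maps $N'$ unitarily onto $N$), and note that the Green form is then exactly $[x,y]=\llangle x,Jy\rrangle_{H\oplus H}$, which is manifestly a perfect pairing. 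With that in hand the remainder of your bookkeeping --- splitting one quotient orthogonally as $\cH\oplus\cK$, transporting the splitting to the other quotient through the duality, and absorbing the sign into $\Gammat_1$ --- does go through.
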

}

Malamud and Mogilevskii \cite{MM02} use this setting to define Weyl $M$-functions associated with boundary 
triples. In \cite{BMNW08}, we used a slightly different setting in which the boundary conditions and Weyl function contain an additional operator $B\in\cL(\cK,\cH)$. \changes{We now summarize some results from \cite{BMNW08} for the convenience of the reader.  }

\begin{definition}\label{defmfn} Let $B\in\cL(\cK,\cH)$ and $\Bt\in\cL(\cH,\cK)$. We define extensions
of $A$ and $\At$ (respectively) by
\[ A_B:=\At^*\vert_{\ker(\Gamma_1-B\Gamma_2)} \hbox{ and } \At_\Bt:=A^*\vert_{\ker(\Gammat_1-\Bt\Gammat_2)}.\]  
In the following, we assume $\rho(A_B)\neq\emptyset$, in particular $A_B$ is a closed operator.
For $\lambda\in\rho(A_B)$, we define the $M$-function via
\[ M_B(\lambda):\Ran(\Gamma_1-B\Gamma_2)\to\cK,\ M_B(\lambda)(\Gamma_1-B\Gamma_2) u=\Gamma_2 u \hbox{ for all } u\in \ker(\At^*-\lambda)\]
and for $\lambda\in\rho(\At_\Bt)$, we define 
\[ \Mt_\Bt(\lambda):\Ran(\Gammat_1-\Bt\Gammat_2)\to\cH,\ \Mt_\Bt(\lambda)(\Gammat_1-\Bt\Gammat_2) v=\Gammat_2 v 
\hbox{ for all } v\in \ker(A^*-\lambda).\]
\end{definition}
It \ian{will follow from Lemma \ref{slamwd}} that $M_B(\lambda)$ and $\Mt_\Bt(\lambda)$ are well defined for $\lambda\in \rho(A_B)$ and $\lambda\in \rho(\At_\Bt)$,
respectively. Moreover, in our situation $\Ran(\Gamma_1-B\Gamma_2)=\cH$ and $\Ran(\Gammat_1-\Bt\Gammat_2)=\cK$, so the $M$-functions are defined on the whole spaces.

\begin{definition} (Solution Operator) For $\lambda\in\rho(A_B)$, we define the linear operator $S_{\lambda,B}:\Ran(\Gamma_1-B\Gamma_2)\to 
\ker(\At^*-\lambda)$ by
\begin{eqnarray}\label{slamdef}
  (\At^*-\lambda)S_{\lambda,B} f=0,\ (\Gamma_1-B\Gamma_2)S_{\lambda,B} f=f,
\end{eqnarray}
i.e.  $S_{\lambda,B}=\left( (\Gamma_1-B\Gamma_2)\vert_{\ker(\At^*-\lambda)}\right)^{-1}$.
For $\lambda\in\rho(\At_B^*)$, we define the linear operator $\St_{\lambda,B^*}:\Ran(\Gammat_1-B^*\Gammat_2)\to 
\ker(A^*-\lambda)$ by
\begin{eqnarray}
  (A^*-\lambda)\St_{\lambda,B^*} f=0,\ (\Gammat_1-B^*\Gammat_2)\St_{\lambda,B^*} f=f.
\end{eqnarray}
\end{definition}

All following results have a corresponding version for the quantities $\Mt_\Bt$, $\St_{\lambda,B^*}$ etc.~obtained from the formally adjoint problem. 

\begin{remark}
\begin{enumerate}
\item \ian{As we are not interested in characterising all closed extensions of $A$, in this paper we will assume for simplicity that $B\in\cL(\cK,\cH)$. A discussion of all closed extensions of $A$ in the boundary triple setting can be found in \cite{BGW09}.}
	\item Note that $M_B(\lambda)=\Gamma_2 S_{\lambda,B}$.
	\item M-functions associated with different boundary conditions are related by the Aronszajn-Donoghue formula \ian{(cf. also \ref{slam2})}
	\be\label{Mfn}M_B(\lambda)= (I+M_B(\lambda)(B-C))M_C(\lambda)=M_C(\lambda)(I+(B-C)M_B(\lambda)).\ee
 \end{enumerate}
\end{remark}

\changes{The following lemma contains the results of \cite[Lemma 2.4 and Corollary 2.5]{BHMNW09}.

\begin{lemma}\label{slamwd}
\begin{enumerate}
	\item $S_{\lambda,B}$ is well-defined for $\lambda\in\rho(A_B)$.
	\item For each
$f\in\Ran(\Gamma_1-B\Gamma_2)$ the map from $\rho(A_B)\to H$ given by $\lambda\mapsto S_{\lambda,B} f$ is analytic.
\item For $\lambda,\lambda_0\in\rho(A_B)$ we have
\be S_{\lambda,B} =  S_{\lambda_0,B} + (\lambda-\lambda_0)(A_B-\lambda)^{-1} S_{\lambda_0,B}. \label{eq:sdiff} \ee
\end{enumerate}
\end{lemma}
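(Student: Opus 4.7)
The plan is to prove the three parts in the order (1), (3), (2), since (2) follows almost immediately from (3) and standard resolvent analyticity.

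For (1), I will check bijectivity of $(\Gamma_1-B\Gamma_2)\vert_{\ker(\At^*-\lambda)}$ for $\lambda\in\rho(A_B)$. \emph{Injectivity} is straightforward: if $u\in\ker(\At^*-\lambda)$ satisfies $(\Gamma_1-B\Gamma_2)u=0$, then $u\in D(A_B)$ and $(A_B-\lambda)u=0$, so $u=0$ as $\lambda\in\rho(A_B)$. \emph{Surjectivity} is the main work. Given $f\in\cH$, I use the surjectivity of $(\Gamma_1,\Gamma_2):D(\At^*)\to\cH\times\cK$ to pick some $v\in D(\At^*)$ with $(\Gamma_1-B\Gamma_2)v=f$ (e.g.~by choosing $\Gamma_1 v=f$ and $\Gamma_2 v=0$). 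Then I set
\[
u := v - (A_B-\lambda)^{-1}(\At^*-\lambda)v.
\]
Since the second term lies in $D(A_B)\subseteq D(\At^*)\cap\ker(\Gamma_1-B\Gamma_2)$, one has $(\Gamma_1-B\Gamma_2)u=f$, while applying $(\At^*-\lambda)$ and noting that $A_B\subseteq\At^*$ on $D(A_B)$ gives $(\At^*-\lambda)u=0$. This exhibits the preimage, so $S_{\lambda,B}f=u$ is well-defined.

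For (3), fix $\lambda_0,\lambda\in\rho(A_B)$ and $f\in\cH$, set $u_0=S_{\lambda_0,B}f$, and consider
\[
z := u_0 + (\lambda-\lambda_0)(A_B-\lambda)^{-1}u_0.
\]
Then $z\in D(\At^*)$. A direct computation, using $(\At^*-\lambda_0)u_0=0$ and $(\At^*-\lambda)(A_B-\lambda)^{-1}u_0=u_0$ (which holds because $A_B\subseteq \At^*$), yields $(\At^*-\lambda)z = -(\lambda-\lambda_0)u_0 + (\lambda-\lambda_0)u_0 = 0$. Moreover $(\Gamma_1-B\Gamma_2)z = f + 0 = f$ since $(A_B-\lambda)^{-1}u_0 \in D(A_B) \subseteq \ker(\Gamma_1-B\Gamma_2)$. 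By the uniqueness established in (1), $z=S_{\lambda,B}f$, which is the claimed identity (\ref{eq:sdiff}).

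For (2), the identity (\ref{eq:sdiff}) expresses $S_{\lambda,B}f$ as $S_{\lambda_0,B}f+(\lambda-\lambda_0)(A_B-\lambda)^{-1}S_{\lambda_0,B}f$, where $\lambda\mapsto(A_B-\lambda)^{-1}$ is operator-norm analytic on the open set $\rho(A_B)$. Composing with the fixed vector $S_{\lambda_0,B}f\in H$ and multiplying by the analytic scalar factor $(\lambda-\lambda_0)$ yields an $H$-valued analytic function on $\rho(A_B)$; since $\lambda_0$ was arbitrary, analyticity holds on all of $\rho(A_B)$. The main obstacle is the surjectivity step in (1): it hinges critically on both the surjectivity of $(\Gamma_1,\Gamma_2)$ and the fact that $\lambda\in\rho(A_B)$ allows one to correct any pre-chosen lift back into $\ker(\At^*-\lambda)$ without disturbing the boundary value; once this is in place, (3) and (2) are essentially formal.
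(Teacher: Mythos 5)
Your proof is correct. Note that the paper itself does not prove this lemma—it cites Lemma 2.4 and Corollary 2.5 of the reference \cite{BHMNW09}—but the argument you give (injectivity from $\lambda\in\rho(A_B)$, surjectivity via the correction $v-(A_B-\lambda)^{-1}(\At^*-\lambda)v$, then verifying the resolvent-type identity directly and reading off analyticity) is the standard one for this statement and is essentially the argument in that reference.

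One small remark: in the surjectivity step you do not actually need the full surjectivity of the pair $(\Gamma_1,\Gamma_2)$; the lemma only quantifies over $f\in\Ran(\Gamma_1-B\Gamma_2)$, so the existence of some $v\in D(\At^*)$ with $(\Gamma_1-B\Gamma_2)v=f$ is immediate from the hypothesis. Your stronger choice $(\Gamma_1 v,\Gamma_2 v)=(f,0)$ is of course available and does no harm, but it slightly obscures that the argument works verbatim without invoking surjectivity onto $\cH\times\cK$.
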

}

\ian{The difference of two resolvents of the operator can be related to the $M$-function by Krein-type resolvent formulae, such as 
\begin{eqnarray}\label{Krein}
(A_C-\lambda)^{-1} - (A_B-\lambda)^{-1} &=& S_{\lambda,C}(I+(B-C)M_B(\lambda))(\Gamma_1-B\Gamma_2)(A_C-\lambda)^{-1}\\ \nonumber
                   &=& S_{\lambda,C}(I+(B-C)M_B(\lambda))(C-B)\Gamma_2(A_C-\lambda)^{-1},
\end{eqnarray}
for  $B,C\in \cL(\cK,\cH)$ and $\lambda\in\rho(A_B)\cap\rho(A_C)$ (see \cite[Theorem 2.6]{BHMNW09}).}

\changes{The following formula already appears in some proofs in \cite{BHMNW09,BMNW08}, but due to its importance in our later analysis, we state and prove it here.}
\begin{lemma}\label{intbyparts}
For every $F\in D(\At^*)$ and $v\in D(A^*)$ and $\lambda\in\rho(A_B)$ we have 
\be \llangle F-(A_B-\lambda)^{-1}(\At^*-\lambda)F,(A^*-\overline{\lambda}I)v\rrangle
 = \ian{ \llangle M_B(\lambda)f,(\Gammat_1-B^*\Gammat_2)v\rrangle_{\K} -\llangle f,\Gammat_2v\rrangle_{\Hc}}
\label{eq:fund} \ee
where $f=(\Gamma_1-B\Gamma_2)F$.
\end{lemma}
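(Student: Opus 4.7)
The plan is to recognize that the vector $u:=F-(A_B-\lambda)^{-1}(\At^*-\lambda)F$ is precisely the solution $S_{\lambda,B}f$ of the abstract boundary-value problem with data $f=(\Gamma_1-B\Gamma_2)F$, and then reduce the identity to the abstract Green formula \eqref{Green} applied to the pair $(u,v)$.

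First I would verify two properties of $u$. Since $(A_B-\lambda)^{-1}$ maps into $D(A_B)\subset D(\At^*)$, one has
\[ (\At^*-\lambda)u=(\At^*-\lambda)F-(A_B-\lambda)(A_B-\lambda)^{-1}(\At^*-\lambda)F=0, \]
so $u\in\ker(\At^*-\lambda)$. Moreover, because $D(A_B)\subset\ker(\Gamma_1-B\Gamma_2)$, the trace operator $(\Gamma_1-B\Gamma_2)$ annihilates the second term and
\[ (\Gamma_1-B\Gamma_2)u=(\Gamma_1-B\Gamma_2)F=f. \]
By Lemma~\ref{slamwd} (well-definedness of $S_{\lambda,B}$) this means $u=S_{\lambda,B}f$, and consequently $\Gamma_2 u=M_B(\lambda)f$ and $\Gamma_1 u=B\,M_B(\lambda)f+f$.

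Next I would apply the Green formula \eqref{Green} to $u\in D(\At^*)$ and $v\in D(A^*)$:
\[ \llangle\At^*u,v\rrangle_H-\llangle u,A^*v\rrangle_H=\llangle\Gamma_1u,\Gammat_2v\rrangle_\cH-\llangle\Gamma_2u,\Gammat_1v\rrangle_\cK. \]
Since $\At^*u=\lambda u$, the left-hand side equals $\lambda\llangle u,v\rrangle-\llangle u,A^*v\rrangle=-\llangle u,(A^*-\overline\lambda I)v\rrangle$, so
\[ \llangle u,(A^*-\overline\lambda I)v\rrangle=\llangle\Gamma_2u,\Gammat_1v\rrangle_\cK-\llangle\Gamma_1u,\Gammat_2v\rrangle_\cH. \]
Substituting the expressions for $\Gamma_1u$ and $\Gamma_2u$ found above and moving $B$ to the other side of the inner product in $\cH$ gives
\[ \llangle u,(A^*-\overline\lambda I)v\rrangle=\llangle M_B(\lambda)f,(\Gammat_1-B^*\Gammat_2)v\rrangle_\cK-\llangle f,\Gammat_2v\rrangle_\cH, \]
which is exactly \eqref{eq:fund}.

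No step looks particularly technical: the main thing to check carefully is that $(A_B-\lambda)^{-1}(\At^*-\lambda)F$ lies in $D(A_B)$ (so that both its trace under $(\Gamma_1-B\Gamma_2)$ vanishes and $(\At^*-\lambda)$ acts on it as $(A_B-\lambda)$). The only mild subtlety is the passage between $\lambda$ and $\overline\lambda$ on the $v$-side, which I handle by moving $\lambda\llangle u,v\rrangle$ into the second slot of the inner product; this is what forces $B^*$ rather than $B$ to appear in the answer.
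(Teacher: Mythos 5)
Your proof is correct and follows essentially the same route as the paper: you set $w=F-(A_B-\lambda)^{-1}(\At^*-\lambda)F$, observe that $w\in\ker(\At^*-\lambda)$ with $(\Gamma_1-B\Gamma_2)w=f$ so that $\Gamma_2 w=M_B(\lambda)f$ and $\Gamma_1 w=(I+BM_B(\lambda))f$, and then apply the Green identity \eqref{Green} and move $B$ across the boundary pairing to produce $B^*$. The only difference is that you spell out explicitly the verification that the second term lies in $D(A_B)\subset\ker(\Gamma_1-B\Gamma_2)$, which the paper leaves implicit.
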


\begin{proof}
Set $w:=F-(A_B-\lambda)^{-1}(\At^*-\lambda)F$. 
Then $w\in\ker(\At^*-\lambda)$, so $$M_B(\lambda)(\Gamma_1-B\Gamma_2)w=M_B(\lambda)f=\Gamma_2 w \hbox{ and } \Gamma_1 w=(\Gamma_1-B\Gamma_2+B\Gamma_2) w=(I+BM_B(\lambda))f.$$
Green's identity \ian{\eqref{Green}} for any $v\in D(A^*)$ gives
\begin{eqnarray*}
-\Big\langle w,(A^*-\overline{\lambda})v\Big\rangle _H
&=&\llangle (\At^*-\lambda)w, v\rrangle _H -\Big\langle  w,(A^*-\overline{\lambda})v\Big\rangle _H \\ \nonumber
&=&\llangle \Gamma_1 w,\Gammat_2 v\rrangle _\cH - \llangle \Gamma_2 w,\Gammat_1 v\rrangle _\cK  \\ \nonumber
&=&\llangle (I+BM_B(\lambda))f,\Gammat_2 v\rrangle _\cH - \llangle M_B(\lambda)f,\Gammat_1 v\rrangle _\cK  \\ \nonumber
&=&\llangle f,\Gammat_2 v\rrangle _\cH - \llangle M_B(\lambda)f,(\Gammat_1 -B^*\Gammat_2)v\rrangle _\cK .
\end{eqnarray*}
\end{proof}
\change{We are now ready to define one of the main concepts of the paper, the detectable subspaces, introduced in   \cite{BHMNW09}.

\begin{definition}\label{def:DS}
Fix $\mu_0\not\in \sigma(A_B)$. We define the spaces
\be \Sc_B = \Span_{\delta\not\in \sigma(A_B)}(A_B-\delta I)^{-1}
       \mbox{Ran}(S_{\mu_0,B})\label{eq:mm1}, \ee
\be \Tc_B = \Span_{\mu\not\in \sigma(A_B)}
        \mbox{Ran}(S_{\mu,B})\label{eq:mm1b}, \ee
and similarly,
\be \widetilde{\Sc}_{B^*} = \mbox{\rm Span}_{\delta\not\in \sigma(\tilde{A}_{B^*})}
 (\tilde{A}_{B^*}-\delta I)^{-1}\mbox{Ran}(\tilde{S}_{\tilde{\mu},B^*})\label{eq:tmm1}, \ee
\be \widetilde{\Tc}_{B^*} = \mbox{\rm Span}_{\mu \not\in \sigma(\tilde{A}_{B^*})} \mbox{Ran}(\tilde{S}_{\mu,B^*}). \ee
We call $\overline{\Sc_B}$  and $\overline{\Sct_{B^*}}$ the detectable subspaces.
\end{definition}}

We now consider the dependence of these spaces on $\mu_0$ and $B$.
\begin{proposition}\label{prop:2.7}
\begin{enumerate}
	\item\label{boundedres} Let $B\in\cL(\cK,\cH)$. Assume that there is a sequence $(z_n)_{n\in\mathbb{N}}$ in $\mathbb{C}$ 
with $|z_n|\rightarrow\infty$ and $(\| z_n(A_B-z_nI)^{-1}\|)_{n\in{\mathbb N}}$ is bounded. Then we have
$ \overline{\Sc_B} = \overline{\Tc_B}. $
In particular, $\overline{\Sc_B}$ is independent of $\mu_0$.     	
	\item Let $B,C\in\cL(\cK,\cH)$. If $\rho(A_B)\cup\rho(A_C)\subseteq\overline{\rho(A_B)\cap\rho(A_C)}$, then $ \overline{\Tc_B}=\overline{\Tc_C}$.
	\item\label{Bindependence} Suppose that for all $B,C\in\cL(\cK,\cH)$, we have $\rho(A_B)\cup\rho(A_C)\subseteq\overline{\rho(A_B)\cap\rho(A_C)}$.
	Then $\overline{\Tc_B}=\overline{\Span_{\lambda\in\Lambda} \ker (\At^*-\lambda)}$, where $\Lambda=\bigcup_{C\in\cL(\cK,\cH)}\rho(A_C)$.
\end{enumerate}
\end{proposition}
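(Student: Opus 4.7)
The plan is to use the identity from Lemma \ref{slamwd}(3), namely
\[ S_{\lambda,B} = S_{\lambda_0,B} + (\lambda-\lambda_0)(A_B-\lambda)^{-1}S_{\lambda_0,B}, \]
as the workhorse for all three parts, together with the observation that for $\lambda\in\rho(A_B)$ one has $\Ran(S_{\lambda,B})=\ker(\At^*-\lambda)$ (since $S_{\lambda,B}$ inverts $(\Gamma_1-B\Gamma_2)|_{\ker(\At^*-\lambda)}$ onto all of $\cH$). The latter is immediate: $(\Gamma_1-B\Gamma_2)|_{\ker(\At^*-\lambda)}$ is surjective by the definition of $S_{\lambda,B}$, and its injectivity follows from $\lambda\in\rho(A_B)$.

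For part (\ref{boundedres}), the inclusion $\Sc_B\subseteq\Tc_B$ is immediate by rearranging the above identity to express $(A_B-\delta)^{-1}S_{\mu_0,B}$ as a linear combination of $S_{\delta,B}$ and $S_{\mu_0,B}$. The reverse inclusion $\overline{\Tc_B}\subseteq\overline{\Sc_B}$ requires showing $\Ran(S_{\mu_0,B})\subseteq\overline{\Sc_B}$, because then $\Ran(S_{\lambda,B})\subseteq\Ran(S_{\mu_0,B})+(A_B-\lambda)^{-1}\Ran(S_{\mu_0,B})\subseteq\overline{\Sc_B}$. The main content, and the place where the hypothesis enters, is the following strong-convergence claim: under the assumption that $\|z_n(A_B-z_nI)^{-1}\|$ is bounded with $|z_n|\to\infty$, we have $-z_n(A_B-z_nI)^{-1}\to I$ strongly on $H$. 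To verify this, on $D(A_B)$ write $-z_n(A_B-z_n)^{-1}u = u - (A_B-z_n)^{-1}A_B u$, and note that $\|(A_B-z_n)^{-1}\|\le \|z_n(A_B-z_n)^{-1}\|/|z_n|\to 0$; then use uniform boundedness to pass to all of $H$. Applying this to $g\in\Ran(S_{\mu_0,B})$ gives $g=\lim_n (A_B-z_n)^{-1}(-z_n g)\in\overline{\Sc_B}$, completing the argument.

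For part (2), the key observation is that when $\lambda\in\rho(A_B)\cap\rho(A_C)$,
\[ \Ran(S_{\lambda,B})=\ker(\At^*-\lambda)=\Ran(S_{\lambda,C}), \]
so these contributions to $\Tc_B$ and $\Tc_C$ coincide. For $\lambda\in\rho(A_B)$ (possibly outside $\rho(A_C)$), the density hypothesis gives a sequence $\lambda_n\in\rho(A_B)\cap\rho(A_C)$ with $\lambda_n\to\lambda$. Using Lemma \ref{slamwd}(2), $S_{\lambda_n,B}f\to S_{\lambda,B}f$ for any fixed $f$, and each $S_{\lambda_n,B}f$ lies in $\Ran(S_{\lambda_n,C})\subseteq\Tc_C$. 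Hence $\Ran(S_{\lambda,B})\subseteq\overline{\Tc_C}$, so $\overline{\Tc_B}\subseteq\overline{\Tc_C}$, and the reverse inclusion is symmetric.

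Part (3) is then essentially a repackaging of parts (1) and (2). By part (2), $\overline{\Tc_B}=\overline{\Tc_C}$ for all $C\in\cL(\cK,\cH)$, and since $\Ran(S_{\mu,C})=\ker(\At^*-\mu)$ for $\mu\in\rho(A_C)$, one obtains
\[ \Span_{\mu\in\rho(A_C)}\ker(\At^*-\mu)=\Tc_C\subseteq\overline{\Tc_B}. \]
Taking the union over $C$ and then the closure yields one inclusion. The reverse inclusion follows from $\rho(A_B)\subseteq\Lambda$, since $\Tc_B\subseteq\Span_{\lambda\in\Lambda}\ker(\At^*-\lambda)$. I expect the main obstacle to be the strong-convergence step in part (1); the density arguments in (2) and (3) are routine once the key identification $\Ran(S_{\lambda,B})=\ker(\At^*-\lambda)$ is in hand.
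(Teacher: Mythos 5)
Your argument is correct, and for parts (2) and (3) it is essentially the paper's proof: the same density argument with a sequence $\lambda_n\to\lambda$ in $\rho(A_B)\cap\rho(A_C)$, combined with the continuity of $\lambda\mapsto S_{\lambda,B}f$ coming from \eqref{eq:sdiff}, and part (3) as an immediate repackaging. Where you genuinely diverge is in two places. First, for the identification of ranges at common resolvent points you use $\Ran(S_{\lambda,B})=\ker(\At^*-\lambda)=\Ran(S_{\lambda,C})$ (valid, since injectivity of $(\Gamma_1-B\Gamma_2)|_{\ker(\At^*-\lambda)}$ follows from $\lambda\in\rho(A_B)$ and surjectivity of $\Gamma_1-B\Gamma_2$ onto $\cH$), whereas the paper instead invokes the Krein-type identity $S_{\lambda,C}(I-(C-B)\Gamma_2 S_{\lambda,B})=S_{\lambda,B}$ from \eqref{slam2}; your route is a bit more direct and avoids importing that formula. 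Second, for part (1) the paper simply cites \cite[Lemma 3.1]{BHMNW09}, while you supply a self-contained proof via the strong convergence $-z_n(A_B-z_n)^{-1}\to I$ (correctly established on the dense set $D(A_B)$ and extended by uniform boundedness), which is in the spirit of the cited lemma and makes the statement independent of the reference. One small point to tidy: the inclusion $\Sc_B\subseteq\Tc_B$ obtained by rearranging \eqref{eq:sdiff} covers $\delta\neq\mu_0$ only; for the term $(A_B-\mu_0)^{-1}\Ran(S_{\mu_0,B})$ you should pass to a limit $\delta\to\mu_0$ (norm continuity of the resolvent), which still lands in $\overline{\Tc_B}$ and is all you need since you only compare closures.
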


\begin{proof}
\begin{enumerate}
  \item This is shown in \cite[Lemma 3.1]{BHMNW09}.
	\item From \cite[Proposition 4.5]{BMNW08} we have
\begin{equation}\label{slam2}
S_{\lambda,C}(I-(C-B)\Gamma_2 S_{\lambda,B})=S_{\lambda,B},
\end{equation}        
we note that $\Ran(S_{\lambda,B})=\Ran(S_{\lambda,C})$ whenever $\lambda\in\rho(A_B)\cap\rho(A_{C})$. \ian{Now assume $\lambda\in\rho(A_B)\cap\sigma(A_C)$.} We need to show that $\Ran(S_{\lambda,B})\subseteq \overline{ \Tc' }$ where 
\beq \Tc'= \Span_{\mu\in\rho(A_B)\cap\rho(A_C) }
        \mbox{Ran}(S_{\mu,B}).
\eeq
By assumption, there exists a sequence $(\lambda_n)_{n\in\N}$ in $\rho(A_B)\cap\rho(A_C)$ with $\lambda_n\to\lambda$. Let $u=S_{\lambda,B}f$. We have
$$S_{\lambda_n,B}-S_{\lambda,B}=(\lambda_n-\lambda)(A_B-\lambda_n)^{-1}S_{\lambda,B}.$$
Therefore,
$$\norm{S_{\lambda_n,B}f-S_{\lambda,B}f}\leq|\lambda_n-\lambda|\norm{(A_B-\lambda_n)^{-1}}\norm{S_{\lambda,B}f}.$$
As $n\to\infty$, $\norm{(A_B-\lambda_n)^{-1}}\to\norm{(A_B-\lambda)^{-1}}<\infty$, so $S_{\lambda_n,B}f\to S_{\lambda,B}f$ which completes the proof.
\item This follows immediately from the previous part of the proposition.
\end{enumerate}
\end{proof}

\begin{remark}
We note that the conditions in parts \ref{boundedres} and \ref{Bindependence} of the proposition are satisfied in many interesting cases, in particular in the case of `weak' perturbations of selfadjoint operators.

Throughout the remainder of this article, we will assume that the spaces $\overline{\Sc_B}$ and $\overline{\Tc_B}$ coincide, are independent of $B$ and equal $\overline{\Span_{\lambda\in\Lambda} \ker (\At^*-\lambda)}$. To avoid cumbersome notation, we shall denote all these spaces by $\Sco$. We shall also denote $\Sc_B$ by $\Sc$ and $\Tc_B$ by $\Tc$ when no confusion can arise. 
\change{We will generally refer to $\Sco$ as the detectable subspace.}

In \cite[Lemma 3.4]{BHMNW09}, it is shown that
 $\overline{\Sc}$ is a regular invariant space of the resolvent of the operator $A_B$: that is, 
$\overline{(A_B-\mu I)^{-1}\overline{\Sc}} = \overline{\Sc}$ for all $\mu\in \rho(A_B)$.

\end{remark}

From \ian{\eqref{eq:mm1b}} and Proposition \ref{prop:2.7}, part \ref{Bindependence}, we get \be\label{eq:Skernel}\Sc^\perp=\bigcap_{B,\lambda\in\rho(A_B)}\ker(S_{\lambda,B}^*).\ee
\ian{Moreover, from \cite[Proposition 3.9]{BMNW08}} we have $$\ker(S_{\lambda,B}^*)=\ker\left(\Gammat_2(\At_{B^*}-\overline{\lambda})^{-1}\right).$$

We now assume that $h\in\Sc^\perp$. Then  we have $\Gammat_2(\At_{B^*}-\overline{\lambda})^{-1}h=0$ for all suitable $B$ and $\lambda$. Fixing $B$ and $\lambda$ and
setting \be\label{eq:yb} y_B=(\At_{B^*}-\overline{\lambda})^{-1}h, \ee 
we get $\Gammat_2 y_B=0$ and hence $\Gammat_1 y_B= B^*\Gammat_2 y_B=0$, so $y_B$ satisfies any homogeneous boundary condition and lies in the domain of the minimal operator. 

Hence,
\be\label{observable}
\Sc^\perp=\{ h\in H: \forall B^*, \lambda\in \rho(\At_{B^*}), \quad \Gammat_i (\At_{B^*}-\lambda)^{-1}h=0, \hbox{ for } i=1,2\}.
\ee

\begin{remark}\label{rem:sys}
Determining the detectable subspace $\Sc$ is closely related to the problem of observability in systems theory.  Indeed, from \eqref{eq:Skernel} and \eqref{observable} the space $\Sc^\perp$ (at least formally) coincides which the `non-observable for all time' subspace $N(\Theta)$ of a system $\Theta=(A,B,C,D)$ (see \cite{Sta05}), in which
$A=\At_{B^*}$, $B=\Sct_{\lambda_0,B^*}$ for some $\lambda_0 \in \rho (\At_{B^*})$,
\ian{$ C=\Gammat_2  (\At^*-\lambda_0)$}, $ D=0$,
though there are several differences between the notions:
\begin{enumerate}
\item
The corresponding system can be highly awkward to construct and requires involving unbounded operators.
\item
In systems theory the subspace of un-observable states is generated by the resolvent in one half plane only (corresponding to positive times $t$ only).  In our construction, the spectral parameter runs through the  whole resolvent set.  It is well known that when the resolvent set consists of several unconnected domains, developing the linear set by the resolvent essentially depends on the choice of the component.
\item
We do not require the operator \ian{$\At_{B^*}$} to be  the generator of a semigroup. In particular, the resolvent set may have a complicated geometrical structure. If \ian{$\At_{B^*}$} is a generator, the resolvent in \eqref{observable} can be replaced by the positive and negative time semigroups.
\end{enumerate}
Despite these differences, the similarity  to the observability problem may \changes{be fruitful} for analysing detectability, both in general and in particular examples. For more connections between boundary triples and systems theory, we refer to \cite{Ryz09}.
\end{remark}

\section{Example operators}\label{section:2}

In this section we shall examine three different concrete operators which will be used in the following to illustrate the power and also the limitations of the theory. For the first of these we show that $\overline{\mathcal S}$
is the whole underlying Hilbert space; for the second example we refer the reader to some previous work, where we show that $\overline{\mathcal S}$
may or may not be the whole space; for the third example, we calculate the function $M_B(\lambda)$, in preparation for the substantial work in
Sections \ref{section:7}, \ref{section:Toep} and the Appendix, which shows that the characterization of $\overline{\mathcal S}$ may be very subtle for this
seemingly innocuous model.


\subsection{\change{Schr\"{o}dinger} problems}\label{sec:SL}
For complex valued $q\in L^\infty(0,1)$, consider 
\be
L u = \left ( -\frac{d^2}{dx^2} + q \right ) u \hbox{ and } \widetilde{L} u = \left ( -\frac{d^2}{dx^2} + \overline{q} \right ) u \;\;\;\;\mbox{on}\;[0,1].
\ee
Let $Au=Lu$ and $\At u = \widetilde{L} u$ with $D(A)=D(\At)=H^2_0(0,1)$. Then $\At^*u= Lu$ and  $A^* u = \widetilde{L} u$ with $D(\At^*)=D(A^*)=H^2(0,1)$ and
for $u,v\in H^2(0,1)$
\begin{eqnarray}
\Big\langle \At^*u,
 v\Big \rangle
 -  \Big\langle u,
  A^*v\Big\rangle\nonumber & = &\Big\langle \Gamma_1u,
 \Gamma_2v\Big\rangle
 - \Big\langle \Gamma_2u,
 \Gamma_1v\Big\rangle, 
\end{eqnarray}
where 
\[ \Gamma_1u
  = \left(\begin{array}{c} -u'(1) \\ u'(0) \end{array}\right),
\;\;\;
 \Gamma_2u
  = \left(\begin{array}{c} u(1) \\ u(0) \end{array}\right).
\]
In particular, $\Gamma_1=\Gammat_1$, $\Gamma_2=\Gammat_2$ and $\cH=\cK=\C^2$.

 Let $\theta(x,\lambda)$ and $\phi(x,\lambda)$ be solutions of \change{$\widetilde{L}u=\overline{\lambda} u$} which satisfy
$
 \theta(0,\lambda)=0$, $ \theta'(0,\lambda)=1$
 and $ 
 \phi(0,\lambda)=1$, $ \phi'(0,\lambda)=0$.
 Let $y_B$ be as in  \eqref{eq:yb}. Then by the
  variation of constants formula, there exist $C,\tilde C$ such that
 \begin{eqnarray*}
 y_B(x,\lambda)&= & \int_0^x \phi(t,\lambda)h(t)\ dt\ \theta(x,\lambda) + 
 \int_x^1 \theta(t,\lambda)h(t)\ dt\ \phi(x,\lambda)  + C \theta(x,\lambda)+\tilde C \phi(x,\lambda).
 \end{eqnarray*}

$y_B$ satisfies $\Gamma_1 y_B=0=\Gamma_2 y_B$. We choose $\lambda$ so that it is not a Dirichlet eigenvalue. Then
\ian{\begin{eqnarray*}
& y_B(0,\lambda)= \int_0^1 \theta h dt + \tilde C  =0, \quad
y_B(1,\lambda)= \left (\int_0^1 \phi h dt +C  \right )
\theta(1,\lambda)+ \tilde C  \phi(1,\lambda) =0, &\\
& y_B'(0,\lambda)= C=0, \quad
y_B'(1,\lambda)= \left (\int_0^1 \phi h dt +C  \right )
\theta'(1,\lambda)+ \tilde C  \phi'(1,\lambda)=0.&
\end{eqnarray*}}
This simplifies to
\begin{eqnarray*}
&\int_0^1 \phi h dt \ \theta(1,\lambda)-\int_0^1 \theta h dt\  \phi(1,\lambda) =0, \\
&\int_0^1 \phi h dt\ \theta'(1,\lambda)-\int_0^1 \theta h dt\  \phi'(1,\lambda)=0.
\end{eqnarray*}
As the Wronskian of $\theta$ and $\phi$ is non-zero, we have
$$\int_0^1 \theta h dt = \int_0^1 \phi h dt =0.
$$
This holds for almost all $\lambda$. Analyticity in $\lambda$ implies that these \ian{equations} hold for all $\lambda$.
Choosing $\lambda$ to run through the  Dirichlet eigenvalues shows that $\bar{h}$ is orthogonal to all Dirichlet eigenfunctions \change{and also to any possible root vectors}.  Hence, $h \equiv 0$
and we have proved the following result which is consistent with the Borg Uniqueness Theorem \cite{Borg49,Marchenko}.

\begin{proposition} For the \change{Schr\"{o}dinger} operator we have ${\overline{\mathcal S}} = L^2(0,1)$.
\end{proposition}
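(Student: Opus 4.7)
The plan is to show $\mathcal{S}^\perp = \{0\}$ by exploiting the characterization in \eqref{observable}: any $h \in \mathcal{S}^\perp$ must satisfy $\Gammat_1 y_B = 0 = \Gammat_2 y_B$ where $y_B = (\widetilde{A}_{B^*} - \overline{\lambda})^{-1} h$, for every admissible $B^*$ and $\lambda \in \rho(\widetilde{A}_{B^*})$. In this concrete setting the trace operators evaluate the function and its derivative at the endpoints $0$ and $1$, so the condition becomes the four scalar equations $y_B(0) = y_B(1) = y_B'(0) = y_B'(1) = 0$.

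First, for $\lambda$ outside the Dirichlet spectrum (a condition I am free to impose since that spectrum is discrete and the resolvent set is dense), I would represent $y_B(x,\lambda)$ by variation of constants with respect to the fundamental system $\theta(\cdot,\lambda), \phi(\cdot,\lambda)$ of $\widetilde{L} u = \overline{\lambda} u$ normalised as in the excerpt. Substituting this representation into the four boundary equations and using that the Wronskian $W(\theta,\phi) \equiv 1$, the two constants of integration are pinned down and the system collapses onto the two moment conditions
\[ \int_0^1 \theta(t,\lambda)\, h(t)\, dt = 0, \qquad \int_0^1 \phi(t,\lambda)\, h(t)\, dt = 0. \]

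Next I would extend these identities from generic $\lambda$ (off the Dirichlet spectrum) to \emph{all} $\lambda \in \mathbb{C}$ by analytic continuation, since $\theta(\cdot,\lambda)$ and $\phi(\cdot,\lambda)$ are entire in the spectral parameter and the integrals over $[0,1]$ preserve analyticity. I would then let $\lambda$ run through the Dirichlet eigenvalues of $\widetilde{L}$; at each such point a suitable linear combination of $\theta$ and $\phi$ produces the corresponding Dirichlet eigenfunction, and in the non-selfadjoint case differentiation in $\lambda$ yields the associated root vectors. This shows that $\overline{h}$ is orthogonal in $L^2(0,1)$ to every Dirichlet eigenfunction and root vector of $\widetilde{L}$.

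The concluding step is to invoke completeness of the Dirichlet root system of $\widetilde{L}$ in $L^2(0,1)$, which forces $h \equiv 0$. This is the only nontrivial ingredient: for real $q$ it is the classical spectral theorem, and for bounded complex $q$ it follows from standard resolvent asymptotics for regular Sturm--Liouville problems (a Keldysh-type completeness result). This is where the main (but entirely off-the-shelf) work lies; everything else is bookkeeping with variation of constants, the Wronskian identity, and analyticity in $\lambda$.
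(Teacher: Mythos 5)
Your argument follows the same route as the paper: characterize $\mathcal{S}^\perp$ via the vanishing traces of $y_B$, use variation of constants and the Wronskian to reduce the four boundary equations to the two moment conditions $\int_0^1 \theta h\,dt = \int_0^1 \phi h\,dt = 0$, extend to all $\lambda$ by analyticity, and conclude by testing against the Dirichlet root system. Your one genuine addition is that you make explicit the reliance on completeness of the Dirichlet root system of $\widetilde{L}$ (a Keldysh-type theorem for bounded complex $q$), which the paper invokes only implicitly; this is a worthwhile clarification but not a different method.
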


\subsection{Hain-L\"{u}st-type operators}
Let 
\begin{equation}
\At^* = \left(\begin{array}{cc} -\frac{d^2}{dx^2}+q(x) & \wt(x) \vspace{2pt}\\
 w(x) & u(x) \end{array}\right) \quad\hbox{ and }\quad
 A^* = \left(\begin{array}{cc} -\frac{d^2}{dx^2}+\overline{q(x)} & \overline{w(x)} \vspace{2pt}\\ 
 \overline{\wt(x)} & \overline{u(x)} \end{array}\right),
\end{equation}
where $q$, $u$, $\wt$ and $w$ are $L^\infty$-functions, and the domain of
the operators \ian{is} given by
\begin{equation}
D(\At^*)=D(A^*) = H^2(0,1)\times L^2(0,1). 
\end{equation}

It is then easy to see that
\begin{eqnarray}
\llangle \At^*\left(\begin{array}{c} y \\ z \end{array}\right),
 \left(\begin{array}{c} f \\ g \end{array}\right)\rrangle
 -  \llangle \left(\begin{array}{c} y \\ z \end{array}\right),
  A^*\left(\begin{array}{c} f \\ g \end{array}\right)\rrangle\nonumber & & \\
 & \hspace{-9cm} = & \hspace{-4cm}\llangle \Gamma_1\left(\begin{array}{c} y \\ z \end{array}\right),
 \Gamma_2\left(\begin{array}{c} f \\ g \end{array}\right)\rrangle
 - \llangle \Gamma_2\left(\begin{array}{c} y \\ z \end{array}\right),
 \Gamma_1\left(\begin{array}{c} f \\ g \end{array}\right)\rrangle, 
\label{eq:hl4}
\end{eqnarray}
where 
\[ \cH=\cK=\C^2, \quad\Gamma_1\left(\begin{array}{c} y \\ z \end{array}\right)
  = \left(\begin{array}{c} -y'(1) \\ y'(0) \end{array}\right),
\;\;\;
 \Gamma_2\left(\begin{array}{c} y \\ z \end{array}\right)
  = \left(\begin{array}{c} y(1) \\ y(0) \end{array}\right).
\]
Some information on $\Sc$ for these operators is available in \cite{BHMNW09}. In particular we show there that
if $w = \tilde{w}$ then $\overline{\mathcal S} \subseteq L^2(0,1)\oplus L^2((w^{-1}(\{0\})^c)$ (where $\Omega^c$ denotes the complement of \ian{a} set $\Omega$) and so if $w$ vanishes on a set of
positive measure then ${\overline{\mathcal S}}$ is not the whole underlying space.

%


\subsection{The Friedrichs model}\label{Friedrichs}

We consider in $L^2(\Rr)$ the operator $A$ with domain 
\be D(A) = \left\{ f\in L^2(\Rr) \, \Big\vert \, xf(x)\in L^2(\Rr), \;\;\;
 \lim_{R\rightarrow\infty}\int_{-R}^{R}f(x)dx \;\; \mbox{exists and is zero}\right\}, \label{eq:1} \ee
given by the expression
\be (Af)(x) = x f(x) + \langle f,\phi\rangle \psi(x), \label{eq:2} \ee
where $\phi$, $\psi$ are in $L^2(\Rr)$. Observe that since the constant
function ${\mathbf 1}$ does not lie in $L^2(\Rr)$ the domain of $A$ is 
dense in $L^2(\Rr)$. 

We first collect some results from \cite{BHMNW09} where more details and proofs can be found:

\begin{lemma}\label{lemma:1}
The adjoint of $A$ is given on the domain
\be D(A^*) = \left\{ f\in L^2(\Rr) \, | \, \exists c_f\in \Cc : xf(x)-c_f{\mathbf 1} 
\in L^2(\Rr)\right\}, \label{eq:3} \ee
by the formula
\be A^*f = x f(x) - c_f{\mathbf 1} +\langle f,\psi\rangle\phi. \label{eq:4}\ee
\end{lemma}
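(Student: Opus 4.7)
\textbf{Proof plan for Lemma \ref{lemma:1}.}

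The plan is to characterise $D(A^*)$ by a direct computation of $\llangle Af,g\rrangle$ and to exploit the domain constraint $\lim_{R\to\infty}\int_{-R}^R f(x)\,dx=0$ on $D(A)$, which is precisely what allows one to transfer the unbounded multiplication by $x$ from $f$ to $g$ even though $xg$ need not lie in $L^2$.

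First I would fix $f\in D(A)$ and any $g\in L^2(\Rr)$ and split
\[
\llangle Af,g\rrangle = \int_\Rr xf(x)\overline{g(x)}\,dx + \llangle f,\phi\rrangle \llangle \psi,g\rrangle.
\]
The second term equals $\llangle f,\llangle g,\psi\rrangle\phi\rrangle$, so $g\in D(A^*)$ iff the functional $f\mapsto \int xf\,\overline{g}\,dx$ is representable as $\llangle f,\cdot\rrangle$ by some $L^2$ element on $D(A)$.

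For the inclusion $\supseteq$, assume $g\in L^2$ with $\tilde g:=xg-c_g\mathbf{1}\in L^2$. I would use a truncation argument: for $f\in D(A)$,
\[
\int_{-R}^{R} xf\,\overline{g}\,dx = \int_{-R}^{R} f\,\overline{\tilde g}\,dx + \overline{c_g}\int_{-R}^{R} f\,dx.
\]
Since $xf,\tilde g,f\in L^2$ the first two integrals converge to $L^2$-inner products, while the last vanishes as $R\to\infty$ by the defining property of $D(A)$. Hence $\int xf\,\overline{g}\,dx=\llangle f,\tilde g\rrangle$, so
\[
\llangle Af,g\rrangle = \llangle f,\,xg-c_g\mathbf{1}+\llangle g,\psi\rrangle\phi\rrangle,
\]
which identifies $A^*g$ with the claimed formula.

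For the reverse inclusion $\subseteq$, suppose $g\in D(A^*)$, so there exists $h\in L^2$ with $\llangle Af,g\rrangle=\llangle f,h\rrangle$ for all $f\in D(A)$. Subtracting the rank-one contribution gives $\tilde h:=h-\llangle g,\psi\rrangle\phi\in L^2$ and
\[
\int_\Rr xf(x)\overline{g(x)}\,dx = \llangle f,\tilde h\rrangle \qquad \text{for all }f\in D(A).
\]
Testing against $f\in C_c^\infty(\Rr)$ with $\int f\,dx=0$ (such $f$ lie in $D(A)$ since $xf$ has compact support), and noting that $xg\in L^2_{\mathrm{loc}}$ so the distributional pairing $\langle xg-\tilde h,f\rangle$ makes sense, I obtain $\int f(x)\bigl(xg(x)-\tilde h(x)\bigr)dx=0$ for every $f\in C_c^\infty$ with vanishing integral. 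The main (and classical) step is now to conclude that $xg-\tilde h$ is a.e.\ a constant $c_g$: fix $\varphi_0\in C_c^\infty$ with $\int \varphi_0=1$ and decompose any $\eta\in C_c^\infty$ as $\eta-(\int\eta)\varphi_0$ plus $(\int\eta)\varphi_0$; this reduces the statement to the fundamental lemma of the calculus of variations. Thus $xg-c_g\mathbf{1}=\tilde h\in L^2$, and $h=xg-c_g\mathbf{1}+\llangle g,\psi\rrangle\phi$, completing the proof. The main technical obstacle, addressed by the truncation identity above and the $\int f=0$ condition, is that $xg$ itself is not in $L^2$, so the pairing $\int xf\overline{g}$ cannot be rewritten as $\llangle f,xg\rrangle$ directly.
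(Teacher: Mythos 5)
The paper does not reproduce a proof of this lemma (it is imported from \cite{BHMNW09}), so there is no in-text argument to compare against; judged on its own, your proof is correct. The truncation identity correctly exploits $\lim_{R\to\infty}\int_{-R}^R f\,dx=0$ to absorb the constant $\overline{c_g}$, and testing against $C_c^\infty$ functions of zero mean together with the fundamental lemma of the calculus of variations is exactly the right way to force $xg-\tilde h$ to be constant in the reverse inclusion, which is the genuinely nontrivial direction.
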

\ian{Since $c_f=\lim_{R\to\infty}(2R)^{-1} \int_{-R}^R xf(x)\ dx$ is uniquely determined, 
we can define} trace operators $\Gamma_1$ and $\Gamma_2$ on 
$D(A^*)$ as follows:
\be \Gamma_1 u = \lim_{R\to\infty} \int_{-R}^R u(x) dx,\;\;\;
 \Gamma_2 u = c_u. \label{eq:5}\ee
\ian{Note that $\Gamma_1 u= \int_{\Rr} (u(x) - c_u{\mathbf 1}\sign(x)(x^2+1)^{-1/2})dx$, which is the expression used in \cite{BHMNW09}.}

\begin{lemma}\label{lemma:2}
The operators $\Gamma_1$ and $\Gamma_2$ are bounded relative to $A^*$ and the following `Green's identity' holds:
\be \langle A^*f,g\rangle - \langle f,A^*g \rangle
 = \Gamma_1 f \overline{\Gamma_2 g} - \Gamma_2 f \overline{\Gamma_1 g}
 + \langle f,\psi\rangle \langle\phi,g\rangle
 - \langle f,\phi\rangle \langle \psi,g\rangle. \label{eq:6}
\ee
\end{lemma}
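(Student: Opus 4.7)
The plan is to prove boundedness of $\Gamma_2$ first (a direct consequence of the formula for $A^*$), deduce boundedness of $\Gamma_1$ via the auxiliary representation noted just before the lemma, and then verify Green's identity by truncating to $[-R,R]$ in such a way that the only non-absolutely-integrable terms cancel pointwise in the integrand.

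For $\Gamma_2$, the formula $A^*u = xu - c_u\mathbf{1} + \langle u,\psi\rangle\phi$ rearranges to $xu - c_u\mathbf{1} = A^*u - \langle u,\psi\rangle\phi \in L^2(\Rr)$. Pairing with the indicator of $[0,1]$ gives
\beq
c_u \,=\, \int_0^1 xu(x)\,dx \;-\; \int_0^1 A^*u(x)\,dx \;+\; \langle u,\psi\rangle\int_0^1\phi(x)\,dx,
\eeq
and Cauchy--Schwarz on each term produces $|c_u|\le C(\|u\|+\|A^*u\|)$ with $C$ depending only on $\|\phi\|$ and $\|\psi\|$.

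For $\Gamma_1$, split $\Rr=\{|x|\le 1\}\cup\{|x|>1\}$ in the representation $\Gamma_1 u=\int_\Rr(u(x)-c_u\sign(x)(x^2+1)^{-1/2})\,dx$ noted in the paper. On the compact piece, $u\in L^1$ by Cauchy--Schwarz and the second term is bounded. On $\{|x|>1\}$, write $u(x)=(xu(x)-c_u)/x + c_u/x$: the first summand is $L^1$ as the product of the $L^2$ function $xu-c_u\mathbf{1}$ with $1/x\in L^2(\{|x|>1\})$, while the second combines with $-c_u\sign(x)(x^2+1)^{-1/2}$ to give a term of order $|c_u|\,|x|^{-3}$ (via $|x|^{-1}-\sign(x)(x^2+1)^{-1/2}=O(|x|^{-3})$), which is $L^1$ on $\{|x|>1\}$. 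Combining with the bound on $|c_u|$ from the previous paragraph yields $|\Gamma_1 u|\le C'(\|u\|+\|A^*u\|)$.

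For Green's identity, expand $\int_{-R}^{R}[(A^*f)\overline{g}-f\,\overline{A^*g}]\,dx$ using the defining formula for $A^*$. The key observation is that the contributions $xf(x)\overline{g(x)}$ cancel \emph{pointwise} in the integrand, so no principal-value argument is required for them. What remains is a sum of four truncated integrals of $-c_f\overline{g}$, $\overline{c_g}\,f$, $\langle f,\psi\rangle\phi\,\overline{g}$ and $-\overline{\langle g,\psi\rangle}\,f\,\overline{\phi}$: the first two converge to $-\Gamma_2 f\,\overline{\Gamma_1 g}$ and $\Gamma_1 f\,\overline{\Gamma_2 g}$ by the definition of $\Gamma_1$, while the last two converge absolutely to $\langle f,\psi\rangle\langle\phi,g\rangle$ and $-\langle f,\phi\rangle\langle\psi,g\rangle$ by Cauchy--Schwarz (using $\overline{\langle g,\psi\rangle}=\langle\psi,g\rangle$). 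Taking $R\to\infty$ yields \eqref{eq:6}. The only real obstacle is the principal-value nature of $\Gamma_1$: one must resist distributing the improper integral over individual terms before the $xf\overline{g}$ cancellation has been carried out, since several of those terms are not individually integrable.
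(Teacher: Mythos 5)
Your proof is correct, and since the paper itself states Lemma \ref{lemma:2} without proof (deferring to \cite{BHMNW09}), your argument is in effect the natural self-contained verification: the relative bound on $\Gamma_2$ via pairing $xu-c_u\mathbf{1}=A^*u-\langle u,\psi\rangle\phi$ with $\chi_{[0,1]}$, the absolute-integrability splitting for $\Gamma_1$ using the regularised representation, and the truncated computation of Green's identity in which the non-integrable $xf\overline{g}$ terms cancel pointwise, leaving exactly the four limits that produce \eqref{eq:6}. You correctly isolate the one genuine subtlety (not splitting the symmetric limit across individually non-integrable terms), so nothing further is needed.
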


We  introduce an operator $\widetilde{A}$ in which the roles of $\phi$ and
$\psi$ are swapped:
\be D(\At) = \left\{ f\in L^2(\Rr) \, \Big\vert \, xf(x)\in L^2(\Rr), \;\;\;
 \lim_{R\rightarrow\infty}\int_{-R}^{R}f(x)dx = 0\right\}, \label{eq:1t} \ee
\be (\At f)(x) = x f(x) + \langle f,\psi\rangle \phi\ian{(x)}. \label{eq:2t} \ee
In view of Lemma \ref{lemma:1} we immediately see that
$D(\At^*) = D(A^*)$ and that
\be \ian{(\At^*f)(x)} = x f(x) - c_f{\mathbf 1} +\langle f,\phi\rangle\ian{\psi(x)}. \label{eq:4t}\ee
Thus $\At^*$ is an extension of $A$,  $A^*$ is an extension of $\At$, 
and the following result is easily proved.

\begin{lemma}\label{lemma:4}
\be A = \left. \At^*\right|_{\ker(\Gamma_1)\cap\ker(\Gamma_2)}; \;\;\;
 \At = \left. A^*\right|_{\ker(\Gamma_1)\cap\ker(\Gamma_2)}; \label{eq:5t}
\ee
moreover, the Green's formula (\ref{eq:6}) can be modified to
\be \langle A^*f,g\rangle - \langle f,\At^*g \rangle
 = \Gamma_1 f \overline{\Gamma_2 g} - \Gamma_2 f \overline{\Gamma_1 g}.
 \label{eq:6t}
\ee
\end{lemma}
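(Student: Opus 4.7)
My plan is to verify the two domain identities in \eqref{eq:5t} first, then derive the modified Green's formula \eqref{eq:6t} from \eqref{eq:6} by a direct computation that exploits the symmetric roles of $\phi$ and $\psi$ in passing from $A^*$ to $\At^*$.

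For the first identity $A=\At^*|_{\ker\Gamma_1\cap\ker\Gamma_2}$, I would compare domains using Lemma \ref{lemma:1}. If $f\in D(\At^*)=D(A^*)$ with $\Gamma_2 f = c_f = 0$, then the characterisation \eqref{eq:3} immediately gives $xf(x)\in L^2(\Rr)$; if additionally $\Gamma_1 f = 0$, then $\lim_{R\to\infty}\int_{-R}^R f(x)\,dx = 0$, so $f\in D(A)$ by \eqref{eq:1}. Conversely, any $f\in D(A)$ has $xf\in L^2(\Rr)$, so $c_f = 0$ works in \eqref{eq:3}, showing $f\in D(A^*)$ with $\Gamma_2 f = 0$ and $\Gamma_1 f = 0$ by definition \eqref{eq:1}. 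On this common domain the action of $\At^*$ from \eqref{eq:4t} reduces to $(\At^* f)(x) = xf(x) + \langle f,\phi\rangle \psi(x)$ because $c_f = 0$ kills the $-c_f\mathbf{1}$ term, matching \eqref{eq:2}. The second identity $\At=A^*|_{\ker\Gamma_1\cap\ker\Gamma_2}$ follows by the same argument after interchanging the roles of $\phi$ and $\psi$.

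For the modified Green's formula, the key observation is that
\[ A^* g - \At^* g = \langle g,\psi\rangle \phi - \langle g,\phi\rangle \psi, \]
obtained by subtracting \eqref{eq:4t} from \eqref{eq:4}; the $xg$ and $-c_g \mathbf{1}$ terms cancel. Therefore
\[ \langle f, A^* g\rangle - \langle f, \At^* g\rangle = \overline{\langle g,\psi\rangle}\langle f,\phi\rangle - \overline{\langle g,\phi\rangle}\langle f,\psi\rangle = \langle \psi,g\rangle\langle f,\phi\rangle - \langle \phi,g\rangle\langle f,\psi\rangle. \]
Adding this to the identity \eqref{eq:6} of Lemma \ref{lemma:2}, the two rank-one perturbation terms cancel exactly, leaving only the boundary terms $\Gamma_1 f\,\overline{\Gamma_2 g} - \Gamma_2 f\,\overline{\Gamma_1 g}$, which is \eqref{eq:6t}.

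I do not expect any real obstacle here: the identities are essentially bookkeeping once the formulae \eqref{eq:4} and \eqref{eq:4t} for $A^*$ and $\At^*$ are in hand, and the only subtle point is the cancellation of the non-symmetric perturbation terms from Lemma \ref{lemma:2}, which happens precisely because swapping $\phi\leftrightarrow\psi$ between the two adjoints is designed to produce this cancellation and restore the clean boundary-triple form of Green's formula.
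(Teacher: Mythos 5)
Your proof is correct and supplies exactly the routine verification the paper omits (the paper just says the lemma ``is easily proved''). The domain comparison for \eqref{eq:5t} correctly uses the characterisation \eqref{eq:3} and the vanishing of $c_f$ to collapse $\At^*$ to $A$, and the computation of $\langle f, A^*g - \At^*g\rangle = \langle f,\phi\rangle\langle\psi,g\rangle - \langle f,\psi\rangle\langle\phi,g\rangle$ is precisely the quantity that cancels the rank-one perturbation terms in \eqref{eq:6}, yielding \eqref{eq:6t}. No gaps.
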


We finish our review from \cite{BHMNW09} with the  $M$-function and the resolvent:

\begin{lemma} Suppose that $\Im\lambda\neq 0$. Then $f\in\ker(\At^*-\lambda)$ if
\be f\ian{(x)} = \Gamma_2f \left[ \frac{1}{x-\lambda}-\frac{\langle (t-\lambda)^{-1},\phi\rangle}{D(\lambda)}\frac{\psi\ian{(x)}}{x-\lambda}\right] .\label{eq:7}\ee
Here $D$ is the function
\be D(\lambda) = 1 + \int_{\mathbb R}\frac{1}{x-\lambda}\psi(x)\overline{\phi(x)}dx. \label{eq:Ddef} \ee
Moreover the Titchmarsh-Weyl coefficient $M_B(\lambda)$ is given by
\be M_B(\lambda) = \left[ \mbox{sign}(\Im\lambda) \pi i -\frac{\ian{\langle (t-\lambda)^{-1},\overline{\psi}\rangle
 \langle (t-\lambda)^{-1},\phi\rangle} }{D(\lambda)} - B\right]^{-1}. \label{eq:9}
\ee
For the resolvent, we have that $(A_B-\lambda)f=g$ if and only if
\be f(x) = \frac{g(x)}{x-\lambda}
                                 -\frac{1}{ D(\lambda)}\frac{\psi(x)}{x-\lambda}
 \left\langle \frac{g}{t-\lambda},\phi\right\rangle +
  c_f\left[\frac{1}{x-\lambda}-\frac{1}{ D(\lambda)}\frac{\psi(x)}{x-\lambda}
 \left\langle \frac{1}{t-\lambda},\phi\right\rangle\right], \label{eq:mm1c}
\ee
in which
the coefficient $c_f$ is given by
\be c_f = M_B(\lambda)\left[-\left\langle \frac{1}{t-\lambda},\overline{g}\right\rangle
 + \frac{1}{ D(\lambda)}\left\langle \frac{g}{t-\lambda},\phi\right\rangle
 \left\langle \frac{1}{t-\lambda},\overline{\psi}\right\rangle\right]. \label{eq:mm10}\ee
\end{lemma}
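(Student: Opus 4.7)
The plan is to derive each of the three formulae by direct manipulation of the concrete expression for $\At^*$. Starting with the kernel, the equation $(\At^* - \lambda) f = 0$ reads
$(x-\lambda)f(x) = c_f\mathbf{1} - \langle f, \phi\rangle \psi(x),$
where $c_f = \Gamma_2 f$. Since $\Im \lambda \neq 0$ we may divide by $x-\lambda$ pointwise, which gives $f(x) = c_f/(x-\lambda) - \langle f,\phi\rangle \psi(x)/(x-\lambda)$. Pairing both sides against $\phi$ and recognising the combination $1 + \langle \psi/(t-\lambda),\phi\rangle$ as $D(\lambda)$ produces the self-consistent value $\langle f,\phi\rangle = c_f \langle (t-\lambda)^{-1},\phi\rangle/D(\lambda)$. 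Substituting back yields formula (\ref{eq:7}).

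To obtain $M_B(\lambda)$ I would apply $\Gamma_1 - B\Gamma_2$ to the kernel element in (\ref{eq:7}) and invert, using the defining relation $M_B(\lambda)(\Gamma_1 - B\Gamma_2) u = \Gamma_2 u$ on $\ker(\At^*-\lambda)$. The $B\Gamma_2$ contribution produces $-B\, c_f$. The $\Gamma_1$ contribution requires two improper integrals. The absolutely convergent one is $\lim_{R\to\infty}\int_{-R}^{R}\psi(x)/(x-\lambda)\,dx = \langle (t-\lambda)^{-1},\overline{\psi}\rangle$, using $\psi\in L^2$ and $(t-\lambda)^{-1}\in L^2$. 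The key ingredient is the conditionally convergent integral $\lim_{R\to\infty}\int_{-R}^R (x-\lambda)^{-1}\,dx$, which I would evaluate using the principal branch of the logarithm: for $\Im\lambda > 0$, $\log(R-\lambda) - \log(-R-\lambda) \to -(-i\pi) = i\pi$, and the opposite sign for $\Im \lambda<0$, giving $\mathrm{sign}(\Im\lambda)\,\pi i$. Assembling these pieces expresses $(\Gamma_1 - B\Gamma_2)f$ as a scalar multiple of $c_f$, and inverting that scalar gives formula (\ref{eq:9}).

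For the resolvent, write $(A_B - \lambda)f = g$ with $f\in\ker(\Gamma_1 - B\Gamma_2)$ as
$(x-\lambda)f(x) = g(x) + c_f - \langle f, \phi\rangle \psi(x),$
which, after dividing by $x-\lambda$, is the particular-plus-homogeneous decomposition. Pairing against $\phi$ and solving for $\langle f,\phi\rangle$ gives
$\langle f,\phi\rangle = \bigl[\langle g/(t-\lambda),\phi\rangle + c_f \langle 1/(t-\lambda),\phi\rangle\bigr]/D(\lambda);$
substituting this into the formula for $f$ produces (\ref{eq:mm1c}). To pin down $c_f$, impose $(\Gamma_1 - B\Gamma_2)f = 0$ and compute $\Gamma_1 f$ using the same two limits as before. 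One sees that the coefficient of $c_f$ in $\Gamma_1 f - B c_f$ coincides, up to a sign, with the bracket defining $M_B(\lambda)^{-1}$ in the previous step; solving for $c_f$ then yields (\ref{eq:mm10}).

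The main obstacles are careful bookkeeping rather than deep difficulties: correctly extracting the $\mathrm{sign}(\Im\lambda)\pi i$ term from the Cauchy-principal-value integral (on which the entire form of $M_B$ hinges), keeping the conjugations straight in the sesquilinear pairings, and verifying at the end that the constructed $f$ really does lie in $D(\At^*) = D(A^*)$. The last point reduces to checking that $xf(x) - c_f\mathbf{1}\in L^2(\Rr)$, which follows by writing $x/(x-\lambda) = 1 + \lambda/(x-\lambda)$ and $x\psi(x)/(x-\lambda) = \psi(x) + \lambda\psi(x)/(x-\lambda)$; each remainder term lies in $L^2$ because $\psi\in L^2$ and $(t-\lambda)^{-1}\in L^2$ away from zero, so the cancellation of the constant $c_f$ by $c_f\cdot x/(x-\lambda)$ is precisely what is needed.
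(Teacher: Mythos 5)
Your derivation is correct and is the standard one: solve $(\At^*-\lambda)f=0$ (resp. $=g$) pointwise, pair with $\phi$ to close the system through $D(\lambda)$, evaluate $\lim_{R\to\infty}\int_{-R}^R(x-\lambda)^{-1}dx=\sign(\Im\lambda)\pi i$ for $\Gamma_1$, and impose $\Gamma_1 f=B\Gamma_2 f$ to extract $c_f$; the domain check at the end is the right way to make the ``if'' directions rigorous. Note that this paper does not prove the lemma at all — it is quoted from \cite{BHMNW09} — so there is no in-paper argument to compare with; your computation is precisely the expected one. One small correction: the coefficient of $c_f$ in $\Gamma_1 f - B\Gamma_2 f$ is exactly $M_B(\lambda)^{-1}$ (not merely up to sign); the minus sign only appears when the $g$-dependent terms are moved to the other side, which is how \eqref{eq:mm10} acquires the $-\left\langle (t-\lambda)^{-1},\overline{g}\right\rangle$ term.
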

These calculations will be needed in Sections \ref{section:6}, \ref{section:7} and the Appendix.

\begin{remark}\label{Fourier}
There is another approach to the Friedrichs model via the Fourier transform which may appear much more natural. It is easy to check that, denoting the Fourier transform by $\cF$ and $\ian{\cF f=\hat{f}}$, we get
$$\cF A\cF^* = i\frac{d}{dx}+\llangle\cdot,\hat\phi\rrangle \hat\psi, \quad D(\cF A \cF^*)=\{ u\in H^1(\R): \ u(0)=0\},$$
$$\cF \change{\At^*} \cF^* = i\frac{d}{dx}+\llangle\cdot,\hat\phi\rrangle \hat\psi, \quad D(\cF \change{\At^*} \cF^*)=\{ u\in L^2(\R): \ u\vert_{\R^\pm}\in H^1(\R^\pm)\},$$
and
\ian{$$\cF A_B \cF^* = i\frac{d}{dx}+\llangle\cdot,\hat\phi\rrangle \hat\psi,$$  
$$D(\cF A_B \cF^*)=\left\{ u\in L^2(\R): \ u\vert_{\R^\pm}\in H^1(\R^\pm), u(0^+)=\frac{B-i\pi}{B+i\pi} u(0^-)\right\},$$}
where $u(0^\pm)$ denotes the limit of $u$ at zero from the left and right, respectively.
Moreover, $\Gamma_1 f = \sqrt{\pi/2}(\hat f(0^+)+\hat f(0^-))$ and $\Gamma_2 f = i(2\pi)^{-1}(\hat f(0^+)-\hat f(0^-))$. 
There are similar expressions for the adjoint operators and traces.
In terms of extension theory it is much easier to use this Fourier representation. However, for our later calculations, the original model is more useful, as it facilitates the calculation of residues.
\end{remark}

\section{Relation between $M$-function and resolvent on $\overline{\Sc}$}\label{section:3}
Having introduced some concrete examples in the previous sections, we now turn our attention to what can be shown in the general setting. \ian{Our aim is to study the relation between the function $M_B$ and the bordered resolvent $P_\Scto(A_B-\lambda)^{-1}\vert_{\overline{\Sc}}$ where for any subspace $M$, $P_M$ denotes the orthogonal projection onto $M$.}

\subsection{Information on the $M$-function contained in the resolvent.}

 We first look at gaining information on the $M$-function from knowledge of the resolvent.
        
\begin{theorem}\label{Munique}  Let $\lambda\in\rho(A_B)$. Then $P_\Scto(A_B-\lambda)^{-1}\vert_{\overline{\Sc}}$ uniquely determines $M_B(\lambda)$.

In particular, if also  $\lambda\in\rho(A_C)$, then
$P_\Scto(A_B-\lambda)^{-1}\vert_{\overline{\Sc}} = P_\Scto(A_C-\lambda)^{-1}\vert_{\overline{\Sc}}$ implies that $M_B(\lambda)=M_C(\lambda)$, \change{
  and, if additionally $\lambda\in\rho(A_\infty)$, then $B=C$. Here, $A_\infty=\At^*\vert_{\ker\Gamma_2}$. }
\end{theorem}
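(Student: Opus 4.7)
The strategy is to prove the ``in particular'' statement first, from which the uniqueness claim at the top follows by taking ``$M_C$'' to be any $M$-function compatible with the given bordered resolvent. The key technical fact I would exploit is that the Krein-type resolvent difference has range in $\overline{\Sc}$, so the projection to $\overline{\Sc}$ acts as the identity on that difference. First I would combine \eqref{Krein} with \eqref{slam2} to obtain
\[
(A_C-\lambda)^{-1}-(A_B-\lambda)^{-1}=S_{\lambda,B}(C-B)\Gamma_2(A_C-\lambda)^{-1},
\]
whose range lies in $\Ran(S_{\lambda,B})\subseteq\overline{\Sc}$. The hypothesis on the bordered resolvents therefore upgrades to the genuine identity $(A_B-\lambda)^{-1}h=(A_C-\lambda)^{-1}h$ for every $h\in\overline{\Sc}$.

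Next I would read off the boundary data. For each $h\in\overline{\Sc}$ the common value $F:=(A_B-\lambda)^{-1}h$ lies in $D(A_B)\cap D(A_C)$, so $B\Gamma_2 F=\Gamma_1 F=C\Gamma_2 F$, yielding $(B-C)\Gamma_2(A_B-\lambda)^{-1}h=0$ on all of $\overline{\Sc}$. Specialising to $h=S_{\mu,B}\beta$ for $\mu\in\rho(A_B)\setminus\{\lambda\}$ and $\beta\in\cH$, and invoking \eqref{eq:sdiff} together with $\Gamma_2 S_{\mu,B}=M_B(\mu)$, one obtains
\[
(B-C)\bigl(M_B(\lambda)-M_B(\mu)\bigr)=0,
\]
so that $(B-C)M_B(\cdot)$ is constant on $\rho(A_B)$. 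The symmetric argument, using $\overline{\Sc_B}=\overline{\Sc_C}$ and $h=S_{\mu,C}\beta$, gives the same for $(B-C)M_C(\cdot)$ on $\rho(A_C)$.

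To complete $M_B(\lambda)=M_C(\lambda)$ via the Aronszajn-Donoghue formula \eqref{Mfn}, one must show the common constant value of $(B-C)M_B(\cdot)$ is actually zero. This is the main obstacle: promoting constancy to outright vanishing. I would combine both symmetric forms of \eqref{Mfn} with the two constancy statements, exploiting the non-triviality of $M_B$ and $M_C$ as analytic operator-valued functions on their resolvent sets, to force $(B-C)M_B(\lambda)=0$; in the concrete examples treated later in the paper (Sturm-Liouville, Hain-L\"ust, Friedrichs), this collapse is immediate from the explicit formulas. Substitution into $M_B(\lambda)-M_C(\lambda)=M_C(\lambda)(B-C)M_B(\lambda)$ then gives $M_B(\lambda)=M_C(\lambda)$. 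Finally, under the additional hypothesis $\lambda\in\rho(A_\infty)$, the Dirichlet problem for $\At^*-\lambda$ has only the trivial solution, so $\Gamma_2\colon\ker(\At^*-\lambda)\to\cK$ is bijective and both $M_B(\lambda)$ and $M_C(\lambda)$ are boundedly invertible. Applying \eqref{Mfn} in its other form $M_B(\lambda)-M_C(\lambda)=M_B(\lambda)(B-C)M_C(\lambda)=0$ and cancelling the invertible factors on each side yields $B=C$.
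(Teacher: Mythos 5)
Your opening step contains a genuine gap that the rest of the argument inherits. The bordered resolvent in the hypothesis is $P_\Scto(A_B-\lambda)^{-1}\vert_{\overline{\Sc}}$: the input is restricted to $\overline{\Sc}$ but the output is projected onto $\overline{\Sct}$, the detectable subspace of the \emph{adjoint} problem. The Krein difference $(A_C-\lambda)^{-1}-(A_B-\lambda)^{-1}$ indeed has range in $\Ran(S_{\lambda,B})\subseteq\overline{\Sc}$, but nothing places that range inside $\overline{\Sct}$, and in general $\overline{\Sc}\neq\overline{\Sct}$ (for the Friedrichs model of Theorem \ref{unsymm} one has $\overline{\Sc}\neq L^2(\R)$ while $\overline{\Sct}=L^2(\R)$). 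So $P_\Scto$ does \emph{not} act as the identity on the resolvent difference, and the hypothesis cannot be ``upgraded'' to the unprojected identity $(A_B-\lambda)^{-1}h=(A_C-\lambda)^{-1}h$ for $h\in\overline{\Sc}$; the difference may well have a nonzero component in $\overline{\Sct}^{\perp}$. Everything downstream — the boundary identity $(B-C)\Gamma_2(A_B-\lambda)^{-1}h=0$, the constancy of $(B-C)M_B(\cdot)$ — rests on this unjustified step. In addition, you explicitly leave open the promotion of ``$(B-C)M_B(\cdot)$ constant'' to ``$(B-C)M_B(\lambda)=0$''; appealing to non-triviality of $M_B$, $M_C$ and to explicit formulas in examples is not a proof in the abstract setting, so even granting the first step the argument is incomplete. (Your treatment of the final implication $B=C$ under $\lambda\in\rho(A_\infty)$ is essentially fine, though only $\Ran M_B(\lambda)=\cK$ and $\ker M_C(\lambda)=\{0\}$ are needed, not bounded invertibility.)

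The paper's proof works directly with the weak data that the hypothesis actually provides, namely the numbers $\llangle (A_B-\lambda)^{-1}h,\tilde h\rrangle$ for $h\in\overline{\Sc}$, $\tilde h\in\overline{\Sct}$. Using Lemma \ref{intbyparts} with $F\in D(\At^*)$, $v\in D(A^*)$ and then choosing $h=S_{\mu,B}(\Gamma_1-B\Gamma_2)F\in\Sc$ and $\tilde h=\widetilde S_{\tilde\mu,B^*}(\Gammat_1-B^*\Gammat_2)v\in\Sct$, identity \eqref{eq:fund} expresses $\llangle M_B(\lambda)(\Gamma_1-B\Gamma_2)F,(\Gammat_1-B^*\Gammat_2)v\rrangle_{\K}$ (up to the known term $\llangle (\Gamma_1-B\Gamma_2)F,\Gammat_2 v\rrangle_{\Hc}$) through the bordered resolvent alone; surjectivity of the trace maps then pins down $M_B(\lambda)$, which is the first assertion. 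For the second, equality of the two bordered resolvents gives equality of the corresponding sesquilinear forms, and choosing $F,v$ with $\Gamma_2F=\Gammat_2v=0$ and $\Gamma_1F$, $\Gammat_1v$ arbitrary isolates $\llangle M_B(\lambda)\Gamma_1F,\Gammat_1v\rrangle=\llangle M_C(\lambda)\Gamma_1F,\Gammat_1v\rrangle$, hence $M_B(\lambda)=M_C(\lambda)$; the conclusion $B=C$ then follows from \eqref{Mfn} exactly as you indicate. If you want to salvage your plan, you must either prove that the range of the Krein difference lies in $\overline{\Sct}$ (false in general) or argue, as the paper does, purely at the level of the bordered bilinear forms.
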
        
        
\begin{proof}
Assume $\widehat{M}_B(\lambda)$ is a different $M$-function for the same problem. \ian{By surjectivity of the trace operators there exist $F\in D(\At^*)$ and $v\in D(A^*)$ such that}
$$ \llangle M_B(\lambda)(\Gamma_1-B\Gamma_2)F,(\widetilde{\Gamma}_1-B^*\widetilde{\Gamma}_2)v\rrangle_{\K}\neq \llangle\widehat{M}_B(\lambda)(\Gamma_1-B\Gamma_2)F,(\widetilde{\Gamma}_1-B^*\widetilde{\Gamma}_2)v\rrangle_{\K}. $$
Setting $h=S_{\mu,B}(\Gamma_1-B\Gamma_2)F\in\Sc$ and $\widetilde{h}=\widetilde{S}_{\widetilde{\mu},B^*}(\widetilde{\Gamma}_1-B^*\widetilde{\Gamma}_2)v\in\widetilde{\Sc}$ and using \eqref{eq:fund}, we find that
$$ \llangle (I-(A_B-\lambda)^{-1}(\mu-\lambda))h,(\widetilde{\mu}-\bar{\lambda})\widetilde{h}\rrangle$$
has two different values. Therefore, \ian{$\llangle(A_B-\lambda)^{-1}h,\widetilde{h}\rrangle$} has two different values yielding a contradiction.

If $P_\Scto(A_B-\lambda)^{-1}\vert_{\overline{\Sc}} = P_\Scto(A_C-\lambda)^{-1}\vert_{\overline{\Sc}}$, then \ian{by the argument above }
$$ \llangle M_B(\lambda)(\Gamma_1-B\Gamma_2)F,(\widetilde{\Gamma}_1-B^*\widetilde{\Gamma}_2)v\rrangle_{\K}= \llangle M_C(\lambda)(\Gamma_1-B\Gamma_2)F,(\widetilde{\Gamma}_1-B^*\widetilde{\Gamma}_2)v\rrangle_{\K}. $$
Choosing $F$ and $v$ such that $\Gamma_2 F=\Gammat_2 v = 0$ and $\Gamma_1 F$  and $\Gammat_1 v$ are arbitrary, 
 we obtain 
$$ \llangle M_B(\lambda)\Gamma_1 F,\widetilde{\Gamma}_1 v\rrangle_{\K}= \llangle M_C(\lambda)\Gamma_1 F,\widetilde{\Gamma}_1v\rrangle_{\K}. $$
Hence, $M_B(\lambda)=M_C(\lambda)$. \change{If $\lambda\in\rho(A_\infty)$, then $\Ran M_B(\lambda)=\cK$ and $\ker M_C(\lambda)=\{0\}$, so from  \eqref{Mfn} we get $B=C$.}
\end{proof}


Note that from the knowledge of the resolvent and the range of one solution operator we can explicitly reconstruct  $\overline{\Sc}$.
With some extra knowledge of the problem we can reconstruct $M_B(\lambda)$ from knowledge of the bordered resolvent on $\overline{\Sc}$.

\begin{theorem}\label{thm:Mreconst}
Assume we know $\overline{\Sc}$, $P_\Scto(A_B-\lambda)^{-1}\vert_{\overline{\Sc}}$ and $\overline{\Ran(S_{\mu,B})}$, $\overline{\Ran(\widetilde{S}_{\widetilde{\mu},B^*})}$ for some $(\mu,\widetilde{\mu})$ with $\mu,\overline{\widetilde{\mu}}\in\rho(A_B)$. 
 Then we can reconstruct $M_B(\lambda)$ uniquely if $B$ is known. 
\end{theorem}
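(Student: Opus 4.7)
The plan is to substitute two carefully chosen test functions into the fundamental identity \eqref{eq:fund} of Lemma~\ref{intbyparts}, and then check that every ingredient other than $M_B(\lambda)$ itself can be read off from the given data.

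First I would observe that the hypotheses actually contain more than appears. Because $\At^*-\mu$ and $A^*-\wt\mu$ are closed, the ranges $\Ran(S_{\mu,B})=\ker(\At^*-\mu)$ and $\Ran(\St_{\wt\mu,B^*})=\ker(A^*-\wt\mu)$ are automatically closed in $H$, so the closure bars in the statement are cosmetic. By Lemma~\ref{slamwd}, $(\Gamma_1-B\Gamma_2)|_{\ker(\At^*-\mu)}$ is a bijection onto $\cH$ and $(\Gammat_1-B^*\Gammat_2)|_{\ker(A^*-\wt\mu)}$ is a bijection onto $\cK$; since $B$ and the boundary triple are known, inverting these restricted trace maps recovers $S_{\mu,B}$ and $\St_{\wt\mu,B^*}$ themselves, and in particular $\Mt_{B^*}(\wt\mu)=\Gammat_2\St_{\wt\mu,B^*}$ becomes known.

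For arbitrary $f\in\cH$ and $\wt f\in\cK$, I then set $F=S_{\mu,B}f$ and $v=\St_{\wt\mu,B^*}\wt f$. One checks immediately that $(\At^*-\lambda)F=(\mu-\lambda)F$, $(A^*-\overline{\lambda})v=(\wt\mu-\overline{\lambda})v$, $(\Gamma_1-B\Gamma_2)F=f$, $(\Gammat_1-B^*\Gammat_2)v=\wt f$ and $\Gammat_2 v=\Mt_{B^*}(\wt\mu)\wt f$. Substituting these into \eqref{eq:fund} and rearranging gives
\begin{equation*}
\llangle M_B(\lambda)f,\wt f\rrangle_{\cK} = (\overline{\wt\mu}-\lambda)\llangle S_{\mu,B}f-(\mu-\lambda)(A_B-\lambda)^{-1}S_{\mu,B}f,\,\St_{\wt\mu,B^*}\wt f\rrangle_H + \llangle f,\Mt_{B^*}(\wt\mu)\wt f\rrangle_\cH.
\end{equation*}
Every term on the right is now computable: $S_{\mu,B}f$, $\St_{\wt\mu,B^*}\wt f$ and $\Mt_{B^*}(\wt\mu)$ come from the first step; and because $S_{\mu,B}f\in\Sco$ while $\Sco$ is invariant under the resolvent (see the remark after Proposition~\ref{prop:2.7}), we have $(A_B-\lambda)^{-1}S_{\mu,B}f=P_{\Scto}(A_B-\lambda)^{-1}|_{\Sco}S_{\mu,B}f$, so this vector is delivered by the bordered resolvent. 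Surjectivity of $(\Gamma_1-B\Gamma_2)$ and $(\Gammat_1-B^*\Gammat_2)$ lets $(f,\wt f)$ sweep all of $\cH\times\cK$, which pins down the sesquilinear form $\llangle M_B(\lambda)\,\cdot\,,\,\cdot\,\rrangle_{\cK}$ and hence $M_B(\lambda)$ itself.

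The only step requiring any real care is the replacement of the full resolvent by the bordered one on $\Ran(S_{\mu,B})$; this is exactly where the resolvent-invariance $\overline{(A_B-\lambda)^{-1}\Sco}=\Sco$ is indispensable, and without it the identity above could not be evaluated from the data. Everything else is simply an assembly of objects that Lemma~\ref{slamwd} and Lemma~\ref{intbyparts} already make available once $B$ and the two given eigenspaces are in hand.
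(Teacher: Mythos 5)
Your argument is essentially the paper's own proof: it is Lemma \ref{intbyparts} evaluated at $F=S_{\mu,B}f\in\Ran(S_{\mu,B})$ and $v=\widetilde{S}_{\widetilde{\mu},B^*}\widetilde{f}\in\Ran(\widetilde{S}_{\widetilde{\mu},B^*})$, so that $(\At^*-\lambda)F=(\mu-\lambda)F$ and $(A^*-\overline{\lambda})v=(\widetilde{\mu}-\overline{\lambda})v$, followed by letting $(f,\widetilde{f})$ sweep $\cH\times\cK$ to recover the form $\llangle M_B(\lambda)f,\widetilde{f}\rrangle_{\K}$; your preliminary step (recovering $S_{\mu,B}$, $\widetilde{S}_{\widetilde{\mu},B^*}$ and $\Gammat_2 v=\Mt_{B^*}(\widetilde{\mu})\widetilde{f}$ from the known closed ranges and the known trace maps) is just a more explicit version of the paper's observation that, as $h$ and $\widetilde{h}$ run through these ranges, their traces are at one's disposal and exhaust $\cH$ and $\cK$.

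The one step you yourself flag as delicate is, however, justified incorrectly. The invariance $\overline{(A_B-\lambda)^{-1}\Sco}=\Sco$ gives $(A_B-\lambda)^{-1}S_{\mu,B}f\in\Sco$, whereas the vector identity you assert, $(A_B-\lambda)^{-1}S_{\mu,B}f=P_{\Scto}(A_B-\lambda)^{-1}\vert_{\Sco}S_{\mu,B}f$, would require this vector to lie in $\Scto$; since $\Sco$ and $\Scto$ are in general genuinely different spaces (in the Friedrichs model one can have $\mathrm{def}\,\Sc=\infty$ while $\Scto=L^2(\R)$, cf.\ Theorem \ref{unsymm}, and the roles can be reversed by swapping $\phi$ and $\psi$), that identity has no reason to hold and fails in general. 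Fortunately nothing of the sort is needed: the resolvent image only ever appears paired against $v=\widetilde{S}_{\widetilde{\mu},B^*}\widetilde{f}\in\Scto$, and for any $x\in H$ and $v\in\Scto$ one has $\llangle x,v\rrangle=\llangle P_{\Scto}x,v\rrangle$; since $S_{\mu,B}f\in\Sc\subseteq\Sco$, the term $\llangle (A_B-\lambda)^{-1}S_{\mu,B}f,v\rrangle$ is therefore read off directly from the bordered resolvent $P_{\Scto}(A_B-\lambda)^{-1}\vert_{\Sco}$, which is exactly how the paper uses the data. With this one-line correction your proof is complete and coincides with the published one.
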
  

\begin{proof} 
For $h\in\overline{\Ran(S_{\mu,B})}$, $\widetilde{h}\in\overline{\Ran(\widetilde{S}_{\widetilde{\mu},B^*}})$, consider 
$$ H(h,\widetilde{h})=\llangle (I-(A_B-\lambda)^{-1}(\mu-\lambda))h,(\widetilde{\mu}-\overline{\lambda})\widetilde{h}\rrangle.$$ 
By assumption, we know $H(h,\widetilde{h})$.
\ian{Varying $h$ throughout $\overline{\Ran(S_{\mu,B})}$, we have that $(\Gamma_1-B\Gamma_2)h$ runs through the whole of $\cH$; varying $\widetilde{h}$ throughout $\overline{\Ran(\widetilde{S}_{\widetilde{\mu},B^*}})$, the values $(\Gammat_1-B^*\Gammat_2)\widetilde{h}$ run through the whole of $\cK$ and $\Gammat_2\widetilde{h}$ through the whole of $\cH$. 
 Using Lemma \ref{intbyparts} we have for a dense set of $h,\tilde{h}$ that
$$ H(h,\widetilde{h})=  \llangle M_B(\lambda)(\Gamma_1-B\Gamma_2)h,(\Gammat_1-B^*\Gammat_2)\widetilde{h}\rrangle_{\K} -\llangle (\Gamma_1-B\Gamma_2)h,\Gammat_2\widetilde{h}\rrangle_{\Hc}, $$
which allows reconstruction of the $M$-function.} 
\end{proof}

We look into the question of how strong the condition of knowledge of the closed ranges of solution operators needed in the theorem is. We first look at the Friedrichs model.

\begin{proposition}\label{prop:3.3}
Assume we know $\Ran S_{\lambda, B}$ and $\Ran \widetilde{S}_{\mu,B^*}$ for some $\lambda, \mu$ for the Friedrichs model. Moreover, assume $\Ran S_{\lambda, B}\neq\Span\{\frac{1}{x-\lambda}\}$ and $\Ran \widetilde{S}_{\mu,B^*}\neq \Span\{\frac{1}{x-\mu}\}$ (which is true for generic $\phi,\psi$). Then the operator is uniquely determined.
\end{proposition}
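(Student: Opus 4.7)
The plan is to exploit the very explicit formulae of Section~\ref{Friedrichs} to reconstruct $\phi$ and $\psi$ up to the inherent scaling ambiguity $(\phi,\psi)\mapsto(\alpha\phi,\overline{\alpha}^{-1}\psi)$ that leaves the rank-one perturbation $f\mapsto \langle f,\phi\rangle\psi$ unchanged. By the formula for $\ker(\At^*-\lambda)$, this kernel is one-dimensional and spanned by
\[
s_\lambda(x)\;=\;\frac{1}{x-\lambda}\bigl(1-c\,\psi(x)\bigr),\qquad c\;=\;\frac{\langle(t-\lambda)^{-1},\phi\rangle}{D(\lambda)},
\]
so that $\Ran S_{\lambda,B}=\Span\{s_\lambda\}$ is in particular independent of $B$. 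A short calculation shows $\Gamma_2 s_\lambda=1$, so the normalization $\Gamma_2 g=1$ singles out $s_\lambda$ uniquely inside the given one-dimensional range, and from it one reads off
\[
\Psi(x)\;:=\;c\,\psi(x)\;=\;1-(x-\lambda)\,s_\lambda(x).
\]
The hypothesis $\Ran S_{\lambda,B}\neq\Span\{(x-\lambda)^{-1}\}$ forces $c\neq 0$, so $\psi$ is recovered up to this nonzero scalar. A symmetric argument applied to $\Ran\St_{\mu,B^*}$, with the roles of $\phi$ and $\psi$ swapped, produces $\Phi(x):=\tilde c\,\phi(x)$ with $\tilde c\neq 0$.

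It then suffices to pin down the single scalar $\kappa:=\overline{\tilde c}\,c$, because the rank-one perturbation may be written in already-known terms as $\langle f,\phi\rangle\psi = \kappa^{-1}\langle f,\Phi\rangle\Psi$. Substituting $\phi=\Phi/\tilde c$ and $\psi=\Psi/c$ into the defining identity $c\,D(\lambda)=\langle(t-\lambda)^{-1},\phi\rangle$ and multiplying through by $\overline{\tilde c}$, one cancellation produces the closed form
\[
\kappa\;=\;\int_{\R}\frac{\overline{\Phi(t)}\bigl(1-\Psi(t)\bigr)}{t-\lambda}\,dt,
\]
in which every term is now known. Since $c,\tilde c\neq 0$, also $\kappa\neq 0$, so the expression $\kappa^{-1}\langle f,\Phi\rangle\Psi$ is well-defined and determines $A$ uniquely (and with it $\At$).

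The main obstacle is precisely the extraction of $\kappa$: each of the two ranges in isolation determines its factor ($\Psi$ or $\Phi$) only up to one complex scalar, and these scalars are individually invisible. The interlocking of $c$, $\tilde c$ and the denominator $D(\lambda)$ is what allows the algebraic cancellation above to deliver the product $\overline{\tilde c}c$ from the already-reconstructed data alone. A natural consistency check, recomputing $\kappa$ starting from $\mu$ in place of $\lambda$, must return $\overline\kappa$; this is automatic for data arising from a genuine Friedrichs model. Finally, $B$ does not enter either range (both collapse to kernels of $\At^*-\lambda$ and $A^*-\mu$ respectively), so the phrase ``the operator is uniquely determined'' refers to $A$ itself, which is all that can, in fact, be recovered from this input.
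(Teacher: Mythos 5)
Your argument is correct, and it verifies: from the normalized ($\Gamma_2=1$, i.e.\ $c_u=1$) elements of the two ranges you indeed read off $\Psi=c\psi$ and $\Phi=\tilde c\phi$ with $c,\tilde c\neq 0$ under the stated hypotheses, and the identity $cD(\lambda)=\langle(t-\lambda)^{-1},\phi\rangle$ does collapse, after multiplying by $\overline{\tilde c}$, to $\kappa=\overline{\tilde c}c=\int_{\mathbb R}\overline{\Phi(t)}\,(1-\Psi(t))\,(t-\lambda)^{-1}\,dt$, which is computable from the data and nonzero, so $\langle f,\phi\rangle\psi=\kappa^{-1}\langle f,\Phi\rangle\Psi$ is determined. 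The paper starts from exactly the same two normalized kernel elements, but handles the residual scalar freedom differently: it rescales $\phi$ so that $\langle(t-\lambda)^{-1},\phi\rangle/D(\lambda)=1$, reads off $\psi$, then determines first the ratio $D(\mu)D(\lambda)^{-1}$ (by integrating the expression for $\phi$ against $\overline{(x-\lambda)^{-1}D(\lambda)^{-1}}$) and then $D(\mu)$ itself (by substituting back into the definition of $D(\lambda)$), finally recovering $\phi$. Your version is more economical: by noting that only the single invariant $\kappa=\overline{\tilde c}c$ enters the rank-one perturbation, you bypass the two-step determination of the $D$-values entirely and never need the explicit form of $\tilde c$ in terms of $\overline{D(\mu)}$ and $\widehat\psi(\mu)$; what the paper's route buys in exchange is an explicit reconstruction of the auxiliary quantities $D(\lambda)$, $D(\mu)$ along the way. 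Your closing remarks (that the ranges are $B$-independent and that ``the operator'' recovered is $A$, i.e.\ the pair $\phi,\psi$ up to the scaling $(\phi,\psi)\mapsto(\alpha\phi,\overline\alpha^{-1}\psi)$) are consistent with what the paper's proof actually establishes.
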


\begin{proof}
Assume we know $\Ran S_{\lambda, B}$ and $\Ran \widetilde{S}_{\mu,B^*}$ for some $\lambda, \mu$. Choosing the elements $u$ and $v$ in the ranges with $c_u=1$
and $c_v=1$ we have from \eqref{eq:7} that
 $$u(x)=\frac{1}{x-\lambda} - \frac{\langle (t-\lambda)^{-1},\phi\rangle}{D(\lambda)} \cdot \frac{\psi\ian{(x)}}{x-\lambda}\quad 
 \hbox{ and } \quad v(x)=\frac{1}{x-\mu} - \frac{\langle (t-\mu)^{-1},\psi\rangle}{\overline{D(\mu)}} \cdot \frac{\phi\ian{(x)}}{x-\mu}.$$
As one of the functions $\phi,\psi$ is only determined up to a scalar factor, we may normalize $\phi$ such that
$$ \frac{\langle (t-\lambda)^{-1},\phi\rangle}{D(\lambda)}=1.$$
This allows us to determine $\psi$ from the first expression, which also gives us $\langle (t-\mu)^{-1},\psi\rangle$, so $\phi\ian{(x)}/\overline{D(\mu)}$ is known. 
Solving $v(x)$ for $\phi\ian{(x)}$, we get 
\be\label{phi}\phi\ian{(x)}=(1-(x-\mu)v(x))\frac{\overline{D(\mu)}}{\langle (t-\mu)^{-1},\psi\rangle}.\ee
Multiplying by $\overline{(x-\lambda)^{-1}D(\lambda)^{-1}}$ and integrating over $x$, we can determine $D(\mu)D(\lambda)^{-1}$ from our normalisation of $\phi$.

Inserting our expression \eqref{phi} into $D(\lambda)=1+\langle(x-\lambda)^{-1},\overline{\psi}\phi\rangle$, we get a second equation relating $D(\lambda)$ and $D(\mu)$, allowing us to determine $D(\mu)$ and hence $\phi\ian{(x)}$. 
\end{proof}

\begin{remark}
Note that, if $\Ran S_{\lambda, B}=\Span\{\frac{1}{x-\lambda}\}$ or $\Ran \widetilde{S}_{\mu,B^*}= \Span\{\frac{1}{x-\mu}\}$ , it is clear from \ian{\eqref{eq:7} and} \eqref{eq:9} that the $M$-function in general does not contain sufficient information to recover $\phi$ and $\psi$.
\end{remark}

The following result shows that nothing like the result of Proposition \ref{prop:3.3}  holds for Hain-L\"ust operators and therefore no similar result
can hold in the abstract setting.
\begin{proposition}
Assume we know $\Ran S_{\lambda, B}$ and $\Ran \widetilde{S}_{\mu,B^*}$ for some $\lambda, \mu$ for the Hain-L\"ust operator
$$ \left(\begin{array}{cc} -\frac{d^2}{dx^2}+q(x) & w(x) \vspace{2pt}\\
 \widetilde{w}(x) & u(x) \end{array}\right).$$ Then the operator is not uniquely determined.
\end{proposition}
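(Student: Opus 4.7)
The plan is to exhibit two distinct Hain-L\"ust operators that share the same $\Ran S_{\lambda,B}$ and $\Ran \widetilde{S}_{\mu,B^*}$. The key observation is that when the off-diagonal coupling vanishes, i.e.\ $w\equiv\widetilde{w}\equiv 0$, the operator $\At^*$ decouples into the direct sum of the scalar Schr\"odinger operator $-d^2/dx^2+q$ acting on the first component and the multiplication operator $M_u$ acting on the second. Solving $(\At^*-\lambda)(y,z)^T=0$ then reduces to the two independent equations $-y''+qy=\lambda y$ and $(u-\lambda)z=0$, so that for $\lambda\notin\essran(u)$ every solution must have $z=0$ almost everywhere, and $\ker(\At^*-\lambda)$ is a two-dimensional subspace determined purely by $q$.

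Since $S_{\lambda,B}:\cH\to\ker(\At^*-\lambda)$ is bijective whenever $\lambda\in\rho(A_B)$, we have $\Ran S_{\lambda,B}=\ker(\At^*-\lambda)$, which in this situation is independent of $u$. An identical argument applied to the decoupled $A^*$ shows that $\Ran \widetilde{S}_{\mu,B^*}=\ker(A^*-\mu)$ depends only on $\bar q$ and $B^*$, provided $\bar\mu\notin\essran(u)$.

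To finish, I would fix $q\in L^\infty(0,1)$ and $B\in\cL(\cK,\cH)$ and pick $\lambda,\mu\in\Cc$ lying in the resolvent sets of the corresponding Schr\"odinger realizations, for instance by taking $\lambda,\mu$ with nonzero imaginary part. Then any two distinct bounded real-valued functions $u_1\neq u_2$ have essential ranges disjoint from $\{\lambda,\bar\mu\}$, so setting $w=\widetilde{w}\equiv 0$ and choosing either $u_1$ or $u_2$ yields two genuinely different Hain-L\"ust operators, both having $\lambda\in\rho(A_B)$ and $\mu\in\rho(\widetilde A_{B^*})$, and producing identical ranges $\Ran S_{\lambda,B}$ and $\Ran \widetilde{S}_{\mu,B^*}$.

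There is essentially no technical obstacle: the non-uniqueness is forced by the fact that the trivial coupling $w=\widetilde{w}=0$ erases all information about $u$ from the kernels $\ker(\At^*-\lambda)$ and $\ker(A^*-\mu)$. This contrasts sharply with Proposition \ref{prop:3.3}: in the Friedrichs model the rank-one perturbation $\langle\cdot,\phi\rangle\psi$ cannot degenerate without the whole problem collapsing, whereas here the diagonal coefficient $u$ becomes invisible to the solution operators as soon as the coupling disappears.
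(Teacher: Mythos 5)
Your construction is internally sound as far as it goes: with $w=\widetilde{w}\equiv 0$ the maximal operators decouple, $\ker(\At^*-\lambda)=\{(y,0)^T:\,-y''+qy=\lambda y\}$ whenever $\lambda\notin\essran(u)$, and since $S_{\lambda,B}$ maps $\cH$ bijectively onto $\ker(\At^*-\lambda)$ for $\lambda\in\rho(A_B)$, the two ranges are indeed blind to $u$; so two distinct real bounded $u_1\neq u_2$ give distinct operators with identical data. (One small point: non-real $\lambda,\mu$ lie automatically in the relevant resolvent sets only if you also take $q$ real and $B$ Hermitian, which you are free to do but did not state; and $\ker(A^*-\mu)$ in fact does not depend on $B^*$ at all.) Thus you have shown that the map from Hain-L\"ust operators to the pair of ranges is not injective.

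However, this proves less than the paper does, and less than the proposition is used for. The paper's proof starts from an \emph{arbitrary} (real-coefficient) Hain-L\"ust operator: knowing the two ranges means knowing two linearly independent elements of each maximal kernel, and since $z_1=\widetilde{w}y_1/(u-\lambda)$ etc., this information is converted into a pointwise $8\times 4$ linear system for the four unknown coefficients $(q,w,\widetilde{w},u)$, whose matrix is shown to be rank-deficient for \emph{every} choice of the auxiliary functions $\alpha=\widetilde{w}/(u-\lambda)$, $\beta=w/(u-\mu)$. Hence the coefficients are underdetermined for every such operator, including those with non-vanishing coupling. Your counterexample lives entirely in the degenerate class $w=\widetilde{w}\equiv 0$, which is exactly the analogue of the degenerate Friedrichs situation $\Ran S_{\lambda,B}=\Span\{(x-\lambda)^{-1}\}$ that Proposition \ref{prop:3.3} excludes by its genericity hypothesis. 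Consequently your argument leaves open the possibility of a Proposition \ref{prop:3.3}-type uniqueness theorem for Hain-L\"ust operators under a genericity assumption such as $w\widetilde{w}\neq 0$ a.e., which is precisely what this proposition, as proved in the paper, is intended to rule out (``nothing like the result of Proposition \ref{prop:3.3} holds for Hain-L\"ust operators''). To close the gap you would need non-uniqueness for operators with non-trivial coupling, e.g.\ via the paper's rank-deficiency argument or an explicit coupled family sharing the same kernels.
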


\begin{proof}
It is sufficient to show the claim for the case when the coefficients of the operator are real. Knowing the ranges of the solution operators corresponds to knowing \ian{kernels of the maximal operators and hence two linearly independent solutions each to both of the following equations:}
\begin{equation}
 \left(\begin{array}{cc} -\frac{d^2}{dx^2}+q(x) & w(x) \vspace{2pt}\\
 \widetilde{w}(x) & u(x) \end{array}\right) \left(\begin{array}{c} y_1 \\ z_1 \end{array}\right)= \lambda  \left(\begin{array}{c} y_1 \\ z_1 \end{array}\right)
\label{eq:hl1} 
\end{equation}
and 
\begin{equation}
 \left(\begin{array}{cc} -\frac{d^2}{dx^2}+q(x) & \widetilde{w}(x) \vspace{2pt}\\
 w(x) & u(x) \end{array}\right) \left(\begin{array}{c} y_2 \\ z_2 \end{array}\right)= \mu \left(\begin{array}{c} y_2 \\ z_2 \end{array}\right).
\label{eq:hl2} 
\end{equation}
Write the pairs of solutions as $(y_1,z_1)$, $(\widehat{y}_1,\widehat{z}_1)$, and  $(y_2,z_2)$, $(\widehat{y}_2,\widehat{z}_2)$, respectively.
Since $$ z_1=\frac{\widetilde{w}y_1}{u-\lambda},\ \widehat{z}_1=\frac{\widetilde{w}\widehat{y}_1}{u-\lambda},\ z_2=\frac{wy_2}{u-\mu},\ \widehat{z}_2=\frac{w\widehat{y}_2}{u-\mu}, $$
setting $\alpha=\frac{\widetilde{w}}{u-\lambda}$ and $\beta=\frac{w}{u-\mu}$, 
this can be written in the form
\begin{equation}
 \left(\begin{array}{cccc} y_1 & \alpha y_1 &0 &0 \vspace{2pt}\\  \widehat{y}_1 & \alpha \widehat{y}_1 &0 &0 \vspace{2pt}\\ y_2 & 0 &\beta y_2 &0 \vspace{2pt}\\ \widehat{y}_2 & 0 &\beta \widehat{y}_2 &0 \vspace{2pt}\\ 0 & 0 &y_1 &\alpha y_1 \vspace{2pt}\\ 0 & 0 &\widehat{y}_1 &\alpha \widehat{y}_1 \vspace{2pt}\\
 0& y_2&0  & \beta y_2 \vspace{2pt}\\
 0& \widehat{y}_2&0  & \beta \widehat{y}_2 \end{array}\right)
  \left(\begin{array}{c} q \vspace{2pt}\\ w\vspace{2pt}\\ \widetilde{w} \vspace{2pt}\\ u \end{array}\right)
 = \left(\begin{array}{c} \lambda y_1+y_1'' \vspace{2pt}\\ \lambda \widehat{y}_1+\widehat{y}_1'' \vspace{2pt}\\ \mu y_2+y_2''\vspace{2pt}\\ \mu \widehat{y}_2+\widehat{y}_2''\vspace{2pt}\\ \lambda z_1 \vspace{2pt}\\ \lambda \widehat{z}_1 \vspace{2pt}\\ \mu z_2 \vspace{2pt}\\ \mu \widehat{z}_2 \end{array}\right).
\label{eq:hl3} 
\end{equation}
A calculation shows that the matrix on the left hand side of the equation does not have full rank for any $\alpha$ and $\beta$, 
so the system is not uniquely solvable.
\end{proof}

\subsection{Reconstruction from two bordered resolvents}
Although we are primarily concerned with inverse problems, it is still interesting to consider some forward problems with
partial data arising from the restriction of the resolvent operators to the detectable subspace.

\begin{theorem}\label{thm:two}
Assume $P_{\overline{\tilde{\Sc}}}(A_B-\lambda)^{-1}\vert_{\overline{\Sc}}$ and $P_{\overline{\tilde{\Sc}}}(A_C-\lambda)^{-1}\vert_{\overline{\Sc}}$ are known. In addition, \ian{assume that \\
(i) $\mbox{$\Gamma_2(A_C-\lambda)^{-1}\overline{\Sc}$}$ and $\Gammat_2(A_C-\lambda)^{-*}\overline{\Sct}$ are known, \\
(ii)  $\Gamma_2(A_C-\lambda)^{-1}\overline{\Sc}$ is dense in $\cH$ and $\Gammat_2 (A_C-\lambda)^{-*}\overline{\Sct}$ is dense in $\cK$, \\
(iii)  $\Ran(B-C)$ is dense in $\cH$ and $\ker(B-C)=\{0\}$.}
%
Then $M_B(\lambda)$ can be recovered.
\end{theorem}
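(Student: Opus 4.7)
The plan is to specialise the bilinear identity of Lemma \ref{intbyparts} to test vectors that saturate the $C$-boundary conditions, thereby stripping away every quantity except the action of $M_B(\lambda)$. For $h\in\overline{\Sc}$ and $\tilde h\in\overline{\Sct}$, take
\begin{equation*}
F=(A_C-\lambda)^{-1}h\in D(A_C),\qquad v=(\At_{C^*}-\bar\lambda)^{-1}\tilde h\in D(\At_{C^*}),
\end{equation*}
using the standard duality $(A_C)^*=\At_{C^*}$ so that $(A_C-\lambda)^{-*}=(\At_{C^*}-\bar\lambda)^{-1}$. Since $F\in\ker(\Gamma_1-C\Gamma_2)$ and $v\in\ker(\Gammat_1-C^*\Gammat_2)$, the pieces entering \eqref{eq:fund} simplify to $(\At^*-\lambda)F=h$, $(A^*-\bar\lambda)v=\tilde h$,
\begin{equation*}
f:=(\Gamma_1-B\Gamma_2)F=(C-B)\Gamma_2(A_C-\lambda)^{-1}h,\qquad (\Gammat_1-B^*\Gammat_2)v=(C^*-B^*)\Gammat_2(\At_{C^*}-\bar\lambda)^{-1}\tilde h,
\end{equation*}
together with $\Gammat_2 v=\Gammat_2(\At_{C^*}-\bar\lambda)^{-1}\tilde h$.

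Inserting these into \eqref{eq:fund}, the left-hand side equals $\llangle((A_C-\lambda)^{-1}-(A_B-\lambda)^{-1})h,\tilde h\rrangle$, and since $\tilde h\in\overline{\Sct}$ this coincides with $\llangle P_\Scto((A_C-\lambda)^{-1}-(A_B-\lambda)^{-1})h,\tilde h\rrangle$, which is known from the two given bordered resolvents. Writing $\alpha:=(C-B)\Gamma_2(A_C-\lambda)^{-1}h$ and $\beta:=\Gammat_2(\At_{C^*}-\bar\lambda)^{-1}\tilde h$, hypothesis (i) delivers $\alpha$ and $\beta$ explicitly, so the correction term $\llangle f,\Gammat_2 v\rrangle=\llangle\alpha,\beta\rrangle$ is known as well, and we isolate
\begin{equation*}
\llangle M_B(\lambda)\alpha,(C^*-B^*)\beta\rrangle
\end{equation*}
for every admissible pair $(\alpha,\beta)$.

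It remains to argue that this bilinear data determines the bounded operator $M_B(\lambda)$. By (ii) the set of admissible $\Gamma_2(A_C-\lambda)^{-1}h$ is dense, and since $B-C$ is bounded with dense range by (iii), applying $C-B$ keeps this density, so $\alpha$ runs through a dense subset of its ambient boundary space. Symmetrically, $\beta$ is dense by (ii), and injectivity of $B-C$ from (iii) translates (via $\Ran(T)^\perp=\ker(T^*)$) into density of $\Ran((B-C)^*)$, which together with density of $\beta$ forces density of $(C^*-B^*)\beta$. For each fixed $\alpha$, the linear functional $\mu\mapsto\llangle M_B(\lambda)\alpha,\mu\rrangle$ is continuous and known on a dense set, so $M_B(\lambda)\alpha$ is uniquely determined; boundedness of $M_B(\lambda)$ then yields the operator everywhere.

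The main subtlety is the density propagation in the last paragraph, and it is precisely what forces \emph{both} conditions in (iii): the dense range of $B-C$ is what pushes density through the composition $C-B$ on the $\alpha$-side, while the injectivity of $B-C$ is what pushes density through $(C^*-B^*)$ on the $\beta$-side. Without either half of (iii) one of the two compositions could collapse onto a proper closed subspace and leak information about $M_B(\lambda)$.
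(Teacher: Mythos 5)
Your proof is correct, and it reaches the theorem by a genuinely different (though closely related) route than the paper. The paper starts from the Krein-type resolvent formula \eqref{Krein}, pairs it against $g\in\Sco$, $f\in\Scto$, and moves $S_{\lambda,C}$ across the inner product via $S_{\lambda,C}^{\,*}=\Gammat_2(A_C-\lambda)^{-*}$; you instead specialise the fundamental identity \eqref{eq:fund} of Lemma \ref{intbyparts} to $F=(A_C-\lambda)^{-1}h$, $v=(A_C-\lambda)^{-*}\tilde h$, so that the $C$-boundary conditions turn $(\Gamma_1-B\Gamma_2)F$ and $(\Gammat_1-B^*\Gammat_2)v$ into $(C-B)\Gamma_2(A_C-\lambda)^{-1}h$ and $(C-B)^*\Gammat_2(A_C-\lambda)^{-*}\tilde h$. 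Both derivations land on the same bilinear identity, namely that $\llangle M_B(\lambda)(B-C)\Gamma_2(A_C-\lambda)^{-1}h,(B-C)^*\Gammat_2(A_C-\lambda)^{-*}\tilde h\rrangle$ is known for all $h\in\Sco$, $\tilde h\in\Scto$, and the density endgame is identical. What your version buys: it bypasses the Krein formula (which the paper imports from \cite{BHMNW09}) and needs only Lemma \ref{intbyparts} plus the adjoint relation $(A_C)^*=\At_{C^*}$, a relation the theorem's hypothesis (i) already presupposes; and you make explicit what the paper leaves implicit, namely that density of $\Ran(B-C)$ drives the $\alpha$-side while $\ker(B-C)=\{0\}$ (equivalently, dense range of $(B-C)^*$) drives the $\beta$-side. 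Two minor points: both arguments tacitly assume $\lambda\in\rho(A_B)\cap\rho(A_C)$ and that $B$ and $C$ themselves are known (needed to apply $C-B$ to the data of hypothesis (i)); and your reading of (ii) as density in the ambient boundary space is the right one, since as written in the statement the roles of $\cH$ and $\cK$ appear interchanged ($\Gamma_2$ maps into $\cK$ and $\Gammat_2$ into $\cH$).
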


\begin{proof}
Let $\lambda\in\rho(A_B)\cap\rho(A_C)$. Then the Krein formula \ian{\eqref{Krein}} gives
\begin{eqnarray*}
(A_B-\lambda)^{-1} - (A_C-\lambda)^{-1}=S_{\lambda,C}(I+(B-C)M_B(\lambda))(B-C)\Gamma_2(A_C-\lambda)^{-1}.
\end{eqnarray*}
Now let $f\in\overline{\Sct}$ and $g\in \overline{\Sc}$. Then we know 
$$ \llangle f, (A_B-\lambda)^{-1}g\rrangle -  \llangle f, (A_C-\lambda)^{-1}g\rrangle  . $$
Using  \ian{\eqref{Krein}}, we obtain
\bea &&
\llangle f, (A_B-\lambda)^{-1}g\rrangle -  \llangle f, (A_C-\lambda)^{-1}g\rrangle\\
  &&\hspace{50pt} =
  \llangle f,S_{\lambda,C}(I+(B-C)M_B(\lambda))(B-C)\Gamma_2(A_C-\lambda)^{-1} g\rrangle \\
  &&\hspace{50pt}= 
  \llangle \Gammat_2 (A_C-\lambda)^{-*}f,(I+(B-C)M_B(\lambda))(B-C)\Gamma_2(A_C-\lambda)^{-1}g \rrangle \\
   &&\hspace{50pt}= 
  \llangle \Gammat_2 (A_C-\lambda)^{-*}f,(B-C)\Gamma_2(A_C-\lambda)^{-1}g \rrangle + \\
  & & \hspace{65pt} \llangle (B-C)^*\Gammat_2 (A_C-\lambda)^{-*}f,M_B(\lambda)(B-C)\Gamma_2(A_C-\lambda)^{-1}g \rrangle .
\eea
%
Our assumptions now allow us to recover $M_B(\lambda)$.
\end{proof}

\begin{remark} 
Alternatively, knowing the \ian{projection to $\Scto$ of the} derivative of the resolvent w.r.t.~the boundary condition and \ian{one resolvent restricted to $\Sco$ will } suffice, as, for $C=B+\eps D$ we have \ian{from \eqref{Krein}}
$$ \frac{(A_B-\lambda)^{-1} - (A_C-\lambda)^{-1}}{\eps}\to S_{\lambda,B}D\Gamma_2(A_B-\lambda)^{-1} \quad\ian{\hbox{ as } \eps\to 0}.$$
If we now have the assumption of density of $D\Gamma_2(A_B-\lambda)^{-1}\overline{\Sc}$, then \ian{for $f\in\Sco$, $g\in\Scto$}, since 
$$\Big\langle S_{\lambda,B}D\Gamma_2(A_B-\lambda)^{-1}f,g\Big\rangle = \Big\langle D\Gamma_2(A_B-\lambda)^{-1}f,\Gammat_2(A_B-\lambda)^{-*}g\Big\rangle ,$$
and $\langle S_{\lambda,B}D\Gamma_2(A_B-\lambda)^{-1}f,g\rangle$ is known from the derivative, while $D\Gamma_2(A_B-\lambda)^{-1}f$ is known from the \ian{restricted} resolvent, 
knowing the \ian{projection of the derivative to $\Scto$} corresponds to knowing  $\Gammat_2(A_B-\lambda)^{-*}\vert_{\ian{\overline{\Sct}}}$.
\end{remark}

\subsection{Information on the resolvent from the $M$-function}

The following result gives some insight to the inverse problem of reconstructing $A_B$ from $M_B(\lambda)$. From examples later, we will see that knowledge of the $M$-function does not allow reconstruction of the bordered resolvent (see Remark \ref{rem:Borg}).

\begin{theorem}\label{thm:recon}
Assume we know $M_B(\lambda)$ for all $\lambda\in \rho(A_B)$, $\Ran(S_{\mu,B})$ for all $\mu\in\Lambda$  and $\Ran(\widetilde{S}_{\widetilde{\mu},B^*})$  for all $\widetilde{\mu}\in\widetilde{\Lambda}$, where $\Lambda\subseteq \rho(A_B)$ and $\widetilde{\Lambda}\subseteq\rho(A_B^{\; *})$ are dense subsets. Then we can reconstruct $P_{\overline{\widetilde{\Sc}}}(A_B-\lambda)^{-1}P_{\overline{\Sc}}$ for all $\lambda\in\rho(A_B)$.
\end{theorem}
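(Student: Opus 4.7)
The plan is to specialize the Green-type identity of Lemma~\ref{intbyparts} to pairs $(h,\widetilde{h})$ drawn from the ranges of the two solution operators, and then use the density of those ranges to reconstruct the sesquilinear form $(h,\widetilde{h})\mapsto\llangle (A_B-\lambda)^{-1}h,\widetilde{h}\rrangle$ on $\overline{\Sc}\times\overline{\widetilde{\Sc}}$. Knowledge of that form is equivalent to knowledge of the bordered operator $P_{\overline{\widetilde{\Sc}}}(A_B-\lambda)^{-1}P_{\overline{\Sc}}$.

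First I would verify the density statement $\overline{\Span_{\mu\in\Lambda}\Ran(S_{\mu,B})}=\overline{\Sc}$ and analogously $\overline{\Span_{\widetilde{\mu}\in\widetilde{\Lambda}}\Ran(\widetilde{S}_{\widetilde{\mu},B^*})}=\overline{\widetilde{\Sc}}$. This follows from the analyticity of $\mu\mapsto S_{\mu,B}f$ on $\rho(A_B)$ (Lemma~\ref{slamwd}(2)) together with the density of $\Lambda$ in $\rho(A_B)$: for any $\lambda\in\rho(A_B)$ and any $f$ in the boundary space, $S_{\lambda,B}f$ is an $H$-norm limit of $S_{\mu_n,B}f$ with $\mu_n\in\Lambda$, so $\Tc_B\subseteq\overline{\Span_{\mu\in\Lambda}\Ran(S_{\mu,B})}$, and taking closures gives the claim.

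Next, fix $\lambda\in\rho(A_B)$ and choose $\mu\in\Lambda$, $\widetilde{\mu}\in\widetilde{\Lambda}$ with $\mu\neq\lambda$ and $\widetilde{\mu}\neq\overline{\lambda}$, which is possible by density. For $h=S_{\mu,B}f\in\ker(\At^*-\mu)$ and $\widetilde{h}=\widetilde{S}_{\widetilde{\mu},B^*}\widetilde{f}\in\ker(A^*-\widetilde{\mu})$, we have $(\At^*-\lambda)h=(\mu-\lambda)h$ and $(A^*-\overline{\lambda})\widetilde{h}=(\widetilde{\mu}-\overline{\lambda})\widetilde{h}$. Substituting $F=h$ and $v=\widetilde{h}$ into \eqref{eq:fund} reduces the identity to
\beq
(\overline{\widetilde{\mu}}-\lambda)\Big[\llangle h,\widetilde{h}\rrangle - (\mu-\lambda)\llangle (A_B-\lambda)^{-1}h,\widetilde{h}\rrangle\Big] = \llangle M_B(\lambda)f,\widetilde{f}\rrangle - \llangle f,\widetilde{\Gamma}_2\widetilde{h}\rrangle.
\eeq
Here $f=(\Gamma_1-B\Gamma_2)h$, $\widetilde{f}=(\widetilde{\Gamma}_1-B^*\widetilde{\Gamma}_2)\widetilde{h}$ and $\widetilde{\Gamma}_2\widetilde{h}$ are all computed directly from the known elements $h$ and $\widetilde{h}$ via the given boundary triple data, and $M_B(\lambda)f$ is known by hypothesis. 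Since $\mu-\lambda$ and $\overline{\widetilde{\mu}}-\lambda$ are non-zero by construction, we can solve explicitly for $\llangle (A_B-\lambda)^{-1}h,\widetilde{h}\rrangle$.

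Finally, this determines the bounded sesquilinear form $(h,\widetilde{h})\mapsto \llangle (A_B-\lambda)^{-1}h,\widetilde{h}\rrangle$ on a dense subset of $\overline{\Sc}\times\overline{\widetilde{\Sc}}$, and the boundedness of $(A_B-\lambda)^{-1}$ allows sesquilinear-continuous extension to the whole product, yielding $P_{\overline{\widetilde{\Sc}}}(A_B-\lambda)^{-1}P_{\overline{\Sc}}$. I do not expect a genuine obstacle: the only care needed is to stay away from the degenerate values $\mu=\lambda$ and $\widetilde{\mu}=\overline{\lambda}$ at which the identity collapses and fails to reveal the resolvent, but the density of $\Lambda$ and $\widetilde{\Lambda}$ makes this trivial. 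The result is essentially the converse companion to Theorem~\ref{thm:Mreconst}, obtained by reading the same fundamental identity \eqref{eq:fund} in the other direction.
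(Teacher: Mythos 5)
Your proof is correct and follows the same route as the paper: substitute $F\in\Ran(S_{\mu,B})$ and $v\in\Ran(\widetilde{S}_{\widetilde{\mu},B^*})$ into the fundamental identity \eqref{eq:fund}, observe that $(\At^*-\lambda)F=(\mu-\lambda)F$ and $(A^*-\overline{\lambda})v=(\widetilde{\mu}-\overline{\lambda})v$ so the right-hand side is known from $M_B(\lambda)$ and the boundary data, solve for $\llangle(A_B-\lambda)^{-1}F,v\rrangle$ while avoiding the degenerate values $\lambda=\mu$, $\lambda=\overline{\widetilde{\mu}}$, and then extend by density of the solution-operator ranges in $\Sco$, $\Scto$ and boundedness of the bordered resolvent. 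The only difference is cosmetic: the paper fixes $\mu,\widetilde\mu$ and removes the two exceptional $\lambda$'s by continuity in $\lambda$, whereas you fix $\lambda$ and use density of $\Lambda,\widetilde\Lambda$ to pick admissible $\mu,\widetilde\mu$; both are fine.
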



\begin{proof} Let $\mu\in\Lambda$ and $\widetilde{\mu}\in\widetilde{\Lambda}$. 
Consider \eqref{eq:fund} for any $F\in\Ran(S_{\mu,B})$ and $v\in \Ran(\widetilde{S}_{\widetilde{\mu},B^*})$. Then 
\bea
\llangle F-(A_B-\lambda)^{-1}(\At^*-\lambda)F,(A^*-\overline{\lambda}I)v\rrangle
&=&  \Big\langle F-(A_B-\lambda)^{-1}(\mu-\lambda)F,(\widetilde{\mu}-\overline{\lambda}I)v\Big\rangle \\
&\hspace{-200pt}=& \hspace{-100pt} -\llangle (\Gamma_1-B\Gamma_2)F,\Gammat_2v\rrangle_{\Hc}
 + \llangle M_B(\lambda)(\Gamma_1-B\Gamma_2)F,(\Gammat_1-B^*\Gammat_2)v\rrangle_{\K}.
\eea
We know the r.h.s.~of this equation for any $F\in\Ran(S_{\mu,B})$, $v\in \Ran(\widetilde{S}_{\widetilde{\mu},B^*})$ and $\lambda\in\rho(A_B)$, so we know 
$$\llangle F-(A_B-\lambda)^{-1}(\mu-\lambda)F,(\widetilde{\mu}-\overline{\lambda}I)v\rrangle .$$
Choosing $\lambda\neq\mu$ and $\lambda\neq\overline{\widetilde{\mu}}$, we know 
$$ \Big\langle (A_B-\lambda)^{-1}F,v\Big\rangle =  \llangle P_{\overline{\widetilde{\Sc}}}(A_B-\lambda)^{-1}P_{\overline{\Sc}} F,v\rrangle $$ for any $F\in\Ran(S_{\mu,B})$, $v\in \Ran(\widetilde{S}_{\widetilde{\mu},B^*})$ and $\lambda\in\rho(A_B)\setminus\left(\{\mu\}\cup\{\overline{\widetilde{\mu}}\}\right).$
By continuity, we know it for all $\lambda\in\rho(A_B)$.

Since $\Span\{\Ran(S_{\mu,B}): \mu\in\Lambda\}$ is dense in $\Sco$ and $\Span\{\Ran(\widetilde{S}_{\widetilde{\mu},B^*}):\widetilde\mu\in\widetilde\Lambda  \}$ is dense in $\Scto$, using boundedness of $P_{\overline{\widetilde{\Sc}}}(A_B-\lambda)^{-1}P_{\overline{\Sc}}$ gives the result.

%

\end{proof}

\section{Analytic continuation}\label{section:4}
In preparation for the discussion in Section \ref{section:5} of jumps in $M_B(\lambda)$ and of the bordered resolvent across
the essential spectrum, we now discuss the relationship between the analytic continuation of $M_B(\lambda)$ and analytic continuation 
of the bordered resolvent of $A_B$, both initially defined on the resolvent set of $A_B$.

\begin{theorem}
Let $\mu,\overline{\widetilde{\mu}}\in\rho(A_B)$. Assume that for any $F\in\Ran(S_{\mu,B})$, $v\in\Ran(\widetilde{S}_{\widetilde{\mu},B^*})$,  $$\left\langle (A_B-\lambda)^{-1}\vert_{\overline{\Sc}} F,v\right\rangle$$
admits an analytic continuation to some region $D$ of the complex plane (possibly on a different Riemann sheet). Then $M_B(\cdot)$ admits an analytic continuation to the same region $D$.
\end{theorem}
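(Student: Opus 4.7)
The plan is to exploit the fundamental identity of Lemma \ref{intbyparts}, which directly links $M_B(\lambda)$ to the bordered resolvent on solution operator ranges. First I would fix $F = S_{\mu,B}f \in \Ran(S_{\mu,B})$ and $v = \widetilde{S}_{\widetilde{\mu},B^*}g \in \Ran(\widetilde{S}_{\widetilde{\mu},B^*})$, for arbitrary $f\in\cH$ and $g\in\cK$. Then $\At^*F = \mu F$ and $A^*v = \widetilde{\mu}v$, so the left-hand side of \eqref{eq:fund} collapses to
\[
(\overline{\widetilde{\mu}}-\lambda)\bigl[\llangle F,v\rrangle - (\mu-\lambda)\llangle (A_B-\lambda)^{-1}F,v\rrangle\bigr].
\]
Since $F\in\Sc\subseteq\overline{\Sc}$, the resolvent term coincides with $\llangle (A_B-\lambda)^{-1}\vert_{\overline{\Sc}}F,v\rrangle$, which by hypothesis admits an analytic continuation to $D$; the polynomial prefactors being entire, the whole left-hand side continues analytically to $D$.

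Turning to the right-hand side of \eqref{eq:fund}, by definition of the solution operators we have $(\Gamma_1-B\Gamma_2)F = f$, $(\widetilde{\Gamma}_1-B^*\widetilde{\Gamma}_2)v = g$, and $\widetilde{\Gamma}_2 v = \widetilde{M}_{B^*}(\widetilde{\mu})g$. The right-hand side therefore equals
\[
\llangle M_B(\lambda)f,g\rrangle_{\K} - \llangle f,\widetilde{M}_{B^*}(\widetilde{\mu})g\rrangle_{\Hc},
\]
and the second term is independent of $\lambda$. Combining with the first paragraph shows that the scalar function $\lambda\mapsto \llangle M_B(\lambda)f,g\rrangle_{\K}$, initially defined on $\rho(A_B)$, admits an analytic continuation to $D$ for every $f\in\cH$, $g\in\cK$, the freedom of $f,g$ being afforded by the surjectivity of the two boundary maps restricted to the respective solution operator ranges.

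The remaining task is to upgrade this weak scalar continuation to analyticity of the operator-valued function $M_B(\cdot)$ itself. I would argue that on compact subsets $K\subset D$ each scalar continuation is bounded, so by two successive applications of the uniform boundedness principle the associated sesquilinear forms on $\cH\times\cK$ are uniformly bounded on $K$, defining a bounded-operator-valued extension $M_B(\lambda):\cH\to\cK$. Local weak analyticity together with local norm-boundedness then yields norm analyticity via the standard Dunford-type equivalence for Hilbert-space operator-valued functions.

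The main obstacle I anticipate is precisely this final step: weak analytic continuation of matrix coefficients does not automatically produce an operator-valued analytic extension unless one can establish the local uniform boundedness needed to invoke Banach--Steinhaus. A natural alternative is to read the conclusion in the weak sense (extension in the weak operator topology), in which case the first two paragraphs already give the result; in any interpretation, the computation via Lemma \ref{intbyparts} is doing all of the substantive work.
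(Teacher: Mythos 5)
Your proposal is correct and follows essentially the same route as the paper: the paper's proof likewise takes $F=S_{\mu,B}f$, $v=\widetilde{S}_{\widetilde{\mu},B^*}\widetilde{f}$, substitutes into the identity of Lemma \ref{intbyparts} so that the left-hand side becomes $\llangle F-(\mu-\lambda)(A_B-\lambda)^{-1}F,(\widetilde{\mu}-\overline{\lambda})v\rrangle$, and concludes that $\llangle M_B(\lambda)f,\widetilde{f}\rrangle_{\K}$ continues analytically since the $\lambda$-independent term $\llangle f,\Gammat_2 v\rrangle_{\Hc}$ drops out. The weak-to-operator upgrade you flag in your last paragraph is not addressed in the paper at all (its proof stops at the continuation of the matrix coefficients), so your reading of the conclusion in the weak sense matches what the paper actually establishes, and your caveat is, if anything, more careful than the original.
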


\begin{proof}
Given \ian{$f\in\cH$ and $\widetilde{f}\in\cK$}, choose $F=S_{\mu,B}f$ and $v=\widetilde{S}_{\widetilde{\mu},B^*}\widetilde{f}$. Then (\ref{eq:fund}) becomes
\beq \Big\langle F-(\mu-\lambda)(A_B-\lambda)^{-1}F,(\widetilde{\mu}-\overline{\lambda})v\Big\rangle
 = -\llangle f,\Gammat_2v\rrangle_{\Hc}+ \llangle M_B(\lambda)f,\widetilde{f}\rrangle_{\K}
 \eeq
 and the l.h.s.~admits analytic continuation, so the r.h.s.~does as well. 
\end{proof}


\begin{lemma}\label{dS}
For $\mu\in\rho(A_B)$,
$$\left(\frac{d}{d\lambda}S_{\cdot,B}\right)(\mu)=(A_B-\mu)^{-1}S_{\mu,B}.$$
\end{lemma}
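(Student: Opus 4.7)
The plan is to derive this identity as an immediate consequence of the difference formula \eqref{eq:sdiff} in Lemma \ref{slamwd}, by forming a difference quotient and taking a limit.

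First I would apply \eqref{eq:sdiff} with $\lambda_0 = \mu$, which gives, for every $\lambda \in \rho(A_B)$,
$$S_{\lambda,B} - S_{\mu,B} = (\lambda - \mu)(A_B - \lambda)^{-1} S_{\mu,B}.$$
Dividing by $\lambda - \mu$ (for $\lambda \neq \mu$) yields
$$\frac{S_{\lambda,B} - S_{\mu,B}}{\lambda - \mu} = (A_B - \lambda)^{-1} S_{\mu,B},$$
understood as an equality of bounded operators from $\mathrm{Ran}(\Gamma_1 - B \Gamma_2)$ into $H$.

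The remaining step is to let $\lambda \to \mu$. Since $\mu \in \rho(A_B)$ and $\rho(A_B)$ is open, the resolvent map $\lambda \mapsto (A_B - \lambda)^{-1}$ is norm-continuous (in fact analytic) in a neighborhood of $\mu$, so $(A_B - \lambda)^{-1} \to (A_B - \mu)^{-1}$ in operator norm. Composing with the bounded operator $S_{\mu,B}$ preserves this convergence, so the right-hand side converges in operator norm to $(A_B - \mu)^{-1} S_{\mu,B}$. This shows that $\lambda \mapsto S_{\lambda,B}$ is differentiable at $\mu$ in operator norm with the stated derivative, which is consistent with the analyticity already asserted in Lemma \ref{slamwd}(2). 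No genuine obstacle arises; the identity is essentially a rewriting of \eqref{eq:sdiff} in differential form.
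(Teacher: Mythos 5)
Your proof is correct and follows exactly the paper's argument: the paper also applies \eqref{eq:sdiff}, divides by $\lambda-\mu$, and concludes by letting $\lambda\to\mu$, with the limit justified by the (norm) continuity of the resolvent that you spell out explicitly. No differences worth noting.
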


\begin{proof}
From \eqref{eq:sdiff}, we have
$$\frac{S_{\lambda,B} f-  S_{\mu,B}f }{\lambda-\mu}=(A_B-\lambda)^{-1}S_{\mu,B}f,$$
which immediately proves the result.
\end{proof}

\begin{theorem} \label{analytic}
\ian{
Assume $M_B(\cdot)$ admits an analytic continuation to some region $D$ of the complex plane (possibly on a different Riemann sheet). Let $\mu,\overline{\widetilde{\mu}}\in\rho(A_B)$. Then
$$\left\langle (A_B-\lambda)^{-1}\vert_{\overline{\Sc}} F,v\right\rangle$$
admits an analytic continuation to the same region $D$ for any $F\in\Ran(S_{\mu,B})$, $v\in\Ran(\widetilde{S}_{\widetilde{\mu},B^*})$, apart from possible simple poles at $\mu$  and $\overline{\widetilde{\mu}}$. If $\mu=\overline{\widetilde{\mu}}$, a pole of order $2$ is possible at this point.}
\end{theorem}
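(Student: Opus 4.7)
The plan is to apply the fundamental identity \eqref{eq:fund} from Lemma \ref{intbyparts} to elements drawn from the ranges of the solution operators and then algebraically solve for the bordered resolvent. The right-hand side of the resulting identity will be an explicit rational combination of $M_B(\lambda)$ and $\lambda$-affine factors, from which the analytic structure can be read off directly.

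First, I would write $F=S_{\mu,B}f$ with $f=(\Gamma_1-B\Gamma_2)F\in\Hc$, and $v=\widetilde{S}_{\widetilde{\mu},B^*}\widetilde{f}$ with $\widetilde{f}=(\Gammat_1-B^*\Gammat_2)v\in\K$. Because $F\in\ker(\At^*-\mu)$ and $v\in\ker(A^*-\widetilde{\mu})$, we have $(\At^*-\lambda)F=(\mu-\lambda)F$ and $(A^*-\overline{\lambda})v=(\widetilde{\mu}-\overline{\lambda})v$. Substituting these into \eqref{eq:fund} and taking the antilinear scalar factor out of the inner product on the left yields
\[
(\overline{\widetilde{\mu}}-\lambda)\langle F,v\rangle - (\mu-\lambda)(\overline{\widetilde{\mu}}-\lambda)\langle(A_B-\lambda)^{-1}F,v\rangle = \llangle M_B(\lambda)f,\widetilde{f}\rrangle_{\K} - \llangle f,\Gammat_2 v\rrangle_{\Hc}.
\]

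Rearranging, for $\lambda\in\rho(A_B)\setminus\{\mu,\overline{\widetilde{\mu}}\}$ we obtain
\[
\langle(A_B-\lambda)^{-1}F,v\rangle = \frac{(\overline{\widetilde{\mu}}-\lambda)\langle F,v\rangle - \llangle M_B(\lambda)f,\widetilde{f}\rrangle_{\K} + \llangle f,\Gammat_2 v\rrangle_{\Hc}}{(\mu-\lambda)(\overline{\widetilde{\mu}}-\lambda)}.
\]
The numerator extends analytically to $D$ by the hypothesis on $M_B$, since $\langle F,v\rangle$, $\Gammat_2 v$, $f$ and $\widetilde{f}$ are $\lambda$-independent. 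The denominator has simple zeros at $\mu$ and $\overline{\widetilde{\mu}}$ when these points are distinct, and a single zero of order two when $\mu=\overline{\widetilde{\mu}}$. This gives exactly the pole structure claimed. Finally, since $F\in\Ran(S_{\mu,B})\subseteq\Sco$ and $v\in\Ran(\widetilde{S}_{\widetilde{\mu},B^*})\subseteq\Scto$, we have $\langle(A_B-\lambda)^{-1}F,v\rangle=\langle(A_B-\lambda)^{-1}\vert_{\overline{\Sc}}F,v\rangle$, which is the quantity of interest.

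The argument is essentially a calculation and I expect no serious obstacle; the only subtlety is the careful bookkeeping of complex conjugates in the factor $(A^*-\overline{\lambda})v$ of \eqref{eq:fund}, to ensure that the denominator involves $\overline{\widetilde{\mu}}-\lambda$ (holomorphic in $\lambda$) rather than $\widetilde{\mu}-\overline{\lambda}$, so that the singularities are genuine poles of the analytic continuation in $\lambda$ and the factors distribute correctly to yield simple poles in the generic case versus a double pole when $\mu=\overline{\widetilde{\mu}}$.
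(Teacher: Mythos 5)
Your proof is correct and follows essentially the same route as the paper's own argument: both rely on Lemma \ref{intbyparts}, substitute $(\At^*-\lambda)F=(\mu-\lambda)F$ and $(A^*-\overline{\lambda})v=(\widetilde{\mu}-\overline{\lambda})v$, and conclude that $(\mu-\lambda)(\overline{\widetilde{\mu}}-\lambda)\langle(A_B-\lambda)^{-1}F,v\rangle$ continues analytically because the right-hand side of \eqref{eq:fund} does. The only difference is presentational -- you solve explicitly for the bordered resolvent as a rational expression, whereas the paper leaves the conclusion implicit -- and the bookkeeping of the conjugate $\overline{\widetilde{\mu}}-\lambda$ that you flag is indeed handled correctly.
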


\begin{proof}
Let $F\in\Ran(S_{\mu,B})$, $v\in\Ran(\widetilde{S}_{\widetilde{\mu},B^*})$. By assumption the r.h.s.~of (\ref{eq:fund}) admits analytic continuation, hence so does the l.h.s., given by
$$\llangle F-(\mu-\lambda)(A_B-\lambda)^{-1}F,(\widetilde{\mu}-\overline{\lambda})v\rrangle.$$
\ian{Since $\llangle F,(\widetilde{\mu}-\overline{\lambda})v\rrangle$ is clearly analytic, we have that
$$(\mu-\lambda) (\overline{\widetilde{\mu}}-{\lambda}) \llangle (A_B-\lambda)^{-1}F,v\rrangle$$
is analytic which gives the desired result.}
\end{proof}

\begin{remark}\ian{
We can extend the set of those vectors} for which $\left\langle (A_B-\lambda)^{-1}\vert_{\overline{\Sc}} F,v\right\rangle$ admits analytic continuation by developing vectors on both sides by taking linear combinations and using the resolvents of $A_B$ and $\widetilde{A}_{B^*}$ respectively. However, we should not expect the result to extend to the whole of $\Sco$ (or $\Scto$) and therefore the bordered resolvent will not \ian{necessarily} admit analytic continuation.
\end{remark}

\ian{It is interesting to note that poles of $\left\langle (A_B-\lambda)^{-1}\vert_{\overline{\Sc}} F,v\right\rangle$ at $\mu$ and  $\overline{\widetilde{\mu}}$ do arise in concrete examples, though they may sometimes be   cancelled by other terms. 

\begin{example}
Let $\mu \in \C^-, \widetilde{\mu} \in \C^+$.
Consider an example of the Friedrichs model from Section \ref{Friedrichs},  where $\phi \in H^-_2$, $\psi \in H^+_2$ are rational functions with poles in suitable half-planes such that
\change{$\psi(\lambda)\overline{\phi(\overline\lambda)}$} does not have poles at $\mu$ or  $\overline{\tilde \mu}$.  Then  
$$F_\mu:=\dfrac1{x-\mu}\in H_2^+\cap  \ker ( \tilde A^* -\mu) \quad\hbox{ and }\quad   v_{\tilde \mu}:=\dfrac1{x-\tilde \mu}\in H_2^-\cap   \ker  (A^*-\tilde \mu).$$
 We consider the analytic continuation of the   functions $M_B(\cdot)$ and   $\llangle (A_B-\cdot)^{-1} F_\mu, v_{ \tilde \mu}\rrangle$  from the upper to the lower half-plane.

From \eqref{eq:fund}, we get   for $\lambda \in \C^+$
\ben
(\lambda-\mu) (\lambda-\overline{ \tilde \mu}) 
\llangle (A_B-\lambda)^{-1} F_\mu, v_{ \tilde \mu}\rrangle \label{*} 
& =&  -\llangle  M_B(\lambda) (\Gamma_1- B \Gamma_2) F_\mu, (\tilde \Gamma_1-B^*\tilde \Gamma_2) v_{\tilde \mu }\rrangle  \\
&& \quad +  \llangle (\Gamma_1-B\Gamma_2 )F_\mu, \tilde \Gamma_2 v_{\tilde \mu})\rrangle + \llangle F_\mu, (\tilde\mu-\overline\lambda)v_{\tilde \mu}\rrangle. \nonumber
\een
Now, $$\llangle F_\mu, v_{\tilde \mu}\rrangle = \llangle \dfrac1{x-\mu},\dfrac1{x-\tilde \mu}\rrangle =0,$$
$$ (\Gamma_1-B\Gamma_2)F_\mu=(\Gamma_1-B\Gamma_2)\dfrac1{x-\mu}=-\pi i -B$$
$$(\tilde \Gamma_1-B^*\tilde   \Gamma_2)v_{\tilde \mu}=\pi i -B^* \quad \hbox{ and }\quad\tilde \Gamma_2 v_\mu=1.$$
Thus  (\ref{*})  gives 
\ben    (\lambda-\mu) (\lambda-\overline{ \tilde \mu}) 
\llangle (A_B-\lambda)^{-1} F_\mu, v_{ \tilde \mu}\rrangle &=& - M_B(\lambda) (-\pi i-B)(-\pi i -B) +(-\pi i -B) \nonumber \\
&= &-(\pi i + B) ( M_B(\lambda) (\pi i + B) +1),  \nonumber
\een
or 
\ben    
\llangle (A_B-\lambda)^{-1} F_\mu, v_{ \tilde \mu}\rrangle 
&= &-\frac{(\pi i + B) ( M_B(\lambda) (\pi i + B) +1)}{(\lambda-\mu) (\lambda-\overline{ \tilde \mu}) }\nonumber  \\
&= & 
\dfrac{-2\pi i    [  \dfrac{  \pi   i -B}{  \pi i + B}- 2\pi i     \psi(\lambda)\overline{\phi(\overline \lambda)}   ]  ^{-1}}{(\lambda-\mu) (\lambda-\overline{ \tilde \mu})}.  \label{**}
\een

From  \eqref{eq:9} we get 
\be M_B(\lambda) = \left[   \pi i +\frac{4\pi^2 \psi(\lambda)\overline{\phi(\overline \lambda)} }{1+2\pi i     \psi(\lambda)\overline{\phi(\overline \lambda)}  } - B\right]^{-1}
 = \left[   -\pi i   - B   +\frac{2\pi  i    }{1+2\pi i     \psi(\lambda)\overline{\phi(\overline \lambda)}  } \right]^{-1}.
 \ee
 
This $M_B(\cdot)$ admits an analytic continuation to the lower half-plane while the analytic continuation of   $\llangle (A_B-\cdot)^{-1} F_\mu, v_{ \tilde \mu}\rrangle$  given by \eqref{**} has poles at $\mu$ and $\overline {\tilde \mu}$.

In the case when $B\neq -\pi i $, to cancel the poles in \eqref{**} we need to chose poles of the analytic continuation of $\psi(\lambda)\overline{  \phi(\overline \lambda)}$ to lie at $\mu$ and $\overline{\tilde \mu}$.  Note that the poles appearing in Theorem \ref{analytic} should not be confused with resonances (poles of the analytic continuation of $M_B$).  Here  the resonances are due to zeroes of 
$     \dfrac{  \pi   i -B}{  \pi i + B}- 2\pi i     \psi(\lambda)\overline{\phi(\overline \lambda)}    $  in formula \eqref{**}.
 \end{example}
}

\section{Abstract theory: relation between jumps of $M_B$ and  bordered resolvent}\label{section:5}
We consider the case in which $A_B$ and $A_B^*$ have essential spectrum lying on the real axis; we wish to examine
in what sense $M_B(\lambda)$ jumps across the real axis, and how this may be related to a jump in the resolvent
$(A_B-\lambda)^{-1}$.

\begin{assumption}\label{assumption:new1}
We assume that there exist countable families $\{f_i\}_{i\in I}$, $\{w_j\}_{j\in\tilde{I}}$ in $\cH$ and
$\cK$ respectively, whose closed linear spans are $\cH$ and $\cK$, and that for these families the inner products
$ \llangle M_B(\lambda)f_i,w_j\rrangle $ lie in both the Nevanlinna classes $N(\mathbb{C}_{\pm})$. This implies that
they have non-tangential boundary values $\llangle M_B(k\pm i0)f_i,w_j\rrangle $ for a.e. $k\in \mathbb R$.
The class $N(\mathbb{C}_{\pm})$ consists of all meromorphic functions on $\C_\pm$ which can be represented as the quotient of two bounded analytic functions in the corresponding half-plane (see \cite{Koosis2}).
\end{assumption}

Our first result is that this assumption is equivalent to an assumption on the resolvent.
\begin{lemma}\label{lemma:extra1}
The functions $\llangle M_B(\lambda)f_i,w_j\rrangle $ lie in $N(\mathbb{C}_{\pm})$ if and only if, for every
$\mu\not\in \sigma(A_B)$ and $\tilde{\mu}\not\in \ian{\sigma(\At_{B^*})}$, the functions $\llangle (A_B-\lambda)^{-1}S_{\mu,B}f_i,
\tilde{S}_{\tilde{\mu},B^*}w_j\rrangle$ lie in $N(\mathbb{C}_{\pm})$.
\end{lemma}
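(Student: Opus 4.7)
The plan is to derive a single algebraic identity relating $\llangle M_B(\lambda)f_i,w_j\rrangle$ and $\llangle (A_B-\lambda)^{-1}S_{\mu,B}f_i,\tilde{S}_{\tilde{\mu},B^*}w_j\rrangle$ by feeding suitable vectors into Lemma \ref{intbyparts}, and then invoke the algebraic properties of the Nevanlinna class $N(\C_\pm)$.

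First I would apply Lemma \ref{intbyparts} with $F:=S_{\mu,B}f_i \in \ker(\At^*-\mu)$ and $v:=\St_{\tilde\mu,B^*}w_j \in \ker(A^*-\tilde\mu)$. Then $(\At^*-\lambda)F = (\mu-\lambda)F$ and $(A^*-\overline{\lambda})v = (\tilde\mu-\overline{\lambda})v$, while by construction $(\Gamma_1-B\Gamma_2)F = f_i$ and $(\Gammat_1-B^*\Gammat_2)v = w_j$. Substituting into \eqref{eq:fund} and rearranging yields
\begin{equation*}
\llangle M_B(\lambda)f_i, w_j\rrangle_{\K}
= (\overline{\tilde{\mu}}-\lambda)\llangle S_{\mu,B}f_i,\St_{\tilde\mu,B^*}w_j\rrangle
- (\mu-\lambda)(\overline{\tilde\mu}-\lambda)\llangle (A_B-\lambda)^{-1}S_{\mu,B}f_i,\St_{\tilde\mu,B^*}w_j\rrangle
+ \llangle f_i, \Gammat_2 \St_{\tilde\mu,B^*}w_j\rrangle_{\Hc}.
\end{equation*}
The first and last terms on the right are a polynomial in $\lambda$ of degree at most one (with coefficients independent of $\lambda$), and the middle term is a polynomial of degree exactly two multiplying the bordered resolvent inner product.

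Next I would exploit the well-known fact that $N(\C_\pm)$ is a field containing all polynomials and closed under the usual algebraic operations: sums, products, and quotients of elements of $N(\C_\pm)$ remain in $N(\C_\pm)$ (see \cite{Koosis2}). For the forward direction, if $\llangle (A_B-\lambda)^{-1}S_{\mu,B}f_i,\St_{\tilde\mu,B^*}w_j\rrangle \in N(\C_\pm)$, then the right-hand side of the identity above is a sum of products of Nevanlinna-class functions and polynomials, hence $\llangle M_B(\lambda)f_i, w_j\rrangle \in N(\C_\pm)$. For the converse, solve the same identity for the resolvent inner product:
\begin{equation*}
\llangle (A_B-\lambda)^{-1}S_{\mu,B}f_i,\St_{\tilde\mu,B^*}w_j\rrangle
= \frac{(\overline{\tilde\mu}-\lambda)\llangle S_{\mu,B}f_i,\St_{\tilde\mu,B^*}w_j\rrangle
+ \llangle f_i, \Gammat_2 \St_{\tilde\mu,B^*}w_j\rrangle
- \llangle M_B(\lambda)f_i, w_j\rrangle}{(\mu-\lambda)(\overline{\tilde\mu}-\lambda)},
\end{equation*}
which is a quotient of a Nevanlinna-class function (the numerator) by a polynomial (the denominator), hence itself in $N(\C_\pm)$.

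The proof is essentially mechanical once the identity from Lemma \ref{intbyparts} is written in the above form; the only subtle point is making sure that the removable or actual singularities at $\lambda=\mu$ and $\lambda=\overline{\tilde\mu}$ (compare Theorem \ref{analytic}) do not obstruct membership in $N(\C_\pm)$, but this is handled automatically by the closure of $N(\C_\pm)$ under division by polynomials (meromorphic functions with polynomial denominators pose no issue, since $N$ consists of meromorphic quotients of bounded analytic functions). There is no need for density or spanning arguments at this step; each pair $(f_i,w_j)$ is treated individually.
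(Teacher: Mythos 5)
Your proposal is correct, and the identity you derive is exactly the right one; the route differs only mildly from the paper's. The paper's own proof applies Green's identity directly to $u=(A_B-\lambda)^{-1}S_{\mu,B}f_i$, $v=\St_{\tilde\mu,B^*}w_j$, so its boundary terms involve the $\lambda$-dependent trace $\Gamma_2(A_B-\lambda)^{-1}S_{\mu,B}f_i$, which is then translated into $M_B$ through the auxiliary identity (\ref{eq:midentity}), $M_B(\lambda)=\Gamma_2(I+(\lambda-\mu)(A_B-\lambda)^{-1})S_{\mu,B}$. You instead feed $F=S_{\mu,B}f_i$, $v=\St_{\tilde\mu,B^*}w_j$ into Lemma \ref{intbyparts}, obtaining in one step
\[
\llangle M_B(\lambda)f_i,w_j\rrangle_{\K}
=(\overline{\tilde\mu}-\lambda)\llangle F,v\rrangle
-(\mu-\lambda)(\overline{\tilde\mu}-\lambda)\llangle (A_B-\lambda)^{-1}F,v\rrangle
+\llangle f_i,\Gammat_2 v\rrangle_{\Hc},
\]
which is precisely the relation the paper itself exploits later in the proof of Theorem \ref{thm:5.1}. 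With this identity both directions reduce to algebra in $N(\C_\pm)$: polynomials lie in the class, it is closed under sums, products and quotients, and since its members may be meromorphic, dividing by $(\mu-\lambda)(\overline{\tilde\mu}-\lambda)$ causes no difficulty at $\lambda=\mu$ or $\lambda=\overline{\tilde\mu}$. What your version buys is that the ``boundary'' data $\llangle F,v\rrangle$ and $\llangle f_i,\Gammat_2 v\rrangle$ are $\lambda$-independent constants, so no analogue of (\ref{eq:midentity}) is needed and one can solve explicitly for either inner product; the paper's version, based on (\ref{eq:nevan}), has to carry the trace of the resolvent along and is somewhat more telegraphic in the converse direction. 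Your handling of the quantifiers is also right: the identity holds for each fixed admissible pair $(\mu,\tilde\mu)$, which gives the forward direction for every such pair, while a single pair already yields the converse.
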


\begin{proof} Starting with the fundamental identity
\[ \llangle (\At^*-\lambda)u,v\rrangle -  \llangle u, (\overset{}{A^*}-\overline{\lambda})v   \rrangle
 = \llangle \Gamma_1 u, \Gammat_2v\rrangle - \llangle \Gamma_2 u, \Gammat_1 v \rrangle \]
and making the choices $u = (A_B-\lambda)^{-1}S_{\mu,B}f_i$, $v = \tilde{S}_{\tilde{\mu},B^*}w_j$ leads to
\be \hspace{-5cm}\llangle \overset{}{S_{\mu,B}}f_i,w_j\rrangle - \llangle (A_B-\lambda)^{-1}S_{\mu,B}f_i,(\tilde{\mu}-\overline{\lambda})\tilde{S}_{\tilde{\mu},B^*}w_j
\rrangle = \label{eq:nevan} \ee
\[ \hspace{3cm} \llangle \Gamma_2(A_B-\lambda)^{-1}S_{\mu,B}f_i,\Gammat_1\tilde{S}_{\tilde{\mu},B^*}w_j\rrangle
 - \llangle \Gamma_2(A_B-\lambda)^{-1}S_{\mu,B}f_i,\Gammat_1\tilde{S}_{\tilde{\mu},B^*}w_j\rrangle . \]
If the functions $\llangle M_B(\lambda)f_i,w_j\rrangle $ lie in $N(\mathbb{C}_{\pm})$ then, thanks to the identity
\be M_B(\lambda) = \Gamma_2(I+(\lambda-\mu)(A_B-\lambda)^{-1})S_{\mu,B}, \label{eq:midentity} \ee
the terms $\llangle \Gamma_2(A_B-\lambda)^{-1}S_{\mu,B}f_i,\cdot\rrangle$ appearing in (\ref{eq:nevan}) also
lie in $N(\mathbb{C}_{\pm})$,  so  
$$\llangle (A_B-\lambda)^{-1}S_{\mu,B}f_i,(\tilde{\mu}-\overline{\lambda})\tilde{S}_{\tilde{\mu},B^*}w_j
\rrangle$$ lies in $N(\mathbb{C}_{\pm})$. This implies that $\llangle (A_B-\lambda)^{-1}S_{\mu,B}f_i,\tilde{S}_{\tilde{\mu},B^*}w_j
\rrangle$ lies in $N(\mathbb{C}_{\pm})$.

The converse result is immediate from equation (\ref{eq:midentity}): if inner products of the form 
$$\llangle (A_B-\lambda)^{-1}S_{\mu,B}f_i,\tilde{S}_{\tilde{\mu},B^*}w_j\rrangle$$
 lie in $N(\mathbb{C}_{\pm})$ then so do the inner products $\llangle M_B(\lambda)f_i,w_j\rrangle$.
\end{proof}

\begin{thm}\label{thm:5.1} Suppose that Assumption \ref{assumption:new1} holds and that 
$
\eps | \langle M_B(k\pm i \eps)f,w\rangle |\to 0$ as $\eps \searrow 0$, for a.e. $k\in \R$ for $f$, $w$ in a dense countable subset of the boundary spaces.
Choose sets $\{\mu_i\}_{i\in I},\;\{\tilde \mu_j\}_{j\in\tilde{I}}$ non-real and outside  $\sigma(A_B)$. Let $F_i=S_{\mu_i,B}f_i,\;\;v_j=\tilde S_{\tilde \mu_j,B^*}w_j$.  Then for a.e. $k\in \R$,
$$
\mbox{\rm rank} \Big ( \Big [ P_{\{v_j\}}(A_B-\lambda)^{-1}P_{\{F_i\}}\Big ]_{\lambda=k}\Big )=
\mbox{\rm rank} \Big ( \Big [  P_{\{w_j\}}M_B(\lambda)P_{\{f_i\}}\Big ]_{\lambda = k}\Big ),
$$
where by $P_{\{v_j\}}$ and $P_{\{w_j\}}$ we denote the projections onto the indicated one-dimensional spaces, and $[\cdot ]_{\lambda =k}$ denotes the jump between $\lambda=k+i\eps$ and $\lambda=k-i\eps$ as $\eps\searrow 0$.
\end{thm}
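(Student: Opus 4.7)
The plan is to derive an entrywise pointwise relation, valid at almost every real $k$, between the jump of $\llangle (A_B-\lambda)^{-1}F_i,v_j\rrangle$ and the jump of $\llangle M_B(\lambda)f_i,w_j\rrangle$. Once such a relation is available with a \emph{nonzero} proportionality factor depending only on $i,j,k$, the rank equality will be immediate, because these scalar factors organize into two diagonal, hence invertible, operators that cannot change the rank of the indexed matrix of jumps.

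For the pointwise relation I would start from Lemma~\ref{intbyparts} applied to $F:=F_i$ and $v:=v_j$. Since $F_i\in\ker(\At^*-\mu_i)$ and $v_j\in\ker(A^*-\tilde{\mu}_j)$, and since $(\Gamma_1-B\Gamma_2)F_i=f_i$, $(\Gammat_1-B^*\Gammat_2)v_j=w_j$ and $\Gammat_2 v_j=\Mt_{B^*}(\tilde{\mu}_j)w_j$, that identity collapses to
\begin{equation}\label{eq:planA}
(\tilde{\mu}_j-\overline{\lambda})\llangle F_i-(\mu_i-\lambda)(A_B-\lambda)^{-1}F_i,v_j\rrangle
=\llangle M_B(\lambda)f_i,w_j\rrangle-\llangle f_i,\Mt_{B^*}(\tilde{\mu}_j)w_j\rrangle.
\end{equation}
The second term on the right is $\lambda$-independent and will drop out when we subtract values at $\lambda=k\pm i\eps$.

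The next step is to evaluate (\ref{eq:planA}) at $\lambda_\pm=k\pm i\eps$, subtract and let $\eps\searrow 0$. Writing $R_\pm:=\llangle(A_B-\lambda_\pm)^{-1}F_i,v_j\rrangle$ and $C_\pm:=(\tilde{\mu}_j-\overline{\lambda_\pm})(\mu_i-\lambda_\pm)$, the main term of (\ref{eq:planA}) decomposes as $-\bigl[C_+(R_+-R_-)+(C_+-C_-)R_-\bigr]$; a direct expansion gives $C_\pm\to(\tilde{\mu}_j-k)(\mu_i-k)$ and $C_+-C_-=O(\eps)$. The boundary summand $(\tilde{\mu}_j-\overline{\lambda})\llangle F_i,v_j\rrangle$ contributes only $O(\eps)$ and is harmless. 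The delicate piece is $(C_+-C_-)R_-$, which requires $\eps R_\pm\to 0$; this is obtained by multiplying (\ref{eq:planA}) through by $\eps$, invoking the hypothesis $\eps|\llangle M_B(k\pm i\eps)f_i,w_j\rrangle|\to 0$ on the right, and noting that every remaining term is manifestly $O(\eps)$. Together with the Nevanlinna assumption (via Lemma~\ref{lemma:extra1}, which guarantees that both jumps exist non-tangentially for a.e.\ $k$), this yields
\begin{equation}\label{eq:planB}
-(\mu_i-k)(\tilde{\mu}_j-k)\bigl[\llangle(A_B-\lambda)^{-1}F_i,v_j\rrangle\bigr]_{\lambda=k}
=\bigl[\llangle M_B(\lambda)f_i,w_j\rrangle\bigr]_{\lambda=k}
\end{equation}
for almost every $k\in\R$.

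To conclude, since each $\mu_i,\tilde{\mu}_j$ is non-real and $k$ is real, $(\mu_i-k)(\tilde{\mu}_j-k)\neq 0$. Arranging the jumps indexed by $(i,j)$ as matrices of inner products with respect to the one-dimensional projections onto $\{F_i\}$ and $\{v_j\}$ (resp.\ $\{f_i\}$ and $\{w_j\}$), the identity (\ref{eq:planB}) reads $[M_{ji}]=-D_1[R_{ji}]D_2$ with diagonal, invertible $D_1,D_2$; ranks are preserved by multiplication by invertible operators, so the two jump objects have equal rank. The main obstacle is the honest passage to the limit $\eps\searrow 0$: without the vanishing hypothesis on $\eps\llangle M_B(k\pm i\eps)f,w\rrangle$, the residual from $(C_+-C_-)R_-$ would add an extra, potentially nonzero, term to (\ref{eq:planB}) and destroy the clean rank-preserving proportionality.
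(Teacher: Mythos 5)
Your argument is correct and essentially the paper's own proof: both rest on Lemma \ref{intbyparts} evaluated at $F_i\in\ker(\At^*-\mu_i)$, $v_j\in\ker(A^*-\tilde{\mu}_j)$, use the hypothesis $\eps\,|\llangle M_B(k\pm i\eps)f,w\rrangle|\to 0$ (after multiplying the identity by $\eps$) to kill the $O(\eps)\cdot\text{resolvent}$ residue, and arrive at the same jump identity $-(\mu_i-k)(\overline{\tilde{\mu}}_j-k)\llangle [(A_B-\lambda)^{-1}]F_i,v_j\rrangle=\llangle [M_B](k)f_i,w_j\rrangle$, from which the rank equality follows because the scalar factors are nonzero; your $C_\pm$, $R_\pm$ bookkeeping is just a tidier version of the paper's expansion via $G(\lambda)$ and $|\lambda|^2$. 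The only blemish is a harmless conjugation slip: pulling $(A^*-\overline{\lambda})v_j=(\tilde{\mu}_j-\overline{\lambda})v_j$ out of the conjugate-linear slot produces the factor $(\overline{\tilde{\mu}}_j-\lambda)$, not $(\tilde{\mu}_j-\overline{\lambda})$, so the constant in your final relation should read $(\overline{\tilde{\mu}}_j-k)$ — still nonzero for non-real $\tilde{\mu}_j$, so the conclusion is unaffected.
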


In order to prove Theorem \ref{thm:5.1} we require the following lemma.
\begin{lemma}\label{lemma:proper1} The collections $\{ F_i \}_{i\in I}$ and
$\{ v_j \}_{j\in \tilde{I}}$ are \ian{both} linearly independent.
\end{lemma}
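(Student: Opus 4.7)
The plan is to reduce the linear independence claim for $\{F_i\}_{i\in I}$ to the independence of the boundary data $\{f_i\}_{i\in I}$ by exploiting the eigenvector structure $F_i\in\ker(\At^*-\mu_i)$; the claim for $\{v_j\}_{j\in\tilde I}$ will follow by the symmetric argument applied to $A^*$ and $\{w_j\}$.

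First I would observe that we may assume, without loss of generality, that the families $\{f_i\}_{i\in I}$ and $\{w_j\}_{j\in\tilde I}$ of Assumption \ref{assumption:new1} are already linearly independent: any countable family with dense linear span in a Hilbert space contains a linearly independent subfamily with the same closed span (inductively discard every element lying in the span of its predecessors), and the Nevanlinna-class hypothesis on $\llangle M_B(\cdot)f_i,w_j\rrangle$ plainly survives this thinning.

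Given a finite vanishing combination $\sum_{k=1}^n c_k F_{i_k}=0$ with distinct indices $i_1,\ldots,i_n$, the next step is to group the summands by the distinct values $\nu_1,\ldots,\nu_m$ occurring among the associated spectral parameters $\mu_{i_k}$. Setting $g_\ell:=\sum_{k:\,\mu_{i_k}=\nu_\ell}c_k f_{i_k}\in\cH$, the relation becomes $\sum_{\ell=1}^m S_{\nu_\ell,B}g_\ell=0$, where each summand lies in the distinct eigenspace $\ker(\At^*-\nu_\ell)$. Since eigenvectors of a linear operator associated with distinct eigenvalues are linearly independent, every $S_{\nu_\ell,B}g_\ell$ must vanish individually.

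From here the argument concludes by applying the trace map $(\Gamma_1-B\Gamma_2)$ to each zero summand: by the defining identity \eqref{slamdef} of the solution operator we have $(\Gamma_1-B\Gamma_2)S_{\nu_\ell,B}g_\ell=g_\ell=0$, so $\sum_{k:\,\mu_{i_k}=\nu_\ell}c_k f_{i_k}=0$ for each $\ell$, and the assumed linear independence of $\{f_i\}$ forces every $c_k$ to vanish. The reasoning for $\{v_j\}$ is identical after replacing $\Gamma_1-B\Gamma_2$, $\{f_i\}$, $\mu_i$, $\At^*$ by $\Gammat_1-B^*\Gammat_2$, $\{w_j\}$, $\tilde\mu_j$, $A^*$. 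The only mildly delicate point is the grouping step: because the assignment $i\mapsto\mu_i$ is not assumed injective, the distinct-eigenvalue argument cannot be applied directly to the $F_i$'s, and it is the injectivity of each $S_{\nu_\ell,B}$ on $\cH$, combined with independence of the $f_i$'s sharing a common $\mu$-value, that recovers the conclusion.
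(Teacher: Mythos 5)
Your proof is correct, and its overall skeleton matches the paper's: separate the terms according to distinct spectral parameters, then within each group use the left-invertibility of the solution operator together with linear independence of the boundary vectors. The difference lies in how the separation is achieved. The paper applies $(\At^*-\zeta)^k$ to the vanishing combination, chooses $\zeta$ so that one modulus $|\mu_{i_0}-\zeta|$ strictly dominates, and lets $k\to\infty$ to kill the remaining terms one at a time; you instead invoke the standard algebraic fact that the eigenspaces $\ker(\At^*-\nu_\ell)$ for distinct $\nu_\ell$ form a direct sum (provable by applying the finite product $\prod_{m\neq\ell}(\At^*-\nu_m)$, which is legitimate since each group sum is an eigenvector and hence lies in every $D((\At^*)^k)$). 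Your route is more elementary in that it needs no limiting argument and no choice of an auxiliary point $\zeta$, and the conclusion $g_\ell=(\Gamma_1-B\Gamma_2)S_{\nu_\ell,B}g_\ell=0$ via \eqref{slamdef} is exactly the injectivity the paper also uses, only stated explicitly. You also make explicit a hypothesis the paper leaves tacit: Assumption \ref{assumption:new1}, under which Lemma \ref{lemma:proper1} is invoked in Theorem \ref{thm:5.1}, only requires the $\{f_i\}$ and $\{w_j\}$ to have dense span, whereas the paper's own proof (like yours) needs them linearly independent (independence is only assumed later, in Theorem \ref{thm:5.2}); your thinning remark, noting that the Nevanlinna hypothesis survives passing to an independent subfamily with the same closed span, is the natural repair and is a genuine, if minor, improvement in precision.
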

\begin{proof} We give the proof for the collection  $\{ F_i \}_{i\in I}$; the remaining case is similar. Assume that there are some constants $\alpha_i$ such that 
$\sum_{i}\alpha_i F_i = 0$. Let $\zeta\in {\mathbb C}$: applying $(\At^*-\zeta)^k$ for some $k\in \mathbb{N}$ we get 
$\sum_{i}\alpha_i (\mu_i-\zeta)^k F_i = 0$. \ian{First, assume all  the $\mu_i$ are distinct. Then we
can choose $i_0$ and $\zeta$} such that $|\mu_{i_0}-\zeta|>|\mu_i-\zeta|$ for $i\neq i_0$. Letting $k\rightarrow\infty$ we 
deduce that $\alpha_{i_0}F_{i_0}=0$. Proceeding in this way we get $\alpha_i=0$ for all $i$ as long as the $\mu_i$ 
are distinct. 

If we have a collection of $\mu_i$ which are all equal, say for $i\in J$, where $J$ is some index set, then we 
can prove that for some appropriately chosen $\zeta$ we have $0=\sum_{i\in J}\alpha_i(\mu_i-\zeta)^kS_{\mu_i,B}f_i$ 
giving, for $\ell\in J$, $S_{\mu_\ell,B}\sum_{i\in J}\alpha_i f_i = 0$. This implies that $\sum_{i\in J}\alpha_i f_i = 0$ 
and hence that $\alpha_i=0$ for all $i\in J$, by linear independence of the $\{ f_i \}$.
\end{proof}

\begin{corollary} The collections $\{ (\mu_i-k)F_i \}$ and $\{ (\overline{\tilde \mu}_j-k)v_j \}$ are \ian{both} linearly independent
as long as $\mu_i\neq k$, $\overline{\tilde \mu_j}\neq k$, for all $i,j$.

\end{corollary}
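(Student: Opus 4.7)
The plan is to observe that this corollary is an immediate consequence of Lemma \ref{lemma:proper1}, since multiplying each element of a linearly independent family by a nonzero scalar preserves linear independence.

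More precisely, for the first collection, I would argue as follows. Suppose $\sum_i \alpha_i (\mu_i - k) F_i = 0$ for some scalars $\alpha_i$ (with only finitely many nonzero). Setting $\beta_i := \alpha_i (\mu_i - k)$, this becomes $\sum_i \beta_i F_i = 0$. By Lemma \ref{lemma:proper1}, the family $\{F_i\}_{i\in I}$ is linearly independent, so $\beta_i = 0$ for every $i$. Since by hypothesis $\mu_i \neq k$, the factor $(\mu_i - k)$ is nonzero, and therefore $\alpha_i = 0$ for every $i$. This proves linear independence of $\{(\mu_i - k) F_i\}_{i\in I}$.

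The identical argument applied to $\{v_j\}_{j\in \tilde{I}}$, using the nonvanishing factor $(\overline{\tilde{\mu}}_j - k)$ in place of $(\mu_i - k)$, gives linear independence of $\{(\overline{\tilde{\mu}}_j - k) v_j\}_{j\in \tilde{I}}$.

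There is no real obstacle here; the statement is genuinely a one-line corollary of the preceding lemma, and the only thing that has to be checked is that the scalar multipliers are nonzero, which is precisely the hypothesis of the corollary.
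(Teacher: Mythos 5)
Your argument is correct and is exactly the intended one: the paper states this corollary without proof, treating it as an immediate consequence of Lemma \ref{lemma:proper1}, since rescaling each vector by the nonzero factors $(\mu_i-k)$ and $(\overline{\tilde\mu}_j-k)$ preserves linear independence. Nothing further is needed.
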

\begin{proof}[Proof of Theorem \ref{thm:5.1}] \ian{
We use the fundamental identity (\ref{eq:fund}), which yields
\be \Big\langle F_i-(A_B-\lambda)^{-1}(\At^*-\lambda)F_i,(A^*-\overline{\lambda}I)v_j\Big\rangle
 = -\llangle f_i,\Gammat_2 v_j\rrangle_{\cH}
 + \Big\langle M_B(\lambda)f_i,w_j\Big\rangle_{\cK}\label{eq:fund3}
\ee
for all $i\in I$, $j\in\widetilde{I}$ and $\lambda=k+i\eps$.}
The jump \ian{at $k$} of the right hand side is clearly given by $\left[\llangle  M_B(k)f_i,w_j\rrangle_{\K}\right]$, which for convenience we denote by $\llangle  [M_B](k)f_i,w_j\rrangle_{\K}$. By our assumptions on \ian{the $\{f_i\}$ and $\{w_j\}$,} 
this is nonzero if and only if $[M_B](k)\neq 0$. 

Now consider the left hand side of (\ref{eq:fund3}). \ian{Clearly, $\Big\langle F_i,(A^*-\overline{\lambda}I)v_j\Big\rangle$ has no jump.}

Defining
\be G(\lambda) := \llangle (\overset{}{A_B}-\lambda)^{-1}F_i,A^*v_j\rrangle + \llangle (A_B-\lambda)^{-1}\At^*F_i,v_j\rrangle \label{eq:Gdef2} \ee
the \ian{negative of the remaining term on the} left hand side of (\ref{eq:fund3}) is
\ben \label{eq:jump3}  \llangle (A_B-\lambda)^{-1}(\At^*-\lambda)F_i,(A^*-\ian{\overline{\lambda}})v_j\rrangle &=& 
 \llangle (A_B-\lambda)^{-1}\At^*F_i,A^*v_j\rrangle - \lambda G(\lambda) \\ \nonumber  && + \ian{|\lambda|^2} \llangle (\overset{}{A_B}-\lambda)^{-1}F_i,v_j\rrangle. 
\een
Observe that $[\lambda G(\lambda)] = k[G](k) + \lim_{\eps\searrow 0}i\eps(G(k+i\eps)+G(k-i\eps))$.
We shall prove later that
\be \lim_{\eps\searrow 0} \eps\llangle (A_B-\lambda)^{-1}F_i,v_j\rrangle = 0 \;\; \mbox{for a.e. $k$}.
\label{eq:weirdassumption2}
\ee
This implies $[\lambda G(\lambda)] = k[G](k)$ and that $$[\ian{|\lambda|^2}\llangle (A_B-\lambda)^{-1}F_i,v_j\rrangle ]
= k^2 \llangle [(A_B-\lambda)^{-1}]F_i,v_j\rrangle;$$ hence, from (\ref{eq:jump3}), the formula
\[ \ian{-}\left[\llangle (A_B-\lambda)^{-1}(\At^*-\lambda)F_i,(A^*-\ian{\overline{\lambda}})v_j\rrangle \right]
 = \llangle [(A_B-\lambda)^{-1}](\At^*-k)F_i,(A^*-k)v_j\rrangle \]
gives the jump of the left hand side of (\ref{eq:fund3}). We therefore have
\be\label{eq:jump} \ian{-}(\mu_i-k)(\overline{{\tilde\mu}_j}-k)\llangle [(A_B-\lambda)^{-1}]F_i,v_j\rrangle = \llangle [ M_B](k)f_i,w_j\rrangle_{\K}, \ee
from which our result follows.

It remains to establish (\ref{eq:weirdassumption2}). Returning to (\ref{eq:fund3}) we have
\beq -(\mu_i-\lambda)(\overline{{\tilde \mu}_j}-\lambda)\llangle(\overset{}{A_B}-\lambda)^{-1}F_i,v_j\rrangle
 = (\overline{{\tilde \mu}_j}-\lambda)\llangle \overset{}{F_i},v_j\rrangle-\llangle f_i,\Gammat_2 v_j\rrangle_{\Hc}
 + \llangle \overset{}{M_B}(\lambda)f_i,w_j\rrangle_{\K}\label{eq:fund4}
\eeq
whence 
\[ \left| \llangle(A_B-\lambda)^{-1}F_i,v_j\rrangle \right| \leq 
 \frac{ |\overline{{\tilde \mu}_j}-\lambda| \left|\Big\langle F_i,v_j\Big\rangle\right|
+\left|\llangle f_i,\Gammat_2 v_j\rrangle_{\Hc}\right|
 + \left|\llangle M_B(\lambda)f_i,w_j\rrangle_{\K}\right| }{|\mu_i-\lambda||\overline{{\tilde \mu}_j}-\lambda|} . \]
Thus (\ref{eq:weirdassumption2}) is an immediate consequence of the hypothesis that
$\eps | \langle M_B(k\pm i \eps)f,w\rangle |\to 0$ as $\eps \searrow 0$, for a.e.~$k\in \R$.
\end{proof}

\begin{thm}\label{thm:5.2} 
Let $\{ f_i \}_{i\in I}$ and $\{ w_j \}_{j\in \tilde{I}}$ be linearly independent vectors whose spans are dense in
$\cH$ and $\cK$ respectively. Let $\{ \mu_{\ell} \}_{\ell\in J}$ and $\{ \tilde{\mu}_{\nu}\}_{\nu\in \tilde{J}}$ be
collections of distinct strictly complex numbers dense in ${\mathbb C}\setminus \sigma(A_B)$ and 
${\mathbb C}\setminus \sigma(A_B^*)$ respectively. Define $F_{i,\ell}=S_{\mu_\ell,B}f_i$, $v_{j,\nu} = \tilde{S}_{\tilde{\mu}_\nu,B^*}w_j$. 
\begin{enumerate}
\item The collections $\{ F_{i,\ell} \}_{i\in I,\ell\in J}$ and $\{ v_{j,\nu} \}_{j\in \tilde{I},\nu\in \tilde{J}}$ are \ian{both} linearly 
 independent and their spans are dense in $\Sco$ and $\Scto$ respectively.
\item For $N,M\in \mathbb{N}$ let \ian{$P_{N,\Sco}$} and $P_{M,\Scto}$ denote projections onto $N$- and $M$-dimensional 
 subspaces of $\Sco$ and $\Scto$ respectively, spanned by $N$ of the $F_{i,\ell}$ and $M$ of the $v_{j,\nu}$ respectively,
 chosen such that ${\displaystyle \lim_{N\rightarrow\infty}P_{N,\Sco}=I}$ and ${\displaystyle \lim_{M\rightarrow\infty}P_{M,\Scto}=I}$, 
 in the sense of strong convergence.
Put $$
\ian{E_1=\left\{ k\in\R \ \Big|\ [M_B](k)\mbox{  exists in the weak topology}\right\},}
$$
$$
\ian{E_2=\left\{ k\in\R \ \Big|\ \lim_{\eps \searrow 0} \eps | \langle M_B(k\pm i\eps )f_i,w_j\rangle|=0\mbox{ for all $i$, $j$} \right\}.}
$$
For any \ian{$k\in E_1\cap E_2$}
we have that
$
[P_{M,\Scto} (A_B-\lambda)^{-1}P_{N,\Sco}](k)$ exists; moreover 
$$
\lim_{N,M\to\infty} \mbox{\rm rank}([P_{M,\Scto}(A_B-\lambda)^{-1}P_{N,\Sco}]|_{\lambda=k})
$$ 
exists and is equal to $\mbox{\rm rank}([M_B](k))$.
\end{enumerate}
\end{thm}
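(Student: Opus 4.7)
The plan is twofold: I establish part (1) by extending the arguments of Lemma \ref{lemma:proper1} and invoking Proposition \ref{prop:2.7}, and then derive part (2) from the fundamental identity \eqref{eq:jump} that underlies the proof of Theorem \ref{thm:5.1}.

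For part (1), I prove linear independence of $\{F_{i,\ell}\}_{i\in I,\ell\in J}$ by a two-index version of the argument in Lemma \ref{lemma:proper1}: given a finite relation $\sum \alpha_{i,\ell}F_{i,\ell}=0$, applying $(\At^*-\zeta)^k$ gives $\sum \alpha_{i,\ell}(\mu_\ell-\zeta)^k F_{i,\ell}=0$; choosing $\zeta$ so that a single $\mu_{\ell_0}$ strictly maximises $|\mu_\ell-\zeta|$ among the $\ell$'s present, dividing by $(\mu_{\ell_0}-\zeta)^k$, and letting $k\to\infty$ isolates $S_{\mu_{\ell_0},B}\sum_i \alpha_{i,\ell_0}f_i=0$. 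Injectivity of $S_{\mu_{\ell_0},B}$ (from $(\Gamma_1-B\Gamma_2)S_{\mu,B}=I$) and linear independence of $\{f_i\}$ force $\alpha_{i,\ell_0}=0$, after which one iterates. For density, $\Ran S_{\mu_\ell,B}=\ker(\At^*-\mu_\ell)$, so density of $\{f_i\}$ in $\cH$ gives $\overline{\Span_i\{F_{i,\ell}\}}=\ker(\At^*-\mu_\ell)$ for each $\ell$; density of $\{\mu_\ell\}$ in $\rho(A_B)$ together with the analyticity $\lambda\mapsto S_{\lambda,B}f$ (Lemma \ref{slamwd}) then shows $\overline{\Span\{F_{i,\ell}\}}\supseteq\bigcup_{\mu\in\rho(A_B)}\ker(\At^*-\mu)$, and Proposition \ref{prop:2.7}(\ref{Bindependence}) identifies the closure with $\Sco$. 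The argument for $\{v_{j,\nu}\}$ is strictly analogous, applied to $A^*$ and $\widetilde{A}_{B^*}$.

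For part (2), I fix $k\in E_1\cap E_2$ so the hypotheses of Theorem \ref{thm:5.1} are in force for every pair $(F_{i,\ell},v_{j,\nu})$; equation \eqref{eq:jump} then yields
\[
\llangle[(A_B-\lambda)^{-1}]_{\lambda=k}F_{i,\ell},v_{j,\nu}\rrangle=-\frac{\llangle[M_B](k)f_i,w_j\rrangle_{\cK}}{(\mu_\ell-k)(\overline{\tilde\mu_\nu}-k)}.
\]
Let $T_N\subset I\times J$ and $\widetilde T_M\subset\widetilde I\times\widetilde J$ be the chosen index sets for $P_{N,\Sco}$ and $P_{M,\Scto}$, and write $J_{N,M}(k):=[P_{M,\Scto}(A_B-\lambda)^{-1}P_{N,\Sco}]_{\lambda=k}$. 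Via the usual Gram-matrix cancellation (both the $F_{i,\ell}$ and the $v_{j,\nu}$ are linearly independent), $\mbox{\rm rank}(J_{N,M}(k))$ equals the rank of the $M\times N$ matrix of the above inner products. Scaling rows by $-(\overline{\tilde\mu_\nu}-k)$ and columns by $(\mu_\ell-k)$, all nonzero since $\mu_\ell,\tilde\mu_\nu$ are strictly complex and $k\in\R$, reduces this matrix to one whose $((j,\nu),(i,\ell))$-entry is $\llangle[M_B](k)f_i,w_j\rrangle$, depending only on $(i,j)$; duplicating rows/columns does not change rank, so
\[
\mbox{\rm rank}(J_{N,M}(k))=\mbox{\rm rank}\Bigl(\llangle[M_B](k)f_i,w_j\rrangle\Bigr)_{i\in I_N,\,j\in\widetilde J_M},
\]
where $I_N,\widetilde J_M$ are the first-coordinate projections of $T_N,\widetilde T_M$.

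The upper bound $\mbox{\rm rank}(J_{N,M}(k))\leq\mbox{\rm rank}([M_B](k))$ is then immediate for every $N,M$. For the lower bound, given any $r\leq\mbox{\rm rank}([M_B](k))$, density of $\{f_i\}$ in $\cH$ and $\{w_j\}$ in $\cK$ supplies indices $i_1,\ldots,i_r\in I$ and $j_1,\ldots,j_r\in\widetilde I$ whose associated $r\times r$ $M_B$-jump submatrix has rank $r$; as soon as $N,M$ are large enough to ensure $\{i_1,\ldots,i_r\}\subset I_N$ and $\{j_1,\ldots,j_r\}\subset\widetilde J_M$, the displayed identity yields $\mbox{\rm rank}(J_{N,M}(k))\geq r$, hence $\lim_{N,M\to\infty}\mbox{\rm rank}(J_{N,M}(k))=\mbox{\rm rank}([M_B](k))$. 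The main obstacle is the ``coverage'' step: strong convergence $P_{N,\Sco}\to I$ on $\Sco$ must be leveraged so that the first-coordinate sets $I_N$ eventually contain any prescribed finite subset of $I$ (and analogously for $\widetilde J_M$); the cleanest route is to exploit the freedom in the theorem's hypotheses by realising $P_{N,\Sco}$ via an exhaustive enumeration of $I\times J$, which both yields $\lim P_{N,\Sco}=I$ in the strong sense and guarantees $I_N\uparrow I$.
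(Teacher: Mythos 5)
Your proof is correct and essentially the paper's own: part (1) uses the same power-iteration/Vandermonde-type linear-independence argument together with density of the $\{f_i\}$, $\{w_j\}$, and part (2) inserts the pairs $(F_{i,\ell},v_{j,\nu})$ into the jump identity \eqref{eq:jump} from the proof of Theorem \ref{thm:5.1} and strips off the nonzero factors $(\mu_\ell-k)(\overline{\tilde{\mu}}_\nu-k)$ — the paper expresses the resulting rank equality via a Kronecker product with the all-ones matrix, you via row/column scaling and discarding repeated rows/columns, which is the same computation. The ``coverage'' point you flag is treated in the paper only by the remark that the $f_i$ and $w_j$ ``exhaust'' $\cH$ and $\cK$, so your explicit realisation of the projections along an exhaustive enumeration of $I\times J$ is a legitimate (indeed more careful) rendering of that same step.
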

\begin{proof}
\hspace{1in}

\begin{enumerate}
\item 
The fact that the closed linear spans of the sets $\{ F_{i,\ell} \}_{i\in I,\ell\in J}$ and $\{ v_{j,\nu} \}_{j\in\tilde{I},\nu\in\tilde{J}}$
are $\Sco$ and $\Scto$ follows immediately from the definitions of $\Sco$ and $\Scto$ together with the fact that the closed linear spans
of $\{ f_i \}_{i\in I}$ and $\{ w_j \}_{j\in\tilde{I}}$ are $\cH$ and $\cK$ respectively. It remains only to establish linear independence.
Assume that there exist constants $\alpha_{i,\ell}$ such that $\sum_{i,\ell}\alpha_{i,\ell}F_{i,\ell}=0$.
This means that $\sum_{i,\ell}\alpha_{i,\ell}S_{\mu_\ell,B}f_i = 0$. Applying $\At^*$ $k$ times yields
\[ \sum_{i,\ell}\alpha_{i,\ell}\mu_l^k F_{i,\ell} = \sum_{\ell}\mu_l^k \sum_{i}\alpha_{i,\ell}F_{i,\ell} = 0, \;\;\; k = 0,1,2,\ldots. \]
Since the $\mu_l$ are distinct this yields $\sum_{i}\alpha_{i,\ell}F_{i,\ell}=0$ for all $\ell$. This means
$S_{\mu_\ell,B}\sum_{i}\alpha_{i,\ell}f_i = 0$, and since $S_{\mu_\ell,B}$ has a left inverse this implies
$\sum_{i}\alpha_{i,\ell}f_i=0$. But the $\{ f_i \}$ are linearly independent, so we deduce that $\alpha_{i,\ell}=0$
for all $i$ and $\ell$. 
\item
Let $P_{N,\Sco}$ denote projection onto $N$ of the $F_{i,\ell}$ and $P_{M,\Scto}$ denote projection onto $M$ of 
the $v_{j,\nu}$, chosen in each case to be such that $P_{N,\Sco}$ and $P_{M,\Scto}$ converge strongly to the 
identity. Let $P_{N'}$ and $\tilde{P}_{M'}$ denote projections onto the spaces spanned by the corresponding $f_i$, of
which there will be $N'\leq N$, and $w_j$, of which there will be $M'\leq M$. Since \ian{$k\in E_1\cap E_2$} we may invoke \ian{\eqref{eq:jump} from the proof of}
Theorem \ref{thm:5.1} and deduce that
\[ \ian{- (\mu_\ell-k)(\overline{\tilde{\mu}}_\nu-k)}\llangle [P_{M,\Scto}(A_B-\lambda)^{-1}P_{N,\Sco}]_{\lambda=k}F_{i,\ell},v_{j,\nu}\rrangle
 = \llangle [\tilde{P}_{M'}M_B(\lambda)P_{N'}]_{\lambda=k}f_i,w_j\rrangle. \]
As $k\in \mathbb{R}$ we know that $\mu_\ell\neq k$ and $\tilde{\mu}_\nu\neq k$, so we define
\[ X_{i,\ell} = (\mu_\ell-k)F_{i,\ell}, \;\;\; Y_{j,\nu} = \ian{-} (\tilde{\mu}_\nu-k)v_{j,\nu}. \]
The vectors $\{ X_{i,\ell} \}$ and $\{ Y_{j,\nu} \}$ are \ian{both} linearly independent, and \ian{for each $\ell\in J, \nu\in\widetilde{J}$} we have
\[
\llangle [P_{M,\Scto}(A_B-\lambda)^{-1}P_{N,\Sco}]_{\lambda=k}X_{i,\ell},Y_{j,\nu}\rrangle
 = \llangle [\tilde{P}_{M'}M_B(\lambda)P_{N'}]_{\lambda=k}f_i,w_j\rrangle. \]
Define $M_1$ to be the matrix with entries $$\llangle [P_{M,\Scto}(A_B-\lambda)^{-1}P_{N,\Sco}]_{\lambda=k}X_{i,\ell},Y_{j,\nu}\rrangle,$$
ordered by incrementing $\ell$ and $\nu$ before $i$ and $j$, and $M_2$ to be the matrix with entries 
$\llangle [\tilde{P}_{M'}M_B(\lambda)P_{N'}]_{\lambda=k}f_i,w_j\rrangle$. It follows from the definition of the Kronecker product that 
\[ M_1 = M_2\otimes E \]
in which $E$ is a matrix whose entries are all equal to $1$. By consideration of the singular values of the Kronecker
product ($E$ has only one non-zero singular value) it follows that $M_1$ and $M_2$ have the same rank, and hence that
\[ \mbox{\rm rank}([P_{M,\Scto}(A_B-\lambda)^{-1}P_{N,\Sco}]_{\lambda=k}) = \mbox{\rm rank}([\tilde{P}_{M'}M_B(\lambda)P_{N'}]_{\lambda=k}). \]
If we define
\[ \mbox{\rm rank}([P_{\Scto}(A_B-\lambda)^{-1}P_{\Sco}]_{\lambda=k}) := 
 \lim_{M,N\rightarrow\infty} \mbox{\rm rank}([P_{M,\Scto}(A_B-\lambda)^{-1}P_{N,\Sco}]_{\lambda=k}) \]
and exploit the fact that the $\{ f_i \}_{i\in I}$ and $\{ v_j \}_{j\in \tilde{I}}$ exhaust $\cH$ and $\cK$ respectively, then it follows
that 
\[ \mbox{\rm rank}([P_{\Scto}(A_B-\lambda)^{-1}P_{\Sco}]_{\lambda=k}) = \mbox{\rm rank}([M_B(\lambda)]_{\lambda=k}) . \]
\end{enumerate}
\end{proof}

As a final remark, we mention that $\mbox{\rm rank}([P_{\Scto}(A_B-\lambda)^{-1}P_{\Sco}]_{\lambda=k})$ is  the multiplicity of
the absolutely continuous spectrum of $\left. A_B\right|_{\overline{\mathcal S}}$.

\section{Friedrichs model: reconstruction of $M_B(\lambda)$ from one \ian{restricted} resolvent $(A_B-\lambda)^{-1}|_{\overline{\mathcal S}}$}
\label{section:6}
We have now finished our abstract considerations   and the remainder of the paper is devoted to a detailed analysis of the Friedrichs model.

In this section we show how to reconstruct $M_B(\lambda)$ explicitly from the restricted resolvent. The fact that even the bordered resolvent
determines $M_B(\lambda)$ uniquely was proved in the abstract setting in Theorem \ref{Munique}, but of course methods
of reconstruction depend on the concrete operators under scrutiny.

We introduce the notation $\widehat{\cdot}$ for the Cauchy or Borel transform given by
\be\label{eq:mm9} \widehat{\overline \phi}(\lambda) = \ll \frac{1}{t-\lambda},\phi\rr, \;\;\;
 \widehat{\psi}(\lambda) = \ll \frac{1}{t-\lambda},\overline{\psi}\rr 
 \ee
 and   $P_\pm:L^2(\R)\to H_2^\pm(\R)$ for the Riesz projections given by 
 \be\label{Riesz}P_\pm f(k)=\pm\frac{1}{2\pi i}\lim_{\eps\to 0} \widehat{ f}(k \pm i\eps) =\pm\frac{1}{2\pi i}\lim_{\eps\to 0}\int_\R\frac{f(x)}{x-(k\pm i\eps)}dx,\ee
where the limit is to be understood in $L^2(\R)$ (see \cite{Koosis}). \ian{Here, $H^+_p(\R)$ and $H^-_p(\R)$ denote the Hardy spaces of boundary values of $p$-integrable functions in the upper and lower complex half-plane, respectively.}
To simplify notation, we also sometimes write $(\hat f)_\pm(k) =\widehat f (k\pm i0):=2\pi i P_\pm f(k)$.

\begin{Theorem}
\label{thm:1}
For the Friedrichs model, assume that 
$(A_B-\lambda)^{-1}|_{\overline{\mathcal S}}$ is known \ian{for  all
$\lambda\in\rho(A_B)\setminus\Rr$.} Then $M_B(\lambda)$ can be recovered.
\end{Theorem}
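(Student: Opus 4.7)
The approach is constructive, based on the explicit Friedrichs resolvent formulas \eqref{eq:mm1c}--\eqref{eq:mm10}. The plan is to identify a non-zero element $F\in\ker(\At^*-\mu)\subseteq\overline{\mathcal S}$ for some fixed $\mu\in\rho(A_B)\setminus\R$, and then use the resolvent identity \eqref{eq:sdiff} to propagate the value of $M_B$ to all $\lambda\in\rho(A_B)$.

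First, for any $g\in\overline{\mathcal S}$ and $\lambda\in\rho(A_B)\setminus\R$ the function $f:=(A_B-\lambda)^{-1}g\in\overline{\mathcal S}\cap D(A^*)$ is known from the data, and the scalar $c_f=\Gamma_2 f$ is uniquely characterised by $xf(x)-c_f{\mathbf 1}\in L^2(\R)$; hence $c_f$ is extractable directly from $f$. Rearranging \eqref{eq:mm1c} as
\beq
g(x)+c_f-(x-\lambda)f(x)=\frac{\widehat{g\bar\phi}(\lambda)+c_f\,\widehat{\bar\phi}(\lambda)}{D(\lambda)}\,\psi(x),
\eeq
the left-hand side is a known function of $x$ proportional to $\psi(x)$. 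Varying $g\in\overline{\mathcal S}$ and $\lambda$, the consistency that all such left-hand sides must be scalar multiples of the \emph{same} $\psi$ pins down $\psi$ as a function, up to an overall scalar $\sigma\in\C\setminus\{0\}$ independent of $g$ and $\lambda$.

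Next, from \eqref{eq:7} the space $\ker(\At^*-\mu)$ is one-dimensional and lies inside the two-dimensional subspace $V_\mu:=\Span\{(x-\mu)^{-1},\,\psi(x)/(x-\mu)\}$, which is now explicitly determined (the rescaling of $\psi$ does not change $V_\mu$). Since $\ker(\At^*-\mu)\subseteq\overline{\mathcal S}$ by definition of the detectable subspace, the intersection $V_\mu\cap\overline{\mathcal S}$ contains $\ker(\At^*-\mu)$; generically this intersection is exactly one-dimensional and therefore coincides with $\ker(\At^*-\mu)$, and it is computable from the data since $\overline{\mathcal S}$ is precisely the domain on which the restricted resolvent acts. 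Picking any non-zero $F\in V_\mu\cap\overline{\mathcal S}=\ker(\At^*-\mu)$, we extract $\Gamma_1 F$ and $\Gamma_2 F$ from the function $F$ as in the first step, and set $c:=(\Gamma_1-B\Gamma_2)F$. Then $F=c\,S_{\mu,B}(1)$, so by the resolvent identity \eqref{eq:sdiff},
\beq
M_B(\lambda)=\Gamma_2 S_{\lambda,B}(1)=\frac{\Gamma_2 F+(\lambda-\mu)\,\Gamma_2(A_B-\lambda)^{-1}F}{c}\quad\text{for all }\lambda\in\rho(A_B),
\eeq
in which every quantity on the right-hand side is completely determined by the data. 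The arbitrary scalar $\sigma$ cancels because $M_B$ depends on $\phi$ and $\psi$ only through combinations invariant under the gauge rescaling $\psi\mapsto\sigma\psi$, $\phi\mapsto\sigma^{-1}\phi$.

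The main technical obstacle is the non-degeneracy claim $\dim(V_\mu\cap\overline{\mathcal S})=1$: in degenerate configurations where $\overline{\mathcal S}$ happens to contain both basis vectors of $V_\mu$, the line $\ker(\At^*-\mu)$ cannot be singled out by intersection alone, and a finer argument relying on the explicit description of $\overline{\mathcal S}$ developed in Sections~\ref{section:7} and~\ref{section:Toep} is required; in all cases the abstract uniqueness of Theorem~\ref{Munique} guarantees that such a reconstruction exists in principle.
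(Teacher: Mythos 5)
Your propagation step via \eqref{eq:sdiff} is fine as algebra, but the argument as written does not prove the theorem, for two concrete reasons. First, your final formula needs the constant $c=(\Gamma_1-B\Gamma_2)F$, i.e.\ it needs the boundary parameter $B$ — which is not part of the data and which you never recover. The hypothesis is only that $(A_B-\lambda)^{-1}|_{\overline{\mathcal S}}$ is known; the paper's proof devotes its entire second step to extracting $B$ from the asymptotics of $c_f$ as $\Im\lambda\to\infty$ (using test elements of the form \eqref{eq:mm11} with $\lambda=-\mu$), and without some such argument your expression for $M_B(\lambda)$ simply cannot be evaluated. Appealing to Theorem \ref{Munique} does not repair this: that theorem gives \emph{uniqueness} (which, with the stronger one-sided data here, is not in question), whereas the content of Theorem \ref{thm:1} is the reconstruction procedure itself; if "exists in principle" were enough, Section \ref{section:6} would be an immediate corollary of Theorem \ref{Munique}.

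Second, the step you yourself flag — $\dim(V_\mu\cap\overline{\mathcal S})=1$ — is not a removable technicality. The set of $\mu$ where it fails is exactly the non-uniqueness set $\Omega$ that the paper introduces right after its proof (both $(x-\mu)^{-1}$ and $\psi(x)/(x-\mu)$ lying in $\overline{\mathcal S}$), and the paper's analysis shows that this "identify the kernel element / unique $\sigma(\mu)$" route only closes under additional hypotheses: in case (I) one only learns that $\overline{\mathcal S}$ contains a Hardy space, and the Corollary needs the extra assumption that $\widehat{\overline{\phi}}/D$ in $\C_+$ continues the function in $\C_-$. That is precisely why the published proof does not try to isolate $\ker(\At^*-\mu)$ at all, but works with \emph{some} admissible $g_\mu$ as in \eqref{eq:mm11}, demanding only an order bound on $\sigma(\mu)$, and then kills the unknown terms by letting $\Im\mu\to\infty$. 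A further, smaller gap sits in your first step: to conclude that the known left-hand side of the rearranged \eqref{eq:mm1c} determines $\psi$ up to a scalar, you must show that its coefficient is not identically zero as $g$ ranges over $\overline{\mathcal S}$ and $\lambda$ over the resolvent set; this is the non-trivial contradiction argument \eqref{eq:mm3}--\eqref{eq:mm8} in the paper (if the coefficient vanished identically one would get $\phi\equiv 0$), and it is missing from your proposal.
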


\begin{Remark}
We assume that $(A_B-\lambda)^{-1}|_{\overline{\mathcal S}}$ is known \ian{for all
$\lambda\in \rho(A_B)\setminus\Rr$}, though it is certainly sufficient to know it at one 
point in each connected component of $\Cc\setminus\sigma(A_B)$. If $\sigma(A_B)$
does not cover all of either half-plane $\Cc_{\pm}$ then it is enough to know
$(A_B-\lambda)^{-1}|_{\overline{\mathcal S}}$ at two points, one in each of 
$\Cc_{\pm}$. If, additionally, $\sigma(A_B)$ does not cover $\Rr$, then it suffices
to know $(A_B-\lambda)^{-1}|_{\overline{\mathcal S}}$ for just one value of
$\lambda$.
\end{Remark}
\begin{proof}[Proof of Theorem \ref{thm:1}] 

{\bf 1. Recovering the function $\psi$.} Take non-zero $g\in\Sco$ and $\lambda
\in \Cc\setminus(\Rr\cup\sigma(A_B))$. Observe that (\ref{eq:mm1c}) may be rewritten in the form
\begin{equation}\label{eq:mm2}
f(x) - \frac{g(x)}{x-\lambda} - \frac{c_f}{x-\lambda} = \frac{\psi(x)}{x-\lambda}A(\lambda),
\end{equation}
in which 
\[ A(\lambda) = -\frac{1}{ D(\lambda)}\left[\left\langle \frac{g}{t-\lambda},\phi\right\rangle
 + c_f \left\langle\frac{1}{t-\lambda},\phi\right\rangle\right] \]
 and $D(\lambda)$  is given by   \eqref{eq:Ddef}.
 The left hand side of (\ref{eq:mm2}) is known as a function of $\lambda$, at least for 
 $g\in\overline{\mathcal{S}}$. To determine $\psi$ {\bf up to a scalar multiple} it is therefore sufficient to
 find $g$ and $\lambda$ so that  $A(\lambda)$ is non-zero: 
 in other words, find $g$ such that the function $A(\cdot)$ is not identically zero.
 
 We proceed by contradiction. Suppose we have a non-trivial Friedrichs model (i.e.
 neither $\phi$ nor $\psi$ is identically zero). If $A(\cdot)$ is identically zero then multiplying by $M_B(\lambda)^{-1}$ \ian{from \eqref{eq:9} and using \eqref{eq:mm10}}
 we obtain
 \begin{align}
\left[i\pi\mbox{sign}(\Im\lambda)-\frac{1}{ D(\lambda)}
 \left\langle\frac{1}{t-\lambda},\phi\right\rangle\left\langle\frac{1}{t-\lambda},
 \overline{\psi}\right\rangle-B\right]\left\langle\frac{g}{t-\lambda},\phi\right\rangle
\hspace{1cm} \nonumber \\ +\left[-\left\langle\frac{1}{t-\lambda},\overline{g}\right\rangle+\frac{1}{ D
 (\lambda)}\left\langle\frac{g}{t-\lambda},\phi\right\rangle\left\langle\frac{1}{t-\lambda},
 \overline{\psi}\right\rangle\right]\left\langle\frac{1}{t-\lambda},\phi\right\rangle\equiv 0,
 \end{align}
from which it follows
\begin{equation}\label{eq:mm3}
(i\pi\mbox{sign}(\Im\lambda)-B)\left\langle \frac{g}{t-\lambda},\phi\right\rangle
 - \left\langle \frac{1}{t-\lambda},\overline{g}\right\rangle \left\langle \frac{1}{t-\lambda},\phi
 \right\rangle \equiv 0.
 \end{equation}
 For all non-real
$\mu$ such that $ D(\mu)$ is nonzero (this is true for a.e.~non-real $\mu$ by analyticity),
 there exists $g\in\Sco$ in the range of the solution operator $S_{\mu,B}$. We know \ian{from \eqref{eq:7}} that such $g$ have the
form
\begin{equation}\label{eq:mm4}
g(x) = \frac{1}{x-\mu}-\frac{1}{ D(\mu)}\left\langle\frac{1}{t-\mu},
\phi\right\rangle\frac{\psi(x)}{x-\mu}, 
\end{equation}
though we do not know the function $\psi$ or the value of 
$\frac{1}{ D(\mu)}\left\langle\frac{1}{t-\mu},
\phi\right\rangle$. Substituting (\ref{eq:mm4}) into (\ref{eq:mm3}) yields
\ben
&&(i\pi\mbox{sign}(\Im\lambda)-B)\left[\ll\frac{1}{(t-\mu)(t-\lambda)},\phi\rr
 - \frac{1}{ D(\mu)}\ll \frac{1}{t-\mu},\phi\rr\ll\frac{\psi}{(t-\mu)(t-\lambda)},
 \phi\rr\right] \nonumber \\
&& \equiv \ll \frac{1}{t-\lambda},\phi\rr\left[\ll\frac{1}{(t-\lambda)(t-\mu)},\mathbf{1}\rr
  - \frac{1}{ \ian{D(\mu)}}\ll\frac{1}{t-\mu},\phi\rr\ll\frac{1}{(t-\lambda)(t-\mu)},
  \overline{\psi}\rr\right]. \label{eq:mm5}
\een
If we use the identity
\be \frac{\lambda-\mu}{(t-\lambda)(t-\mu)} = \frac{1}{t-\lambda}-\frac{1}{t-\mu} \label{eq:mm12}\ee
and use the notations from \eqref{eq:mm9}
then \ian{multiplying by $(\lambda-\mu)$, \eqref{eq:mm5}} 
becomes
\begin{align}
(i\pi\mbox{sign}(\Im\lambda)-B)\left[\widehat{\overline{\phi}}(\lambda)-
\widehat{\overline{\phi}}(\mu) - \frac{1}{ D(\mu)}\widehat{\overline{\phi}}(\mu)
( D(\lambda)- D(\mu))\right] \nonumber \\
\equiv \widehat{\overline{\phi}}(\lambda)\left[\int_{\Rr}\frac{\lambda-\mu}{(t-\lambda)(t-\mu)}dt
- \frac{\widehat{\overline{\phi}}(\mu) }{ D(\mu)}(\widehat{{\psi}}(\lambda)-\widehat{{\psi}}(\mu))
\right].\label{eq:mm6}
\end{align}
Performing the integral for the case in which $\Im\lambda\cdot\Im\mu<0$, we obtain
\be
(i\pi\mbox{sign}(\Im\lambda)-B)\left[\widehat{\overline{\phi}}(\lambda)
- \frac{ D(\lambda)}{ D(\mu)}\widehat{\overline{\phi}}(\mu)\right] 
\equiv \widehat{\overline{\phi}}(\lambda)\left[\pm 2\pi i
- \frac{\widehat{\overline{\phi}}(\mu) }{ D(\mu)}(\widehat{{\psi}}(\lambda)-\widehat{{\psi}}(\mu))
\right].
\ee
Fix $\lambda$ and let $\mu\rightarrow i\infty$, so that $ D(\mu)\rightarrow 1$
and $\widehat{\overline\phi}(\mu)\rightarrow 0$. This yields
\be (i\pi\mbox{sign}(\Im\lambda)-B)\widehat{\overline{\phi}}(\lambda) 
\equiv \pm2\pi i \widehat{\overline{\phi}}(\lambda). \label{eq:mm7} \ee
If, on the other hand, we consider $\Im\lambda\cdot\Im\mu>0$ in (\ref{eq:mm6}) then the value
of the integral is zero, and we obtain, upon letting $\mu\rightarrow i\infty$,
\be (i\pi\mbox{sign}(\Im\lambda)-B)\widehat{\overline\phi}(\lambda)\equiv 0. \label{eq:mm8} \ee
Equations (\ref{eq:mm7},\ref{eq:mm8}) together imply that $\widehat{\overline{\phi}}$ is identically
zero, and hence so is $\phi$. In this case the function $\psi$ is irrelevant and so our Friedrichs
model is trivial, a contradiction.
Thus (\ref{eq:mm2}) determines $\psi$ up to a constant multiple. We may choose this (non-zero)
multiple arbitrarily, since $\phi$ can be rescaled if necessary to obtain the correct Friedrichs model.

{\bf 2. Recovering the boundary condition parameter $B$.} Returning to the parameter
$c_f$ in (\ref{eq:mm10}) and using the notation (\ref{eq:mm9}), we have
\bea \left[i\pi \mbox{sign}(\Im\lambda)-B-\frac{1}{D(\lambda)}\widehat{\overline{\phi}}(\lambda)
\widehat{\psi}(\lambda)\right]c_f 
&=& \left[-\ll \frac{1}{t-\lambda},\overline{g}\rr \ian{+} \frac{1}{D(\lambda)}
\ll \frac{g}{t-\lambda},\phi\rr\ll
\frac{1}{t-\lambda},\overline{\psi}\rr\right] \\
&=& \left[-\ll \frac{1}{t-\lambda},\overline{g}\rr + O\left(\| g \|_2 |\Im\lambda|^{-3/2}\right)\right], 
\eea
as $\Im\lambda\rightarrow\infty$, and uniformly in $g$. Now choose an element 
\be g(x)\equiv g_\mu(x) := \frac{1}{x-\mu}-\sigma(\mu)\frac{\psi(x)}{x-\mu}, \label{eq:mm11}\ee
$\mu\in\Cc\setminus\Rr$, $D(\mu)\neq 0$, with some $\sigma(\mu) = O(|\Im\mu|^{-1/2})$.
We know that such $\sigma(\mu)$ exists, and indeed may be chosen as $\widehat{\overline \phi}(\mu)/
D(\mu)$, but we do not yet know $\phi$ and therefore do not claim that our particular choice of
$\sigma$ is given by this formula. 
We fix some choice of $\sigma$,
so that $g=g_\mu$ is determined and $c_f$ is known as a function of $\lambda$ and $\mu$. We have
\bea &&
(i\pi\mbox{sign}(\Im\lambda)-B+O(|\Im\lambda|^{-1}))c_f \\ && \hspace{50pt} = \left[-\ll\frac{1}{t-\lambda},\frac{1}{t-\overline{\mu}}\rr
  + \sigma(\mu)\ll\frac{1}{t-\lambda},\frac{\overline{\psi}}{t-\overline{\mu}}\rr
   + O(|\Im\lambda|^{-3/2}) \| g_\mu \|_2 \right] \nonumber \\
    && \hspace{50pt} = -\int_{\Rr}\frac{1}{(t-\lambda)(t-\mu)}dt 
    + O(|\Im\mu|^{-3/2})O(|\Im\lambda|^{-1/2})\nonumber \\ && \hspace{65pt}
    +O(|\Im\lambda|^{-3/2})
    \left(O(|\Im\mu|^{-1/2})+\|\psi\|_2\frac{|\sigma(\mu)|}{|\Im\mu|}\right).
\eea
Assuming that $\Im\lambda\cdot\Im\mu<0$, we know that 
\[ -\int_{\Rr}\frac{1}{(t-\lambda)(t-\mu)}dt = \frac{\pm 2\pi i}{\lambda-\mu} . \]
Put $\lambda = -\mu$ and letting $\Im\mu\rightarrow\infty$, we obtain
\[ (i\pi\mbox{sign}(\Im\lambda)-B)c_f = \frac{\pm 2\pi i}{2\lambda} + O(|\lambda|^{-2}). \]
For one choice of $\mbox{sign}(\Im\lambda)$ at least, $i\pi\mbox{sign}(\Im\lambda)-B\neq 0$
and so we can recover $B$ from the asymptotic behaviour of $c_f$ as $
\Im\lambda\rightarrow\infty$.

{\bf 3. Recovering $\widehat{\overline{\phi}}(\lambda)/D(\lambda)$.}
Once again we choose $g=g_\mu$ of the form (\ref{eq:mm11}). Returning to (\ref{eq:mm2})
and indicating the $\mu$-dependence of $f$ by writing $f=f_\mu=(A_B-\lambda)^{-1}g_\mu$, we have
\[ (A_B-\lambda)^{-1}g_\mu - \frac{g_\mu(x)}{x-\lambda} - \frac{c_{f_\mu}(\lambda)}{x-\lambda}
 = -\frac{\psi(x)}{x-\lambda}\frac{1}{D(\lambda)}\left[\ll \frac{g_\mu}{t-\lambda},\phi\rr
  + c_{f_\mu}(\lambda)\ll\frac{1}{t-\lambda},\phi\rr\right]. \]
Since the left hand side of this equation is known and since $\psi$ is known, this implies that
\[ \frac{1}{D(\lambda)}\left[\ll \frac{g_\mu}{t-\lambda},\phi\rr + c_{f_\mu}(\lambda)
\ll\frac{1}{t-\lambda},\phi\rr\right] \]
is known. Substituting the known choice of $g_\mu$ we discover that
\[ \frac{1}{D(\lambda)}\left[ \ll \frac{1}{(t-\lambda)(t-\mu)},\phi\rr
 - \sigma(\mu)\ll \frac{\psi}{(t-\lambda)(t-\mu)},\phi\rr
  + c_{f_\mu}(\lambda)\ll \frac{1}{t-\lambda},\phi\rr\right](\lambda-\mu) \]
is known too. Using identity (\ref{eq:mm12}) this means that
\be \frac{1}{D(\lambda)}\left[\widehat{\overline\phi}(\lambda)-\widehat{\overline{\phi}}(\mu)
 - \sigma(\mu)(D(\lambda)-D(\mu)) + (\lambda-\mu)c_{f_\mu}(\lambda)
 \widehat{\overline{\phi}}(\lambda) \right]\label{eq:mm13}
 \ee
is known. We shall now fix $\lambda$ and let $\Im\mu\rightarrow \infty$, for which purpose
we need to know how $(\lambda-\mu)c_{f_\mu}(\lambda)$ will behave. From (\ref{eq:mm10}),
we have 
\begin{align}
 c_{f_\mu}(\lambda)(\lambda-\mu) = (\lambda-\mu)M_B(\lambda)\left[
  - \ll \frac{1}{t-\lambda},\frac{1}{t-\overline{\mu}}\rr + \sigma(\mu)\ll\frac{1}{t-\lambda},
   \frac{\overline{\psi}}{t-\overline{\mu}}\rr \right. \nonumber \\
   \left. + \frac{\widehat{\psi}(\lambda)}{D(\lambda)}
    \left\{ \ll \frac{1}{(t-\lambda)(t-\mu)},\phi\rr - \sigma(\mu)\ll\frac{\psi}{(t-\lambda)(t-\mu)},
    \phi\rr\right\} \right] .
    \end{align}
Choosing $\mu\neq \lambda$ with $\Im\lambda\cdot\Im\mu>0$ causes the integral term
$\ll \frac{1}{t-\lambda},\frac{1}{t-\overline{\mu}} \rr $ to vanish. This yields
\begin{align} c_{f_\mu}(\lambda) & = M_B(\lambda)\left[\sigma(\mu)(\widehat{\psi}(\lambda)-\widehat{\psi}(\mu))
   + \frac{\widehat{\psi}(\lambda)}{D(\lambda)}(\widehat{\overline{\phi}}(\lambda)-
   \widehat{\overline{\phi}}(\mu)) - \sigma(\mu)(D(\lambda)-D(\mu))\right] \nonumber \\
  &  \rightarrow M_B(\lambda)\frac{\widehat{\psi}(\lambda)}{D(\lambda)}\widehat{\overline{\phi}}(\lambda),
  \;\;\; \Im\mu\rightarrow\infty.
\end{align}
Letting $\Im\mu\rightarrow\infty$ in (\ref{eq:mm13}) therefore yields that
\be \frac{1}{D(\lambda)}\left[\widehat{\overline{\phi}}(\lambda) +
 M_B(\lambda) \frac{\widehat{\psi}(\lambda)}{ D(\lambda)}\widehat{\overline{\phi}}(\lambda)^2 \right]
 \label{eq:mm15}
 \ee
is known. However from (\ref{eq:9}) we have
\[ M_B(\lambda) = \left[ i\pi\mbox{sign}(\Im \lambda )-\frac{1}{ D(\lambda)}\widehat{\overline{\phi}}(\lambda)
\widehat{\psi}(\lambda)-B\right]^{-1} \]
and so the known quantity appearing in (\ref{eq:mm15}) is  
\[   M_B(\lambda)\frac{\widehat{\overline{\phi}}(\lambda)}{ D(\lambda)}\left[ i \pi \mbox{sign}(\Im\lambda)-B\right] .
\]
This means that 
\[ \alpha := M_B(\lambda)\frac{\widehat{\overline{\phi}}(\lambda)}{ D(\lambda)} \]
is known, and simple algebra shows that
\be \frac{\widehat{\overline{\phi}}(\lambda)}{ D(\lambda)}(1+\alpha\widehat{\psi}(\lambda)) = 
\alpha(i\pi\mbox{sign}(\Im\lambda)-B), 
\label{eq:mm16}
\ee
which determines $\frac{\widehat{\overline{\phi}}(\lambda)}{ D(\lambda)}$ and hence $M_B(\lambda)$ provided the factor 
$1+\alpha\widehat{\psi}(\lambda)$ is not identically zero; equivalently, provided
$i\pi \mbox{sign}(\Im\lambda)-B$ is not zero.

We are therefore left to rule out just one pathological case: the case in which $B=i\pi\mbox{sign}(\Im(\lambda))$
in one half-plane and $\widehat{\overline{\phi}}\widehat{\psi}\equiv 0$ in the same half-plane. This can only happen
if $M_B(\lambda)^{-1}$ is zero in this half-plane, which means that every point in the half-plane is an eigenvalue
of $A_B$ and the corresponding $g_\lambda$ given by
\[ g_\lambda(x) = \frac{1}{x-\lambda}-\frac{\widehat{\overline\phi}(\lambda)}{ D(\lambda)}\frac{\psi(x)}{
x-\lambda} = \frac{1}{x-\lambda} \]
belongs to $L^2(\Rr)$ and also satisfies the conditions to lie in the domain of $A_B$:
\[ i\pi \sign(\Im\lambda) = i\pi \mbox{sign}(\Im\lambda) - \frac{\widehat{\overline{\phi}}(\lambda)\widehat{\psi}
(\lambda)}{ D(\lambda)} = \Gamma_1 g_\lambda = B \Gamma_2 g_\lambda 
 = B \]
(see (6.16) in \cite{BHMNW09}). 
\end{proof}
\begin{Remark} ({\bf Uniqueness of $g_\mu$}). 
An alternative approach can be found by examining the uniqueness of the function $g_\mu$ in $\Sco$
defined in (\ref{eq:mm11}). If we know that the choice of $\sigma(\mu)$ is unique then we can immediately
determine $\widehat{\overline{\phi}}(\mu)/ D(\mu)$, which must be equal to $\sigma(\mu)$. This is determined
by $g_\mu$ if $g_\mu$ is unique with its required properties. We examine this now.
\end{Remark}
\begin{Definition}
The non-uniqueness set is the set
\begin{align}
 \Omega = \left\{ \mu\in\Cc\setminus\Rr\, \Big| \, \exists \sigma_1(\mu)\neq\sigma_2(\mu): \;\;
\frac{1}{x-\mu}+\sigma_j(\mu)\frac{\psi(x)}{x-\mu}\in\Sco, \;\; j = 1, 2\right\}.
\end{align}
Equivalently,
\[ \Omega = \left\{ \mu\in\Cc\setminus\Rr \, \Big| \, \frac{1}{x-\mu}\in\Sco \;\mbox{and}\; 
 \frac{\psi(x)}{x-\mu}\in \Sco \right\} .\]
 \end{Definition}
 We also let $\Omega_{\pm} = \Cc_{\pm}\cap\Omega$. We can ignore the condition
 $ D(\mu)\neq 0$ since it can be removed by taking a closure. We can also assume
 that $\overline{\mathcal{S}}\neq L^2(\Rr)$ since otherwise we know the whole resolvent
 $(A_B-\lambda)^{-1}$, which means we know $A_B$ and hence $M_B$. 
We consider two cases in $\Cc_+$ (the situation in $\Cc_{-}$ is similar):
\begin{description}
\item[(I)] $\Cc_{+}\setminus\Omega_+$ has measure 0;
\item[(II)] $\Cc_{+}\setminus\Omega_+$ has positive measure.
\end{description}
In case (II) the uniqueness set in $\Cc_+$, where we can recover $\widehat{\overline\phi}(\mu)/
 D(\mu)$ immediately from $g_\mu$, will have an accumulation point in $\Cc_{+}$
and thus $\widehat{\overline\phi}(\mu)/ D(\mu)$ is uniquely determined in $\Cc_{+}$, by 
analyticity.

In case (I) we have that for almost all $\mu\in\Cc_{+}$, the function $x\mapsto
(x-\mu)^{-1}$ lies in $\overline{\mathcal{S}}$. However
$ \bigvee_{\Im\mu>0}\frac{1}{x-\mu}$ is the Hardy space $H_2^{-}$, and hence
$\overline{\mathcal S}\supseteq H_2^{-}$. Consider the situation in $\Cc_{-}$. 
If we are in the case $|\Omega_{-}|>0$ then 
\[ \overline{\mathcal S} \supset \bigvee_{\mu\in\Omega_{-}}\frac{1}{x-\mu}
 = H_2^{+}, \]
and so we have proved the following.
\begin{Lemma}
\begin{enumerate}
\item If $\Cc_{\pm}\setminus\Omega_{\pm}$ has measure zero, then
 $\overline{\mathcal S}$ contains $H^{2}_{\mp}$, respectively.
\item If $\Cc_{\pm}\setminus\Omega_{\pm}$ has positive measure then one
can recover $\widehat{\overline{\phi}}(\mu)/ D(\mu)$ uniquely, for 
$\mu\in \Cc_{\pm}$.
\end{enumerate}
\end{Lemma}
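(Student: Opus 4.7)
The plan is to derive both parts directly from the dichotomy (I)/(II) that was set up immediately before the statement, so the work is largely one of assembly rather than new estimation. The key objects are: the definition of $\Omega_\pm$ (the set of $\mu$ in the corresponding half-plane for which $g_\mu\in\Sco$ is not uniquely determined by its prescribed form), the invariance of $\Sco$ as a closed subspace of $L^2(\Rr)$, and the classical identification of closed linear spans of Cauchy kernels with Hardy spaces.

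For part (1), I would argue as follows. Saying $\Cc_\pm\setminus\Omega_\pm$ has measure zero means that for a.e.\ $\mu\in\Cc_\pm$, \emph{both} $(x-\mu)^{-1}$ and $\psi(x)/(x-\mu)$ belong to $\Sco$; in particular $(x-\mu)^{-1}\in\Sco$ for a.e.\ $\mu\in\Cc_\pm$. Since $\Sco$ is closed, it contains
\[
\bigvee_{\mu\in\Cc_\pm}\frac{1}{x-\mu}.
\]
The classical fact I would then invoke is that for $\Im\mu>0$ the function $z\mapsto (z-\mu)^{-1}$ is analytic in the opposite half-plane and belongs to $H_2^{\mp}$, and that these Cauchy kernels (which up to a constant are the reproducing kernels for $H_2^{\mp}$) have closed linear span equal to the whole of $H_2^{\mp}$. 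This yields $H_2^{\mp}\subseteq\Sco$ as required.

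For part (2), I would exploit that positive Lebesgue measure implies the existence of an accumulation point of $\Cc_\pm\setminus\Omega_\pm$ in $\Cc_\pm$. On this uniqueness set, the representation $g_\mu(x)=(x-\mu)^{-1}-\sigma(\mu)\psi(x)/(x-\mu)$ with $g_\mu\in\Sco$ forces a single value of $\sigma(\mu)$, which by (\ref{eq:mm4}) must coincide with $\widehat{\overline\phi}(\mu)/D(\mu)$. Because $\psi$ has already been recovered (Step 1 of the proof of Theorem \ref{thm:1}) and $g_\mu$ is determined by the known restricted resolvent, this pins down $\widehat{\overline\phi}(\mu)/D(\mu)$ on a set with an accumulation point in $\Cc_\pm$. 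Since $\widehat{\overline\phi}/D$ is meromorphic in $\Cc_\pm$, analytic continuation then recovers it uniquely throughout the open half-plane.

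The only non-trivial analytic input is the identification $\bigvee_{\Im\mu>0}(x-\mu)^{-1}=H_2^{-}$ (and its mirror), which I expect to be the main obstacle to write cleanly; this follows from Cauchy's reproducing formula in $H_2^{\mp}$, together with the fact that a function in $H_2^{\mp}$ annihilated by all evaluation functionals at points of the opposite half-plane must vanish identically. Once that is quoted, both assertions are a direct consequence of the definitions of $\Omega_\pm$ and $\Sco$.
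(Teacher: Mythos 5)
Your proposal is correct and follows essentially the same route as the paper: part (1) via the observation that a.e.\ $\mu\in\Cc_\pm$ gives $(x-\mu)^{-1}\in\Sco$ and that the closed span of these Cauchy kernels is $H_2^{\mp}$, and part (2) via the fact that a set of positive measure has an accumulation point in the open half-plane, so the value $\sigma(\mu)=\widehat{\overline{\phi}}(\mu)/D(\mu)$ read off on the uniqueness set extends uniquely by analytic (meromorphic) continuation. The only point worth making explicit is that membership is known only for a.e.\ $\mu$, so passing to the span over all $\mu$ uses the $L^2$-continuity of $\mu\mapsto(x-\mu)^{-1}$ together with closedness of $\Sco$ — the same gloss the paper makes.
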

\begin{Corollary}
Assume that the function $\widehat{\overline{\phi}}(\mu)/ D(\mu)$ in $\Cc_{+}$ coincides with the analytic continuation of $\widehat{\overline{\phi}}(\mu)/ D(\mu)$ in
$\Cc_{-}$. (This happens,
for instance, if $\phi$ has compact support or is zero on an interval.) Then
either $\overline{\mathcal{S}}=L^2(\Rr)$ or we can reconstruct
$\widehat{\overline{\phi}}(\mu)/ D(\mu)$ in $\Cc\setminus\Rr$ uniquely from
$(A_B-\lambda)^{-1}|_{\overline{\mathcal S}}$.
\end{Corollary}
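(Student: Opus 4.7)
The plan is to use the preceding lemma separately in each half-plane $\C_\pm$, leading to a four-way case analysis that collapses to two outcomes: either $\overline{\mathcal S} = L^2(\R)$, or we can recover $\widehat{\overline{\phi}}(\mu)/D(\mu)$ in at least one half-plane and then invoke the hypothesis about analytic continuation to obtain it in the other.

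First I would apply the lemma in each half-plane. This yields four logically possible combinations based on whether each of $\C_+\setminus\Omega_+$ and $\C_-\setminus\Omega_-$ has measure zero or positive measure. In the case where both have positive measure, part (2) of the lemma immediately reconstructs $\widehat{\overline{\phi}}(\mu)/D(\mu)$ on all of $\C\setminus\R$ without even needing the analytic continuation hypothesis. In the mixed case where, say, $\C_+\setminus\Omega_+$ has positive measure but $\C_-\setminus\Omega_-$ has measure zero, part (2) recovers $\widehat{\overline{\phi}}(\mu)/D(\mu)$ in $\C_+$; then by the standing assumption that this function coincides with the analytic continuation of $\widehat{\overline{\phi}}(\mu)/D(\mu)$ from $\C_-$, we obtain the values in $\C_-$ as well. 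The symmetric mixed case is handled identically.

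The remaining case is when both $\C_\pm\setminus\Omega_\pm$ have measure zero. Here part (1) of the lemma gives $\overline{\mathcal S}\supseteq H_2^-$ and $\overline{\mathcal S}\supseteq H_2^+$. Using the classical Hardy decomposition $L^2(\R) = H_2^+ \oplus H_2^-$, this yields $\overline{\mathcal S} = L^2(\R)$, which is the first alternative in the corollary.

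The main obstacle, and what the hypothesis of the corollary is designed to address, is precisely the mixed cases: the lemma only gives direct reconstruction of $\widehat{\overline{\phi}}/D$ on the half-plane where the non-uniqueness set is small (i.e.\ $\C_\pm\setminus\Omega_\pm$ has positive measure), and in the other half-plane one instead only gets a one-sided inclusion $H_2^\mp\subseteq\overline{\mathcal S}$, which does not by itself suffice to recover the function there. The coincidence hypothesis (satisfied, e.g., when $\phi$ has compact support or vanishes on an interval, so that $\widehat{\overline{\phi}}/D$ extends analytically across $\R$ as a single-valued function) is exactly what is needed to bridge this gap, since one can then analytically continue the reconstructed half-plane values to the other half-plane and match them with the true function.
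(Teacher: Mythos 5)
Your proof is correct and follows the approach implicit in the discussion preceding the Lemma; since the paper does not include an explicit proof of this Corollary, your exhaustive case analysis on the measure of $\Cc_\pm\setminus\Omega_\pm$ fills it in faithfully. The key points — that part (2) of the Lemma yields direct reconstruction in whichever half-plane has $|\Cc_\pm\setminus\Omega_\pm|>0$, that the analytic-continuation hypothesis bridges the mixed cases, and that part (1) combined with $L^2(\R)=H_2^+\oplus H_2^-$ gives $\overline{\mathcal S}=L^2(\R)$ in the remaining case — are exactly what the authors intend.
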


\section{Determining $\overline{\Sc}$ for the Friedrichs model}\label{section:7}
This section is devoted to a detailed analysis of the space $\overline{\mathcal S}$ for the Friedrichs model. 
We shall demonstrate how different aspects of complex analysis are brought into the problem of determining $\overline{\mathcal S}$
and we compute the defect number 
\[ \mbox{def}(\overline{\mathcal S}) = \dim({\mathcal S}^\perp) \]
for various different choices of the functions $\phi$ and $\psi$ which determine the model. The proofs are almost all in the appendix,
as the calculations are sometimes elaborate.

 We first give a characterisation of the space $\overline{\Sc}$, or more precisely, its orthogonal complement.
 
 \begin{proposition} \label{sperpcrit}
Let $P_\pm$ be the Riesz projections defined in \eqref{Riesz} and $D(\lambda)$ be as in (\ref{eq:Ddef}). Denote by $D_\pm(\lambda)$ its restriction to $\C_\pm$ and by $D_\pm$ the boundary values of these functions on $\R$ (which exist a.e., cf.~\cite{Koosis,Pri50}).
\begin{enumerate}
	\item  Let $\phi,\psi\in L^2$.
	\ben\label{eq:65}
 g\in\overline{\Sc}^\perp & \Longleftrightarrow & \begin{cases} P_+ \og-\frac{2\pi i}{D_+}(P_+\ophi)P_+(\psi\og)=0,\\ 
 P_- \og+\frac{2\pi i}{D_-}(P_-\ophi)P_-(\psi\og)=0.  \end{cases}\\ \label{eq:66}
  & \Longleftrightarrow & \begin{cases} \mathrm{(i)}\ \frac{(P_+\ophi)P_+(\psi\og)}{D_+}\in H_2^+, \\
  \mathrm{(ii)}\ \frac{(P_-\ophi)P_-(\psi\og)}{D_-}\in H_2^-, \\
  \mathrm{(iii)}\ \og-\frac{2\pi i}{D_+}(P_+\ophi)P_+(\psi\og)+\frac{2\pi i}{D_-}(P_-\ophi)P_-(\psi\og)=0\ (a.e.).
   \end{cases}
   \een
   \item If $\phi\in L^2,\psi\in L^2\cap L^\infty$ or $\phi,\psi\in L^2\cap L^4$, then
   \ben\label{eq:67}
   g\in\overline{\Sc}^\perp & \Longleftrightarrow & \left[ D_+- 2\pi i(P_+\ophi)\psi\right] \og = 2\pi i\ophi[\psi P_-\og-P_-(\psi\og)]\ (a.e.), \\ \label{eq:68}
   & \Longleftrightarrow & \left[ D_+- 2\pi i(P_+\ophi)\psi\right] \og = 2\pi i\ophi[-\psi P_+\og+P_+(\psi\og)]\ (a.e.), \\ \label{eq:69}
   & \Longleftrightarrow & \left[ D_+- 2\pi i(P_+\ophi)\psi\right] \og = 2\pi i\ophi[P_+(\psi P_-\og)-P_-(\psi P_+\og)]\ (a.e.). 
   \een
\end{enumerate}
\end{proposition}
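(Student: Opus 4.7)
The plan is to use the description of $\overline{\Sc}^\perp$ coming from the standing assumption that $\overline{\Sc}=\overline{\Span_{\lambda\in\Lambda}\ker(\At^*-\lambda)}$, together with the explicit formula \eqref{eq:7} for $\ker(\At^*-\lambda)$ in the Friedrichs model. Choosing $\Gamma_2 f = 1$ in \eqref{eq:7}, the condition $\langle f,g\rangle=0$ for all such $f$ is, after extending by analyticity across the discrete zero-set of $D$, equivalent to the single functional identity
\beq
\widehat{\og}(\lambda)\,D(\lambda) \;=\; \widehat{\ophi}(\lambda)\,\widehat{\psi\og}(\lambda),\qquad \lambda\in\C\setminus\R,
\eeq
where the three Cauchy transforms from \eqref{eq:mm9} are well defined because $\og,\ophi\in L^2$ and $\psi\og\in L^1$.

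To obtain \eqref{eq:65} I would take non-tangential boundary values as $\lambda\to k\pm i0$, using Sokhotski--Plemelj in the form $\widehat{h}(k\pm i0)=\pm 2\pi i\,P_\pm h(k)$. The two limits give the two equations of \eqref{eq:65} (the extra $\pm 2\pi i$ on the right comes from the fact that the right side carries two such boundary values against one on the left). Conversely, if \eqref{eq:65} holds a.e., then each equation is an equality between boundary values of functions analytic in the corresponding half-plane, so extends back into $\C_\pm$ and yields the functional identity. The equivalence with \eqref{eq:66} is then immediate: (iii) is the sum of the two equations in \eqref{eq:65} (using $P_+\og+P_-\og=\og$), and (i)--(ii) record that the two pieces of (iii) actually live in $H_2^\pm$, which is precisely what is needed in order to split (iii) back into \eqref{eq:65}.

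For part (2), I would combine the two boundary equations using the jump identity $D_+-D_-=2\pi i\,\psi\ophi$ (taking the Plemelj jump of $D(\lambda)$) together with $P_-\ophi=\ophi-P_+\ophi$ and $P_\pm(\psi\og)=\psi\og-P_\mp(\psi\og)$. Adding the two equations of \eqref{eq:65} after clearing $D_\pm$ and rearranging produces
\beq
[\,D_+-2\pi i(P_+\ophi)\psi\,]\og \;=\; 2\pi i\,\ophi\,[\psi P_-\og-P_-(\psi\og)],
\eeq
which is \eqref{eq:67}. Equation \eqref{eq:68} follows by substituting $P_-=I-P_+$ inside the bracket, and \eqref{eq:69} from the decomposition $\psi P_-\og-P_-(\psi\og)=P_+(\psi P_-\og)-P_-(\psi P_+\og)$, obtained by splitting $\psi\og=\psi P_+\og+\psi P_-\og$ and applying $P_-$.

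The main obstacle will be the regularity bookkeeping in part (2): the products $\psi P_\pm\og$ and the Riesz projections applied to them must live in $L^2$, and this is exactly what the hypotheses $\psi\in L^\infty$ or $\phi,\psi\in L^2\cap L^4$ are designed to ensure (in the second case via boundedness of $P_\pm$ on $L^4$ and H\"older). The other subtle point is the converse in part (2): recovering the separated pair \eqref{eq:65} from a single a.e.\ identity such as \eqref{eq:67} requires splitting both sides into their $H_2^\pm$ components. I would do this by applying $P_+$ and $P_-$ to \eqref{eq:67}, exploiting that $D_\pm$ are boundary values of $H^\infty$ functions in $\C_\pm$ so that multiplication by $D_+$ preserves $H_2^+$, and then reassembling to recover the two equations of \eqref{eq:65}.
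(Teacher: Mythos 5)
Your reduction to the functional identity $\widehat{\og}(\lambda)D(\lambda)=\widehat{\ophi}(\lambda)\widehat{\psi\og}(\lambda)$, the passage to boundary values via Sokhotski--Plemelj, and the algebra based on $D_+-D_-=2\pi i\,\psi\ophi$, $P_-\ophi=\ophi-P_+\ophi$ reproduce the paper's own route for part (1) and for the forward direction of part (2) (including the correct decomposition $\psi P_-\og-P_-(\psi\og)=P_+(\psi P_-\og)-P_-(\psi P_+\og)$ leading to \eqref{eq:69}). The genuine problem is your converse in part (2). You propose to recover the two separated equations of \eqref{eq:65} by applying $P_+$ and $P_-$ to \eqref{eq:67}, ``exploiting that $D_\pm$ are boundary values of $H^\infty$ functions''. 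That claim is false under the stated hypotheses: $D_\pm-1$ is the Cauchy transform of $\psi\ophi$, which is only in $L^1$ (or $L^{4/3}$, $L^2$ under the extra integrability assumptions), so $D_\pm$ need not be bounded and multiplication by $D_+$ does not preserve $H_2^+$. More seriously, even if $D_\pm$ were bounded the proposed splitting would not go through: after clearing denominators, \eqref{eq:67} is equivalent to the a.e.\ identity $\widetilde F_+=\widetilde F_-$ with $\widetilde F_+=D_+P_+\og-2\pi i(P_+\ophi)P_+(\psi\og)$ and $\widetilde F_-=-D_-P_-\og-2\pi i(P_-\ophi)P_-(\psi\og)$, and these are boundary values of analytic functions lying only in $H_1^\pm+H_2^\pm$ (products of two $H_2$-type factors are merely $H_1$). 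The Riesz projections are not bounded (indeed not defined) on $L^1$, so ``applying $P_\pm$ and reassembling'' cannot separate the identity; moreover terms such as $P_+\bigl[(P_+\ophi)\psi\og\bigr]$ do not reduce to $(P_+\ophi)P_+(\psi\og)$, so projecting \eqref{eq:67} directly produces uncontrolled Hankel-type remainders rather than the two equations of \eqref{eq:65}.

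What is needed at this point --- and what the paper supplies --- is a uniqueness statement for the jump: if $G_\pm\in H_1^\pm+H_2^\pm$ and $G_+=G_-$ a.e.\ on $\R$, then $G_\pm\equiv 0$ (Lemma \ref{lem:unique}, proved by a Poisson-kernel argument). Applied to $\widetilde F_\pm$, it gives $\widetilde F_+=0$ and $\widetilde F_-=0$ separately, which is exactly the pair \eqref{eq:65}. A milder instance of the same issue occurs in your converse for part (1): passing from a.e.\ equality of boundary values back to the identity in $\C_\pm$ requires a Privalov/Nevanlinna-type uniqueness theorem, because dividing by $D_\pm$ leaves you only with meromorphic functions of bounded type; this is fixable with the same circle of ideas, but it must be invoked. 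As written, without Lemma \ref{lem:unique} or an equivalent argument, your chain of equivalences in part (2) runs in one direction only.
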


\begin{remark}
\begin{enumerate}
	\item The second part of the proposition allows us to replace all three conditions (i)-(iii) in \eqref{eq:66} with a single pointwise condition under mild extra assumptions on $\phi$ and/or $\psi$.
		\item Note that the operator $[P_+(\psi P_-\og)-P_-(\psi P_+\og)]$ in the last characterisation of $\overline{\Sc}^\perp$ is the difference of two  Hankel operators.
\end{enumerate}
\end{remark}

As an immediate consequence of \eqref{eq:67}, we get
\begin{theorem}\label{Schar} Assume $\phi\in L^2,\psi\in L^2\cap L^\infty$ or $\phi,\psi\in L^2\cap L^4$.
Define the operator $L$ on $L^2(\R)$ by 
\be\label{eq:70}
Lu = [-P_+(\psi\ophi)+P_+(\ophi)\psi] u +\ophi[\psi P_--P_-\psi] u
\ee 
with the maximal domain $D(L)=\{u\in L^2(\R): \ Lu\in L^2(\R)\}$.\footnote{Under our assumptions, for any $u\in L^2$ we have $Lu\in L^1$ (where we mean the expression $L$ in \eqref{eq:70} rather the operator $L$).} 
Then $\overline{\Sc}\neq L^2(\R)$ iff $1/(2\pi i)\in\sigma_p(L)$ and $\overline{\Sc}^\perp = \ker(L-1/(2\pi i))$.

Furthermore, let $\eta\in L^\infty(\R)$ be a function such that $\eta(k)\neq 0\ a.e.$ and $\eta[-P_+(\psi\ophi)+P_+(\ophi)\psi]$, $\eta\psi\ophi$, $\eta\ophi\in L^\infty(\R)$.  Define the operator $\cL$ on $L^2(\R)$ by
\be
\cL u = \eta\left[-\frac{1}{2\pi i}-P_+(\psi\ophi)+P_+(\ophi)\psi\right] u +\eta\ophi[\psi P_--P_-\psi] u
\ee
with dense domain $D(\cL)=\{u\in L^2(\R): \eta\ophi P_-(\psi u)\in L^2(\R)\}$.
Then $\overline{\Sc}\neq L^2(\R)$ iff $0\in\sigma_p(\cL)$. Moreover, $\overline{\Sc}^\perp = \ker \cL$. Note that if $\psi\in L^\infty$, then $D(\cL)=L^2(\R)$.
\end{theorem}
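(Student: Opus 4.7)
The plan is to derive both statements directly from the pointwise criterion \eqref{eq:67} in Proposition \ref{sperpcrit}, via an algebraic rewrite of $D_+$ using the Sokhotski--Plemelj formula.

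First I would observe that, since $D(\lambda)=1+\widehat{\psi\ophi}(\lambda)$ by definition \eqref{eq:Ddef}, the prescription $(\widehat f)_\pm = \pm 2\pi i\, P_\pm f$ gives the a.e.\ boundary identity
\begin{equation*}
D_+(k) = 1 + 2\pi i\, P_+(\psi\ophi)(k).
\end{equation*}
Substituting this into \eqref{eq:67} and dividing by $2\pi i$, the criterion $g\in\overline{\Sc}^\perp$ becomes
\begin{equation*}
\tfrac{1}{2\pi i}\,\og \;=\; -P_+(\psi\ophi)\og + (P_+\ophi)\psi\og + \ophi\psi P_-\og - \ophi P_-(\psi\og),
\end{equation*}
which is precisely $L\og = \tfrac{1}{2\pi i}\og$, with $L$ as in \eqref{eq:70}. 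This proves the first equivalence and identifies $\overline{\Sc}^\perp$ with $\ker(L-1/(2\pi i))$, once $L$ is interpreted on its maximal domain $D(L)=\{u\in L^2:\, Lu\in L^2\}$.

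Next I would check that the expression \eqref{eq:70} defines a well-defined (possibly unbounded) operator into the space of measurable functions, so that the maximal domain makes sense. Under the hypothesis $\phi\in L^2,\psi\in L^2\cap L^\infty$ or $\phi,\psi\in L^2\cap L^4$, the product $\psi\ophi$ lies in $L^2$ and hence $P_+(\psi\ophi)\in L^2$; similarly $P_+(\ophi)\psi$ and $\ophi P_-(\psi u)$, $\ophi\psi P_-u$ are locally integrable for any $u\in L^2$. Thus $Lu$ is defined a.e.\ and membership in $L^2$ determines the domain.

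For the second part, the crucial identity is simply
\begin{equation*}
\cL u = \eta\,(L - \tfrac{1}{2\pi i})\,u,
\end{equation*}
which is immediate from the definitions. I would then verify that on the stated domain $D(\cL)=\{u\in L^2:\, \eta\ophi P_-(\psi u)\in L^2\}$ each of the five summands of $\cL u$ lies in $L^2$: the terms $\tfrac{\eta}{2\pi i}u$, $\eta[-P_+(\psi\ophi)+P_+(\ophi)\psi]u$ and $\eta\psi\ophi\, P_-u$ lie in $L^2$ by the $L^\infty$ hypotheses on the coefficients of $\eta$, and the last term $\eta\ophi P_-(\psi u)$ lies in $L^2$ by the domain condition. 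When $\psi\in L^\infty$ we have $\psi u\in L^2$ and hence $P_-(\psi u)\in L^2$, so combined with $\eta\ophi\in L^\infty$ we see $D(\cL)=L^2(\R)$. Finally, since $\eta\neq 0$ a.e., $\cL u=0$ is equivalent (a.e.) to $(L-\tfrac{1}{2\pi i})u=0$, so $\ker\cL=\ker(L-\tfrac{1}{2\pi i})=\overline{\Sc}^\perp$. In particular $\overline{\Sc}\neq L^2(\R)$ iff $0\in\sigma_p(\cL)$.

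The main obstacle is bookkeeping the integrability of each summand under the various hypotheses and making the maximal-domain construction precise; the statement itself is essentially a rewriting of \eqref{eq:67} once the Sokhotski--Plemelj identity for $D_+$ is in hand.
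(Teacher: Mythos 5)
Your proposal is correct and takes essentially the same route as the paper, which presents Theorem \ref{Schar} as an immediate consequence of \eqref{eq:67}: one inserts the boundary identity $D_+=1+2\pi i\,P_+(\psi\ophi)$ and divides by $2\pi i$ to read off $L\og=\tfrac{1}{2\pi i}\og$, and the passage to $\cL$ is the factorisation $\cL u=\eta\,(L-\tfrac{1}{2\pi i})u$ with the domain bookkeeping you supply (your check that the $\ker L$ and $\ker\cL$ conditions match despite the different domains, and that $D(\cL)=L^2$ when $\psi\in L^\infty$, is exactly what is needed). The only cosmetic point is the implicit identification of $g$ with $\og$ in writing $\overline{\Sc}^\perp=\ker(L-\tfrac{1}{2\pi i})$, a convention the paper itself uses (cf.\ Remark \ref{rem:Salpha}).
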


\begin{remark}\label{rem:Salpha}
Replacing $\psi$ by $\alpha\psi$, we denote the corresponding detectable subspace by $\Sco_\alpha$. Then,
under the conditions in the second part of Proposition \ref{sperpcrit}, we get $g\in \Sc_\alpha^\perp$ iff
		$$\frac{1}{2\pi i \alpha}\og =[-P_+(\psi\ophi)+P_+(\ophi)\psi]\og +\ophi [P_+\psi P_- -P_-\psi P_+]\og =L\og,$$
		where the right hand side is the sum of a multiplication operator and the difference of two Hankel operators multiplied by $\ophi$. As in the theorem, we then get  $\Sc_\alpha^\perp\neq \{0\}$ iff $1/(2\pi i\alpha)\in \sigma_p(L)$ and $\Sc_\alpha^\perp$ is given by the corresponding kernel.
\end{remark}

We now consider various special cases that illustrate the different situations that can arise. The proofs of the results can be found in the appendix.

\subsection{\change{Results depending on the support of $\phi$ and $\psi$}}
\subsubsection{\change{The case of disjoint supports}} In this part we assume that $\phi\cdot\psi=0$ almost everywhere, in particular, $D(\lambda)\equiv 1$, and that either
\be\label{starcondition}
 \phi\in L^2 \hbox{ and } \psi\in L^2\cap L^\infty  \hbox{ or } \phi,\psi\in L^2\cap L^4.
\ee
In some cases (which will be mentioned in the text), we will require the slightly stronger condition 
\be\label{epsiloncondition}
 \phi\in L^2 \cap L^{2+\eps} \hbox{ for some } \eps>0 \hbox{ and } \psi\in L^2\cap L^\infty \hbox{  or } \phi,\psi\in L^2\cap L^4.
\ee

\begin{theorem}\label{trivS} Define the sets  $\Omega_{\psi}=\{x\in\R: \psi(x)\neq 0\}$, $\Omega_{\phi}=\{x\in\R: \phi(x)\neq 0\}$\footnote{Note that the sets $\Omega_\psi$, $\Omega_\phi$ are only defined up to a set of Lebesgue measure zero, but this is sufficient for our purpose. Also, they may be much smaller than the support of the functions.} and $\Omega=\R\setminus(\Omega_\psi \cup \Omega_\phi)$.
Then $g\in \Sco^\perp$ iff $g\in L^2$ and
	\be\label{cond1}
	\chi_{\Omega_{\phi}}  \overline{g}(k)= \overline{\phi}(k)\widehat{\psi\chi_{\Omega_{\psi}} \overline{g}}(k+i0),
	\ee
	\be\label{cond2} 
	\chi_{\Omega_{\psi}}\overline{g}(k)\left(1-\widehat{\overline{\phi}}(k-i0)\psi(k)\right)=0,
	\ee
	\be\label{cond3}
	g\vert_{\Omega}=0.
	\ee
Let  $\Omega_{\psi,0} =\{k\in \R:\ \widehat{\overline{\phi}}(k-i0)\psi(k)=1\}.$ Then, additionally,
\begin{enumerate}
	\item[(i)] if $\Omega_{\psi,0}$ has zero measure, then $\overline{\Sc}=L^2(\R)$.
	\item[(ii)] Assume \eqref{epsiloncondition}.  If $\Omega_{\psi,0}$ has non-zero measure, then  $\overline{\Sc}^\perp\neq\{0\}$. Moreover, we have that 
		$$\Sco \subseteq \{ f\in L^2(\R):\  f = \psi\left(\widehat{f\ophi}\right)_- \hbox{ on } \Omega_{\psi,0}\}.$$
	\item[(iii)] 	Assume $\phi,\psi\in L^2\cap L^\infty$. Then 
	$$\Sco = \{ f\in L^2(\R):\  f = \psi\left(\widehat{f\ophi}\right)_- \hbox{ on } \Omega_{\psi,0}\}.$$
\end{enumerate}
\end{theorem}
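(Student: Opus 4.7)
The plan is to specialise Proposition~\ref{sperpcrit} to the disjoint-support case and then analyse how $\Omega_{\psi,0}$ controls the size of $\Sco^\perp$. Since $\phi\psi\equiv 0$ gives $D(\lambda)\equiv 1$, hence $D_\pm\equiv 1$, I would invoke identity~\eqref{eq:68}, which reads
\[
[1-2\pi i\,(P_+\ophi)\psi]\og \;=\; 2\pi i\,\ophi\bigl[-\psi P_+\og + P_+(\psi\og)\bigr]\quad\text{a.e.}
\]
I would then restrict this pointwise to the three disjoint sets $\Omega_\phi$, $\Omega_\psi$, $\Omega$ in turn. On $\Omega_\phi$ the factor $\psi$ kills the left-hand side down to $\og$, and $2\pi i P_+(\psi\og)(k)=\widehat{\psi\chi_{\Omega_\psi}\og}(k+i0)$ yields \eqref{cond1}; on $\Omega_\psi$ the right-hand side vanishes because $\ophi=0$, and the Plemelj--Sokhotski jump $2\pi i\ophi(k)$ also vanishes there, so $\widehat{\ophi}(k+i0)$ may be replaced by $\widehat{\ophi}(k-i0)$, yielding \eqref{cond2}; on $\Omega$ both sides collapse to $\og=0$, giving \eqref{cond3}.

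Part (i) is then immediate: if $|\Omega_{\psi,0}|=0$, \eqref{cond2} forces $\og$ to vanish on $\Omega_\psi$, \eqref{cond3} does the same on $\Omega$, and with $\psi\chi_{\Omega_\psi}\og\equiv 0$ the right-hand side of \eqref{cond1} is identically zero, so $g\equiv 0$.

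For the inclusion $\Sco\subseteq\{f\in L^2:f=\psi(\widehat{f\ophi})_-$ on $\Omega_{\psi,0}\}$ appearing in both (ii) and (iii), I would test on the generators $f_\lambda(x)=(x-\lambda)^{-1}-\widehat{\ophi}(\lambda)\psi(x)(x-\lambda)^{-1}$ of $\Ran S_{\lambda,B}$ from~\eqref{eq:7} with $D(\lambda)=1$. A short calculation using $\phi\psi=0$ reduces $f_\lambda\ophi$ to $\ophi/(t-\lambda)$, whose lower boundary Cauchy transform at $k$ is $[\widehat{\ophi}(k-i0)-\widehat{\ophi}(\lambda)]/(k-\lambda)$; multiplying by $\psi(k)$ and invoking the defining identity $\psi(k)\widehat{\ophi}(k-i0)=1$ on $\Omega_{\psi,0}$ reproduces $f_\lambda(k)$ exactly. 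Linearity, density of $\bigcup_\lambda\Ran S_{\lambda,B}$ in $\Sco$, and continuity of $f\mapsto\psi(\widehat{f\ophi})_-\vert_{\Omega_{\psi,0}}$ under $L^2$-convergence (furnished by \eqref{epsiloncondition} through $L^p$-boundedness of the Riesz projection combined with H\"older's inequality) then pass the identity to the closure.

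The main obstacle is the existence statement in (ii). To produce a nonzero element of $\Sco^\perp$ I would take any nonzero bounded $h$ compactly supported in $\Omega_{\psi,0}$, set $\og:=h$ on $\Omega_{\psi,0}$ and $\og:=0$ on $(\Omega_\psi\setminus\Omega_{\psi,0})\cup\Omega$, and define $\og$ on $\Omega_\phi$ via the right-hand side of \eqref{cond1}. Conditions~\eqref{cond2}--\eqref{cond3} hold by construction; the delicate point is verifying that $\ophi(k)\widehat{\psi h}(k+i0)\in L^2(\Omega_\phi)$. Precisely \eqref{epsiloncondition} is tailored for this: compact support and boundedness of $\psi h$ put it in every $L^p(\R)$, whence $\widehat{\psi h}(\cdot+i0)=2\pi i P_+(\psi h)$ belongs to every $L^p$ with $p\in(1,\infty)$, and H\"older against $\ophi\in L^{2+\eps}$ (or $L^4$) yields an $L^2$-factor on $\Omega_\phi$. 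Finally, for the reverse inclusion in (iii), given $f$ in the right-hand side I would show $\langle f,g\rangle=0$ for every $g\in\Sco^\perp$ by splitting $\int f\og\,dk$ using \eqref{cond1}--\eqref{cond3}, applying $f=\psi(\widehat{f\ophi})_-$ on $\Omega_{\psi,0}$, and invoking the adjointness relation $\int u\cdot(\hat v)_-\,dk=-\int v\cdot(\hat u)_+\,dk$ for the Riesz projections to make the two surviving terms cancel --- this is where the stronger assumption $\phi,\psi\in L^\infty$ is needed, as it ensures $f\ophi\in L^2$ and thus that all Cauchy transforms are honest $L^2$-functions.
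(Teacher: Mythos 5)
Your argument is correct, and for the first half (the characterisation by \eqref{cond1}--\eqref{cond3} and part (i)) it is essentially the paper's: the paper works from the consolidated pointwise identity of Proposition~\ref{ev} (\eqref{disjointcond2}), while you work directly from \eqref{eq:68} with $D_+\equiv 1$ and supply the additional observation that $\widehat{\ophi}(k+i0)=\widehat{\ophi}(k-i0)$ on $\Omega_\psi$ via Sokhotski--Plemelj; these are the same computation. Your construction of a nonzero $g\in\Sco^\perp$ in (ii) also matches the paper's (choose bounded $\og$ on $\Omega_{\psi,0}$, extend to $\Omega_\phi$ by \eqref{cond1}, by zero elsewhere, using \eqref{epsiloncondition} and H\"older to control $\ophi\,P_+(\psi\og)$).

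Where you genuinely diverge from the paper is in proving the inclusion $\Sco\subseteq\{f : f=\psi(\widehat{f\ophi})_-\text{ on }\Omega_{\psi,0}\}$. The paper pairs an arbitrary $f\in\Sco$ against the $g\in\Sco^\perp$ just constructed, telescopes $0=\int f\og$ through the adjointness of $P_\pm$, and concludes via a density argument on the test functions $\og\in(L^2\cap L^\infty)(\Omega_{\psi,0})$. You instead verify the identity directly on the generators $f_\lambda=(x-\lambda)^{-1}-\widehat{\ophi}(\lambda)\,\psi(x)(x-\lambda)^{-1}$ of $\Tc=\Sco$: using $\phi\psi\equiv 0$ to reduce $f_\lambda\ophi$ to $\ophi/(t-\lambda)$, the partial-fraction identity gives $(\widehat{f_\lambda\ophi})_-(k)=\frac{\widehat{\ophi}(k-i0)-\widehat{\ophi}(\lambda)}{k-\lambda}$, and multiplying by $\psi(k)$ and invoking $\psi\widehat{\ophi}_-=1$ on $\Omega_{\psi,0}$ reproduces $f_\lambda(k)$ exactly; linearity and the $L^2\to L^q$ boundedness of $f\mapsto\psi(\widehat{f\ophi})_-$ (with $q<2$ given by \eqref{epsiloncondition}) pass the identity to the closure along an a.e.\ convergent subsequence. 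This forward/constructive route makes transparent \emph{where} the defining relation $\psi\widehat{\ophi}_-=1$ is used, at the cost of an explicit continuity-of-operator step; the paper's dual route avoids the continuity step but needs a (mild) duality/density argument. For (iii), your approach is again cleaner in one respect: you verify $\langle f,g\rangle=0$ directly for \emph{every} $g\in\Sco^\perp$ using \eqref{cond1}--\eqref{cond3} and the $L^2$-adjointness of $P_\pm$ (which is where $\phi,\psi\in L^\infty$ is needed so that $f\ophi$, $\psi\og$ are honest $L^2$ functions), so you never need to establish that the explicitly constructed $g$'s are dense in $\Sco^\perp$, which the paper's argument does rely on. Both routes are correct.
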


\begin{remark}
\begin{enumerate}
\item Note that, from \eqref{cond1} we have that $g\vert_{\Omega_{\psi}}$ completely determines $g\vert_{\Omega_{\phi}}$.
\item Condition \eqref{cond1} gives an additional restriction on  $g\vert_{\Omega_{\psi}}$, requiring $\overline{\phi}(k)\widehat{\psi\chi_{\Omega_{\psi}} \overline{g}}(k+i0)\in L^2(\Omega_\phi)$.
\item The condition \eqref{epsiloncondition} implies (via the H\"older inequality and boundedness of $P_+: L^p \to L^p$ for $1<p<\infty$) that for $g\in (L^2\cap L^\infty) (\Omega_\psi)$ we have $\overline{\phi}(k)\widehat{\psi\chi_{\Omega_{\psi}} \overline{g}}(k+i0)\in L^2(\Omega_\phi)$.
\end{enumerate}
\end{remark}

\begin{example}\label{ex:char}
Let $I$ and $I'$ be disjoint closed intervals in $\R$. Choose $\phi\in L^\infty$ with $\supp\ \phi\in I$ such that 
$$ \int_{\R}\frac{\overline{\phi(x)}}{x-k} dx\neq 0 \quad \hbox{  for  } \quad k\in I'.$$
Define $\psi$ by 
$$ \psi(k)=\begin{cases} \left( \int_{\R}\frac{\overline{\phi(x)}}{x-k} dx\right)^{-1}, & k\in I' \\ 0 & \hbox{ otherwise. }\end{cases}$$
Then $\psi\in L^\infty$,  $\phi\cdot\psi=0$ and $\widehat{\overline{\phi}}(k-i0)\psi(k)= 1$. Therefore, by Theorem \ref{trivS}, (ii),  $\overline{\Sc}^\perp\neq\{0\}$.
\end{example}

\begin{thm}\label{thm:index}
Assume \eqref{epsiloncondition}. Then $\overline \Sc =L^2 \Longleftrightarrow 1 -\psi 
 (\widehat{ \overline \phi})(k+i0) \neq 0$ for a.e. $k\in {\mathbb R}$.
Moreover,
$$
\mathrm{def}\ {\Sc}= \dim \Sc^\perp = \left \{ 
\begin{array}{cc}
0 & \mbox{ if }\overline \Sc=L^2,\\
\infty & \mbox{otherwise}.
\end{array}
\right .
$$
\end{thm}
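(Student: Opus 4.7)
The plan is to reduce everything to the structural description of $\Sco^\perp$ given in Theorem \ref{trivS} and then exploit the freedom one has in prescribing data on $\Omega_{\psi,0}$. First I would note that on $\Omega_\psi$ we have $\phi\equiv 0$ (since $\phi\cdot\psi=0$), so by the Plemelj--Sokhotski jump relation $\widehat{\ophi}(k+i0)-\widehat{\ophi}(k-i0)=2\pi i\,\ophi(k)=0$ a.e.\ on $\Omega_\psi$. Consequently $\widehat{\ophi}(k+i0)=\widehat{\ophi}(k-i0)$ on $\Omega_\psi$, and since $\psi(k)=0$ off $\Omega_\psi$, the condition ``$1-\psi(k)\widehat{\ophi}(k+i0)\neq 0$ a.e.'' is exactly the condition ``$\Omega_{\psi,0}$ has Lebesgue measure zero''. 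Parts (i) and (ii) of Theorem \ref{trivS} then immediately give the stated equivalence $\overline{\Sc}=L^2\Longleftrightarrow 1-\psi\,\widehat{\ophi}(\cdot+i0)\neq 0$ a.e.

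For the dichotomy $\mathrm{def}(\Sco)\in\{0,\infty\}$ the interesting case is $|\Omega_{\psi,0}|>0$, where I would construct an infinite-dimensional family in $\Sc^\perp$ explicitly. Reading the three conditions \eqref{cond1}--\eqref{cond3} of Theorem \ref{trivS}, an element $g\in\Sc^\perp$ is determined by the choice of $\og$ on $\Omega_{\psi,0}$ alone: one must take $\og\equiv 0$ on $\Omega$ and on $\Omega_\psi\setminus\Omega_{\psi,0}$ (by \eqref{cond2} and \eqref{cond3}), while on $\Omega_\phi$ the value is forced by the extension formula
\[
  \chi_{\Omega_\phi}\og(k)\;=\;\ophi(k)\,\widehat{\psi\,\chi_{\Omega_{\psi,0}}\og}(k+i0).
\]
The linear map $T:\og\vert_{\Omega_{\psi,0}}\mapsto g$ so defined is injective. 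The key step is to verify that $T$ is well-defined on an infinite-dimensional subspace of $L^2(\Omega_{\psi,0})$, i.e.\ that the right-hand side above lies in $L^2(\Omega_\phi)$. For $h\in L^2\cap L^\infty(\Omega_{\psi,0})$, under assumption \eqref{epsiloncondition} we have $\psi h\in L^p(\R)$ for some $p\in(2,\infty)$ (taking $p=2(2+\eps)/\eps$ in the first case, or $p=4$ in the $L^4$ case). The Riesz projection is bounded on $L^p$, so $\widehat{\psi h}(\cdot+i0)\in L^p$; Hölder's inequality with $\phi\in L^{2+\eps}$ (respectively $L^4$) then yields $\ophi\,\widehat{\psi h}(\cdot+i0)\in L^2$.

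Finally, I would conclude by choosing, say, characteristic functions of an infinite collection of pairwise disjoint measurable subsets of $\Omega_{\psi,0}$ (of positive finite measure, which exist since $|\Omega_{\psi,0}|>0$); these are linearly independent bounded $L^2$-functions on $\Omega_{\psi,0}$, hence their images under $T$ are linearly independent elements of $\Sc^\perp$, and so $\dim\Sc^\perp=\infty$.

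The main obstacle I anticipate is the $L^2$-estimate on the extension formula: it is what forces the slightly stronger integrability assumption \eqref{epsiloncondition} rather than the bare \eqref{starcondition}. Everything else is a bookkeeping consequence of Theorem \ref{trivS} together with the disjoint-support simplification $\widehat{\ophi}(k+i0)=\widehat{\ophi}(k-i0)$ on $\Omega_\psi$.
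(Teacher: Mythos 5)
Your proposal is correct and follows essentially the same route as the paper: both rest on the pointwise characterization of $\Sc^\perp$ for disjointly supported $\phi,\psi$ (the paper factorizes \eqref{disjointcond2} directly, you invoke Theorem \ref{trivS}), and both obtain $\mathrm{def}\,\Sc=\infty$ by prescribing arbitrary bounded data on the set where $\psi\widehat{\ophi}=1$, extending by zero on the rest of $\Omega_\psi\cup\Omega$ and by the Cauchy-transform formula on $\Omega_\phi$, with \eqref{epsiloncondition} giving the $L^2$ bound via H\"older and $L^p$-boundedness of $P_+$. Your explicit observation that $\widehat{\ophi}(k+i0)=\widehat{\ophi}(k-i0)$ a.e.\ on $\Omega_\psi$ (so the $+i0$ condition in the statement matches the $-i0$ set $\Omega_{\psi,0}$) is a detail the paper uses only implicitly.
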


We have the following results on complete detectability, ie.~$\Sco=L^2(\R)$. 
\begin{thm}\label{generic}
\begin{enumerate}
	\item Complete detectability is generic in the following sense. Replace $\psi$ by  $\alpha \psi$ for $\alpha\in \C$  (or $\phi$ by $\overline{\alpha}\phi$), then for all $\alpha$ outside a countable set $E_0$ we have $\overline \Sc_\alpha=L^2(\R)$, where $\Sco_\alpha$ is as defined in Remark \ref{rem:Salpha}.
 \item For small perturbations, we have
 that if $\psi\in L^\infty$ and $P_+\phi$ or $P_-\phi\in L^\infty$,  replacing $\psi$ by $\alpha\psi$ where $\alpha\in\C$, we get $\overline{\Sc}_\alpha=L^2(\R)$ for sufficiently small $|\alpha|$.
\end{enumerate}
\end{thm}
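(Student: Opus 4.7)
The plan is to reduce both parts directly to Theorem \ref{thm:index}: under the disjoint-support assumption of this subsubsection, replacing $\psi$ by $\alpha\psi$ gives $\overline{\Sc}_\alpha = L^2(\R)$ if and only if $1 - \alpha f(k) \neq 0$ almost everywhere, where
\[ f(k) := \psi(k)\,\widehat{\overline{\phi}}(k+i0) \]
is a fixed measurable function independent of $\alpha$. Both statements then become questions about the set of $\alpha$ for which $\alpha f = 1$ on a set of positive measure.

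For part (1), the exceptional set is
\[ E_0 = \{\alpha \in \C\setminus\{0\} : f(k) = 1/\alpha \text{ on a set of positive Lebesgue measure}\} \cup \{0\}. \]
The level sets $\{k\in \R : f(k) = c\}$ indexed by $c \in \C$ are pairwise disjoint, and $\R$ is $\sigma$-finite for Lebesgue measure; hence at most countably many of these level sets can carry positive measure. This forces $E_0$ to be countable.

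For part (2), I would first show that $f \in L^\infty(\R)$ under the stated hypotheses. The Plemelj--Sokhotski relation gives
\[ \widehat{\overline{\phi}}(k+i0) - \widehat{\overline{\phi}}(k-i0) = 2\pi i\, \overline{\phi}(k), \]
so the disjoint-support assumption $\psi\overline{\phi} = 0$ a.e.\ yields $\psi\widehat{\overline{\phi}}(k+i0) = \psi\widehat{\overline{\phi}}(k-i0)$ a.e. Since $\widehat{\overline{\phi}}(k+i0) = 2\pi i P_+\overline{\phi} = 2\pi i\,\overline{P_-\phi}$, the function $\widehat{\overline{\phi}}(k+i0)$ lies in $L^\infty$ when $P_-\phi \in L^\infty$; symmetrically, $\widehat{\overline{\phi}}(k-i0) \in L^\infty$ when $P_+\phi \in L^\infty$. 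Combined with the preceding a.e.\ identity and $\psi \in L^\infty$, either hypothesis gives $f \in L^\infty$. Then for $|\alpha| < 1/\|f\|_\infty$, we have $|\alpha f(k)| < 1$ a.e., so $1 - \alpha f(k) \neq 0$ a.e., and Theorem \ref{thm:index} delivers $\overline{\Sc}_\alpha = L^2(\R)$.

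The whole argument rests on the characterization from Theorem \ref{thm:index}; no serious obstacle beyond the elementary measure-theoretic observation underlying part (1) is expected, the remainder being routine manipulation of boundary values of Cauchy transforms under the disjoint-support hypothesis.
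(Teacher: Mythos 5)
Your part (1) follows the paper's own route: Theorem \ref{thm:index} reduces $\overline{\Sc}_\alpha=L^2(\R)$ to $1-\alpha\psi\,\widehat{\overline{\phi}}(k+i0)\neq 0$ a.e., and the observation that a measurable function can take a given value on a set of positive measure for at most countably many values (the paper's Lemma \ref{lem:count}, which you reprove via $\sigma$-finiteness instead of the paper's separability argument) finishes it; the variant replacing $\phi$ by $\overline{\alpha}\phi$ is the same condition. Your part (2) contains the same computation as the paper's proof: by Plemelj and the disjoint supports, $\psi\widehat{\overline{\phi}}(k+i0)=\psi\widehat{\overline{\phi}}(k-i0)$, and these equal $2\pi i\,\psi\overline{P_-\phi}$ and $-2\pi i\,\psi\overline{P_+\phi}$ respectively, so either hypothesis plus $\psi\in L^\infty$ gives boundedness and smallness of $\alpha$ kills the criterion.

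The one point to tighten is that in part (2) you invoke Theorem \ref{thm:index}, whose stated hypothesis is the stronger condition \eqref{epsiloncondition} ($\phi\in L^{2+\eps}$, or $\phi,\psi\in L^4$); the hypotheses of part (2) only guarantee \eqref{starcondition} ($\phi\in L^2$, $\psi\in L^2\cap L^\infty$), and $P_\pm\phi\in L^\infty$ does not upgrade the integrability of $\phi$. This is presumably why the paper does not cite Theorem \ref{thm:index} here but argues directly from the pointwise criterion \eqref{disjointcond2} of Proposition \ref{ev}: multiplying it by $\psi$ gives $g_\psi=\psi\widehat{\overline{\phi}}(k-i0)\,g_\psi$ with $g_\psi=\psi\overline{g}$, and $|\alpha\psi\widehat{\overline{\phi}}(k-i0)|<1$ a.e.\ forces $g_\psi\equiv 0$ and then $\overline{g}\equiv 0$. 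Your argument is repaired with one remark: you only use the implication ``$1-\alpha\psi\widehat{\overline{\phi}}(k+i0)\neq 0$ a.e.\ $\Rightarrow$ $\overline{\Sc}_\alpha=L^2$'' of Theorem \ref{thm:index}, and that direction needs only \eqref{starcondition}, being exactly the computation just described (the $L^{2+\eps}$/$L^4$ assumption is needed only for the converse, where one must construct nonzero elements of $\Sc^\perp$). As written, the citation is formally outside the theorem's hypotheses, but the mathematics is sound and otherwise matches the paper.
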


We next  analyse a specific example with disjoint supports where we consider some questions in more detail.

\begin{theorem}\label{unsymm} 

Assume $I$ and $I'$ are disjoint closed intervals such that $I'$ lies to the left of $I$. Let $$\phi=\chi_I \quad \hbox{ and } \quad 
\psi(x)=\chi_{I'}(x)\cdot\left(\int_I\frac{dt}{t-x}\right)^{-1}.$$

\begin{enumerate}
	\item In this case $\overline{\Sc}\neq L^2(\R)$ with $\mathrm{def}\ {\overline{\Sc}}=\infty$, while $\overline{\Sct} = L^2(\R)$.
\item 
The jump of the $M$-function across the real axis at $k$ is given by 
$$[M_B^{-1}(k)]=\begin{cases}2\pi i, & k\in\R\setminus(I\cup I'),\\ 2\pi i(1-\widehat{\psi}(k)), & k\in I, \\ 0, & k\in I'.\end{cases}$$
Moreover, the bordered resolvent $P_{\Scto}(A_B-\lambda)^{-1}P_\Sco$ jumps at $k\in\R$ iff $k\notin I'$, i.e.~the location of the jumps of the  bordered resolvent coincides with the jumps of $M_B$. (Compare to Theorem \ref{thm:5.2}, where we can only border the resovent by finite dimensional projections.)
\end{enumerate}
\end{theorem}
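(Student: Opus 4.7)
Apply Theorem \ref{thm:index}: $\overline{\Sc}=L^2(\R)$ iff $1-\psi(k)\widehat{\overline{\phi}}(k+i0)\ne 0$ a.e. On $I'$ the Cauchy integral $\widehat{\overline{\phi}}$ is continuous across $\R$ (because $\overline{\phi}=\chi_I$ and $I\cap I'=\emptyset$) and equals the real quantity $\int_I dt/(t-k)$, whose reciprocal is by construction $\psi(k)$; hence $\psi\widehat{\overline{\phi}}\equiv 1$ on the positive-measure set $I'$, giving $\overline{\Sc}\ne L^2(\R)$ and, by the theorem, $\mathrm{def}\,\overline{\Sc}=\infty$. For $\overline{\Sct}$ the roles of $\phi$ and $\psi$ swap, so one would need $\phi(k)\widehat{\overline{\psi}}(k+i0)=1$ on a set of positive measure. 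Since $\phi=\chi_I$, only $k\in I$ is relevant; but $\widehat{\overline{\psi}}$ is holomorphic on the connected set $\C\setminus I'\supset I$ and $\widehat{\overline{\psi}}(\lambda)\to 0$ as $|\lambda|\to\infty$, so by the identity theorem it cannot equal $1$ on any subset of $I$ of positive measure. Thus $\overline{\Sct}=L^2(\R)$.

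\textbf{Part 2 (jump of $M_B^{-1}$).} Since $\psi\overline{\phi}\equiv 0$ we have $D\equiv 1$, and \eqref{eq:9} reduces to $M_B(\lambda)^{-1}=\pi i\,\mathrm{sign}(\Im\lambda)-\widehat{\overline{\phi}}(\lambda)\widehat{\psi}(\lambda)-B$. A term-by-term Plemelj--Sokhotski calculation then yields: off $I\cup I'$ both Cauchy integrals are continuous, so $[M_B^{-1}]=2\pi i$; on $I$ only $\widehat{\overline{\phi}}$ jumps (by $2\pi i$) while $\widehat{\psi}$ is continuous, giving $[M_B^{-1}](k)=2\pi i(1-\widehat{\psi}(k))$; on $I'$ only $\widehat{\psi}$ jumps, by $2\pi i\,\overline{\psi(k)}=2\pi i\,\psi(k)$ (using that $\psi$ is real on $I'$), so combined with $\psi(k)\widehat{\overline{\phi}}(k)=1$ the jumps cancel and $[M_B^{-1}](k)=0$.

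\textbf{Part 2 (bordered resolvent).} By Theorem \ref{trivS}(iii), applicable since $\phi,\psi\in L^2\cap L^\infty$, one has $\overline{\Sc}=\{f\in L^2:\,f(k)=\psi(k)\widehat{f\overline{\phi}}(k)\text{ on }I'\}$ (no jump of $\widehat{f\overline{\phi}}$ on $I'$). As $\overline{\Sct}=L^2$, the question reduces to the jumps of $\langle(A_B-\lambda)^{-1}g,h\rangle$ for $g\in\overline{\Sc}$, $h\in L^2$. Using \eqref{eq:mm1c}--\eqref{eq:mm10}, split the pairing as $\alpha(\lambda)+c_f(\lambda)\beta(\lambda)$ with $\alpha,\beta,c_f$ built from Cauchy transforms. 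For $k\in I'$, a direct Plemelj--Sokhotski computation, combined with $\overline{\phi}|_{I'}=0$, $\psi\widehat{\overline{\phi}}|_{I'}\equiv 1$, $g|_{I'}=\psi\widehat{g\overline{\phi}}|_{I'}$, and the real-valuedness of $\psi$ on $I'$, gives $[\alpha]=[\beta]=0$ and
\[ [c_f](k)=2\pi i\,M_B(k)\,\widehat{g\overline{\phi}}(k)\bigl(\overline{\psi(k)}-\psi(k)\bigr)=0, \]
where continuity of $M_B$ across $I'$ comes from Part 2a; consequently the bordered resolvent has no jump on $I'$. For $k\notin I'$, the previous computation shows $[M_B^{-1}](k)\ne 0$ except on the null subset of $I$ on which $\widehat{\psi}=1$ (which is discrete by analyticity of $\widehat{\psi}$ on $\C\setminus I'$). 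Applying Theorem \ref{thm:5.2} to strongly convergent finite-rank projections $P_{N,\overline{\Sc}}\to P_{\overline{\Sc}}$ and $P_{M,\overline{\Sct}}\to I$, the nonzero rank of $[M_B](k)$ forces a nonzero finite-rank jump of the bordered resolvent, hence $P_{\overline{\Sct}}(A_B-\lambda)^{-1}P_{\overline{\Sc}}$ itself jumps at $k$ (if it did not, every finite-rank restriction would vanish).

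\textbf{Main obstacle.} The delicate step is the cancellation $[c_f]=0$ on $I'$: it uses all three of the characterisation of $\overline{\Sc}$, the identity $\psi\widehat{\overline{\phi}}\equiv 1$ on $I'$, and the real-valuedness of $\psi$ on $I'$; without the last ingredient the surviving term $(\overline{\psi(k)}-\psi(k))$ would not vanish. A secondary technicality is passing from the finite-dimensional rank equality of Theorem \ref{thm:5.2} to a statement for the full projection $P_{\overline{\Sc}}$, which is however immediate by the contrapositive.
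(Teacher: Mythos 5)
Your proposal is correct, and in two places it genuinely departs from the paper's own argument. For part 1 the treatment of $\overline{\Sc}$ is identical (Theorem \ref{thm:index} plus $\psi\widehat{\overline{\phi}}\equiv 1$ on $I'$), but for $\overline{\Sct}$ the paper argues by signs: for $k\in I$ the inner integral $\int_I du/(u-t)$ is positive and $1/(t-k)$ is negative on $I'$, so $\phi\,\widehat{\overline{\psi}}(k+i0)<0\neq 1$ pointwise on $I$; you instead use holomorphy of $\widehat{\overline{\psi}}$ across $I$, the identity theorem and decay at infinity. Both are valid; the paper's version gives the pointwise inequality on all of $I$, yours is softer but generalises more readily. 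The $M$-jump computation coincides with the paper. For the bordered resolvent the paper works directly with the resolvent kernel and computes its distributional jump (delta functions plus $1/(x-k\mp i0)$ terms), exhibiting an explicit $v\in\overline{\Sc}$ supported off $I\cup I'$ with $[c_u](k)\neq 0$ for $k\notin I'$; you instead (a) recast the $I'$-cancellation as a statement about the scalar pairings $\langle (A_B-\lambda)^{-1}g,h\rangle$, which is essentially the paper's cancellation in weak form and avoids distributions, and (b) for $k\notin I'$ invoke the abstract transfer result (Theorem \ref{thm:5.2}, really identity \eqref{eq:jump}) to pass from $[M_B](k)\neq 0$ to a nonzero jump of the bordered resolvent. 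Route (b) is legitimate and shorter, but note it needs the hypotheses of Assumption \ref{assumption:new1} and $k\in E_1\cap E_2$; here $M_B$ is an explicit scalar function of bounded type (Cauchy transforms of compactly supported bounded densities are Herglotz-type), so these hold for a.e.\ $k$ — you should say this, since otherwise the appeal to Theorem \ref{thm:5.2} is unearned. The resulting statement, like the paper's, is an a.e.\ statement in $k$ and a weak statement in the pairing sense.

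Two small corrections. First, your "main obstacle" claim that real-valuedness of $\psi$ on $I'$ is essential for $[c_f]=0$ is an artefact of a conjugation slip: with the paper's convention $\widehat{\psi}(\lambda)=\int\psi(t)(t-\lambda)^{-1}dt$, the Plemelj jump is $2\pi i\,\psi(k)$ (not $2\pi i\,\overline{\psi(k)}$), and the cancellation on $I'$ follows from $g=\psi\,\widehat{g\overline{\phi}}$ and $\psi\,\widehat{\overline{\phi}}=1$ alone, with no reality assumption; since $\psi$ is real here the conclusion is unaffected. Second, on $I$ you only exclude a null set where $\widehat{\psi}=1$; the paper's observation that $\widehat{\psi}(k)<0$ on $I$ shows this set is actually empty, which is worth keeping.
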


\subsubsection{\change{Results when $\phi$ and $\psi$ are not disjointly supported} }
We consider a more general case when $\phi$ and $\psi$ do not necessarily have disjoint supports.
\begin{theorem}\label{pos}
\begin{enumerate}
	\item Let $\Omega^c=\{x: \phi(x)\neq 0\} \cup \{x: \psi(x)\neq 0\}$. Then  $g\in \Sc^\perp$ implies $\{x: g(x)\neq 0\}\subseteq\Omega^c$ (up to a set of measure zero). In particular, $\Sc^\perp \subseteq L^2(\Omega^c)$ and $\Sco\supseteq L^2(\Omega)$.
	\item Consider $\psi=\alpha\chi_I$ for some constant $\alpha$ and a set $I$ of finite measure and assume $ \phi\vert_{I^c}=0$ a.e. Then  $\overline{\Sc}=L^2(\R)$.
\end{enumerate}
\end{theorem}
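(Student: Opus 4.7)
For Part (1), the plan is to exploit the orthogonality characterization: $g\in\Sc^\perp$ iff $\langle f,g\rangle = 0$ for every $f\in\ker(\At^*-\mu)$ at every $\mu\in\rho(A_B)\setminus\R$. Substituting the explicit formula \eqref{eq:7} with $\Gamma_2 f = 1$ produces the key identity of analytic functions on $\C\setminus\R$,
\beq
D(\mu)\widehat{\og}(\mu) \;=\; \widehat{\ophi}(\mu)\,\widehat{\psi\og}(\mu).
\eeq
I then intend to take the Sokhotski-Plemelj jump of this identity across $\R$ at a point $k\in\Omega$. Since $\phi=\psi=0$ a.e.\ on $\Omega$, the jumps of $\widehat{\ophi}$, $\widehat{\psi\og}$ and $\widehat{\psi\ophi}$ all vanish there, so in particular $D_+=D_-$ on $\Omega$ and the right-hand side is jump-free. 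A short computation using $[fg]=f_-[g]+[f]g_+$ shows the left-hand side jumps by $2\pi i D_-(k)\og(k)$, and hence $D_-(k)\og(k)=0$ a.e.\ on $\Omega$.

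The main delicate step is to rule out $D_-=0$ on a subset of positive measure of $\Omega$. My plan is to observe that $D-1 = \widehat{\psi\ophi}$ is the Cauchy transform of the $L^1$ function $\psi\ophi$ (Cauchy-Schwarz, since $\psi,\phi\in L^2$), while $D(\lambda)\to 1$ as $|\lambda|\to\infty$; thus $D$ is a non-trivial function in the Nevanlinna class of each half-plane, so Privalov's boundary uniqueness theorem prevents its boundary values from vanishing on a set of positive measure. This forces $\og=0$ a.e.\ on $\Omega$, yielding $\Sc^\perp\subseteq L^2(\Omega^c)$ and equivalently $\Sco\supseteq L^2(\Omega)$.

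For Part (2), I would combine Part (1) with the special algebraic structure of $\psi=\alpha\chi_I$ and $\phi|_{I^c}=0$. These hypotheses give $\Omega^c\subseteq I$, so any $g\in\Sc^\perp$ is already supported in $I$. The identities $\psi\og=\alpha\og$ and $\psi\ophi=\alpha\ophi$ then hold on all of $\R$ (both sides vanishing on $I^c$), which yields $\widehat{\psi\og}=\alpha\widehat{\og}$ and $D=1+\alpha\widehat{\ophi}$. Substituting into the key identity above produces the clean cancellation $(1+\alpha\widehat{\ophi})\widehat{\og} = \alpha\widehat{\ophi}\,\widehat{\og}$, which collapses to $\widehat{\og}\equiv 0$ on $\C\setminus\R$. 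Since $\og\in L^2(\R)$ is supported on the finite-measure set $I$, H\"older's inequality puts $\og\in L^1(\R)$, so injectivity of the Cauchy transform on $L^1$ gives $g=0$. Hence $\Sc^\perp=\{0\}$ and $\overline{\Sc}=L^2(\R)$.
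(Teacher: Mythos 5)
Your proof is correct and follows the paper's overall strategy — take boundary jumps of the key identity $D(\mu)\widehat{\og}(\mu)=\widehat{\ophi}(\mu)\widehat{\psi\og}(\mu)$ that encodes $g\in\Sc^\perp$ — but with two worthwhile refinements. In Part (1) the paper simply asserts that the quotient on the right-hand side has zero jump on $\Omega$, which tacitly requires the boundary values $D_\pm$ to be nonzero a.e.; you supply this justification explicitly, observing that $D-1$ is the Cauchy transform of the $L^1$ function $\psi\ophi$, hence lies in the Nevanlinna class of each half-plane, $D\to 1$ at $i\infty$ makes $D$ nontrivial, and Privalov's uniqueness theorem then forbids $D_\pm$ from vanishing on a set of positive measure. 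In Part (2) the paper factors $F_\pm=\widehat{\og}/D_\pm$ and again relies on $D_\pm\neq 0$ a.e.; your route instead substitutes $\widehat{\psi\og}=\alpha\widehat{\og}$ and $D=1+\alpha\widehat{\ophi}$ (both valid because $g$, $\phi$, $\psi$ are all supported in $I$) into the key identity and gets the cancellation $\widehat{\og}(1+\alpha\widehat{\ophi}-\alpha\widehat{\ophi})=\widehat{\og}=0$ unconditionally, avoiding any discussion of whether $D$ vanishes. The last step through $L^1$ is a small detour — since $\og\in L^2$, the conditions $\widehat{\og}\equiv 0$ on $\C_\pm$ already give $P_\pm\og=0$ directly — but it is correct.
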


We finish this subsection by showing that in a our situation we can improve on Theorems \ref{Munique}, \ref{thm:two} and \ref{thm:1} by  recovering the $M$-function from one bordered resolvent.

\begin{theorem}\label{oneborderedres}
As before, let  $\Omega_{\psi}=\{x\in\R: \psi(x)\neq 0\}$ and  $\Omega_{\phi}=\{x\in\R: \phi(x)\neq 0\}$. Let $\Omega=\R\setminus(\Omega_\phi \cup\Omega_\psi)$ and assume
that we know a set of non-zero measure $\Omega'\subseteq\Omega$. Then the $M$-function can be recovered from one bordered resolvent. 
\end{theorem}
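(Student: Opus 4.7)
The plan is to exploit the fact that on $\Omega'$ both $\phi$ and $\psi$ vanish identically, which causes the explicit resolvent and $c_f$ formulas (\ref{eq:mm1c})--(\ref{eq:mm10}) to collapse dramatically, and to combine this with the observation that $L^2(\Omega')$ sits inside both $\Sco$ and $\Scto$, so that a bordered resolvent sees the full compressed operator on $\Omega'$.

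First I would verify that $L^2(\Omega)\subseteq\Sco\cap\Scto$. Theorem \ref{pos}(1) applied to $A$ gives $\Sc^\perp\subseteq L^2(\Omega^c)$, hence $L^2(\Omega)\subseteq\Sco$. Applying the same result to the formally adjoint operator $\At$ (which is obtained from $A$ by swapping $\phi$ and $\psi$, an operation that preserves the set $\Omega$) yields $L^2(\Omega)\subseteq\Scto$. Consequently the bordered resolvent $P_\Scto(A_B-\lambda)^{-1}P_\Sco$ fully determines the compressed resolvent $T_\lambda:=\chi_{\Omega'}(A_B-\lambda)^{-1}\chi_{\Omega'}$ acting on $L^2(\Omega')$.

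Next, for any test function $g\in L^2(\Omega')$, the inner product $\ll g/(t-\lambda),\phi\rr$ vanishes (because $\phi g\equiv 0$ on $\R$), so formula (\ref{eq:mm10}) collapses to $c_f=-M_B(\lambda)\int_{\Omega'}g(t)(t-\lambda)^{-1}\,dt$, and since $\psi$ also vanishes on $\Omega'$ the resolvent formula (\ref{eq:mm1c}) evaluated at $x\in\Omega'$ reduces to
\[ (T_\lambda g)(x)=\frac{g(x)}{x-\lambda}\;-\;\frac{M_B(\lambda)}{x-\lambda}\int_{\Omega'}\frac{g(t)}{t-\lambda}\,dt. \]

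To close the argument, I would take $g=\chi_{\Omega'}$. For every $\lambda\in\rho(A_B)\setminus\R$ the integral $I(\lambda):=\int_{\Omega'}(t-\lambda)^{-1}\,dt$ has nonzero imaginary part $\Im\lambda\int_{\Omega'}|t-\lambda|^{-2}\,dt$, since $|\Omega'|>0$. Pairing the displayed identity against any $h\in L^2(\Omega')$ with $\ll(x-\lambda)^{-1},\overline{h}\rr\neq 0$ yields an explicit linear equation for the scalar $M_B(\lambda)$ whose coefficients are all known from $T_\lambda$ and $\Omega'$, giving
\[ M_B(\lambda)=-\,\frac{\ll T_\lambda g,\overline{h}\rr-\ll g(\cdot)/(\cdot-\lambda),\overline{h}\rr}{I(\lambda)\,\ll (\cdot-\lambda)^{-1},\overline{h}\rr}. \]
This determines $M_B(\lambda)$ for every non-real $\lambda\in\rho(A_B)$, and hence on each connected component of $\rho(A_B)$ by analyticity, in the spirit of Theorem \ref{thm:1}. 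The only step requiring genuine care is the symmetry argument in the first paragraph, because without the inclusion $L^2(\Omega)\subseteq\Scto$ the bordered resolvent would not grant direct access to $T_\lambda$; everything else is a routine algebraic simplification of (\ref{eq:mm1c}) and (\ref{eq:mm10}) on the set where both coupling functions vanish.
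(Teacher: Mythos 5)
Your proof is correct and follows essentially the same route as the paper: it uses Theorem \ref{pos} (together with the $\phi\leftrightarrow\psi$ symmetry) to place $L^2(\Omega')$ inside $\Sco\cap\Scto$, then collapses the explicit formulas (\ref{eq:mm1c}) and (\ref{eq:mm10}) for test functions supported in $\Omega'$ and solves the resulting scalar equation for $M_B(\lambda)$. Your choice $g=\chi_{\Omega'}$ with the imaginary-part argument for $I(\lambda)\neq 0$ is just a concrete instance of the paper's choice of nonnegative $v,\widetilde v$, so the two arguments coincide in substance.
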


\begin{remark}
The converse is not possible. The asymptotics of the $M$-function at $i\infty$ allow us to recover $B$ and thus $\widehat{\psi}(\lambda)\widehat{\overline{\phi}}(\lambda)$ for any $\lambda$. However, only knowing the product for example makes it impossible to distinguish the expression for $A$ from the operator expression obtained by replacing $\psi$ by $\ophi$ and $\phi$ by $\overline{\psi}$, respectively.
\end{remark}

\subsection{Results with $\phi, \psi \in H_2^+$}

We note that in the Fourier picture described in Remark \ref{Fourier}, the condition that $\phi, \psi \in H_2^+$ corresponds to $\cF\phi, \cF\psi$ being supported in $\R^-$ (by the Paley-Wiener Theorem \cite{Koosis}). A similar remark applies to the next subsection when $\overline{\phi},\psi\in H_2^+$ where the Fourier transforms will be supported on different half lines.
Moreover, similar results will hold if both $\phi, \psi \in H_2^-$.

\begin{theorem}\label{S++} Let $\phi, \psi \in H_2^+$. Then 
$$\Sco=\overline{\bigvee_{\mu \in \C^+} \dfrac1{x-\mu} + \bigvee_{\mu \in \C^-} \dfrac{  D(\mu) + 2 \pi i \bar \phi(\mu) \psi(x)}{x-\mu}}.$$

Moreover, if 
\be\label{eq:rationalpsi}\psi(x)= \sum_{j=1}^N \frac{c_j}{x-z_j}\ee
with $c_j\neq 0$, $\Im z_j<0$ and $z_i\neq z_j$ for $i\neq j$, then 
$$\ind(\change{\Sc})=N-P-M-M_0,$$
where $P=\sum p_k$ and $p_k$ is the order of poles of $\overline{\phi(\overline{\mu})}/D_+(\mu)$
 in $\C_-\setminus\{z_j\}_{j=1}^N$, $M=\sum m_i$, where $m_i$ are the `order of the poles' of $\overline{\phi(x)}/D_+(x)$ in $\R$ (i.e.~$m_i$ is the minimum integer such that $\overline{\phi(x)}/D_+(x) (x-x_i)^{m_i}$ is square integrable), $M_0$ corresponds to a degenerated case and is given by
$$M_0=\left\vert\{j:\overline{\phi(\overline{z_j})}=0\hbox{ and } \lim_{\mu\to z_j}\dfrac{ 2 \pi i \bar \phi(\mu) c_j}{D_+(\mu)(\mu-z_j)} \neq 1\}\right\vert$$
and 
$$D_+(\mu):=1+2\pi i\sum_{j=1}^N\frac{c_j\overline{\phi(z_j)}}{\mu-z_j}$$
for $\mu\in\C$ is a meromorphic continuation of the rational function $D(\mu)$, $\mu\in\C_+$ to the lower half plane. (Note that this will not coincide with $D(\mu)$ in the lower half plane and that for generic $\phi\in H_2^+$ the continuation of $D_-$ to $\C_+$ will not even exist).

\end{theorem}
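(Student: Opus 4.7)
The plan is to first identify the generators of $\Sco$ explicitly using Proposition~\ref{prop:2.7}(3), which gives $\Sco = \overline{\bigvee_{\lambda\in\Lambda}\ker(\At^*-\lambda)}$. Formula \eqref{eq:7} describes the kernel; the key simplification from $\phi\in H_2^+$ (so $\overline\phi\in H_2^-$) is that the Cauchy integral $\widehat{\overline\phi}(\lambda)$ vanishes on $\C^+$ and equals $-2\pi i\,\bar\phi(\lambda)$ on $\C^-$, where $\bar\phi(\lambda):=\overline{\phi(\bar\lambda)}$. Substituting these values into \eqref{eq:7} yields generators $\tfrac{1}{x-\mu}$ for $\mu\in\C^+$, and (up to the nonzero scalar $D(\mu)^{-1}$) $\tfrac{D(\mu)+2\pi i\bar\phi(\mu)\psi(x)}{x-\mu}$ for $\mu\in\C^-$, giving the first assertion of the theorem.

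For the index formula, I would next observe that $\overline{\bigvee_{\mu\in\C^+}\tfrac{1}{x-\mu}} = H_2^-$, while the second family lies in $H_2^+$ (its only possible poles are at $\mu$ and the $z_j$, all in $\C^-$). Since $L^2(\R)=H_2^-\oplus H_2^+$ orthogonally, we obtain $\Sco = H_2^-\oplus V$ with $V\subseteq H_2^+$, and hence $\Sco^\perp = H_2^+\ominus V$. A convenient rewriting uses the identity $D(\mu)=D_+(\mu)-2\pi i\bar\phi(\mu)\psi(\mu)$ on $\C^-$ (verifiable by comparing residues of the two meromorphic extensions) and the partial-fraction formula $\tfrac{\psi(x)-\psi(\mu)}{x-\mu}=-\sum_j \tfrac{c_j}{(\mu-z_j)(x-z_j)}$, which recasts the second-family generator as
$$F(\mu,x)=\frac{D_+(\mu)}{x-\mu}-\sum_{j=1}^N\frac{2\pi i\bar\phi(\mu)\,c_j}{(\mu-z_j)(x-z_j)}.$$

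To characterize $\Sco^\perp$, I would then test orthogonality against each $F(\mu,\cdot)$. Computing $\int_\R\overline{g(x)}/(x-\nu)\,dx=-2\pi i\,\overline{g(\bar\nu)}$ for $\nu\in\C^-$ (since $\overline g\in H_2^-$), the condition $\langle F(\mu,\cdot),g\rangle=0$ for all $\mu\in\C^-$ reduces, after division by $-2\pi i$, to
$$D_+(\mu)\,\overline{g(\bar\mu)}+2\pi i\bar\phi(\mu)\sum_{j=1}^N\frac{c_j\alpha_j}{\mu-z_j}=0,\qquad \alpha_j:=\overline{g(\bar z_j)}.$$
This relation uniquely determines $\overline{g(\bar\mu)}=-\tfrac{2\pi i\bar\phi(\mu)}{D_+(\mu)}\sum_j\tfrac{c_j\alpha_j}{\mu-z_j}$ from the $N$ parameters $\{\alpha_j\}$, and $g\in H_2^+$ requires that the right-hand side define an analytic function on $\C^-$ with $H_2^-$ boundary values on $\R$.

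The last step is to count constraints on $\alpha=(\alpha_1,\dots,\alpha_N)\in\C^N$. Each pole of $\bar\phi/D_+$ in $\C_-\setminus\{z_j\}$ of order $p_k$ forces the corresponding principal part of the rational function $\mu\mapsto\tfrac{2\pi i\bar\phi(\mu)}{D_+(\mu)}\sum_j\tfrac{c_j\alpha_j}{\mu-z_j}$ to vanish, contributing $p_k$ independent linear conditions on $\alpha$ and $P=\sum p_k$ in total. The same mechanism on $\R$ accounts for $M$; these are $L^2$-integrability conditions at the real singularities of $\bar\phi/D_+$. Finally, at each $\mu=z_j$ the residue $\bar\phi(z_j)c_j\alpha_j/D_+(z_j)$ must vanish: if $\bar\phi(z_j)\neq 0$ this already forces $\alpha_j=0$ (which together with analyticity at the other $z_k$ is absorbed into the framework above through $D_+$); if $\bar\phi(z_j)=0$ the pole at $z_j$ cancels automatically, and the self-consistency condition reduces to $\alpha_j(1-L_j)=(\text{linear combination of other }\alpha_k)$ with $L_j:=\lim_{\mu\to z_j}\tfrac{2\pi i\bar\phi(\mu)c_j}{D_+(\mu)(\mu-z_j)}$, providing an additional genuine constraint exactly when $L_j\neq 1$, for a total of $M_0$. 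Hence $\dim\Sc^\perp = N-P-M-M_0$. The main obstacle is verifying that the constraints arising from distinct poles of $\bar\phi/D_+$ and from the degenerate $z_j$ cases are linearly independent, which requires a careful local expansion near each pole and near each $z_j$ with $\bar\phi(z_j)=0$ to isolate the genuinely new condition contributed.
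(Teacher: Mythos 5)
Your overall route is the same as the paper's: the generators of $\Sco$ are obtained from the explicit kernel formula \eqref{eq:7} using $\widehat{\overline\phi}=0$ on $\C^+$ and $\widehat{\overline\phi}(\mu)=-2\pi i\,\overline{\phi(\overline\mu)}$ on $\C^-$, and the second half rests on the representation $\og(\mu)=\frac{2\pi i\,\overline\phi(\mu)}{D_+(\mu)}\sum_{j}\frac{c_j\og(z_j)}{\mu-z_j}$ for $\mu\in\C_-$, parametrised by the $N$ numbers $\og(z_j)$ — exactly the paper's identity \eqref{eq:grep}, which the paper derives from the $H_2^\pm$-characterisation of $\Sc^\perp$ (Propositions \ref{sperpcrit}, \ref{Schar++}) and the residue theorem, while you derive it by testing orthogonality against the generators; these are equivalent computations. (Two small points: the exceptional $\mu$ with $D(\mu)=0$ need the approximation remark the paper makes so that the closure is unaffected; and your orthogonality relation has a sign slip — the correct relation carries a plus sign, and with your minus sign the degenerate criterion would read $L_j\neq-1$ rather than the $L_j\neq 1$ appearing in the statement.)

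There are, however, two genuine gaps in the constraint count. First, your treatment of the points $z_j$ with $\overline\phi(z_j)\neq 0$ is incorrect: by its very definition $D_+(\mu)=1+2\pi i\sum_k c_k\overline{\phi(z_k)}/(\mu-z_k)$ has a \emph{simple pole} at every such $z_j$, so $\overline\phi/D_+$ vanishes there, the apparent singularity of the representation at $z_j$ is cancelled automatically, and $\lim_{\mu\to z_j}\og(\mu)=\og(z_j)$ holds with no condition on $\alpha_j$ (this is the paper's "simple explicit calculation"). Your conclusion that $\overline\phi(z_j)\neq0$ forces $\alpha_j=0$ comes from treating $D_+(z_j)$ as finite; followed through, it would give $\dim\Sc^\perp=0$ in the generic case (no zeros of $D_+$ in $\overline{\C}_-$, $\overline\phi(z_j)\neq0$ for all $j$), where the theorem gives $N$. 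Second, the linear independence of the $P$ (and $M$, $M_0$) conditions — which you explicitly defer as "the main obstacle" — is precisely the substantive step: without it one only obtains $\dim\Sc^\perp\geq N-P-M-M_0$. The paper closes this by a specific argument: a nontrivial dependence would produce a rational function $F(z)=\sum_{k,n}\alpha_{k,n}(\mu_k-z)^{-n}$ vanishing at all $z_1,\dots,z_N$, whence $Q(z)=F(z)\prod_k(\mu_k-z)^{p_k}$ is a polynomial of degree $<P\leq N$ with $N$ zeros (the inequality $P\leq N$ coming from $D_+(\mu)\to1$ at infinity, so $D_+$ has at least as many poles as zeros in $\C_-$), forcing $F\equiv0$. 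Some argument of this kind is needed to turn your "at most $N$ parameters minus a list of conditions" into the asserted equality $\ind(\Sc)=N-P-M-M_0$.
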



\begin{proposition} \label{proposition:8.13} Any values can be realized for the defect numbers of $\Sc$ and $\tilde{\Sc}$ by suitably choosing 
 rational $\phi$ and $\psi$ in $H_2^+(\R)$.
\end{proposition}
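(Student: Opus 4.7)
The plan is to apply Theorem \ref{S++} together with its mirror image for $\tilde{\Sc}$, and then to exhibit rational choices of $\phi,\psi\in H_2^+$ realizing any prescribed pair $(n,m)\in\mathbb{N}_0\times\mathbb{N}_0$ of defect numbers. The key observation is that the construction of $\tilde A$ in \eqref{eq:1t}--\eqref{eq:2t} is obtained from $A$ simply by interchanging $\phi$ and $\psi$. Consequently, the formula of Theorem \ref{S++} applied to $\tilde A$ yields, whenever $\phi$ is itself rational in $H_2^+$ of the form $\phi(x)=\sum_{k=1}^{N_\phi}a_k/(x-w_k)$ with $\Im w_k<0$ and $w_i\neq w_k$, an analogous identity
\[
\mathrm{def}(\tilde{\Sc}) \;=\; N_\phi - \tilde P - \tilde M - \tilde M_0,
\]
in which $\tilde P,\tilde M,\tilde M_0$ are defined exactly as $P,M,M_0$ but with the roles of $\phi$ and $\psi$ (and the corresponding function $D$) swapped.

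Given targets $n,m$, I would take $\psi$ to have $N_\psi=n$ distinct simple poles $z_1,\dots,z_n$ in $\C_-$ and $\phi$ to have $N_\phi=m$ distinct simple poles $w_1,\dots,w_m$ in $\C_-$, chosen together with the numerators generically (and with $\{w_k\}\cap\{z_j\}=\emptyset$, so that $D_+$ and $\tilde D_-$ are well-defined rational functions). The correction terms then all vanish:
\begin{itemize}
\item $M=0$ and $\tilde M=0$ because the zero sets of the rational functions $D_+$ and $\tilde D_-$ avoid the real axis for generic coefficients;
\item $P=0$ and $\tilde P=0$ because the other zeros of $D_+$ and $\tilde D_-$ may generically be kept away from $\C_-$ (respectively $\C_+$) and away from the pole sets;
\item $M_0=0$ and $\tilde M_0=0$ because, generically, $\overline{\phi(\overline{z_j})}\neq 0$ for all $j$ and $\overline{\psi(\overline{w_k})}\neq 0$ for all $k$, making the degenerate case vacuous.
\end{itemize}
With all correction terms zero, Theorem \ref{S++} and its mirror yield $\mathrm{def}(\Sc)=n$ and $\mathrm{def}(\tilde{\Sc})=m$ simultaneously.

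The transversality argument justifying the word ``generic'' is routine: each of the nuisance conditions ($D_+$ having a zero at a prescribed location, a coincidence $\overline{\phi(\overline{z_j})}=0$, etc.) cuts out a proper algebraic subvariety of the finite-dimensional parameter space of admissible $(\phi,\psi)$ with the prescribed pole structure; a finite union of proper subvarieties still leaves a non-empty Zariski-open complement from which the construction may be drawn.

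The main technical point, and hence the likely obstacle, is the bookkeeping for the mirror formula for $\tilde{\Sc}$. One has to verify that the derivation of Theorem \ref{S++} carries over verbatim when $\phi$ and $\psi$ are swapped, in particular that the Riesz--Nevanlinna-type factorisation step (which underlies the pole counting of $\overline{\phi(\overline\mu)}/D_+(\mu)$) respects this symmetry. This, however, follows at once from the symmetric definition of $\tilde A$ via \eqref{eq:1t}--\eqref{eq:2t}, so no new ideas beyond those already developed in the proof of Theorem \ref{S++} are required.
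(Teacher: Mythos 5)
There is a genuine gap, and it is not merely a matter of missing bookkeeping: the central claim that with exactly $N_\psi=n$ poles for $\psi$, $N_\phi=m$ poles for $\phi$ and \emph{generic} coefficients one gets $P=\tilde P=M=\tilde M=M_0=\tilde M_0=0$ is false. Whether a zero of $D_+$ (or of the mirrored function $\tilde D$) lies in $\C_-$ or in $\C_+$ is an \emph{open} condition on the data $(c_j,z_j,d_k,w_k)$, not the complement of a proper algebraic subvariety: both alternatives occur on parameter sets with nonempty interior, so a Zariski-genericity argument cannot force $P=0$, and a fortiori cannot force $P=0$ and $\tilde P=0$ simultaneously — note that $D$ and $\tilde D$ are built from the \emph{same} data, so the two requirements are coupled. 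Indeed the paper's Example \ref{specialcase} refutes your scheme already in the smallest case: for one simple pole each, $\lambda_0=z_1+\frac{2\pi i\alpha}{w_1-z_1}$ and $\widetilde{\lambda_0}=w_1+\frac{2\pi i\alpha}{w_1-z_1}$, so $\widetilde{\lambda_0}-\lambda_0=w_1-z_1\in\C_+$, and one can never have $\lambda_0\in\C_+$ (needed for $\mathrm{def}\,\Sc=1$) together with $\widetilde{\lambda_0}\in\C_-$ (needed for $\mathrm{def}\,\Sct=1$); at least one of $\Sco$, $\Scto$ is all of $L^2(\R)$ for \emph{every} admissible choice of coefficients. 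Hence the pair $(n,m)=(1,1)$ is unattainable by your construction, generic or not, and the proposition cannot be proved with ``as many poles as the target defect''.

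The paper's proof circumvents exactly this obstruction by allowing more poles than the target defects and by making an explicit, non-generic choice. With $\psi$ and $\overline\phi$ rational with $N$ and $\tilde N$ simple poles, it computes the zeros of $D$ and $\tilde D$ in an asymptotic regime $z_j-w_k\gg1$, $|z_j|\gg|w_k|$, obtaining $\mu_j\approx z_j-2\pi i\,(c_j/z_j)\,\overline{\sum_k d_k}$ and $\tilde\mu_k\approx w_k+2\pi i\, d_k\,\overline{\sum_j c_j/z_j}$; these zeros can be pushed independently across $\R$ by tuning the residues, so any $P\in\{1,\dots,N\}$ and $\tilde P\in\{1,\dots,\tilde N\}$ can be realized independently. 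This yields any pair of defects in $\{0,\dots,N-1\}\times\{0,\dots,\tilde N-1\}$ (note: at most $N-1$, not $N$, in this construction), and since $N,\tilde N$ are arbitrary the proposition follows. To repair your argument you would have to take $N_\psi>n$, $N_\phi>m$ and exhibit a concrete parameter choice controlling the half-plane location of the zeros of \emph{both} $D$ and $\tilde D$ simultaneously, which is precisely what the paper's scale-separation construction provides.
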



We conclude this part with an example. 
\begin{Example}\label{specialcase}
Let
$$\psi(x)=\dfrac \alpha {x-z_1} \hbox{ with } z_1\in \C_-, \alpha \in\C\setminus\{0\} \quad \hbox{ and }\quad\overline \phi(x) =\dfrac 1{x-w_1} \hbox{ with } w_1\in \C_+.
$$
The root of $D(\lambda)$ in $\C_+$ or its analytic continuation $D_+(\lambda)$ in $C_-$ is $\lambda_0= z_1+\frac{2\pi i\alpha }{w_1-z_1}$. We have three cases for $N,P,M,M_0$ as in Theorem \ref{S++}:
\begin{enumerate}
	\item If $\lambda_0\in\C_+$
	 then $N=1$, $P=M=M_0=0$,
	\item if $\lambda_0\in\C_-$ then $N=P=1$, $M=M_0=0$,
	\item if $\lambda_0\in\R$ then $N=M=1$, $P=M_0=0$.
\end{enumerate}
Therefore, $\Sc^\perp$ is non-trivial if and only if $\lambda_0=z_1+\frac{2\pi i\alpha }{w_1-z_1}\in\C_+$. In this case, $\Sc^\perp$ is one dimensional. Moreover,
\be 
\Sc^\perp=\left\{\frac{const}{(t-\overline{w_1})(t-\overline{z_1}+\frac{2\pi i\overline{\alpha} }{\overline{w_1}-\overline{z_1}})}\right\}\quad\hbox{ and }\quad \Sco=\{f\in L^2(\R):\ (P_+f)(w_1)=(P_+f)(\lambda_0)\}.
\ee
Similarly, $\Sct^\perp$ is non-trivial if and only if $\widetilde{\lambda_0}:= w_1+\frac{2\pi i \alpha }{w_1-z_1}\in\C_- $ (and therefore $D(\widetilde{\lambda_0})=0$). Note that if $\lambda_0\in\C_+$, then also $\widetilde{\lambda_0}\in \C_+$, whilst if $\widetilde{\lambda_0}\in\C_-$, then also $\lambda_0\in\C_-$. Therefore, at least one of $\Sco$ and $\Scto$ is the whole space. 

Moreover, we see that the bordered resolvent does not detect the singularities at the eigenvalues $\lambda_0\in\C_+$ or $\widetilde{\lambda_0}\in\C_-$: 

For $\lambda\approx \lambda_0\in\C_+$ we have from\eqref{eq:mm1c} and \eqref{eq:mm10} that
\be (A_B-\lambda)^{-1}=\hbox{regular part at }\lambda_0+\frac{\mathcal{P}_{\lambda_0}}{\lambda-\lambda_0},\ee
with the Riesz projection $\mathcal{P}_{\lambda_0}$ given by
\be
\mathcal{P}_{\lambda_0} = \llangle\cdot, u_1 \rrangle u_2,
\ee
where 
\be u_1=\frac{\phi}{x-\overline{\lambda_0}} \quad \hbox{ and } \quad u_2=\alpha (z_1-\lambda_0)\frac{\psi(x)-2\pi i M_B(\lambda_0)\psi(\lambda_0)}{x-\lambda_0}.
\ee
Since $u_1\in\Sc^\perp$, the singularity is cancelled by $P_\Sco$. $u_2$ is the eigenvector of $A_B-\lambda_0$ (see \cite{Kato}).

For $\lambda\approx \widetilde{\lambda_0}\in\C_-$ we have again from\eqref{eq:mm1c} and \eqref{eq:mm10} that
\be (A_B-\lambda)^{-1}=\hbox{regular part at }\lambda_0+\frac{\mathcal{P}_{\widetilde{\lambda_0}}}{\lambda-\widetilde{{\lambda_0}}},\ee
with 
\be
\mathcal{P}_{\widetilde{\lambda_0}} = \llangle\cdot, (\overline{\widetilde{\lambda_0}}-\overline{w_1})\frac{\phi(x)-2\pi i \overline{M_B(\widetilde{\lambda_0})}\ophi(\widetilde{\lambda_0})}{x-\widetilde{\lambda_0}} \rrangle \frac{\alpha\psi}{x-\overline{\widetilde{\lambda_0}}} ,
\ee
where $\frac{\psi}{x-\widetilde{\lambda_0}}$ is an eigenvector of $A_B$  for all (!) $B$ and lies in $\Sct^\perp$, so the singularity of the resolvent is cancelled by $P_\Scto$.

We note that this behaviour of the bordered resolvent is in accordance with Theorem 3.6 in \cite{BHMNW09}.

%
\end{Example}

\begin{remark}\label{rem:Borg}
We note that in the case when $\phi,\psi\in H_2^+$ taking $\lambda,\mu\in \C_+$, the $M$-function and the ranges of the solution operators $S_{\lambda,B}$ and $\widetilde{S}_{\mu,B^*}$ do not depend on $\phi$ and $\psi$ (see \eqref{eq:9} and \eqref{eq:7}). In fact, $M_B(\lambda) = \left[ \mbox{sign}(\Im\lambda) \pi i - B\right]^{-1}$, $S_{\lambda,B}f= (\Gamma_2f) (x-\lambda)^{-1}$ and $\widetilde{S}_{\mu,B^*}f= (\Gammat_2 f) (x-\mu)^{-1}$.  In this highly degenerated case, 
only the boundary condition $B$ can be obtained. Therefore, in this case a Borg-type theorem allowing recovery of the bordered resolvent  from the $M$-function is not possible, even with knowledge of the ranges of the solution operators in the whole of the suitable half-planes.
On the other hand, knowledge of the ranges of the solution operators in both half-planes, together with the $M$-function at one point allows reconstruction by Theorem \ref{thm:recon}.
\end{remark}

\subsection{Analysis for the case $\overline{\phi},\psi\in H_2^+$.}

\begin{theorem}\label{notminuspi} Let $\overline{\phi},\psi\in H_2^+$.
Then  if $B\neq -\pi i$, we have $\ind({\mathcal S_B}) = 0$.
Similar results hold for $\Sct_B$ by taking adjoints.
\end{theorem}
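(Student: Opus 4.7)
The plan is to show $\overline{\Sc_B}=L^2(\R)$ directly by analysing the ranges of the solution operators $S_{\lambda,B}$, exploiting the fact that $\overline\phi$ and $\psi$ extend analytically to $\C^+$.

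I would start by computing $D(\lambda)$ explicitly. Since $\psi\overline\phi\in H_1^+$, Cauchy's formula gives $D(\lambda)=1+2\pi i\psi(\lambda)\overline\phi(\lambda)$ for $\lambda\in\C^+$ and $D(\lambda)=1$ for $\lambda\in\C^-$. Substituting into \eqref{eq:9} yields $M_B(\lambda)^{-1}=-i\pi-B$ throughout $\C^-$, so when $B\neq-i\pi$ the entire lower half-plane lies in $\rho(A_B)$. By \eqref{eq:7}, for any $\lambda\in\C^-$ the range of $S_{\lambda,B}$ is then spanned by $1/(x-\lambda)$, since the second term of \eqref{eq:7} vanishes ($\widehat{\overline\phi}(\lambda)=0$ in $\C^-$). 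Taking the closed span over $\lambda\in\C^-$ yields $H_2^+\subseteq\overline{\Sc_B}$.

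Next, for $\lambda\in\rho(A_B)\cap\C^+$ the range of $S_{\lambda,B}$ is spanned by $f_\lambda(x)=\frac{1}{x-\lambda}-\frac{2\pi i\overline\phi(\lambda)}{D(\lambda)}\frac{\psi(x)}{x-\lambda}$. A short Plemelj computation shows that $P_+(\psi/(x-\lambda))=(\psi(x)-\psi(\lambda))/(x-\lambda)$, which lies in $H_2^+$ and hence in $\overline{\Sc_B}$ by the previous step. Using $D(\lambda)-2\pi i\overline\phi(\lambda)\psi(\lambda)=1$ to rearrange, I obtain
\[
\frac{1}{x-\lambda}=D(\lambda)f_\lambda+2\pi i\overline\phi(\lambda)\,\frac{\psi(x)-\psi(\lambda)}{x-\lambda}\in\overline{\Sc_B}.
\]
A routine calculation gives $M_B^{-1}(\lambda)=[(i\pi-B)-2\pi i(i\pi+B)\psi(\lambda)\overline\phi(\lambda)]/D(\lambda)$ in $\C^+$, which is the boundary value of an analytic function not identically zero (since $\psi\overline\phi\to 0$ at $i\infty$), so $\rho(A_B)\cap\C^+$ is $\C^+$ minus a discrete set. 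Therefore the closed span of $\{1/(x-\lambda):\lambda\in\rho(A_B)\cap\C^+\}$ is all of $H_2^-$, giving $H_2^-\subseteq\overline{\Sc_B}$, and hence $\overline{\Sc_B}=L^2(\R)$.

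The crux is the inclusion $H_2^+\subseteq\overline{\Sc_B}$ in paragraph two: the hypothesis $B\neq -i\pi$ is used exactly once, to place all of $\C^-$ into $\rho(A_B)$. Without this inclusion the $H_2^+$-correction $(\psi(x)-\psi(\lambda))/(x-\lambda)$ needed in paragraph three is not a priori available inside $\Sc_B$, and the extraction of $H_2^-$ collapses. This is consistent with the fact that for $B=-i\pi$ one has $\C^-\subseteq\sigma_p(A_B)$ and the detectable subspace is genuinely smaller, so the hypothesis is sharp.
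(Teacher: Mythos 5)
Your proposal is correct, but it takes a genuinely different route from the paper's. The paper works on the orthogonal complement: it invokes the characterisation of $\Sc^\perp$ in \eqref{eq:65} from Proposition \ref{sperpcrit}, notes that $P_-\ophi=0$ forces $P_-\og=0$, hence $\og\in H_2^+$ and $P_+(\psi\og)=\psi\og$, and then the boundary identity $D_+=1+2\pi i\psi\ophi$ turns the remaining condition into $(1+2\pi i\ophi\psi)\og=2\pi i\ophi\psi\og$, so $\og=0$; there the hypothesis $B\neq-\pi i$ enters only implicitly, through the validity of that characterisation (the span of kernels must be available for essentially all non-real $\mu$, which fails precisely in the degenerate case of Remark \ref{minuspi}). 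You instead argue on the primal side, computing $\Ran S_{\lambda,B}$ in each half-plane and showing that the two families of kernels span $H_2^+$ and $H_2^-$; this is in the constructive spirit of the proof of Theorem \ref{S++}, makes the role of $B\neq-\pi i$ completely explicit ($\C^-\subseteq\rho(A_B)$), and avoids the $L^1$-projection technicalities, at the cost of being longer. Two small points to tighten: (i) your justification that $M_B^{-1}$ is not identically zero in $\C^+$ ``since $\psi\ophi\to0$ at $i\infty$'' only shows the numerator tends to $i\pi-B$, which settles every $B\neq i\pi$; for $B=i\pi$ the numerator reduces to $4\pi^2\psi\ophi$, which is still not identically zero unless the model is trivial ($\psi\ophi\equiv 0$ in $\C^+$), a degenerate situation tacitly excluded throughout (the paper's own argument also breaks down there); (ii) the formula for $f_\lambda$ presupposes $D(\lambda)\neq 0$, so you should take $\lambda$ in $\rho(A_B)\cap\C^+$ minus the discrete zero set of $D$ — harmless, since the Cauchy kernels over any co-discrete subset of $\C^+$ still span $H_2^-$.
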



\begin{remark}\label{minuspi}
We note that the space $\Sco_B$ as defined in \eqref{eq:mm1} can  depend on the boundary condition. In the degenerate case
$B=-\pi i$ we have
\[ \Sco_B = \bigvee_{\mu\in {\mathbb C}^+} \left(\frac{D(\mu) - 2\pi i \overline{\phi}(\mu)\psi(x)}{x-\mu} \right). \] 
If $\phi$ or $\psi$ additionally lies in $L^\infty$, then this gives $\mathrm{def}(\Sc_B)=+\infty$.
However, we consider this choice of $B$ as a degenerate case, since the hypotheses of Proposition \ref{prop:2.7} are not satisfied.
\end{remark}

\subsection{The general case $\psi, \phi\in L^2$}

We conclude this section by studying the general case. Without assumptions on the support, or the Hardy class of $\phi$ and $\psi$, the results are rather complicated. Therefore, in what follows we will not worry about imposing slightly stronger regularity conditions on $\phi$ and $\psi$. 
We first define the following set
 \ben \label{E0}
  E_0&:=& \{ \alpha\in\C :
 \exists \hbox{ a set of positive measure }E\subseteq\R  \\ 	&&  \hbox{ s.t. } 1 + 2\pi i \alpha (P_+(\overline \phi \psi)-\psi (P_+(\overline \phi)))= 0 \hbox{ on }E\}. \nonumber
	\een
Note that $E_0$ consists  of those $\alpha$ such that the factor $\left[ D_+- 2\pi i(P_+\ophi)\psi\right]$ appearing in \eqref{eq:67} - \eqref{eq:69} vanishes on some non-null set $E$ when $\psi$ is replaced by $\alpha\psi$.

\begin{theorem}\label{thm:E0}
Assume \eqref{epsiloncondition}. Let $\alpha\in E_0$, then $\mathrm{def}\  \Sc_\alpha =+\infty$.
\end{theorem}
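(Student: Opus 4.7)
The plan is to translate the condition $g\in\Sc_\alpha^\perp$ into an operator equation via Theorem~\ref{Schar} combined with the rescaling $\psi\leftrightarrow\alpha\psi$ of Remark~\ref{rem:Salpha}. Applied to \eqref{eq:67}, this yields: $g\in\Sc_\alpha^\perp$ iff
\[
F_\alpha\,\og \;=\; 2\pi i\alpha\,\ophi\bigl[\psi P_-\og-P_-(\psi\og)\bigr]\qquad\text{a.e.\ on }\R,
\]
where $F_\alpha := 1 + 2\pi i\alpha\bigl[P_+(\ophi\psi)-\psi P_+\ophi\bigr]$. The hypothesis $\alpha\in E_0$ is exactly the assertion that $F_\alpha\equiv 0$ on some positive-measure set $E\subseteq\R$.

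The first step is to restrict attention to $\og$ whose support lies in $E$. For such $\og$ the left-hand side of the equation vanishes identically on $\R$ (on $E$ because $F_\alpha=0$, on $E^c$ because $\og=0$), and the equation collapses to the single homogeneous pointwise constraint
\[
\ophi\bigl[\psi P_-\og-P_-(\psi\og)\bigr]=0\quad\text{a.e.\ on }\R,
\]
equivalently $\og\in\ker T$, where $T:L^2(E)\to L^2(\R)$ is defined by $Tu:=\ophi\,[M_\psi,P_-]u$. Since every $\og\in\ker T$ produces a distinct element of $\Sc_\alpha^\perp$, it is enough to prove $\dim\ker T=+\infty$.

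To produce infinitely many linearly independent elements of $\ker T$, I would partition $E=\bigsqcup_{n\in\N}E_n$ into countably many pieces of positive Lebesgue measure and, for each $n$, exhibit a non-zero $u_n\in\ker T$ with $\supp u_n\subseteq E_n$; the disjointness of supports then forces linear independence of $\{u_n\}_{n\in\N}$. The production of each $u_n$ exploits the Hankel structure of the commutator $[M_\psi,P_-]=P_+M_\psi-M_\psi P_+$, which under \eqref{epsiloncondition} is bounded on $L^2$; its image when restricted to $L^2(E_n)$ and composed with multiplication by $\ophi$ lies in a proper closed subspace of $L^2(\R)$ consisting of functions expressible as $\ophi$ times boundary values of Cauchy integrals of $E_n$-supported functions. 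A Fredholm/codimension argument against this ambient subspace then yields a non-trivial kernel inside the infinite-dimensional space $L^2(E_n)$.

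The main obstacle is making the final codimension argument precise, since compactness of $T|_{L^2(E_n)}$ alone does not guarantee a non-trivial kernel (compact operators between infinite-dimensional Hilbert spaces can be injective). To resolve this I would appeal to the specific analytic structure of Cauchy integrals and Hankel operators, in the spirit of the Riesz--Nevanlinna factorisation arguments used later in Section~\ref{section:7}: the image $T(L^2(E_n))$ has finite defect against a naturally associated ambient subspace of $\ophi\cdot L^2$, which via a dimension count forces $\ker T\cap L^2(E_n)$ to be non-trivial (indeed of infinite dimension on its own, though a single element per $E_n$ already suffices). Assembling the resulting $u_n$ across the partition yields an infinite-dimensional subspace of $\Sc_\alpha^\perp$, so $\mathrm{def}\,\Sc_\alpha=+\infty$ as claimed.
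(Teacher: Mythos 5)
There is a genuine gap, and it lies in the very first reduction: restricting to $\og$ supported in $E$. It is true that for such $\og$ the factor $\left[D_+-2\pi i\alpha(P_+\ophi)\psi\right]\og$ vanishes a.e., but the price is that \eqref{eq:67} then forces the extra condition $\ophi\left[\psi P_-\og-P_-(\psi\og)\right]=0$ a.e., i.e.\ $\og$ must lie in the kernel of a Hankel-difference operator, and this kernel (intersected with $L^2(E)$) can be trivial even in situations covered by the theorem. Take Example \ref{ex:char}: $\phi,\psi$ have disjoint supports $I$, $I'$ and $\psi\,\widehat{\ophi}(k-i0)=1$ on $I'$, so $\alpha=1\in E_0$ with $E\subseteq\Omega_\psi$. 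If $\og$ is supported in $E$, then $\og=0$ on $\Omega_\phi$, and the constraint on $\Omega_\phi$ (where $\psi=0$, $\phi\neq 0$) reads $P_-(\psi\og)=0$ a.e.\ on $\Omega_\phi$; by the uniqueness theorem for Hardy-class boundary values this gives $P_-(\psi\og)\equiv 0$, so $\psi\og\in H_2^+$ is compactly supported and hence $\psi\og\equiv 0$, whence $\og=0$ on $\Omega_\psi\supseteq E$ and so $\og\equiv 0$. Thus in this example your candidate class contains only the zero function, while the theorem asserts $\mathrm{def}\,\Sc_\alpha=+\infty$; no refinement of the final ``Fredholm/codimension'' step (which you leave unresolved in any case, and which compactness alone indeed cannot supply) can rescue a strategy whose solution set is empty. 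The paper avoids this by \emph{not} making both sides of \eqref{eq:67} vanish separately: it takes $\og=(P_+\ophi)\chi_E h-\ophi P_-(\chi_E h)$ with $h$ an arbitrary element of $L^2(E)\cap L^\infty(E)$ (or $L^2(E)\cap L^4(E)$ under the other branch of \eqref{epsiloncondition}), a function that is in general supported on all of $\R$, and checks by direct computation, using $D_+-D_-=2\pi i\psi\ophi$ and the defining relation $D_+=2\pi i\psi P_+\ophi$ on $E$, that the two sides of \eqref{eq:67} are equal. Infinite defect then comes from the infinite-dimensional freedom in $h$ (shrinking $E$ to a suitable $E'$ with $\Omega_\phi\not\subseteq E'$ to see that $h\mapsto g$ is nontrivial, since $\og=-\ophi P_-(\chi_{E'}h)$ off $E'$ and boundary values of a nonzero analytic function vanish only on a null set). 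If you want to salvage your outline, you must allow your trial functions to have a tail off $E$ of exactly this form, rather than imposing $\supp\og\subseteq E$.
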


\begin{remark}
When considering the corresponding $\Sct_\alpha$ note that the set
$$
  \widetilde{E}_0:= \left\{ \alpha: 1 + 2\pi i \overline{\alpha} (P_+(\overline \psi \phi)-\phi (P_+(\overline \psi)))= 0\hbox{ on a set of positive measure}\right\}$$ 
	does not need to coincide with $E_0$, so it is possible to have $\ind \Sc_\alpha \neq \ind\Sct_\alpha$ even for $\alpha\in E_0$. 
	
	This happens in Example \ref{ex:char}, where (if $\psi$ is multiplied by a suitable constant) $\ind \Sc_\alpha =\infty$, while $\ind\Sct_\alpha=0$. 
\end{remark}

 \begin{theorem}\label{thm:indexx} Let $\phi\in L^2\cap L^\infty$ and $\psi\in L^2\cap C_0(\R)$, where $C_0(\R)$ is the space of continuous functions vanishing at infinity, and assume $\alpha\not\in E_0$. 
\begin{enumerate}
	\item Then $\ind \Sc_\alpha>0$ iff $(2\pi i \alpha)^{-1}\in \sigma_p(\M+\K)$, where
\be
\M= \left((P_+\overline \phi)\psi -P_+(\psi \overline \phi )\right)
\ee is a possibly unbounded multiplication operator  and
\be
\K=\ophi\left[ P_+ \psi P_- - P_- \psi P_+  \right]
\ee is the difference of two compact Hankel operators multiplied by $\ophi$. Note that $D(\M+\K)=D(\M)$, where $D(\M)$ is the canonical domain of the multiplication operator.

Moreover, $$\Sc_\alpha^\perp=\ker\left(\M+\K-\frac{1}{2\pi i\alpha}\right),$$ 
so
$$\ind \Sc_\alpha =\dim \ker\left(\M+\K-\frac{1}{2\pi i\alpha}\right).$$
If $(2\pi i \alpha)^{-1}\notin\overline{\essran_{k\in\R} \M(k)}$, then 
$$\ind \Sc_\alpha =\dim \ker\left(I+\K\left(\M-\frac{1}{2\pi i\alpha}\right)^{-1}\right) < \infty.$$

\item Additionally assume $\M(k)$ is continuous. Then $\C\setminus\overline{\Ran\M(k)}$ is a countable union of disjoint connected domains. Set $\mu=(2\pi i \alpha)^{-1}$. Then in each of these domains we have either
\begin{enumerate}
	\item $\ind\Sc_\alpha=0$ whenever $\mu$ is in this domain except (possibly) a discrete set, or
	\item $\ind\Sc_\alpha\neq 0$ is finite and constant for any $\mu$ in the domain except (possibly) a discrete set.
\end{enumerate}
Moreover, for $\mu$ sufficiently large, we have $\ind\Sc_\alpha=0$.
\end{enumerate}
\end{theorem}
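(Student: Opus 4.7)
The plan is to recast the problem as an operator equation by applying the pointwise characterisation \eqref{eq:69} of Proposition \ref{sperpcrit}, which is valid since $\phi,\psi\in L^2\cap L^\infty$. Replacing $\psi$ by $\alpha\psi$ throughout, one has $D_+ = 1 + 2\pi i\alpha P_+(\psi\ophi)$, so the left bracket of \eqref{eq:69} collapses to $1 - 2\pi i\alpha\M$, while the right-hand side becomes $2\pi i\alpha\K\og$. Rearranging yields the eigenvalue equation $(\M + \K)\og = (2\pi i\alpha)^{-1}\og$ and hence
$$\Sc_\alpha^\perp = \ker\bigl(\M + \K - (2\pi i\alpha)^{-1}\bigr).$$
The hypothesis $\alpha\notin E_0$ ensures that $1 - 2\pi i\alpha\M$ does not vanish on any set of positive measure, so the reformulation is faithful; moreover $\og\in D(\M)$ follows automatically, since $2\pi i\alpha\M\og = \og - 2\pi i\alpha\K\og \in L^2$.

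Next I would verify compactness of $\K = \ophi[P_+\psi P_- - P_-\psi P_+]$: this is $\ophi$ times the difference of the Hankel operators $P_\pm\psi P_\mp$ on $L^2(\R)$. Since $\psi\in C_0(\R)\subset L^\infty$ lies in a symbol class to which the real-line version of Hartman's theorem applies, the Hankel operators are compact, and left multiplication by $\ophi\in L^\infty$ preserves compactness. For the finiteness claim in part (1), assume $\mu := (2\pi i\alpha)^{-1}\notin \overline{\essran\M}$; then $(\M-\mu)^{-1}$ is a bounded multiplication operator on $L^2$, and the algebraic factorisation
$$\M + \K - \mu = \bigl(I + \K(\M-\mu)^{-1}\bigr)\,(\M - \mu)\qquad\text{on } D(\M),$$
combined with the bijectivity of $\M-\mu\colon D(\M)\to L^2$, yields $\dim\ker(\M+\K-\mu) = \dim\ker\bigl(I+\K(\M-\mu)^{-1}\bigr)$. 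Since $\K(\M-\mu)^{-1}$ is compact, $I+\K(\M-\mu)^{-1}$ is Fredholm of index zero and its kernel is finite-dimensional.

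For part (2), continuity of $\M(k)$ makes $\overline{\Ran\M}$ closed in $\C$, so its complement is open, hence a countable disjoint union of open connected domains. On each such domain $D$, the map $\mu\mapsto \K(\M-\mu)^{-1}$ is an operator-norm analytic family with values in the compact operators, and the analytic Fredholm theorem applied to $I+\K(\M-\mu)^{-1}$ gives the stated dichotomy: either this operator is invertible on $D$ off a discrete subset, giving case (a) with $\ind\Sc_\alpha = 0$ off that subset; or it is nowhere invertible on $D$, in which case the Gohberg--Sigal refinement of the theorem (for an analytic family of index-zero Fredholm operators on a connected domain the kernel dimension is constant off a discrete set) yields case (b). Finally, whenever an unbounded connected component of $\C\setminus\overline{\Ran\M}$ exists, the estimate $\|(\M-\mu)^{-1}\|\le 1/\mathrm{dist}(\mu,\overline{\Ran\M})$ forces $\K(\M-\mu)^{-1}$ to zero in norm as $|\mu|\to\infty$ within that component, so a Neumann series gives $I+\K(\M-\mu)^{-1}$ invertible and hence $\ind\Sc_\alpha = 0$ for $|\mu|$ large.

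I expect the main obstacle to be the refined analytic Fredholm theorem needed in case (b): the classical statement supplies only the invertibility dichotomy, and the constancy of $\dim\ker$ off a discrete set requires the Gohberg--Sigal-type strengthening combined with upper semicontinuity of $\dim\ker$ for analytic Fredholm families. A secondary technical point is the line form of Hartman's theorem for $\psi\in C_0(\R)$, which can be reduced to the disc case via a conformal change of variables or obtained directly by approximating $\psi$ in $L^\infty$ by compactly supported continuous functions whose associated Hankel operators are compact via an explicit integral-kernel calculation.
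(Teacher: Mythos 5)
Your proposal is correct and follows essentially the same route as the paper's (sketched) proof: rewrite \eqref{eq:69} with $\alpha\psi$ to get $\Sc_\alpha^\perp=\ker(\M+\K-\mu_\alpha)$, obtain compactness of $\K$ from Hartman-type theory for Hankel operators (the paper cites Peller, Corollary 8.5), factor through $I+\K(\M-\mu)^{-1}$, and invoke the analytic Fredholm theorem together with the Neumann series for large $\mu$. Two small remarks: the hypothesis $\alpha\notin E_0$ is not needed to justify the algebraic reformulation $\Sc_\alpha^\perp=\ker(\M+\K-\mu_\alpha)$ (that equivalence is already unconditional from Proposition \ref{sperpcrit}), and the refinement of the analytic Fredholm theorem you flag for constancy of $\dim\ker$ in case (b) is likewise left implicit in the paper's outline.
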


Although this theorem gives a description of $\Sco_\alpha$ for a rather general case of $\psi$ and $\phi$, for concrete examples as investigated in previous subsections it is useful to   determine the space explicitly rather than just give the description in terms of operators $\K$ and $\M$. However, this theorem shows the topological properties of the function $\ind \Sc_\alpha$ in the $\alpha$-plane.

\begin{example}\label{petal}
Let
$$\psi(x)=\alpha \left(\frac{c_1}{x-z_1}+\frac{c_2}{x-z_2}\right) \hbox{ with } z_1\neq z_2\in \C_-, \alpha \in\C\setminus\{0\}$$ and 
$$\overline \phi(x) =\dfrac 1{x-w_1} \hbox{ with } w_1\in \C_+.
$$
We wish to analyse the defect as a function of $\alpha$. By Theorem \ref{S++}, we need to determine the number of roots of the analytic continuation $D_+(\lambda)$ of $D(\lambda)$  in $\C_-$. 
Now,
\be\label{Dplus}
D_+(\lambda)= 1-2\pi i\alpha\left( \frac{c_1}{(z_1-w_1)(z_1-\lambda)} +  \frac{c_2}{(z_2-w_1)(z_2-\lambda)} \right).
\ee
After setting $\hat\mu:=\frac{2\pi i\alpha}{(z_1-w_1)(z_2-w_1)}$ a short calculation shows that the roots of $D_+(\lambda)$ solve
\be
\lambda^2+\lambda(d_1\hat\mu-z_1-z_2)+d_2\hat\mu+z_1z_2=0,
\ee
where 
$$d_1= c_1(z_2-w_1)+c_2(z_1-w_1) \quad\hbox{ and }\quad d_2= -c_1z_2(z_2-w_1)-c_2z_1(z_1-w_1).$$
In particular, for $\hat\mu=0$ the roots are $z_1,z_2\in \C_- $. By continuity, for small $|\alpha|$, by Theorem \ref{S++} we have $ \ind \Sco_\alpha=0 $.

For a polynomial $ \lambda^2+p\lambda+q=0$, an elementary calculation shows that it has a real root iff
\be\label{poly} (\Im q)^2 = (\Im p)\left(\Re p \Im q -\Re q\Im p\right)\quad\hbox{ and }\quad 4\Re q \leq |p|^2 .\ee

We now analyse the defect in a few examples. 
\begin{enumerate}
	\item[(A)] We first make the specific choice
$$\psi(x)=\alpha \left(\frac{-2}{x+i}+\frac{3}{x+2i}\right) \quad \hbox{ and }\quad\overline \phi(x) =\dfrac 1{x-i}.
$$
Then $d_1=0, d_2=-6$ and the equation in \eqref{poly} becomes 
\be\label{eq:onepetal}
 (\Im \hat\mu)^2 = \frac12(1+3\Re\hat\mu).
\ee
All $\hat\mu$ satisfying \eqref{eq:onepetal} satisfy the inequality in \eqref{poly}.
This gives a parabola in the $\alpha$-plane (or equivalently the $\hat\mu$-plane) with $\ind \Sco_\alpha=0$ inside or on the parabola and $\ind \Sco_\alpha=1$ outside. In the $1/\alpha$-plane this gives a curve whose interior is a petal-like shape  with   $\ind \Sco_\alpha=0$ for $1/\alpha$ outside or on the curve and $\ind \Sco_\alpha=1$ for  $1/\alpha$ inside the curve.
\item[(B)] We now return to the formula for $D_+$ in \eqref{Dplus}. Setting $\mu=(2\pi i \alpha)^{-1}$, we have 
\be
\mu =  \frac{c_1}{(z_1-w_1)(z_1-\lambda)} +  \frac{c_2}{(z_2-w_1)(z_2-\lambda)}.
\ee 
Clearly for $\lambda \to \pm\infty$, we have that $\mu=0$. 
We now choose $c_1,c_2$ to get another real root at $\lambda=0$.
Consider 
$$\psi(x)=\alpha \left(\frac{-1}{x+i}+\frac{3}{x+2i}\right) \quad \hbox{ and }\quad\overline \phi(x) =\dfrac 1{x-i}.
$$
In the $\mu$-plane this leads to one petal. As $\lambda$ runs through $\R$, this curve is covered twice (once for $\lambda<0$ and once for $\lambda>0$). We have $\ind \Sco_\alpha=0$ for $\mu$ outside the curve  and $\ind \Sco_\alpha=2$ for $\mu$ inside the curve. On the curve we have $\ind\Sco_\alpha =0$. The double covering of the curve allows the jump of $2$ in the defect when crossing the curve.
\item[(C)] More generally,  if $\psi$ has $N$ terms, then the problem of finding real roots of $D_+(\lambda)$ leads to studying the real zeroes of
$$\xi(\lambda):=\sum_{k=1}^N \frac{a_k}{z_k-\lambda}, \quad \hbox{ where } \quad a_k=c_k\ophi(z_k).$$
Generically $\xi$ will not have real zeroes and we will only get one petal in the $\mu$-plane. However, we can arrange it that $\xi$ has $N-1$ real zeroes which leads to $N$ petals in the $\mu$-plane. Assume $a_N\neq0$. Then to do this, we need to  solve the linear system,
\be
Z\left(\begin{array}{c}a_1 \\ \vdots \\a_{N-1}\end{array}\right) = \left(\begin{array}{c}-\frac{a_N}{z_N-\lambda_1} \\ \vdots \\-\frac{a_N}{z_N-\lambda_{N-1}}\end{array}\right), 
\ee
where the matrix $Z$ has ${jk}$-component given by $z_{jk}=(z_k-\lambda_j)^{-1}$. $Z$ is invertible whenever all $z_k\in\C_-, \lambda_j\in\R$ are distinct.

\begin{figure}[ht]\vspace{-90pt}
    \includegraphics[width=2.2in,height=4in]{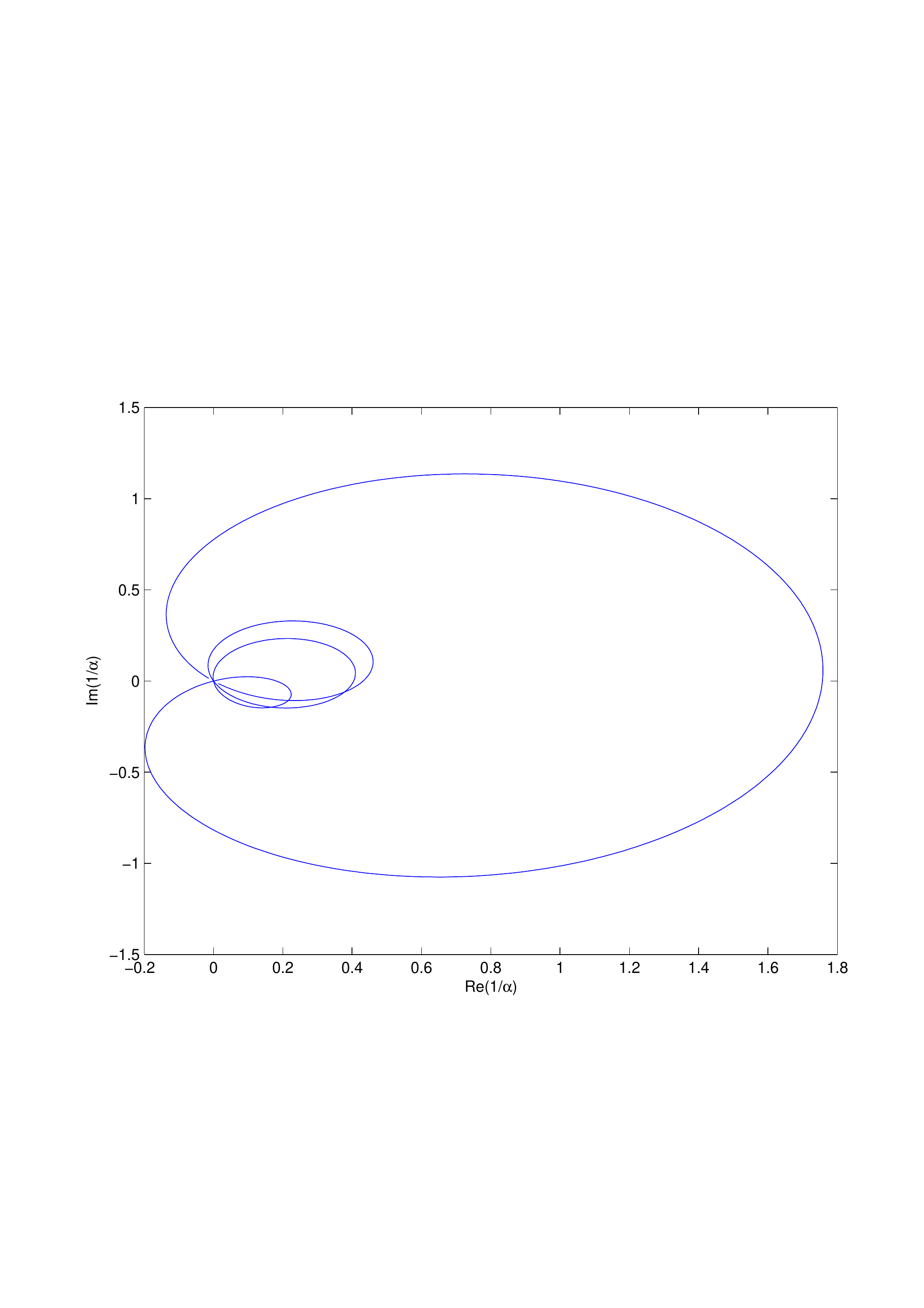} 
    \includegraphics[width=2.6in,height=4in]{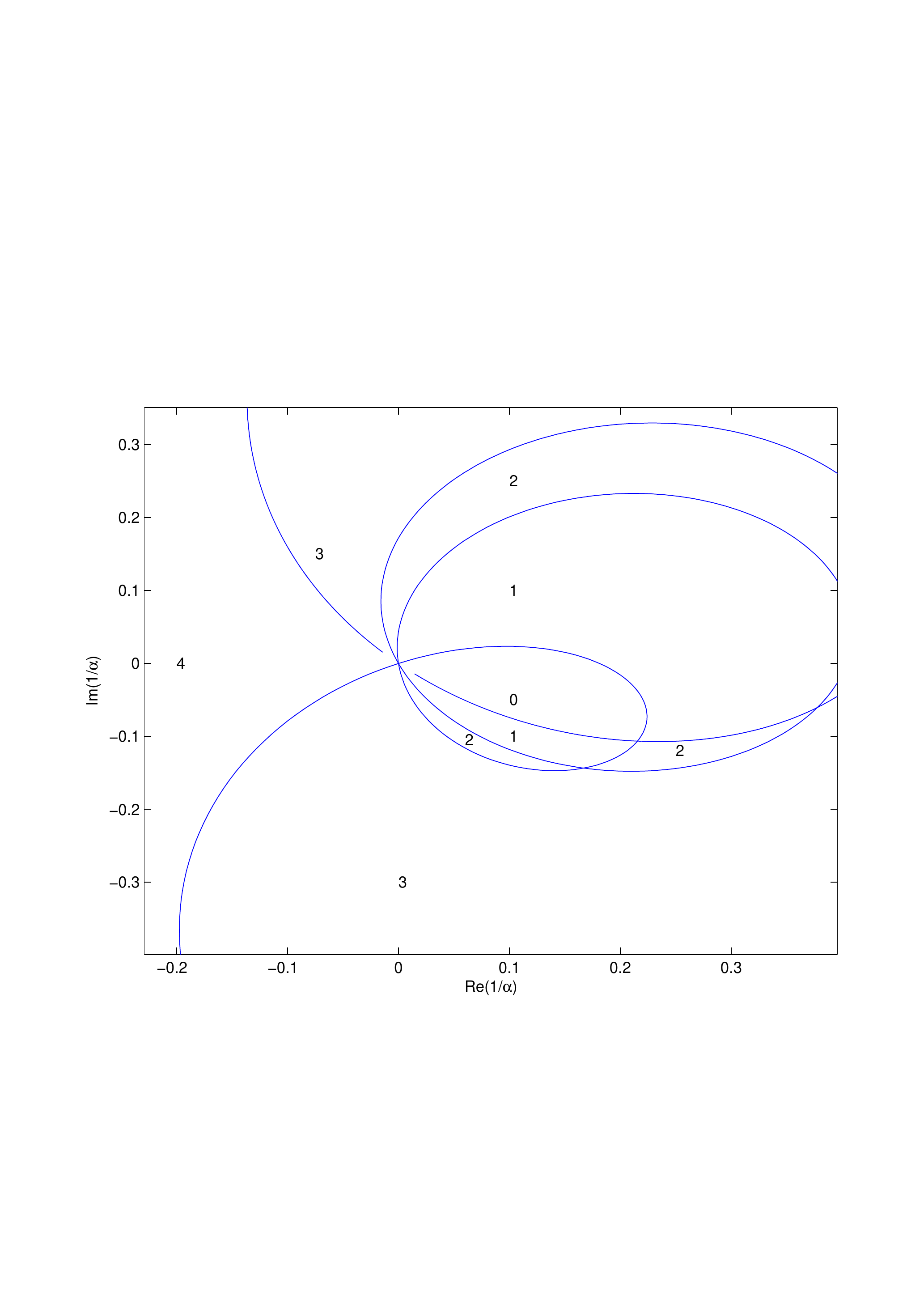} \vspace{-70pt}
  \caption{The curve in the $1/\alpha$-plane along which $D_+$ has a real root for the case $\lambda_1=0$, $\lambda_2=1$, $\lambda_3=-2$, $z_1=-i$, $z_2=1-i$, $z_3=-2-i$, $z_4=3-2i$ and $a_4=1$. On the right, zoom of part of the curve including the number of roots of $D_+$ in $\C_-$ in different components.}
  \label{fig:2}
\end{figure}

For the example in Figure \ref{fig:2}, the defect in each of the components is given by $4-\nu_-$ where $\nu_-$ denotes   the number of roots of $D_+$ in $\C_-$ (by Theorem \ref{S++}). At each curve precisely one of the roots crosses from the lower to the upper half-plane, thus increasing the defect by $1$. On the curve itself, one root is on the real axis and by Theorem \ref{S++}, the defect coincides with the smaller of the defects on the components on each side of the curve. By a similar reasoning at the three non-zero points of self-intersection of the curve the defect coincides with the smallest defect of the neighbouring components.

This example displays the analytical nature of finding the defect in terms of the location of roots of $D_+$ using  Theorem \ref{S++}. 
On the other hand, it also displays the topological nature of the same situation mentioned in  Theorem \ref{thm:indexx}. The complex $1/\alpha$-plane is separated into components in which the defect is constant everywhere (in this example the exceptional discrete set is empty). The curves are the range of $2\pi i \M (t)$ on the real axis.

\end{enumerate}

%
%
\end{example}

\section{Spectra of Toeplitz operators and detectability}\label{section:Toep}
In Section \ref{section:7} Hankel operators played a special role in the analysis (see e.g.~Theorem \ref{thm:indexx}). However, for another class of examples of the Friedrichs model, the theory of Toeplitz operators is the main instrument of our analysis. We will discuss this type of examples and the related detectability problem here.
The proofs and some auxiliary results can be found in Appendix \ref{AppB}.

We will study the operator $T:=\overline \phi P_+\psi$ acting on $L^2$. It is closely related to  the Toeplitz operator $T_a:H^+_2\to H^+_2$ given by $T_a u =P_+au=P_+aP_+u$, where $a=\psi\overline \phi$.

\begin{assumptions}\label{AssAppB}
For this section, we will make the following assumptions on the functions $\phi,\psi\in L^2$:
(i) $a(z):=\psi(z)\overline \phi (z) \in H_1^-\backslash\{0\}$,
(ii) $\phi\in H_2^-$ and
(iii) $\phi,\psi\in L^\infty$.
\end{assumptions}

\begin{remark}\label{Rem:AppB}
\begin{enumerate}
	\item Under the above assumptions, we have $D_+(\lambda)\equiv 1$ and $P_+\ophi=\ophi$, $P_-\ophi=0$.
	\item Choosing $\phi=(x-i)^{-\frac12-\eps}$ for some $\eps>0$ and a suitable choice of the branch cut, we have $\phi\in H_2^-$. To satisfy the first assumption, we then require $\psi(x)=a(x)(x+i)^{\frac12+\eps}\in L^2$, or $a\in L^2(\R; (1+x^2)^{\frac12+\eps})$. Therefore, the assumption that $\phi\in H^-_2$ only imposes a mild extra condition on the decay of $a$ at infinity.
	\item The third assumption  is only introduced for the sake of convenience and may be significantly relaxed. However, this would introduce more technical details which would obscure the main results. It means that the operator $T$ is a bounded operator in $L^2$ and  $T_a$ in $H^+_2$, in particular, they are defined on the whole space.
\end{enumerate}
\end{remark}

In the next theorem we will make use of the canonical Riesz-Nevanlinna factorization theorem (see \cite[Pages 199-200]{Koosis}). For the reader's convenience, we state it here for functions in the lower half plane: If $f\not\equiv 0$, $f\in H_p^-$ for $p\geq 1$, then up to constant multiples, $f$ can be factorized uniquely as
\be
 f(z)=B(z)\Sigma(z)G(z),
\ee
where 
\begin{itemize}
	\item $B(z)$ is a Blaschke product with 
	$$B(z) = \prod_k\left( e^{i\theta_k}  \frac{z-z_k}{z-\overline{z_k}}\right) \hbox{ for } \Im z<0 , $$
	where $z_k$ are all the roots of $f$ in $\C_-$ and the real $\theta_k$ are chosen so that $$e^{i\theta_k}\frac{i-z_k}{i-\overline{z_k}}\geq 0;$$
	\item $\Sigma(z)$ is the singular factor given by
	$$\Sigma(z) = \exp\left(\frac{1}{\pi}\int_{-\infty}^\infty \left(\frac{i}{z-t}+\frac{it}{t^2+1}\right) \ d\sigma(t)  \right)$$
	with $d\sigma(t)\geq 0$ a singular measure (w.r.t.~Lebesgue measure) such that $\int_{-\infty}^\infty \frac{d\sigma(t)}{t^2+1}<\infty$;
	\item $G(z)$ is the outer factor given by
	$$G(z)=\exp\left(-\frac{i}{\pi}\int_{-\infty}^\infty \left(\frac{1}{z-t}+\frac{t}{t^2+1}\right)\log|f(t)| \ dt  \right).$$
\end{itemize}

\begin{theorem}\label{B2}
Define the operators  $T$ on $L^2$ and $T_a:H^+_2\to H^+_2$ as above and let $\mu_\alpha =(2\pi i \alpha)^{-1}$. Then
\begin{enumerate}
\item  $\ind(\Sc_\alpha)= \dim\ker\left(T_a-\mu_\alpha\right)$. Moreover, 
$$\Sc^\perp_\alpha=\left(\ker(T-\mu_\alpha)\right)^*=\overline{\psi^{-1}}[(\mu_\alpha P_+ + P_-aP_+)\ker\left(T_a-\mu_\alpha\right)]^*.$$ Here, we use $\left(\ker(T-\mu_\alpha)\right)^*$ to denote the set of complex conjugates of functions in $\ker(T-\mu_\alpha)$. This will be used to distinguish it from the closure in cases where the meaning may be ambiguous.  \label{Toep_estimate} 
\item 
We have $\Sc^\perp_\alpha = \phi\left( H_2^-\ominus (a(z)-\mu_\alpha) H_2^-\right)$. In particular, $\phi\left( H_2^-\ominus (a(z)-\mu_\alpha) H_2^-\right)$ is a closed subspace.\label{Salpha}
\item Consider the canonical factorisation  in $\C_-$ of $a(z)-\mu_\alpha=B_\alpha(z) \Sigma_\alpha(z)G_\alpha(z)$ for a fixed $\alpha\in\C$ where $B_\alpha$ is a Blaschke product containing all zeros of  $a(z)-\mu_\alpha$ in $\C_-$, $\Sigma_\alpha$ is the singular part and $G_\alpha$ is an outer function. \label{index}
Then 
$$ \overline{(a-\mu_\alpha)H_2^-}=\overline{ B_\alpha(z) \Sigma_\alpha(z)H_2^-} \equiv B_\alpha(z) \Sigma_\alpha(z)H_2^-. $$
 and 
\be\label{Blaschkedefect}  \ind (\Sc_\alpha)=\left \{  
\begin{array}{cc}
 \infty& \mbox    {if } \Sigma_\alpha\not \equiv 1, \\
\mbox{ number of roots of } B_\alpha(z)
\mbox{ counted with multiplicity }  & \mbox{ if } \Sigma_\alpha \equiv 1.
\end{array}
\right . \ee
\end{enumerate}
\end{theorem}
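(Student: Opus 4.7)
The plan is to treat the three parts in sequence, using the reduction of the defining equation $\Sc_\alpha^\perp = \ker(L-\mu_\alpha)$ from Remark \ref{rem:Salpha} (with $\mu_\alpha=(2\pi i\alpha)^{-1}$) to a Toeplitz eigenvalue problem, followed by an application of Beurling's invariant subspace theorem in $H_2^-$.

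For Part \ref{Toep_estimate}, I first observe that Assumptions \ref{AssAppB} greatly simplify $L$. Since $\overline{\phi}\in H_\infty^+$ we have $P_+\overline{\phi}=\overline{\phi}$, and since $a=\psi\overline{\phi}\in H_1^-$ we have $P_+a=0$; thus the multiplication part of $L$ collapses to multiplication by $a$, and a direct computation gives the operator identity $L = T + aP_-$, where $T=\overline{\phi}P_+\psi$. Splitting the eigenvalue equation $Lh=\mu_\alpha h$ into its $P_+$- and $P_-$-components, and using that $\overline{\phi}P_+(\psi h)\in H_2^+$ while $aP_-h\in H_2^-$ (because $a\in H_\infty^-$ acts multiplicatively on $H_2^-$), I reduce the equation to the pair $Th=\mu_\alpha P_+h$ and $(a-\mu_\alpha)P_-h=0$. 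Since $a-\mu_\alpha$ is a nonzero element of $H_\infty^-$ it vanishes on at most a null subset of $\R$, forcing $P_-h=0$; moreover $\ker(T-\mu_\alpha)\subseteq H_2^+$ automatically because $Tu\in H_2^+$ for every $u\in L^2$. I then set up the linear bijection $u\leftrightarrow v$ between $\ker(T-\mu_\alpha)$ and $\ker(T_a-\mu_\alpha)$ by $v=P_+(\psi u)$ with inverse $u=\overline{\phi}v/\mu_\alpha$; the verification that each map sends the correct eigenspace into the other is a one-line substitution using $\overline{\phi}T_a v = \overline{\phi}P_+(\psi\overline{\phi}v) = \mu_\alpha\,Tu$. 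The explicit formula $\Sc_\alpha^\perp = \overline{\psi^{-1}}[(\mu_\alpha P_+ + P_-aP_+)\ker(T_a-\mu_\alpha)]^*$ then follows by computing $\psi u = av/\mu_\alpha = (\mu_\alpha v + P_-(av))/\mu_\alpha$ from $u=\overline{\phi}v/\mu_\alpha$ and conjugating, since $g\in\Sc_\alpha^\perp$ corresponds to $h=\overline{g}\in\ker(L-\mu_\alpha)$.

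For Part \ref{Salpha}, I rewrite $T_a v = \mu_\alpha v$ as $P_+((a-\mu_\alpha)v)=0$, so $v\in\ker(T_a-\mu_\alpha)$ if and only if $v\in H_2^+$ and $(a-\mu_\alpha)v\in H_2^-$. Under the correspondence $v\mapsto g=\phi\overline{v}/\overline{\mu_\alpha}$ from Part \ref{Toep_estimate}, setting $w=\overline{v}\in H_2^-$ and conjugating shows the condition becomes $w\in H_2^-$ and $(\overline{a}-\overline{\mu_\alpha})w\in H_2^+$. A short inner-product computation, exploiting the orthogonal decomposition $L^2(\R)=H_2^+\oplus H_2^-$ together with $\langle w,(a-\mu_\alpha)u\rangle_{L^2} = \langle (\overline{a}-\overline{\mu_\alpha})w,u\rangle_{L^2}$, identifies this set with $H_2^-\ominus(a-\mu_\alpha)H_2^-$: the requirement that $(\overline{a}-\overline{\mu_\alpha})w$ be orthogonal to $H_2^-$ in $L^2$ places it in $H_2^+$, and conversely. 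Multiplication by $\phi$ is injective on $H_2^-$ since $\phi\not\equiv 0$ is an analytic boundary value and hence vanishes only on a null set; therefore $\Sc_\alpha^\perp = \phi(H_2^-\ominus(a-\mu_\alpha)H_2^-)$, and this space is closed because it is the $\phi$-image of an orthogonal complement.

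For Part \ref{index}, I invoke the Riesz-Nevanlinna factorization $a-\mu_\alpha = B_\alpha\Sigma_\alpha G_\alpha$ in $H_\infty^-$, valid because $a-\mu_\alpha\in H_\infty^-$ is not identically zero (a nonzero constant cannot lie in $H_1^-$, so $a\not\equiv\mu_\alpha$). The outer factor $G_\alpha$ satisfies $\overline{G_\alpha H_2^-}=H_2^-$ by Beurling's theorem in the half-plane, and multiplication by the inner factor $B_\alpha\Sigma_\alpha$ is an $L^2$-isometry mapping $H_2^-$ onto a closed subspace, hence it commutes with closure, giving $\overline{(a-\mu_\alpha)H_2^-} = B_\alpha\Sigma_\alpha H_2^-$, already closed. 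The orthogonal complement $H_2^-\ominus B_\alpha\Sigma_\alpha H_2^-$ is a standard half-plane model space whose dimension is infinite whenever the inner function has a nontrivial singular factor or an infinite Blaschke part, and otherwise equals the number of zeros of $B_\alpha$ counted with multiplicity; combined with Part \ref{Salpha} this yields \eqref{Blaschkedefect}. The main technical obstacle will be the clean reduction of $L$ to $T+aP_-$ and the subsequent identification $\ker(L-\mu_\alpha)\leftrightarrow\ker(T_a-\mu_\alpha)$, which relies on careful bookkeeping of Riesz projections applied to products of functions from different Hardy classes and on the fact that $a-\mu_\alpha$ cannot annihilate a nontrivial element of $H_2^-$; once this transport to the Toeplitz picture is in place, Parts \ref{Salpha} and \ref{index} proceed by standard Hardy-space arguments.
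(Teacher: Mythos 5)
Your proposal is correct and takes essentially the same route as the paper: both reduce membership of $\Sc_\alpha^\perp$ to the eigenvalue equation $T\og=\mu_\alpha\og$ (your identity $L=T+aP_-$ obtained from Remark \ref{rem:Salpha} is a repackaging of the paper's direct use of \eqref{eq:65}), pass to the Toeplitz operator $T_a$ via multiplication by $\psi$ with inverse $v\mapsto\ophi v/\mu_\alpha$, identify $\Sc_\alpha^\perp$ with $\phi\left(H_2^-\ominus(a-\mu_\alpha)H_2^-\right)$, and conclude with the Riesz--Nevanlinna factorisation and Beurling's theorem. The only point to repair is your justification of closedness in part (2): the $\phi$-image of a closed subspace need not be closed (multiplication by $\phi$ is not bounded below), but closedness is immediate from the equality with the orthogonal complement $\Sc_\alpha^\perp$, which you have already established.
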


\begin{Example} For fixed $\alpha\in\C$, we consider a canonical decomposition of $(a-\mu_\alpha)$ in $\C_-$ of the following form:
Choose any $B_a$ with zeroes in a finite box and the singular measure in $\Sigma_a$ with bounded support. Then $B_a\Sigma_a \to 1$ at infinity. 
Choose 
$$G_a(z)= -\mu_\alpha\frac{(z-a_1)(z+\sigma)}{z^2+\tau z+\rho},$$
where $a_1\in\C_+,\sigma\in\C_-,\tau\in\C_-$ and $\rho<<-1$.

Being contractive in $\C_-$ the product $B_a\Sigma_a$ behaves like $e^{\frac{b}{z}}$ with $b\in\C_+$ at $\infty$. We can choose the constants above such that $a_1-\sigma=b-\tau\in \C_+$. Therefore $a(z)=O(1/z^2)$ at $\infty$. Defining $\phi(z)=(z-i)^{-1}\in H_2^-$, we have $\phi\sim 1/z$ at $\infty$ , so $\psi:=a/\ophi = O(1/z)$ belongs to $L^2$.

Choosing $\rho$ sufficiently negative, we get that both roots of $z^2+\tau z+\rho$, approximately $-\tau/2\pm\sqrt{-\rho}$, lie in $\C_+$. This gives $G_a\in H^-_\infty$ and 
since both its roots $a_1$ and $-\sigma$ lie in $\C_+$, $G_a$ is outer in $\C_-$
By Theorem \ref{B2}, the number of roots in $\C_-$ equals $\ind\Sc$ provided $\Sigma_a\equiv 1$ and therefore all natural numbers are possible as indices.
\end{Example}

\appendix
\section{Proofs of the results of Section \ref{section:7}} 

We start with an elementary uniqueness lemma, whose proof we include for completeness.

\begin{lemma}\label{lem:unique}
Let $$G(\mu)= \begin{cases} G_+(\mu),\ \mu\in\C_+, \\  G_-(\mu),\ \mu\in\C_-. \end{cases}$$ with $G_\pm\in H_1^\pm + H_2^\pm:=\{G_1+G_2:\ G_1\in H_1^\pm, G_2\in H_2^\pm\}$. Then the jump across the real axis $[G]\equiv 0$ iff and only iff $G\equiv 0$.
\end{lemma}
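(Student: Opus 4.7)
The plan is as follows. The ``if'' direction is immediate. For the converse, the key idea is to exploit the Cauchy integral representation of Hardy-class functions, using simultaneously both half-plane reproducing formulas. First, I would observe that the hypothesis $[G]\equiv 0$ means precisely that $G_+(k+i0)=G_-(k-i0)$ for a.e.\ $k\in\R$; denote this common boundary value by $g$. Since $G_+\in H_1^++H_2^+$, we may write its boundary value as $g=g_1+g_2$ with $g_1\in L^1(\R)$ and $g_2\in L^2(\R)$; in particular, the Cauchy transform
\[
 (\mathcal{C}g)(z):=\frac{1}{2\pi i}\int_\R \frac{g(x)}{x-z}\,dx
\]
is well defined for every $z\in\C\setminus\R$, because $(x-z)^{-1}$ is bounded in $x$ (absorbing the $L^1$ part) and lies in $L^2$ in $x$ (absorbing the $L^2$ part).

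Next I would invoke the classical Cauchy reproducing/annihilation properties of Hardy spaces. For $f\in H_p^+$ with $p\in\{1,2\}$, $\mathcal{C}(f\vert_\R)$ equals $f$ on $\C_+$ and $0$ on $\C_-$; analogously, for $f\in H_p^-$, $\mathcal{C}(f\vert_\R)$ equals $0$ on $\C_+$ and $-f$ on $\C_-$ (see Koosis). Applying these to each summand of the decomposition of $G_+$ and using linearity, I obtain $(\mathcal{C}g)(z)=G_+(z)$ on $\C_+$ and $(\mathcal{C}g)(z)=0$ on $\C_-$. Symmetrically, viewing the same $g$ as the boundary trace of $G_-\in H_1^-+H_2^-$ yields $(\mathcal{C}g)(z)=0$ on $\C_+$ and $(\mathcal{C}g)(z)=-G_-(z)$ on $\C_-$. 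Comparing the two expressions gives $G_+\equiv 0$ on $\C_+$ and $G_-\equiv 0$ on $\C_-$, as required.

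The only point that needs a moment's care is the justification of the reproducing/annihilation formulas for the sum spaces $H_1^\pm+H_2^\pm$; but this is essentially formal: the formulas are classical in each of $H_1^\pm$ and $H_2^\pm$ separately, and the fact that $\mathcal{C}g$ is well defined on $\C\setminus\R$ (as shown above) allows the splitting $g=g_1+g_2$ to pass through the integral. No nontrivial obstacle is anticipated; the entire argument amounts to pairing the two half-plane Cauchy formulas against each other, with the hypothesis $[G]\equiv 0$ supplying exactly the compatibility needed to identify their inputs.
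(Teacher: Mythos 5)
Your proof is correct, and it reaches the conclusion by a genuinely different (and somewhat more streamlined) route than the paper. The paper uses only the annihilation half of the Cauchy theory: since $G(\cdot+i0)\in H_1^++H_2^+$ its Cauchy transform vanishes on $\C_-$, and since $G(\cdot-i0)\in H_1^-+H_2^-$ it vanishes on $\C_+$; because $[G]\equiv 0$ these are transforms of one and the same boundary function, so the Cauchy transform vanishes identically off $\R$. The paper must then still show that this forces the boundary function to vanish a.e., which it does by hand with a Poisson-kernel localization argument (reducing to real-valued data, splitting off a bounded interval $I$, and letting $\eps\searrow 0$), and it finishes by appealing to the uniqueness theorem for Hardy-class functions whose boundary values vanish. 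You instead invoke the full two-sided Cauchy integral formula for $H_1^\pm$ and $H_2^\pm$ (reproduction in the natural half-plane, annihilation in the opposite one), so that the single well-defined function $\mathcal{C}g$ is identified simultaneously as $G_+$ on $\C_+$ and $0$ there, and as $-G_-$ on $\C_-$ and $0$ there; the conclusion $G_\pm\equiv 0$ is then immediate, with no Poisson-transform step and no boundary-uniqueness theorem. The trade-off is that you need the reproducing formula itself (not merely the annihilation property) for $H^1$ and $H^2$ of a half-plane, including the sign $-f$ on $\C_-$ for the lower classes, which you state correctly; this is classical and available in the same reference (Koosis) the paper cites. Your preliminary observation that $\mathcal{C}g$ is well defined on $\C\setminus\R$ for $g\in L^1+L^2$, independently of the chosen decomposition, is exactly what legitimizes passing the splitting $g=g_1+g_2$ through the integral, so there is no gap.
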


\begin{proof} We show that if the jump vanishes, then $G$ vanishes, the other implication is trivial.

By $G(x\pm i0)$ we denote the boundary values of the functions $G_\pm$ which exist a.e.~on $\R$ (see \cite{Koosis}). Since $[G]\equiv 0$, we have $G(x):=G(x+i0)=G(x-i0)$. Now, $G(x+ i0)\in H_1^+ + H_2^+$, so for all $\la\in\C_-$ we have 
$$\int_\R\frac{G(t)}{t-\la}\equiv 0.$$ Similarly, $G(x- i0)\in H_1^- + H_2^-$, so for all $\la\in\C_+$ we have 
$$\int_\R\frac{G(t)}{t-\la}\equiv 0.$$
Combining this gives 
$$\int_\R\frac{G(t)}{t-\la}\equiv 0\quad\hbox{ for all } \la\not\in\R.$$

We want to show that this implies $G(x)=0$ for almost every $x\in\R$. Clearly, taking complex conjugates,
$$ \int_\R\frac{\Re G(t)}{t-\la}\equiv 0 \equiv \int_\R\frac{\Im G(t)}{t-\la},$$
so we may now assume without loss of generality that $G$ is real. Then
$$ 0=\Im \int_\R\frac{G(t)}{t-\la} = \Im\la \int_\R\frac{G(t)}{|t-\la|^2} = \pi P_\eps(G)(k),$$
where $\la=k+i\eps$ and $P_\eps$ denotes the Poisson transformation.

Now, let $I$ be any bounded open interval in $\R$ and $k\in I$. As $G=G_1+G_2\in L^1(\R) + L^2(\R)$, we have $\chi_IG\in L^1(\R)$  and
$$\eps\int_{I^c}\frac{G_1(x)+G_2(x)}{|x-\la|^2}\ dx \leq \eps\int_{I^c}\frac{|G_1(x)|+|G_2(x)|}{|x-k|^2}\ dx = o(1),$$
where we have used that $G_2(x)/|x-k|^2\in L^1(I^c)$. Therefore, 
$$P_\eps(G)=P_\eps(\chi_IG+\chi_{I^c}G)=P_\eps(\chi_IG) + o(1) \to \chi_IG \hbox{ as } \eps\to 0.$$
This shows that $\chi_IG(x)=0$ and as $I$ was arbitrary, we get $G(x)=0$ for almost every $x\in\R$.
As the boundary values of $G_\pm$ vanish identically, we have $G_\pm\equiv 0$.
\end{proof}

We can now prove the characterisation of $\Sc^\perp$.

\begin{proof}[Proof of Proposition \ref{sperpcrit}]
 \begin{enumerate}
	 \item Noting the form of elements from $\Ran S_{\lambda,B}$ from \eqref{eq:7}, we get that
 \bea 
 \Sc^\perp&=&\Big\{ g\in L^2(\R): \ \hbox{ for all } \mu\notin\R \\
&&\ F_\pm(\mu):=\llangle \frac{1}{x-\mu},g\rrangle-\frac{1}{D(\mu)}\llangle \frac{1}{x-\mu},\phi\rrangle \llangle \frac{1}{x-\mu}\psi,g\rrangle=0\Big\}.
\eea
Here the index $\pm$ indicates which half plane $\mu$ lies in. Then
\be
F_\pm(\mu)=\widehat{\overline{g}}(\mu)-\frac{1}{D(\mu)}\widehat{\overline{\phi}}(\mu)\widehat{\psi\overline{g}}(\mu).
\ee
First assume $g\in \Sc^\perp$.
Let $\mu\in \C_+\setminus\{D(\mu)=0\}$. Then $F_+(\mu)=0$ is equivalent to
\be\label{eqplus} (P_+ \og)(\mu)=\frac{2\pi i}{D_+(\mu)}(P_+\ophi)(\mu)P_+(\psi\og)(\mu). \ee
Similarly, if $\mu\in \C_-\setminus\{D(\mu)=0\}$, then $F_-(\mu)=0$ is equivalent to
\be\label{eqminus} (P_- \og)(\mu)=-\frac{2\pi i}{D_-(\mu)}(P_-\ophi)(\mu)P_-(\psi\og)(\mu).\ee
This proves the first implication in the statement. 

Now, as the zeroes of $D(\mu)$ in $\C_\pm$ form a discrete set, the right hand sides of \eqref{eqplus}  and \eqref{eqminus} must lie in $H_2^+$ and $H_2^-$, respectively, showing (i) and (ii). Since $\og=P_+\og+P_-\og$, we also get (iii).
For the reverse implications, simply apply $P_+$ and $P_-$ to $\og$ as given in (iii). 

\item We first show the equivalence in \eqref{eq:67}. Let $g\in \Sc^\perp$ and apply $P_\pm$ to \eqref{eq:66}, part (iii), keeping  \eqref{eq:66} parts (i) and (ii) in mind. Then 
$$P_+ \og-\frac{2\pi i}{D_+}(P_+\ophi)P_+(\psi\og)=0 \quad\hbox{ and }\quad 
 P_- \og+\frac{2\pi i}{D_-}(P_-\ophi)P_-(\psi\og)=0.$$
Since $D_{\pm}$ only have discrete zeroes, this is equivalent to
$$ D_\pm P_\pm (\og)\mp 2\pi i(P_\pm\ophi)P_\pm(\psi\og)=0.$$
In particular, on $\R$ we have
$$ D_+ P_+ (\og) - 2\pi i(P_+\ophi)P_+(\psi\og) = - D_- P_- (\og) - 2\pi i(P_-\ophi)P_-(\psi\og)\ (a.e).$$
Since for the boundary values on $\R$ we have $D_-=1-2\pi i P_-(\psi\ophi) = D_+ -2\pi i \psi\ophi$ a.e., this is equivalent to 
$$  D_+ (P_+ (\og)+P_-(\og)) - 2\pi i(P_+\ophi)P_+(\psi\og) = 2\pi i \psi\ophi P_- (\og) - 2\pi i(P_-\ophi)P_-(\psi\og)\ (a.e.)$$
which can be rewritten as
$$  D_+ \og - 2\pi i(P_+\ophi)\psi\og = 2\pi i \psi\ophi P_- (\og) - 2\pi i \ophi P_-(\psi\og)\ (a.e.),$$
giving the right hand side of \eqref{eq:67}.

On the other hand, assume the right hand side of \eqref{eq:67} holds. Retracing our steps above, this gives 
\be\label{jump}\widetilde{F}_+:= D_+ P_+ (\og) - 2\pi i(P_+\ophi)P_+(\psi\og) = - D_- P_- (\og) - 2\pi i(P_-\ophi)P_-(\psi\og)=:\widetilde{F}_-\ (a.e).\ee
Using H\"{o}lder's inequality and boundedness of the Riesz projections $P_\pm:L^p\to L^p$ for $1<p<\infty$ for the cases $p=\frac43$, $p=2$ and $p=4$ (see proof of Proposition \ref{ev} for more details), our conditions on $\psi$ and $\phi$ guarantee that $\widetilde{F}_\pm\in H_1^\pm + H_2^\pm$. Moreover, \eqref{jump} states  that $[\widetilde{F}]=0$.
By Lemma \ref{lem:unique} we have $\widetilde{F}=0$ and so 
$$D_+ P_+ (\og) - 2\pi i(P_+\ophi)P_+(\psi\og)=0 \hbox{ and } D_- P_- (\og) + 2\pi i(P_-\ophi)P_-(\psi\og)=0.$$
Therefore all conditions on the right hand side of \eqref{eq:65} are satisfied and $g\in \Sc^\perp$.

Using the identity $P_-=I-P_+$ then gives \eqref{eq:68} and a similarly simple calculation leads to \eqref{eq:69}.
\end{enumerate}
\end{proof}

\subsection{\change{Results depending on the support of $\phi$ and $\psi$}} 

\begin{proposition}\label{ev}
	Let  either $\phi\in L^2$ and $\psi\in L^2\cap L^\infty$  or $\phi,\psi\in L^2\cap L^4$ be such that $\phi\psi=0$. Then 
	\ben\label{disjointcond}
	 g\in\overline{\Sc}^\perp & \Longleftrightarrow &  \og-2\pi i(P_+\ophi)P_+(\psi\og)+2\pi i(P_-\ophi)P_-(\psi\og)=0\ (a.e.)\\ \label{disjointcond2}
	& \Longleftrightarrow &  \og- 2\pi i\overline{\phi}P_+(\psi\og)+2\pi i(P_-\overline{\phi})\psi\og  =0\ (a.e.)
	\een
Define $L_0=2\pi i\left(\overline{\phi}P_+\psi-(P_-\overline{\phi})\psi\right)\equiv 2\pi i\left(\overline{\phi}P_+\psi+(P_+\overline{\phi})\psi\right)$ on its maximal domain $D(L_0)= \{u\in L^2: L_0u\in L^2\}$.
 Then we have $\overline{\Sc}=L^2(\R)$ if and only if $1$ is not an eigenvalue of the operator $L_0$ on $L^2(\R)$.
\end{proposition}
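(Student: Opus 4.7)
The strategy is to specialise Proposition~\ref{sperpcrit}(2) to the disjoint-support setting and then recognise the resulting identity as an eigenvalue equation for $L_0$. Since the hypotheses on $\phi,\psi$ in the proposition are precisely those of Proposition~\ref{sperpcrit}(2), the equivalence \eqref{eq:67} is at my disposal. Two simplifications apply under $\phi\psi=0$: first, the definition \eqref{eq:Ddef} of $D(\lambda)$ collapses to $D(\lambda)\equiv 1$, so the boundary values satisfy $D_+\equiv 1$; second, the pointwise product $\ophi\psi$ vanishes a.e., which cancels the term $\ophi\psi P_-\og$ appearing on the right of \eqref{eq:67}. Substituting these two facts into \eqref{eq:67} leaves the single identity
$$\og - 2\pi i (P_+\ophi)\psi\og \;=\; -2\pi i\,\ophi\,P_-(\psi\og) \qquad (\text{a.e.}).$$

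From here both \eqref{disjointcond} and \eqref{disjointcond2} follow by elementary rearrangement using $P_+ + P_- = I$. For \eqref{disjointcond} I split $\ophi = P_+\ophi + P_-\ophi$ on the right, then combine the $(P_+\ophi)$-contributions via $\psi\og - P_-(\psi\og) = P_+(\psi\og)$, producing the stated expression. For \eqref{disjointcond2} I instead substitute $P_-(\psi\og) = \psi\og - P_+(\psi\og)$ on the right and use $\ophi\psi\og = 0$ to eliminate the spurious term. The two identities are of course algebraically equivalent; in fact one can pass between them directly using $(P_+\ophi)\psi = -(P_-\ophi)\psi$, which is just the splitting of $\ophi\psi = 0$ into Hardy components. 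None of these steps requires new analytic input, since the mapping properties of the Riesz projections needed to justify working in $L^2$ are already built into Proposition~\ref{sperpcrit}(2).

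For the closing statement, equation \eqref{disjointcond2} reads exactly as $\og = L_0 \og$ with $L_0 = 2\pi i(\ophi P_+\psi - (P_-\ophi)\psi)$, while the alternative form $L_0 = 2\pi i(\ophi P_+\psi + (P_+\ophi)\psi)$ follows from the same identity $(P_+\ophi)\psi = -(P_-\ophi)\psi$. Any $\og \in L^2$ satisfying $\og = L_0 \og$ automatically lies in $D(L_0)$ by the maximal-domain convention, so $\overline{\Sc}^\perp = \ker(L_0 - I)$ and hence $\overline{\Sc} = L^2(\R)$ if and only if $1 \notin \sigma_p(L_0)$. I do not foresee any obstacle beyond careful bookkeeping: the entire argument is algebraic rearrangement of the output of Proposition~\ref{sperpcrit}(2), with the one-line cancellation $\ophi\psi = 0$ as its sole substantive ingredient.
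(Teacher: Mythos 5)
Your argument is correct, and it reaches the conclusion by a somewhat different route than the paper's own proof. You take the single pointwise characterisation \eqref{eq:67} of Proposition \ref{sperpcrit}(2) as your starting point, note that $\phi\psi=0$ forces $D_+\equiv 1$ and kills the term $\ophi\psi P_-\og$, and then everything else is the pointwise algebra $P_++P_-=I$ together with $(P_+\ophi)\psi=-(P_-\ophi)\psi$; the analytic content (H\"older estimates placing the products $(P_\pm\ophi)P_\pm(\psi\og)$ in $H_1^\pm$ and the jump-uniqueness Lemma \ref{lem:unique}) is inherited wholesale from the proof of Proposition \ref{sperpcrit}(2). The paper instead argues from part (1) of Proposition \ref{sperpcrit}: with $D\equiv 1$, \eqref{disjointcond} is exactly condition (iii) of \eqref{eq:66}, so the only substantive step is to show that (iii) alone implies (i) and (ii); this is done directly by checking that the two products lie in $H_1^\pm$ under the stated integrability hypotheses and observing that \eqref{disjointcond} then exhibits the unique decomposition of the $L^2$ function $\og$ into Hardy components, forcing each product into $H_2^\pm$. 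After that the paper performs the same rearrangement to \eqref{disjointcond2} and $L_0$ that you do. What your route buys is brevity and no repetition of analysis, since \eqref{eq:67} already packages the hard part; what the paper's route buys is a self-contained and slightly more transparent argument in the disjoint-support case, making visible exactly where the hypotheses on $\phi,\psi$ enter (the $H_1^\pm$ membership) without appealing to the general one-equation characterisation. Your handling of the domain issue for $L_0$ (any $L^2$ solution of $\og=L_0\og$ automatically lies in the maximal domain) matches the intended reading of the statement, so no gap there.
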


\begin{proof}
In this case, $D(\lambda)\equiv 1$, so \eqref{disjointcond} is equivalent to condition (iii)  on the right hand side of  \eqref{eq:66} and one implication is trivial. 
By  Proposition \ref{sperpcrit} it is sufficient to show that conditions (i) and (ii) on the right hand side of  \eqref{eq:66} hold. In our case, this simplifies to 
$$\mathrm{(i')}\ (P_+\ophi)P_+(\psi\og)\in H_2^+ \hbox{ and }
  \mathrm{(ii')}\ (P_-\ophi)P_-(\psi\og)\in H_2^-.$$
	
	In the case when $\phi\in L^2$ and $\psi\in L^2\cap L^\infty$, we have that $P_+\ophi\in L^2$ and $P_+(\psi\og)\in L^2$ by boundedness of the Riesz projection $P_+:L^2\to L^2$, so the product lies in $H_1^+$ (see \cite{Koosis}). On the other hand, if $\phi,\psi\in L^2\cap L^4$, then $\psi\og\in L^{4/3}$ by H\"{o}lder's inequality, so $P_+(\psi\og)\in L^{4/3}$. Also $P_+\ophi\in L^4$, so by H\"{o}lder's inequality, we again get that  the product lies in $H_1^+$. Similarly $(P_-\ophi)P_-(\psi\og)\in H_1^-$.

From \eqref{disjointcond}, $\og= 2\pi i(P_+\ophi)P_+(\psi\og)-2\pi i(P_-\ophi)P_-(\psi\og)$ which gives a decomposition of $\og\in L^2$ into its unique $H^+$ and $H^-$ components, whence we obtain (i') and (ii').

Now,
\bea
\og&=&2\pi i(P_+\ophi)P_+(\psi\og)-2\pi i(P_-\ophi)P_-(\psi\og)\\
&=& 2\pi i \left[(P_+\ophi)P_+\psi-(P_-\ophi)P_-\psi\right]\og\\
&=& 2\pi i \left[\ophi P_+\psi-(P_-\ophi)(P_+\psi+P_-\psi)\right]\og\ =\ L_0\og,
\eea
which shows \eqref{disjointcond2} and the statement about $L_0$.
\end{proof}

\begin{remark}
Note that $D(L_0)$ is dense in $L^2$ if $\phi,\psi\in L^2\cap L^4$ or if $\phi\in L^{2+\eps}$ for some $\eps>0$ and $\psi\in L^2\cap L^\infty$, as it contains $L^2\cap L^\infty$.
\end{remark}

\begin{proof}[Proof of Theorem \ref{trivS}]
We recall that $\R=\Omega_{\phi}\cup\Omega_{\psi}\cup\Omega$. Without loss of generality, we may assume $\Omega_{\phi}\cap\Omega_{\psi}=\emptyset$.  Suppose  $g\in \overline{\Sc}^\perp$. Using \eqref{disjointcond2}, we then have three cases:
\begin{enumerate}
	\item $k\in\Omega_{\phi}$: we have $0= \overline{g}(k)-\overline{\phi}(k)\widehat{\psi\chi_{\Omega_{\psi}} \overline{g}}(k+i0)$. Hence,
	\be
	\chi_{\Omega_{\phi}}  \overline{g}(k)= \overline{\phi}(k)\widehat{\psi\chi_{\Omega_{\psi}} \overline{g}}(k+i0)
	\ee
	and so $g\vert_{\Omega_{\psi}}$ completely determines $g\vert_{\Omega_{\phi}}$.
	\item  $k\in\Omega_{\psi}$: we have  $0= \overline{g}(k)-\widehat{\overline{\phi}}(k-i0)\psi(k)\overline{g}(k).$ Hence, almost everywhere we have
	\be
	\chi_{\Omega_{\psi}}\overline{g}(k)\left(1-\widehat{\overline{\phi}}(k-i0)\psi(k)\right)=0.
	\ee
	\item $k\in\Omega$: we have
	\be
	g\vert_{\Omega}=0.
	\ee
\end{enumerate}
This gives three necessary and sufficient conditions for $g$ to lie in $\overline{\Sc}^\perp$.

We now prove the statements (i)-(iii).
\begin{enumerate}
	\item[(i)] If $\widehat{\overline{\phi}}(k-i0)\psi(k)\neq 1$ for almost every $k\in\Omega_{\psi}$, \eqref{cond2} implies that $g\vert_{\Omega_{\psi}}=0$, and so by \eqref{cond1} we have $g\vert_{\Omega_{\phi}}=0$ whence $g\equiv 0$ and $\overline{\Sc}=L^2(\R)$.
	\item[(ii)] Choose $g$ on $\Omega_{\psi,0}$ to be an arbitrary non-zero $(L^2\cap L^\infty)(\Omega_{\psi,0})$-function (in the case when $\phi\in L^{2+\eps}, \psi \in L^\infty$, we may even choose $g$ arbitrary in $(L^2\cap L^{\frac{2(2+\eps)}{\eps}})(\Omega_{\psi,0})$). Extending $g$ by zero to $\Omega_\psi$ and then using H\"older's inequality and boundedness of $P_+$, we automatically get  that $\overline{\phi}\left(\widehat{\psi\chi_{\Omega_{\psi}} \overline{g}}\right)_+\in L^2$. This then determines $g$ on $\Omega_\psi$  from \eqref{cond1} and extending $g$ to $\Omega$ by $0$, we have $g\in \Sc^\perp$.
	
	Now let $f\in \Sco$ and choose $g\in (L^2\cap L^\infty)(\Omega_{\psi,0})$. Then
	\bea
	0 &=& \int_\R f\og \ =\ \int_{\Omega_{\psi,0}} f \og + \int _{\Omega_\phi} f \og \ = \ \int_{\Omega_{\psi,0}} f \og + \int _{\Omega_\phi} f \overline{\phi}\left(\widehat{\psi \overline{g}}\right)_+  \\ & = &  \int_{\Omega_{\psi,0}} f \og + \int _{\R} f \overline{\phi}\left(\widehat{\psi\overline{g}}\right)_+ 
	\ =\  \int_{\Omega_{\psi,0}} f \og - \int _{\R} \left(\widehat{ f \ophi}\right)_- \psi \overline{g}\\  &= &  \int_{\Omega_{\psi,0}} f \og - \int _{\Omega_{\psi,0}} \left(\widehat{ f \ophi}\right)_- \psi \overline{g} \ =\ \ \int_{\Omega_{\psi,0}} \left(f -\psi\left(\widehat{ f \ophi}\right)_-\right)\overline{g},
	\eea
	from which it follows that  $\og\mapsto \int_{\Omega_{\psi,0}} \left(f -\psi\left(\widehat{ f \ophi}\right)_-\right)\overline{g}$ 
	is a bounded linear functional for all $g$ in the dense set $ (L^2\cap L^\infty)(\Omega_{\psi,0})$. Hence, $\psi\left(\widehat{ f \ophi}\right)_- \in L^2(\Omega_{\psi,0})$  and we see that $f\in \Sco$ must satisfy $f=\psi\left(\widehat{f \ophi}\right)_-$  on $ L^2(\Omega_{\psi,0})$.
	\item [(iii)] If $\psi$ and $\phi$ are both bounded, then choosing $g\in L^2\cap L^\infty(\Omega_{\psi,0})$ and determining $g$ on $\Omega_\phi$ from \eqref{cond1} will give a dense set of $g$ in $\Sc^\perp$, as $g\mapsto \overline{\phi}\left(\widehat{\psi \overline{g}}\right)_+$ is bounded. The calculation in (ii) can then be repeated for a dense set of $g$ in $\Sc^\perp$ which gives the needed equality of the two sets.
\end{enumerate}
\end{proof}


\begin{proof}[Proof of Theorem \ref{thm:index}]

From \eqref{disjointcond2}, we have
\begin{align}\label{disjointcrit} 
 g \in \Sc^\perp& \Longleftrightarrow \overline g(k)-\overline \phi(k)(\widehat{\psi \overline g})_+-(\widehat{\overline \phi})_-\psi\overline g=0 \;a.e. \ 
\Longleftrightarrow\ (1-\psi\widehat{(\overline{\phi})}_+)(\overline g - \overline \phi (\widehat{ \psi \overline{ g}})_+)=0 \;a.e.
\end{align}

We have two cases: In the first case,  $\overline g -\overline{ \phi} (\widehat{\psi\overline g})_+=0\ a.e.$ Then multiplying by $\psi$ and using the condition on the supports, we get $\psi \overline g=0$ and 
hence $\overline g =0$.

In the second case,
$\overline g -\overline \phi \cdot 2\pi i P_+(\psi\overline g) \not\equiv 0. $
Then there exists a set $E$ of positive measure such that 
$\overline{g} - \overline {\phi} \cdot 2\pi i P_+(\psi\overline g)\neq 0$ on $E$  and
 $\left(1-\psi   (\widehat {\overline{  \phi}})\right)\Big\vert_E=0\ a.e.$  

We now show that 
if there exists a set $E$ of positive measure such that $(1-\psi   (  \widehat  { \overline \phi} )_+)|_E=0$ a.e., then $\Sc^\perp\neq\{0\}$.
Note first that $E\subset \Omega_\psi$.  Choose any non-zero $\tilde g \in (L^2\cap L^\infty)(E)$, $\tilde g \neq 0$ and continue it to $\R$ by zero.

Define $$\overline g(k)=\left \{  \begin{array}{cc}
\overline {\tilde g(k)}, & k\in E,\\
0, & k\in \Omega_\psi\backslash E, \\
\overline{ \phi} 2\pi i P_+(\psi \overline{\tilde g}), & k\notin\Omega_\psi.
\end{array}\right .
$$
By \eqref{disjointcrit}, to show $g\in \Sc^\perp$ we only require $g\in L^2$. This follows immediately from the condition \eqref{epsiloncondition}. From the freedom in the choice of $\tilde{g}$, it is clear that $\mathrm{def}\ \Sc$ is infinite.
\end{proof}

\begin{lemma}\label{lem:count} Let $f:\R\to\C$ be measurable.
Consider the set $E_\alpha \subset \R$ given by $E_\alpha = \{ x\in\R : f(x) =\alpha\},\quad \alpha \in \C $.
Then $\{ \alpha \in \C : |E_\alpha |>0 \}$ is countable.
\end{lemma}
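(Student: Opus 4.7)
The plan is to exploit disjointness: the sets $E_\alpha$ are pairwise disjoint since each $x\in\R$ has a unique image $f(x)$, so $x\in E_\alpha$ determines $\alpha$ uniquely. Thus we have a family of pairwise disjoint measurable subsets of $\R$, indexed by $\alpha\in\C$, and the claim is that only countably many can have positive Lebesgue measure.

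First I would localize by intersecting with bounded intervals. For each $n\in\N$, set $E_\alpha^{(n)}:=E_\alpha\cap[-n,n]$. Then
\[ \{\alpha\in\C:|E_\alpha|>0\}=\bigcup_{n\in\N}\{\alpha\in\C:|E_\alpha^{(n)}|>0\}, \]
so it suffices to show that each of the sets on the right is countable. Next I would stratify by the size of the measure: for fixed $n$ and $k\in\N$, consider
\[ A_{n,k}:=\{\alpha\in\C:|E_\alpha^{(n)}|>1/k\}. \]
Since the $E_\alpha^{(n)}$ are pairwise disjoint subsets of $[-n,n]$, for any finite subset $\{\alpha_1,\dots,\alpha_N\}\subset A_{n,k}$ we have
\[ N/k<\sum_{j=1}^N |E_{\alpha_j}^{(n)}|=\Big|\bigsqcup_{j=1}^N E_{\alpha_j}^{(n)}\Big|\leq 2n, \]
so $N<2nk$. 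Hence $A_{n,k}$ is finite (with at most $2nk-1$ elements).

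Finally, writing $\{\alpha:|E_\alpha^{(n)}|>0\}=\bigcup_{k\in\N}A_{n,k}$ exhibits it as a countable union of finite sets, hence countable, and then the countable union over $n$ gives countability of $\{\alpha\in\C:|E_\alpha|>0\}$. There is no real obstacle here; the only subtlety is the double stratification (over $n$ and $k$) needed to reduce to the elementary fact that a bounded measurable set cannot contain more than finitely many disjoint pieces each of measure $>1/k$.
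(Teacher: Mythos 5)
Your proof is correct, but it follows a different route from the paper's. You localize to the intervals $[-n,n]$ and then stratify by measure level $1/k$, so that pairwise disjointness plus finite additivity gives the explicit bound $N<2nk$ on the number of $\alpha$ with $|E_\alpha\cap[-n,n]|>1/k$; countability then follows from the double countable union. The paper instead stratifies only by measure level, assumes for contradiction that some stratum $\{\alpha:|E_\alpha|>\eps_0\}$ is uncountable, and builds from the disjoint sets $E_\alpha$ an uncountable orthogonal family of characteristic functions $\chi_{\tilde E_\alpha}$ of fixed norm $\sqrt{\eps_0}$, contradicting the separability of $L^2$. Your argument is more elementary (no Hilbert-space input) and yields quantitative finiteness of each stratum, which is slightly stronger than what is needed; note that your localization step is genuinely necessary for this counting argument, since $\R$ has infinite measure and disjointness alone would not bound the number of sets of measure $>1/k$. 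The paper's separability argument avoids localization altogether and transfers verbatim to any measure space with separable $L^2$, while yours transfers to any $\sigma$-finite measure space; both are perfectly adequate for the lemma as stated.
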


\begin{proof}
$\{ \alpha \in \C: |E_\alpha | >0 \}= \bigcup_{n=1}^\infty \{ \alpha \in \C : | E_\alpha |> 1/n \}$.
Assume the set is not countable. Then there exists $\eps_0$ such that 
$J:=\{\alpha \in \C: |E_\alpha | > \eps_0 \}$ is uncountable.
Moreover if $\alpha \neq \alpha'$, $E_\alpha \cap E_{\alpha'}=\emptyset$.
Now $\C \supset \cup_{\alpha \in J} E_\alpha$  is a disjoint union of uncountably many sets of measure greater than $\eps_0$.
Choose $\widetilde{f}_\alpha = \chi_{\tilde E_\alpha}$
where $\tilde  E_\alpha \subseteq E_\alpha$ with $|\tilde E_\alpha | =\eps_0$.  Then $\norm{\widetilde{f}_\alpha}=\sqrt{\eps_0}$ and $\widetilde{f}_\alpha\perp \widetilde{f}_{\alpha'}$ for $\alpha \neq \alpha'$.  This contradicts separability of $L^2(\R^2)$. Therefore, $\{ \alpha \in \C : |E_\alpha |>0 \}$ must be countable.
\end{proof}

\begin{proof}[Proof of Theorem \ref{generic}]
\begin{enumerate}
	\item This is immediate from Theorem \ref{thm:index} with the help of Lemma \ref{lem:count}.
		\item From \eqref{disjointcond2}, we have $g\in\Sc^\perp$ if and only if
\be\label{eq:diff2}  \overline{g}(k)- \overline{\phi}(k)\widehat{\psi \overline{g}}(k+i0)-\widehat{\overline{\phi}}(k-i0)\psi(k)\overline{g}(k) = 0. 
\ee
Multiplying by $\psi (k)$, and setting $g_\psi:=\psi \overline{g}$ we get
\be\label{eq:gpsi}  g_\psi(k)=  \psi(k)\widehat{\overline{\phi}}(k-i0)g_\psi(k). \ee
Hence, unless $ \psi(k)\widehat{\overline{\phi}}(k-i0)=1$ on a set of non-zero measure, we have $g_\psi\equiv 0$ which by \eqref{eq:diff2} gives $g\equiv 0$ and therefore $\overline{\Sc}=L^2(\R)$.

Now, as $\ophi\psi=0$, 
$$\alpha\psi(k)\widehat{\overline{\phi}}(k-i0) = -2\pi i \alpha\psi(k) P_-\ophi = 2\pi i \alpha\psi(k) P_+\ophi.$$
Assuming, that $\psi\in L^\infty$ and $P_+\phi$ or $P_-\ophi\in L^\infty$, we see that $\overline{\Sc}_\alpha=L^2(\R)$ for sufficiently small $|\alpha|$, as $|\alpha\psi(k)\widehat{\overline{\phi}}(k-i0)|<1$ and \eqref{eq:gpsi} implies $g_\psi\equiv 0$.
\end{enumerate}
\end{proof}

\begin{proof}[Proof of Theorem \ref{unsymm}]
\change{Clearly, $\phi$ and $\psi$ are both bounded and compactly supported, so \eqref{epsiloncondition} is satisfied.
We have $$\widehat{\overline{\phi}}(k+i0)\psi(k) = \left(\int_I\frac{dt}{t-k}\right)\chi_{I'}(k)\left(\int_I\frac{dt}{t-k}\right)^{-1}\ =\ \chi_{I'}(k)$$  and by Theorem \ref{thm:index}, we have $\overline{\Sc}\neq L^2(\R)$ and $\mathrm{def}\  \Sc=\infty$. On the other hand, 
$$\widehat{\overline{\psi}}(k+i0)\phi(k)= \chi_{I}(k) \int_{I'}\frac{1}{t-k}\left(\int_I\frac{du}{u-t}\right)^{-1}\ dt\neq 1\ a.e.\ \hbox{on}\ I,$$
since the inner integral is positive and the outer one negative. Hence, by Theorem \ref{thm:index},} $\overline{\Sct} = L^2(\R)$, and the first part of the theorem is proved.

For part (ii) we investigate the relationship between the $M$-function and the bordered resolvent in this example. As a first step, we investigate jumps of the $M$-function across the real axis.
Recall that 
\be M_B(\lambda) = \left( \mbox{sign}(\Im\lambda) \pi i -\widehat{\psi} (\lambda) \widehat{\overline{\phi}}(\lambda) - B\right)^{-1}
\ee
and that here we have $$ \widehat{\phi} (\lambda) = \int_I\frac{dt}{t-\lambda}\quad\hbox{ and }\quad \widehat{\psi} (\lambda)= \int_{I'}\frac{1}{t-\lambda}\left(\int_I\frac{du}{u-t}\right)^{-1}\ dt.$$

Suppose $k\in \R\setminus(I\cup I')$. Then both $\widehat{\phi}$ and $\widehat{\psi}$ are analytic near $k$ and  the jump of $M_B$ is given by the jump coming from the $\mbox{sign}(\Im\lambda) \pi i$ term.

Now suppose $k\in I$. Then the difference 
$\widehat{\phi}(k+i\eps)-\widehat{\phi}(k-i\eps)$ can be found using a contour integral and we get 
$$\widehat{\phi}(k+i0)-\widehat{\phi}(k-i0)= 2\pi i \chi_I(k).$$ 
Moreover, 
$$\lim_{\eps\to 0}\widehat{\psi}(k\pm i\eps)=\widehat{\psi}(k)=\int_{I'}\frac{1}{t-k}\left(\int_I\frac{du}{u-t}\right)^{-1}\ dt < 0 $$
is analytic on $I$.
Therefore, the jump of $M_B^{-1}$ at $k$ is $2\pi i(1-\widehat{\psi}(k))$ and since $\widehat{\psi}(k)<0$, the $M$-function will jump at $k$.

Finally, let $k\in I'$. Then $\widehat{\phi} (\lambda) = \int_I\frac{dt}{t-\lambda}$ is analytic in $k$, so does not jump, while
$$\left[ \widehat{\psi}(k) \right] = 2\pi i \left(\int_I\frac{du}{u-k}\right)^{-1}.$$
Therefore, $\left[\widehat{\overline{\phi}}(k) \widehat{\psi}(k) \right] = 2\pi i .$ This  cancels the jump from the $\mbox{sign}(\Im\lambda) \pi i$ term, and the formula for the jump of $M_B^{-1}$ in the second part is proved.

We  next examine the bordered resolvent. Note that, since $\overline{\Sct} = L^2(\R)$, only one projection is necessary here. 
We first consider the resolvent. Let $(A_B-\lambda)u=v$. Then (noting that $ D(\lambda) =1$ in our situation) we have from \eqref{eq:mm10}
\be u(x)  = \frac{v(x)}{x-\lambda}
                                 -\frac{\psi(x)}{x-\lambda}
 \left\langle \frac{v}{t-\lambda},\phi\right\rangle +
  c_u\left(\frac{1}{x-\lambda}-\frac{\psi(x)}{x-\lambda}
 \widehat{\overline{\phi}}(\lambda)\right). \label{FriedrichsResolvent}
\ee
We see immediately from \eqref{FriedrichsResolvent}  that the non-bordered resolvent (as the sum of a multiplication operator with a jump and finite rank operators) has a non-trivial jump across the whole real axis. Moreover, we get
\be \Gamma_1 u= \widehat{v}(\lambda) - \widehat{\psi}(\lambda) \left[ \left\langle \frac{v}{t-\lambda},\phi\right\rangle +
  c_u \widehat{\overline{\phi}}(\lambda)\right] + c_u\lim_{R\to\infty}\int_{-R}^R \frac{ dx}{x-\lambda},
\ee
where the value of the limit of the integrals in the last term is given by $\pi i \sign\Im(\lambda)$.
From \eqref{eq:mm10} we have 
\be
c_u = M_B(\lambda)\left(-\widehat{v}(\lambda)
 + \left\langle \frac{v}{t-\lambda},\phi\right\rangle
 \widehat{\psi}(\lambda)\right).
\ee

Since we are only interested in the restricted resolvent, we now fix $v\in\overline{\Sc}$. In this example both $\phi,\psi\in L^2\cap L^\infty$, so we may apply the final part of Theorem \ref{trivS} to obtain that 
%
%
%
\be\label{eq:v}
v\vert_{I'}= \widehat{v\chi_{I}}(k+i0)\left(\int_I\frac{dt}{t-x}\right)^{-1}\Big\vert_{I'},
\ee 
while $v$ is an arbitrary $L^2$-function outside $I'$.

Now consider the jump of $u(x,\lambda)=(A_B-\lambda)^{-1}v(x)$ for $k\in I'$:
\bea &&
u(x,k+i0)-u(x,k-i0)\\ && \hspace{50pt}= \underbrace{v(x) 2\pi i\delta(x-k)-\psi(x) 2\pi i \delta(x-k)\widehat{v\overline{\phi}}(k+i0)}_{\hbox{cancel due to \eqref{eq:v}}}-\frac{\psi(k)}{x-k}2\pi i v(k)\underbrace{\overline{\phi}(k)}_{=0}\\
&& \hspace{65pt}\ +\ c_u(k+i0) \underbrace{\left(2\pi i\delta(x-k)- 2\pi i\delta(x-k)\psi(x)\widehat{\overline{\phi}}(k)\right)}_{=0,\ \hbox{ as } \psi(k)\widehat{\overline{\phi}}(k)=1\ \hbox{ in } I'} \\
&&\hspace{65pt} \ +\ \left(\frac{1}{x-k}-\frac{\psi(x)}{x-k}\widehat{\overline{\phi}}(k)\right) M_B(k)\underbrace{\left(-2\pi iv(k)+ \left\langle \frac{v}{t-k},\phi\right\rangle 2\pi i\psi(k)\right)}_{=0\ \hbox{ due to \eqref{eq:v}}}\\
&&\hspace{50pt}= 0 \hbox{ on } I'.
\eea
Here, we have used that $[M_B(k)]=0$ for $k\in I'$.

We now consider the jump for $k\in I$, where $\psi$ is zero.
\ben \label{resjump}
u(x,k+i0)-u(x,k-i0)&=& v(x) 2\pi i\delta(x-k)
\ +\  \frac{[c_u(k)]}{x-k-i0}\\&&  \ +\ c_u(k-i0) 2\pi i\delta(x-k). \nonumber
\een
Next, we use that for any function $f$,
$$[(1/f)(k)]=-\frac{[f(k)]}{f(k+i0)f(k-i0)}$$
to obtain
\ben
[c_u(k)]&=& -[(1/c_u)(k)]c_u(k+i0)c_u(k-i0)\label{cjump} \\ \nonumber
&=& -[M_B^{-1}]\frac{c_u(k+i0)c_u(k-i0)}{-\widehat{v}(k-i0)+\widehat{\psi}(k-i0)\widehat{v\overline{\phi}}(k-i0) } \\
&&
    -\frac{c_u(k+i0)c_u(k-i0)}{M_B(k+i0)}\left[\frac{1}{-\widehat{v}(\lambda)
 + \left\langle \frac{v}{t-\lambda},\phi\right\rangle
 \widehat{\psi}(\lambda)}\right](k)
\nonumber \\
&=& [M_B^{-1}]M_B(k+i0)M_B(k-i0)\left(\widehat{v}(k+i0)-\widehat{\psi}(k)\widehat{v\overline{\phi}}(k+i0) \right)
\nonumber \\ && + \ M_B(k-i0)\left(2\pi i v(k)-\widehat{\psi}(k) 2\pi i v(k)\right). \nonumber
\een
Here, we have used the notation $[F(\lambda)](k)$ for the jump of the function $F$ of $\lambda$ at the point $k$.

To show that the restricted resolvent has a jump, it is sufficient to show that there exists $v\in\Sco$ such that the righthand side of \eqref{resjump} does not vanish identically. 
Since $\frac{1}{x+i0}=\pi i \delta(x) + v.p.(\frac1x)$ and the delta-functions cannot cancel the principal value, it is sufficient to show that there exists $v\in\overline{\Sc}$ such that $[c_u(k)]\neq 0$. To do this, choose $v=0$ on $I\cup I'$, but $v\in L^2$ not identically zero and sufficiently smooth. Then $v\in\overline{\Sc}$ and \eqref{cjump} becomes
$$[c_u(k)]=[M_B^{-1}(k)]M_B(k+i0)M_B(k-i0)\widehat{v}(k+i0)\neq 0,$$
as all terms of the product are non-zero. $M_B(k+i0)$ and $M_B(k-i0)$ are non-zero a.e. in $k$, as they are boundary values of a non-zero analytic function (see \cite{Koosis}).

Now, let $k \in \R\backslash I\cup I'$. Then
\begin{align*} 
      & u(x,k+i0)-u(x,k-i0)=   2\pi i v\delta(x-k)-\underbrace{\psi(x)2\pi i \delta(x-k)}_0 v \hat{  \bar{ \phi}} (k+i0) \\
  & -\underbrace{\dfrac{ \psi(k)}{x-k}}_0  2 \pi i v(k)\bar \phi (k)+c_u(k+i0) \left ( 2\pi i\delta (x-k)-\underbrace{ 2\pi i \delta  (x-k)\dfrac{  \psi (x)}{x-k}\hat{\bar{ \phi}} (k)}_0 \right )\\
 &+\left [  M_B (k+i0) \left( -2\pi i v(k)\delta (x-k)+\llangle  \dfrac v{x-k},\phi \rrangle \underbrace{2\pi i \psi(k)\delta(x-k)}_0\right)\right. \\ 
 &+[M_b(k)]\left ( -\hat{v}(k-i0)+\widehat{v\bar{\phi}}(k-i0)\hat{\psi} (k-i0)\right )\Bigg]\left ( \dfrac1{x-k-i0}-\dfrac{ \psi(x)}{x-k-i0}\hat{ \bar{ \phi}} (k-i0)\right )\\
 =& 2\pi i v(x)\delta (x-k)+c_u(k+i0) 2\pi i \delta (x-k)-2\pi i v(x)\delta (x-k)\dfrac 1{x-k-i0}M_B(k+i0)\\
 &-M_B(k+i0)M_B(k-i0)2\pi i (1-\hat \psi (k))(-\hat{v}(k)+\widehat{v\bar{ \phi}}(k)\hat{ \psi}(k)).
 \end{align*}
 Therefore, the most singular term is $-2\pi i v(x) M_B(k+i0)\dfrac{\delta (x-k)}{x-k-i0}$
 which cannot be cancelled.  Clearly the coefficient does not vanish for any $v$ supported outside $I\cup I'$ with  $k\in \{x\in\R: v(x)\neq 0\}$.
 \end{proof}



\begin{proof}[Proof of Theorem \ref{pos}]
\begin{enumerate}
	\item Let \be\label{eq:Fgen}
F_\pm:=	
	P_\pm \og\mp\frac{2\pi i}{D_\pm}(P_\pm\ophi)P_\pm(\psi\og)=0. 
	\ee
	We consider the condition \eqref{eq:65}. Then $g\in \Sc^\perp$ implies $F_\pm\equiv 0$ and $[F]=0$. On $\Omega$ we  have
	$$\frac{\llangle \frac{1}{x-\mu},\phi\rrangle \llangle \frac{1}{x-\mu}\psi,g\rrangle}{1+\llangle \frac{\psi}{x-\mu},\phi\rrangle}$$ is analytic a.e., so its jump is zero. Therefore
	$$2\pi i g(k) = \llangle \frac{1}{x-\mu},g\rrangle\vert_{\Omega} = 0\ a.e.$$
	and $g$ vanishes a.e. on $\Omega$. 
	Moreover, since our conditions are symmetric in $\phi$ and $\psi$, we immediately get that also $\Sct^\perp\subseteq L^2(\Omega^c)$.
\item Consider $F_\pm$ from \eqref{eq:Fgen}. We need to show that if $F$ vanishes, then so does $g$. We have
$$D_\pm(\mu)=1+\alpha\int_I\frac{\ophi}{x-\mu}\ dx,$$
so 
$$F_\pm(\mu)=\llangle \frac{1}{x-\mu},g\rrangle -\frac{\alpha\int_I\frac{\ophi}{x-\mu}\ dx\int_I\frac{\og}{x-\mu}\ dx}{1+\alpha\int_I\frac{\ophi}{x-\mu}\ dx}.$$ 
Since $g\in\Sc^\perp$, the first part of the theorem implies $g\vert_{I^c} = 0 $ a.e., so 
$$F_\pm(\mu)= \int_I\frac{\og}{x-\mu}\ dx \frac{1}
{1+\alpha\int_I\frac{\ophi}{x-\mu}\ dx}.$$ 
Clearly, $
\left(1+\alpha\int_I\frac{\ophi}{x-\mu}\ dx\right)^{-1}\neq 0$ for a.e. $\mu$, so we have that for all $\mu\not\in\R$
$$\int_I\frac{\og}{x-\mu}\ dx = \llangle \frac{1}{x-\mu},g\rrangle =0.$$
As in the proof of Lemma \ref{lem:unique}, this implies $g\equiv 0$.
\end{enumerate}
\end{proof}

\begin{proof}[Proof of Theorem \ref{oneborderedres}]
From Theorem \ref{pos}, we know that $L^2(\Omega')\subseteq \Sco\cap\Scto$. Choose $v,\widetilde{v}\in L^2(\Omega')$. By assumption, we know $\llangle (A_B-\lambda)^{-1}v,\widetilde{v}\rrangle$. Noting that $v\phi=0$ and $\widetilde{v}\psi=0$,  we get from \eqref{eq:mm1c} and \eqref{eq:mm10} that
\be
\llangle (A_B-\lambda)^{-1}v,\widetilde{v}\rrangle - \llangle \frac{v}{x-\lambda}, \widetilde{v}\rrangle = - M_B(\lambda) \llangle \frac{v}{x-\lambda},  {\bf 1} \rrangle \llangle \frac{1}{x-\lambda}, \widetilde{v}\rrangle. 
\ee   
Choosing $v,  \widetilde{v} \geq 0$ and not identically zero, we can divide $\lambda$-a.e.~and obtain
\be
M_B(\lambda) = \frac{\llangle \frac{v}{x-\lambda}, \widetilde{v}\rrangle- \llangle (A_B-\lambda)^{-1}v,\widetilde{v}\rrangle}{\llangle \frac{v}{x-\lambda},  {\bf 1} \rrangle \llangle \frac{1}{x-\lambda}, \widetilde{v}\rrangle}.
\ee

\end{proof}

\subsection{Results with $\phi, \psi \in H_2^+$}

\begin{proposition}\label{Schar++}
Let $\phi, \psi \in H_2^+$. Then 
$$ g\in\Sc^\perp
  \Longleftrightarrow  \begin{cases} \mathrm{(I)}\ g\in H_2^+, \\
  \mathrm{(II)}\  \og = - \frac{2\pi i}{D_-}\ophi P_-(\psi\og) \ (a.e.).
   \end{cases}$$
\end{proposition}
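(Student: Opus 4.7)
The plan is to reduce the characterization of $\Sc^\perp$ given in Proposition \ref{sperpcrit}, \eqref{eq:65}, using the extra information that $\phi,\psi\in H_2^+$. Specifically, since $\phi\in H_2^+$ one has $\ophi\in H_2^-$, so that
\[
P_+\ophi = 0, \qquad P_-\ophi = \ophi
\]
(the latter as boundary values a.e.\ on $\R$).

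For the forward implication, assume $g\in\Sc^\perp$. The first equation in \eqref{eq:65} reads $P_+\og = \tfrac{2\pi i}{D_+}(P_+\ophi)P_+(\psi\og)$, and the right-hand side vanishes because $P_+\ophi = 0$. Hence $P_+\og = 0$, which is exactly the statement $g\in H_2^+$, giving (I). Consequently $\og = P_-\og$, and substituting $P_-\ophi = \ophi$ into the second equation of \eqref{eq:65} yields
\[
\og = P_-\og = -\frac{2\pi i}{D_-}\,\ophi\, P_-(\psi\og)\qquad(a.e.),
\]
which is (II).

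For the converse, suppose $g$ satisfies (I) and (II). Condition (I) gives $\og\in H_2^-$, so $P_+\og = 0$ and $P_-\og = \og$. The first equation of \eqref{eq:65} then reduces to $0 = \tfrac{2\pi i}{D_+}\cdot 0\cdot P_+(\psi\og) = 0$, which holds trivially since $P_+\ophi = 0$. Using $P_-\ophi = \ophi$, condition (II) rewrites as $P_-\og = -\tfrac{2\pi i}{D_-}(P_-\ophi)P_-(\psi\og)$, which is precisely the second equation of \eqref{eq:65}. Both conditions of \eqref{eq:65} are therefore met, so $g\in\Sc^\perp$.

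No substantive obstacle arises: the whole argument is a direct simplification of the general characterization in Proposition \ref{sperpcrit} exploiting the Hardy-class assumptions on $\phi,\psi$, which collapse the two-equation system into a single pointwise identity on $H_2^+$.
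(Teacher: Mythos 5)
Your proof is correct and follows essentially the same route as the paper: specialise the two-equation characterisation \eqref{eq:65} of Proposition \ref{sperpcrit} using $P_+\ophi=0$ and $P_-\ophi=\ophi$ (since $\ophi\in H_2^-$), so the first equation forces $P_+\og=0$, i.e.\ $g\in H_2^+$, and the second becomes (II). The only difference is that you spell out the (immediate) converse direction explicitly, which the paper leaves implicit.
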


\begin{proof}
We consider the conditions in \eqref{eq:65}. As $\ophi\in H_2^-$, we have $P_+\ophi=0$, giving $P_+\og=0$, hence $\og\in H_2^-$ and $g\in H_2^+$. Since $P_-\og=\og$ and $P_-\ophi=\ophi$, the second condition in \eqref{eq:65} becomes (II).
%
%
%
\end{proof}

\begin{proof}[Proof of Theorem \ref{S++} (outline)]
We use the fact that $\overline{\mathcal S}=\overline{\mathcal T}$ where $\mathcal T$ is as defined
in (\ref{eq:mm1b}): the elements of ${\mathcal T}$ are found by solving $(\tilde{A}^*-\mu)u = 0$ and varying
$\mu$ over the resolvent set of some appropriate operators $A_B$. 
We therefore start by solving
$$
(\At ^* -\mu)u=(x-\mu)u-c_u  {\bf 1} +  \langle u,\phi \rangle \psi=0
$$
where $\phi, \psi \in H_2^+$. Dividing by $(x-\mu)$ we find that 
$$ 
u=\dfrac{ c_u {\bf 1} - \langle u,\phi \rangle \psi}{x-\mu}.
$$
Taking the inner product with $\phi$ we get
$$
D(\mu)\langle u,\phi \rangle - \llangle \frac{c_u}{x-\mu}, \phi\rrangle =0.
$$

There are two cases to consider.
\begin{enumerate}
\item
$\mu \in \C_+$.  This means $\llangle \dfrac{1}{x-\mu},\phi\rrangle =0$, and therefore $D(\mu)\langle u,\phi \rangle=0$.
There are two subcases to consider.
\begin{enumerate}
\item[(1a)]
$
D(\mu)\neq   0$ which implies $\langle u,\phi \rangle =0$, giving $u=\dfrac{{\bf 1}}{x-\mu}$ up to arbitrary constant multiples.
\item[(1b)]
$D(\mu)=0$ giving $u=\dfrac{ c_u {\bf 1} -\tilde c \psi}{ x-\mu}$ for arbitrary values $c_u$ and $\tilde c$.
For any boundary condition $B$, by  suitable choice of the two constants we see that $\mu$ belongs to the spectrum of $A_B$. Therefore these functions need not be added to the space $\Sco$. However, functions $\dfrac{{\bf 1}}{x-\mu}$ are in $\Sco$ due to being able to approximate them using neighbouring values of $\mu$. 
\end{enumerate}
\item
We take $\mu\in \C_-$. Then
$$ \langle u,\phi\rangle D(\mu)=\llangle \dfrac{ c_u}{x-\mu},\phi \rrangle = -2 \pi i c_u \bar\phi(\mu).$$
There are  some subcases to consider.
\begin{itemize}
\item[(2a)]
$D(\mu) \neq 0$ which implies $u=c_u \dfrac{  1+(2\pi i \bar \phi (\mu)/D(\mu))\psi}{x-\mu}$ for arbitrary  $c_u$;
\item[(2b)]
$D(\mu)=0,$ $\bar \phi(\mu)=0$ giving by explicit calculation a two dimensional kernel: $u=\dfrac{ c_u {\bf 1} -\tilde c \psi}{x-\mu}$ for arbitrary values $c_u$ and $\tilde c$;
\item[(2c)]
$D(\mu)=0, \; \bar \phi(\mu)\neq 0$ giving $c_u=0$ and $u=\tilde{c}\dfrac\psi{x-\mu}$ for any $\tilde c$.
\end{itemize}
\end{enumerate} 
In the case (2b) for any boundary condition $B$, by  suitable choice of the two constants we see that $\mu$ belongs to the spectrum of $A_B$. Therefore these functions need not be added to the space $\Sco$. In the case (2c) the function $\dfrac\psi{x-\mu}$ should be included in $\Sco$. There is only one $B$ for which it is an eigenfunction (formally $B=\infty$), but even for this choice of $B$ it can be approximated by elements from neighbouring kernels with $D(\mu)\neq 0$.
Note that this means that $\Sco$ is independent of $B$ as it should be by Proposition \ref{prop:2.7}.
This proves the formula
for $\overline{\mathcal T} = \overline{\mathcal S}$ in the theorem.

We now obtain the expression for the dimension of $\Sc^\perp$, in the generic case $M=0=M_0$, when
$\psi(x) = \sum_{j=1}^n c_j/(x-z_j)$, where the $z_j$ are distinct, lie in $\C_-$ and the $c_j$ are all non-zero. We know that 
$g\in {\mathcal S}^\perp$ if and only if $g$ satisfies both conditions (I) and (II) in Proposition \ref{Schar++}:
Using the fact that $P_{-}=I-P_{+}$ the second condition becomes
\be (1-2\pi i P_{-}(\psi\overline{\phi})+2\pi i \overline{\phi} \psi)\overline{g}-2\pi i\ophi P_{+}(\psi\overline{g}) = 0. \label{eq:fedupnow}\ee
The first bracket on the left gives $D_+$ and by (I) we know that $\overline{g}\in H_2^-$ and so (\ref{eq:fedupnow}) becomes
\[ D_+(x)\overline{g}-2\pi i \overline{\phi}\sum_{j=1}^N c_j P_{+}\left(\frac{1}{x-z_j}\overline{g}\right) = 0, \quad g\in H_2^+, x\in\R\]
in which $D_+(x)$ are the boundary values on the real line of the function $D_+(\mu) = 1 + \int_{\mathbb R}\frac{\psi(x)\overline{\phi(x)}}{x-\mu}dx$, $\mu\in {\mathbb C}_+$. Thus by the Residue Theorem,
\be g\in H_2^+, \quad \overline{g}(x) = \frac{2\pi i \overline{\phi}(x)}{D_+(x)} \sum_{j=1}^N\frac{c_j \overline{g}(z_j)}{x-z_j}, \;\;\; x \in {\mathbb R}. \ee
Therefore, by unique continuation of the meromorphic function to the lower half plane (see \cite{Koosis}) $\og$ is given by
\be g\in H_2^+, \quad \overline{g}(\mu) = \frac{2\pi i \overline{\phi}(\mu)}{D_+(\mu)} \sum_{j=1}^N\frac{c_j \overline{g}(z_j)}{\mu-z_j}, \;\;\; \mu \in {\mathbb C}_-,\label{eq:grep} \ee
from which it is immediately clear that the space of all such $g$ is at most $N$-dimensional. Note that the expression on the right
hand side of the equality sign in (\ref{eq:grep}) is not clearly an element of $H_2^-$; to deal with this we
substitute the particular $\psi$ under consideration into the formula for $D_+ $ and use residue calculations to obtain the following expression for its analytic continuation to $\C$:
\be D_+ (\mu) = 1 - 2\pi i \sum_{j=1}^N \frac{\overline{\phi}(z_j)}{z_j-\mu}, \quad \mu\in {\mathbb C}. \label{eq:Dpoles} \ee

If $D_+ (\mu)$ has no zeros in $\overline{\mathbb C}_-$ and if $\overline{\phi}(z_j)\neq 0$ for all $j$ then we get
\[ \overline{g}(\mu) = 2\pi i \overline{\phi}(\mu)\sum_{j=1}^N \frac{c_j \overline{g}(z_j)}{D_+ (\mu)(\mu-z_j)},\quad \mu\in\C_- \]
and the condition that $\lim_{\mu\rightarrow z_j}\overline{g}(\mu) = \overline{g}(z_j)$ gives no additional restrictions, as can be confirmed by a simple explicit calculation.
In this case, therefore, the defect of $\overline{\Sc}$ is  $N$.

Now suppose $D_+ $ has zeros in $\overline{\C_{-}}$; for simplicity we are assuming that they all lie
strictly below the real axis. We let $\mu_1,\ldots, \mu_\nu$ be the distinct poles of $\overline{\phi}/D_+ $, with orders
$p_1,\ldots, p_\nu$ and set $P=\sum_{j=1}^\nu p_j$. In order to ensure that $g$ given by (\ref{eq:grep}) lies in $H_2^+$ we need that the conditions
\be \sum_{j=1}^N \frac{c_j}{(\mu_k-z_j)^{n}}\overline{g}(z_j) = 0, \;\;\; n = 1,\ldots, p_k, \;\;\; k = 1,\ldots, \nu , \ee
all hold - a total of $P$ linear conditions on the numbers $\overline{g}(z_1),\ldots,\overline{g}(z_N)$. We now check
that this is a full-rank system. Suppose for a contradiction that there is a non-trivial set of constants $\alpha_{i,k}$ such 
that 
\[ \sum_{k=1}^\nu \sum_{n=1}^{p_k} \frac{\alpha_{i,k}}{(\mu_k-z_j)^n} = 0, \;\;\; j = 1,\ldots,N. \]
Define a rational function by $F(z) = \sum_{k=1}^\nu \sum_{n=1}^{p_k} \frac{\alpha_{i,k}}{(\mu_k-z)^n}$ so that
$F$ has zeros at $z_1,\ldots,z_N$. Observe that $Q(z) := F(z)\prod_{k=1}^\nu  (\mu_k-z)^{p_k}$ is a polynomial
of degree strictly less than $P = \sum_{k=1}^\nu  p_k$, having $N$ zeros. Now $D_+ (\mu)\rightarrow 1$ as $\Im(\mu)\rightarrow\infty$,
so $D_+ $ has the same number of zeros as poles. In particular, $D_+ $ has at least as many poles in $\mathbb C$ as it has zeros
in ${\mathbb C}_-$, giving $N\geq P$. Thus $Q$ is a polynomial of degree $<P\leq N$ having $N$ zeros. 
This means $Q\equiv 0$, so $F\equiv 0$, and the constants $\alpha_{i,k}$ must all be zero. This contradiction
shows that the set of linear constraints on the $N$ values $\overline{g}(z_j)$ has full rank $P$, and so the set of
allowable values for $(\overline{g}(z_1),\ldots,\overline{g}(z_N))$ has dimension  $N-P$. 

The degenerated case leading to non-zero $M$ and $M_0$ can be analysed similarly by considering the local behaviour of $\ophi/D_+$ around zeroes of $D_+(x)$ on the real axis.
\end{proof}

\begin{proof}[Proof of Proposition \ref{proposition:8.13}]

We follow the construction of Theorem \ref{S++}, assuming additionally now that $\overline{\phi}$ has the same
form as $\psi$:
\[ \overline{\phi}(x) = \sum_{k=1}^{\tilde{N}}\frac{d_k}{x-w_k}, \quad w_k\in\C_+ \hbox{ distinct and } d_k\neq0. \]
We shall construct $\psi$ and $\phi$ so that the defect number $N-P$ of Theorem \ref{S++} takes any value between
$0$ and $N-1$, while the corresponding defect number $\tilde{N}-\tilde{P}$ for $\tilde{\mathcal S}$ takes any value between
$0$ and $\tilde{N}-1$, independently of the value of $N-P$.

In addition to the function $D(\mu)$ appearing in the proof of Theorem \ref{S++} we now have a function $\tilde{D}$ which,
following (\ref{eq:Dpoles}), has the form
\be \tilde{D}(\mu) = 1-2\pi i\sum_{k=1}^{\tilde{N}}\frac{d_k\overline{\psi}(w_k)}{w_k-\mu}. \label{eq:DTpoles} \ee
The expressions (\ref{eq:Dpoles},\ref{eq:DTpoles}) can be further developed using the explicit formulae for $\phi$ and $\psi$ to
obtain
\be D(\mu) = 1 - 2\pi i \sum_{j=1}^N\frac{c_j}{z_j-\mu}\sum_{k=1}^{\tilde{N}}\frac{\overline{d_k}}{z_j-\overline{w_k}}, \ee
\be \tilde{D}(\mu) = 1 - 2\pi i \sum_{k=1}^{\tilde{N}}\frac{d_k}{w_k-\mu}\sum_{j=1}^{N}\frac{\overline{c_j}}{w_k-\overline{z_j}}. \ee
We choose the points $z_j$ and $w_k$ so that $z_j-w_k\gg 1$ and $|z_j|\gg |w_k$ for all $j,k$. Then
\[ D(\mu) \approx 1 - 2\pi i \sum_{j=1}^N \frac{c_j/z_j}{z_j-\mu}\sum_{k=1}^{\tilde{N}} \overline{d_k}; \quad\quad \tilde{D}(\mu) \approx 1 + 2\pi i \sum_{k=1}^{\tilde{N}} \frac{d_k}{w_k-\mu}\overline{\sum_{j=1}^{N} \frac{c_j}{z_j}}. \]
From these expressions we can find approximations to the zeros $\mu_j$ of $D$ and $\tilde{\mu}_k$ of $\tilde{D}$,
\[ \mu_j \approx z_j - 2\pi i \frac{c_j}{z_j}\sum_{k=1}^{\tilde{N}}\overline{d_k}, \quad\quad
 \tilde{\mu}_k \approx w_k + 2\pi i d_k\overline{\sum_{j=1}^{N} \frac{c_j}{z_j}}; \]
these expressions can be written in the form
\[ \mu_j \approx z_j - 2\pi i \alpha_j\overline{d}, 
\quad\quad \tilde{\mu}_k \approx w_k + 2\pi i d_k\overline{a}; \quad\quad a = \sum_{j=1}^N\alpha_j, \;\;\; d = \sum_{k=1}^{\tilde{N}}d_k, \;\;\;
 \alpha_j = c_j/d_j.\]
 

Inspecting these expressions one may deduce that it is possible to assign independently any value in $\{1,\ldots,N\}$ to
the number $P$ of $\mu_j$ in ${\mathbb C}_{-}$ and any value in $\{ 1,\ldots, \tilde{N}\}$ to the number $\tilde{P}$ of ${\tilde \mu}_k$ in
${\mathbb C}_{-}$. Thus $N-P$ may take any value in $\{0,\ldots,N-1\}$ and
$\tilde{N}-\tilde{P}$ may take any value in $\{0,\ldots,\tilde{N}-1\}$. Since $N$ and $\tilde{N}$ are arbitrary, the proof is
complete.
\end{proof}

%

\begin{proof} (Statements in Example \ref{specialcase}.)
In this example, for $\lambda\in \C^+$ we have by the residue theorem 
\be
D_+(\lambda) = 1+\alpha \int\left(\frac{1}{x-z_1}\cdot\frac{1}{x-w_1}\right) \frac{1}{x-\lambda} \ dx\ =\ 1+\frac{2\pi i \alpha}{(z_1-w_1)(\lambda-z_1)} = \frac{\lambda_0-\lambda}{z_1-\lambda}.
\ee
Clearly, this formula also gives the meromorphic continuation of $D_+$ to the lower half plane. We remark that this differs from $D_-$ which is given by
\be
D_-(\lambda)=1+\frac{2\pi i \alpha}{(z_1-w_1)(w_1-\lambda)}.
\ee
We now calculate the numbers $N,P,M,M_0$ from Theorem \ref{S++}. $\psi$ has a simple pole at $z_1\in\C_-$, hence $N=1$. As $\phi$ has no zeroes, $M_0=0$. The function $D_+$ has one pole at $z_1\in\C_-$, $\ophi$ has a simple pole at $w_1\in\C_+$. Thus all poles of $\ophi/D_+$ in $\overline{\C_-}$ stem from zeroes of $D_+$. The only zero of this function is at $$\lambda_0=z_1+\frac{2\pi i\alpha }{w_1-z_1}.$$ Thus, if 
 $\lambda_0\in\C_+$
	 then $P=M=0$; 
	if $\lambda_0\in\C_-$ then $P=1$, $M=0$;
	 if $\lambda_0\in\R$ then  $P=0$, $M=1$.
	
We next show the form of $\Sc^\perp$ and $\Sco$ in the case $\lambda_0\in\C_+$. Using $\ophi \in H_2^+$, from \eqref{eq:65}, we have $\og\in H_2^-$ and 
\be
\og = - \frac{2\pi i}{D_-}\ophi P_-(\psi\og). 
\ee
Hence, 
\be
\left(1+\frac{2\pi i \alpha}{(z_1-w_1)(\lambda-w_1)}\right) \og = -\frac{2\pi i \alpha}{\lambda-w_1} P_-\left(\frac{\og} {\lambda-z_1}\right) = -\frac{2\pi i \alpha}{\lambda-w_1} \left(\frac{\og-\og(z_1)} {\lambda-z_1}\right).
\ee
Noting that $\og(z_1)$ is a free parameter, a short calculation shows that
$$\og=\frac{-2\pi i \og(z_1)}{(\lambda-w_1)(\lambda-\lambda_0)}\quad \hbox{ or }\quad g(x)=\frac{const}{(x-\overline{w_1})(x-\overline{\lambda_0})}.$$
Now, $f\in\Sco$ iff
\be
0=\int f\og = const \int f(t)\left(\frac{1}{t-w_1}-\frac{1}{t-\lambda_0}\right).
\ee
This is equivalent to $(P_+f)(w_1)=(P_+f)(\lambda_0)$.
\end{proof}

\subsection{Analysis for the case $\overline{\phi},\psi\in H_2^+$}

\begin{proof}[Proof of Theorem \ref{notminuspi}]
We use the characterisation of $\Sc^\perp$ given in \eqref{eq:65}:
\bea
g\in\overline{\Sc}^\perp & \Longleftrightarrow & \begin{cases} P_+ \og-\frac{2\pi i}{D_+}\ophi P_+(\psi\og)=0, \\ 
 P_- \og=0.  \end{cases}\\
& \Longleftrightarrow & \og\in H_2^+ \hbox{ and } \og=\frac{2\pi i}{D_+}\ophi \psi\og.
\eea
Since  $D_+=1+2\pi i \psi\ophi$ on $\R$ we have
$$  \og\in H_2^+ \hbox{ and } (1+2\pi i \ophi\psi) \og=2\pi i\ophi \psi\og,$$
so $\og=0$.
\end{proof}

\subsection{The general case $\psi, \phi\in L^2$}


\begin{proposition} \label{enought}
 The set $E_0$ defined in \eqref{E0} is countable.
\end{proposition}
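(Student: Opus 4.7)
The plan is to reduce the statement directly to the abstract level-set lemma (Lemma \ref{lem:count}). Define the measurable function
\[
f(k) := P_+(\overline{\phi}\psi)(k) - \psi(k)\, P_+(\overline{\phi})(k), \qquad k \in \R.
\]
Under the standing hypothesis \eqref{epsiloncondition} (or even just $\phi,\psi\in L^2\cap L^4$, which we are implicitly working with in this subsection), each of the two terms is a well-defined, a.e. finite, measurable function: $P_+(\overline\phi)\in L^p$ by boundedness of the Riesz projection for $p>1$, and $\overline\phi\psi\in L^{4/3}\subset L^1_{\mathrm{loc}}$, so its Riesz projection is a well-defined distribution with a.e. boundary values. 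In any case, $f$ is measurable.

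First, observe that $0\notin E_0$, since the defining relation $1+2\pi i \alpha f(k) = 0$ becomes $1=0$ when $\alpha=0$. For $\alpha\neq 0$, the relation $1+2\pi i \alpha f(k)=0$ on a set $E$ is equivalent to $f(k) = -\frac{1}{2\pi i \alpha}$ on $E$. Hence
\[
E_0 \setminus\{0\} \;=\; \Big\{ \alpha \in \C\setminus\{0\} \;:\; \big|\{k\in\R : f(k) = -\tfrac{1}{2\pi i\alpha}\}\big| > 0 \Big\}.
\]

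Now I would apply Lemma \ref{lem:count} to the measurable function $f$: it tells us that the set
\[
J := \big\{\beta \in \C : |\{k\in\R : f(k) = \beta\}| > 0\big\}
\]
is countable. Since the map $\alpha\mapsto -\frac{1}{2\pi i \alpha}$ is a bijection of $\C\setminus\{0\}$ onto itself, $E_0\setminus\{0\}$ is the preimage of $J\setminus\{0\}$ under this bijection, hence countable. Adjoining $\{0\}$ (which in fact lies outside $E_0$) preserves countability, so $E_0$ is countable.

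The only genuine issue is the measurability/well-definedness of $f$; once that is handled by the regularity assumptions in force, the argument is just a direct application of Lemma \ref{lem:count} composed with an inversion on $\C\setminus\{0\}$.
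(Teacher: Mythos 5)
Your argument is correct and is essentially identical to the paper's: both define the measurable function $f$ built from $P_+(\overline\phi\psi)-\psi P_+(\overline\phi)$, observe that $\alpha\in E_0\setminus\{0\}$ forces $f$ to take the constant value $-1/(2\pi i\alpha)$ on a set of positive measure, and then invoke Lemma \ref{lem:count}. The only cosmetic difference is that the paper absorbs the factor $2\pi i$ into its $f$; your extra remarks on measurability and on $0\notin E_0$ are sound but not a different method.
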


\begin{proof}[Proof of Proposition \ref{enought}] Let $\alpha\in E_0\setminus\{0\}$ and $E$ be the set of positive measure on which
$1 + 2\pi i \alpha (P_+(\overline \phi \psi)-\psi (P_+(\overline \phi)))=0$. Set $f=2\pi i (P_+(\overline \phi \psi)-\psi (P_+(\overline \phi)))$.  
As $1+\alpha f|_E=0$ then $f|_E=-1/\alpha$; this can only be true for a countable set of $\alpha$, as Lemma \ref{lem:count} shows.
\end{proof}

 \begin{proof}[Proof of Theorem \ref{thm:E0}] Without loss of generality, we asssume $\alpha=1$. 
Let $E$ be the set of positive measure from \eqref{E0}. 
For $\phi\in L^{2+\eps}$, choose $h\in L^2(E) \cap L^\infty(E)$, while if $\phi\in L^4$, then choose $h\in L^2(E)\cap L^4(E)$.
Now, set 
\be
\og=(P_+\ophi)\chi_E h - \ophi P_-(\chi_E h). 
\ee
By our assumptions on $h$ and in \eqref{epsiloncondition}, we have $g\in L^2$.

We next show that $\og$ satisfies the right hand side of \eqref{eq:67} pointwise. Note that here and in several other places in this proof we use that $P_-P_+f=0$. This is justified as our assumptions  on $h$ and in \eqref{epsiloncondition} guarantee that the functions $f$ we apply this to are in appropriate function classes.
 We have
\ben \nonumber
P_-\og &=& P_- ((P_+\ophi)\chi_E h - \ophi P_-(\chi_E h))\\ \nonumber
&=& P_- ((P_+\ophi)P_-(\chi_E h) - \ophi P_-(\chi_E h))\\ \nonumber
&=& P_- ((P_+\ophi - \ophi) P_-(\chi_E h))\\ 
&=& P_- (-(P_-\ophi) P_-(\chi_E h)) = -(P_-\ophi) P_-(\chi_E h).
\een
 Multiplying by $2\pi i\psi$ and using that $D_+-D_-=2\pi i\psi\ophi$ on the real axis by the Sohocki-Plemelj Theorem (see \cite{Koosis}), gives
\ben\nonumber
2\pi i\psi P_-\og  &=& -2\pi i\psi (P_-\ophi) P_-(\chi_E h) \\ \nonumber
&=& 2\pi i\psi (-\ophi+(P_+\ophi)) P_-(\chi_E h) \\ 
&=& (-(D_+-D_-)+2\pi i\psi (P_+\ophi)) P_-(\chi_E h). \label{eq:121}
\een
We rewrite the $D_-$-term as follows.
\ben
D_- P_-(\chi_E h) &=& P_- (D_- P_-(\chi_E h))\ =\ P_- ((D_--D_+) P_-(\chi_E h))+ P_- (D_+ P_-(\chi_E h))\\ \nonumber
&=& P_- ((D_--D_+) P_-(\chi_E h))+ P_- (D_+ \chi_E h).
\een
Inserting this in \eqref{eq:121}, and rearranging gives the identity
$$ 2\pi i\psi P_-\og - P_- ((D_--D_+) P_-(\chi_E h))- P_- (D_+ \chi_E h) = (-D_++2\pi i\psi (P_+\ophi)) P_-(\chi_E h).$$
Multiplying by $\ophi$ and using that on $E$ we have $D_+=2\pi i\psi(P_+\ophi)$ this gives
$$ 2\pi i \ophi\left( \psi P_-\og + P_- (\psi\ophi P_-(\chi_E h))- P_- (\psi(P_+\ophi) \chi_E h)\right) =- (D_+- 2\pi i\psi (P_+\ophi))\ophi P_-(\chi_E h),$$
which, noting that $(D_+- 2\pi i\psi (P_+\ophi))\chi_E h=0$, is 
 the equation on the right hand side of \eqref{eq:67}.

We now need to chose $h\in L^2(E)$ suitably to obtain an infinite dimensional subspace for the corresponding $\og$. Choose $E'\subset E$ with $|E'|>0$ and sufficiently small such that $\Omega_\phi\not\subseteq E'$ (as $E$ has positive measure and $\phi$ is not identically zero this is always possible). Consider
\be
\og=(P_+\ophi)\chi_{E'} h - \ophi P_-(\chi_{E'} h). 
\ee
By the above arguments, $g\in\Sc^\perp$. Moroever,
$$\og\vert_{(E')^c} = - \chi{((E')^c)} \ophi P_-(\chi_{E'} h).$$
As $\chi{((E')^c)} \ophi\not\equiv 0$ and $ P_-(\chi_{E'} h)$ are the boundary values of an analytic function and therefore non-zero a.e.~on $\R$, we have $\og\not\equiv 0$ whenever $ P_-(\chi_{E'} h)\not\equiv 0$ (see \cite{Koosis}), which gives an infinite dimensional set of such functions.
 \end{proof}


 \begin{proof}[Proof of Theorem \ref{thm:indexx} (outline)]
This follows easily from \eqref{eq:69} in Proposition \ref{sperpcrit} and standard results on compact operators. The compactness of the difference of Hankel operators follows from \cite[Corollary 8.5]{Peller}.

 Consider the analytic operator-valued function
 $$
 I+({\mathcal M} -\mu I)^{-1} \K
 $$
 which is a compact perturbation of $I$. We need to know the values $\mu \in \C$ for which this operator has non-trivial kernel.
 Each connected component of $\C\backslash \overline{\mbox{essran}\ \M} $  either contains only discrete (countable) spectrum or else lies entirely in the spectrum. However for large $\mu$, $\{ 0 \} =   \ker(I+(\M-\mu)^{-1}\K)$, so by the Analytic Fredholm Theorem (see \cite{RS05}),  outside some bounded set   
 there is no spectrum of $\M+\K$.
 \end{proof}

\section{Proofs of the results of Section \ref{section:Toep}} \label{AppB}


In this appendix, we collect spectral results for an operator $T:=\overline \phi P_+\psi$ acting on $L^2$. These are closely related to the corresponding results for the Toeplitz operator $T_a:H^+_2\to H^+_2$ given by $T_a u =P_+au=P_+aP_+u$ which can be found, e.g.~in \cite{BS}. 

Assumptions \ref{AssAppB} are assumed to hold throughout.

\begin{proposition}\label{PropAppB}
Define the operators  $T$ on $L^2$ and $T_a:H^+_2\to H^+_2$ as above. Then
\begin{enumerate}
\item $\sigma_p(T)\supseteq \{a(z)|z\in \C_-\}$. \label{pointspec}
\item $\sigma_p(T)=\{\mu\in \C \ |\ (a-\mu) \mbox{  is not an outer function in } \C_-\}\cup\{0\}$. \label{outer}
\item   $\mu \not \in \overline{  \Ran_{z\in \C_-}  a(z)}$ implies $\mu \in \rho(T)$. \label{T_res}
\item $\sigma(T)=\overline{\sigma_p(T)}=\overline{ \Ran_{z\in \C_-}   a(z)}.$ \label{spectrum}
\item $\sigma_p(T)\setminus\{0\}= \sigma_p(T_a)\setminus\{0\}$. \label{Toeplitz}
\end{enumerate}
\end{proposition}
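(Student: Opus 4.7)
The plan is to first prove part (\ref{Toeplitz}), which sets up a bijective correspondence between the nonzero point spectra of $T$ on $L^2$ and of the classical Toeplitz operator $T_a$ on $H_2^+$ (noting that $a\in L^\infty\cap H_1^-\subseteq H_\infty^-$ under our assumptions, so $T_a$ is bounded with a coanalytic symbol), and then to use this correspondence together with standard Hardy-space facts to derive the other parts. For (\ref{Toeplitz}), given $u\in\ker(T_a-\mu)\setminus\{0\}$ with $\mu\neq 0$, I would set $f:=\overline\phi u\in L^2$; since $\overline\phi\in H_2^+\cap L^\infty$ is nontrivial it is nonzero a.e., so by the $H_2^+$ uniqueness theorem $f\neq 0$. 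A direct computation then gives $Tf=\overline\phi P_+(\psi\overline\phi u)=\overline\phi T_a u=\mu\overline\phi u=\mu f$. Conversely, from $Tf=\mu f$ with $\mu\neq 0$ I would set $u:=\mu^{-1}P_+(\psi f)\in H_2^+$; then $f=\mu^{-1}Tf=\overline\phi u$ so $u\neq 0$, and $T_a u=P_+(\psi\overline\phi u)=P_+(\psi f)=\mu u$.

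For (\ref{pointspec}), I would give a direct construction: for $z\in\C_-$ take $f_z(x):=\overline\phi(x)/(x-z)\in L^2$. Using the decomposition $a(x)/(x-z)=(a(x)-a(z))/(x-z)+a(z)/(x-z)$, the first summand extends analytically to $\C_-$ with removable singularity at $z$ and thus lies in $H_2^-$, while the second has its pole at $z\in\C_-$ and lies in $H_2^+$; hence $P_+(a/(x-z))=a(z)/(x-z)$ and $Tf_z=a(z)f_z$. For (\ref{outer}), I would combine (\ref{Toeplitz}) with a Coburn-type characterisation: for $b\in H_\infty^-$, $\ker T_b=\{0\}$ iff $b$ is outer in $\C_-$ (proof: $T_b^*=T_{\overline b}$ on $H_2^+$ with $\overline b\in H_\infty^+$ satisfying $T_{\overline b}u=\overline b u$, so $\Ran T_{\overline b}=\overline b H_2^+$ is dense iff $\overline b$ is outer in $\C_+$, iff $b$ is outer in $\C_-$). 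Applied to $b=a-\mu$, this identifies $\sigma_p(T_a)$ and, via (\ref{Toeplitz}), the subset $\{\mu\neq 0: a-\mu\text{ not outer}\}$ inside $\sigma_p(T)$. The unconditional inclusion $0\in\sigma_p(T)$ then follows by observing that for any $g\in H_2^-\setminus\{0\}$, the function $f:=\overline\phi g\in L^2\setminus\{0\}$ satisfies $\psi f=ag\in H_2^-$ (since $a\in H_\infty^-$), so $Tf=\overline\phi P_+(ag)=0$.

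For (\ref{T_res}), since $a\in H_1^-$ decays at infinity in $\C_-$ we have $0\in\overline{a(\C_-)}$; the assumption $\mu\notin\overline{a(\C_-)}$ thus forces $\mu\neq 0$ and $(a-\mu)^{-1}\in H_\infty^-$. The composition rule $T_bT_c=T_{bc}$ for $b,c\in H_\infty^-$ (which follows from $P_+(bh)=0$ when $b\in H_\infty^-$ and $h\in H_2^-$, since $bh\in H_2^-$) yields $(T_a-\mu)^{-1}=T_{(a-\mu)^{-1}}$. To transfer invertibility to $T-\mu$ on $L^2$, I would solve $(T-\mu)f=g$ by setting $u:=P_+(\psi f)$, which rearranges the equation to $f=\mu^{-1}(\overline\phi u-g)$ together with the self-consistency condition $(T_a-\mu)u=P_+(\psi g)$; invertibility of $T_a-\mu$ on $H_2^+$ gives a unique $u$, hence a bounded inverse of $T-\mu$. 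Finally (\ref{spectrum}) follows from the chain $\overline{a(\C_-)}\subseteq\overline{\sigma_p(T)}\subseteq\sigma(T)\subseteq\overline{a(\C_-)}$ supplied by (\ref{pointspec}) and (\ref{T_res}). The main technical obstacle will be the Coburn-type kernel characterisation used in (\ref{outer}); though classical, it is where Hardy space duality and outer/inner factorisation enter crucially.
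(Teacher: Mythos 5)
Your proposal is correct. Parts (1), (4) and (5) essentially coincide with the paper's own arguments: the same decomposition $a(x)/(x-z)=\bigl(a(x)-a(z)\bigr)/(x-z)+a(z)/(x-z)$ for (1), the same sandwich $\overline{\Ran_{z\in\C_-}a(z)}\subseteq\overline{\sigma_p(T)}\subseteq\sigma(T)\subseteq\overline{\Ran_{z\in\C_-}a(z)}$ for (4), and the same correspondence between eigenvectors of $T$ and of $T_a$ for (5), which the paper phrases via $h=\psi\overline g$ and the splitting of $aP_+h=\mu h$ into its $P_\pm$ parts rather than via $u=\mu^{-1}P_+(\psi f)$, $f=\overline{\phi}u$. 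Where you genuinely diverge is in (2) and (3): you prove (5) first and push everything onto the classical Toeplitz operator $T_{a-\mu}$ on $H_2^+$. For (2) you use $\ker T_{a-\mu}=(\Ran T_{\overline{a-\mu}})^\perp=\bigl(\overline{(a-\mu)}\,H_2^+\bigr)^\perp$ together with Beurling's theorem (a Coburn-type lemma for coanalytic symbols), while the paper works directly in $L^2$: when $a-\mu$ is outer it shows $P_+h\perp\bigvee_{z\in\C_+}\frac{\overline a-\overline\mu}{k-\overline z}=H_2^+$ for $h=\psi\overline g$, and when it is not outer it exhibits the explicit eigenvector $\overline{\phi}\,\overline h$ with $h\perp(a-\mu)H_2^-$; the Hardy-space input (outerness via density of $bH_2$) is the same, but you route it through adjoints on $H_2^+$ and the paper through explicit orthogonality relations. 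Your unconditional inclusion $0\in\sigma_p(T)$ via $\overline{\phi}H_2^-\subseteq\ker T$ is exactly the paper's $\mu=0$ case. For (3) you invert $T_{a-\mu}$ by the product rule $T_bT_c=T_{bc}$ for coanalytic symbols and then transfer invertibility to $L^2$ by the substitution $u=P_+(\psi f)$, whereas the paper writes down and verifies the closed-form resolvent $(T-\mu)^{-1}v=\frac{v}{a-\mu}-\frac{\overline{\phi}}{\mu}P_-\frac{\psi v}{a-\mu}$. Your route leans more on standard Toeplitz-operator theory and is arguably cleaner; the paper's computation buys an explicit inverse. When writing yours up, do include the routine verifications that the reconstructed $f=\mu^{-1}(\overline{\phi}u-g)$ with $u=(T_a-\mu)^{-1}P_+(\psi g)$ really satisfies $(T-\mu)f=g$ and that $g\mapsto f$ is bounded (immediate from $\phi,\psi\in L^\infty$ and $\mu\neq0$, the latter justified, as you say, by $0\in\overline{\Ran_{z\in\C_-}a(z)}$ from the $H_1^-$ decay of $a$, a point the paper states without proof).
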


\begin{remark}
The proofs of (i)-(iv) are very similar to the standard proofs for Toeplitz operators which can be found, e.g.~in \cite{BS}.
\end{remark}

\begin{proof}
Proof of \ref{pointspec}:
Take $u(k)=\tfrac{ \overline \phi(k)}{k-z_1},\;\;z_1\in \C_-$.
Then 
\bea 
   Tu&=& \overline \phi P_+ \Big ( \dfrac{\psi\overline \phi }{k-z_1} \Big )\ = \overline \phi P_+\Big ( \dfrac{a(k)}{k-z_1} \Big ) \\
  & =& \overline \phi P_+\Big ( \dfrac{ a(k)-a(z_1)}{k-z_1}+\dfrac{a(z_1)}{k-z_1}\Big )\
   = \dfrac{a(z_1)}{k-z_1}\overline \phi(k)=a(z_1)u(k)
\eea
since the first term  acted on by $P_+ $ is analytic in $\C_-$ and in $L^2(\R)$ and the second is in $H_2^+$ since $z_1\in \C_-$.

Proof of \ref{outer}: 
We first consider $\mu=0$. Choosing $\og=\overline{\phi}h$ for $h\in H_2^-$, we get
$$T\og = \overline{\phi} P_+ \psi \overline{\phi}h =0,$$ since $\psi \overline{\phi}h \in H_2^-$.
Hence all functions in $\overline{\phi}H_2^-$ are  eigenfunctions to the eigenvalue $0$.


Now let $\mu\neq 0$ and assume that $(a-\mu)$ is an outer function in $\C_-$. We use that if $f\in H^-_\infty$, then $f$ is outer iff $\overline{fH_2^-}=H_2^-$ (Beurling Theorem, see \cite{Koosis}) and that the functions $(k-z_0)^{-1}$ for $z_0\in\C_+$ span $H_2^-$. Therefore 
$$\bigvee_{z_0\in\C_+} (a(k)-\mu)\frac{1}{k-z_0}=H^-_2.$$
Now assume there exists $g\in L^2\setminus\{0\}$ with $T\overline{g}=\mu \overline{g}$ and set $h=\psi\overline{g}$. Then $h\in L^1\setminus\{0\}$ and $aP_+h=\mu h$, or $(a-\mu)P_+h=\mu P_-h$. Let $z\in \C_+$, then
$$\int_\R (a-\mu)P_+h \frac{dk}{k-z} =\mu \int_\R P_-h \frac{dk}{k-z}\equiv 0.$$
Therefore, $P_+h\perp\frac{\overline{a}-\overline{\mu}}{k-\overline{z}}$ for all $z\in \C_+$, i.e.
$$P_+h\perp \bigvee_{\overline{z}\in\C_-} \frac{\overline{a}-\overline{\mu}}{k-\overline{z}} = H^+_2.$$ This implies $P_+h=0$, giving $P_-h=0$, so $h=0$ and, as $\psi$ is non-zero a.e.~(due to condition (i) in Assumptions \ref{AssAppB}), we have $g=0$, so $\mu$ is not an eigenvalue of $T$.

Next let $\mu\neq 0$ and assume that $(a-\mu)$ is not an outer function in $\C_-$. This implies that there exists $h\in H^-_2\setminus\{0\}$ such that $h\perp (a-\mu)H^-_2$. Now 
\bea
h\perp (a-\mu)H^-_2 &\Longleftrightarrow & \overline{h}\perp (\overline{a}-\overline{\mu}) H^+_2 
\Longleftrightarrow\ (a-\mu) \overline{h}\in H^-_2, 
\eea
so $P_+(a-\mu)\overline{h}=0$ and $P_+a\overline{h}=\mu P_+ \overline{h} = \mu\overline{h}$ (as $\overline{h}\in H^+_2$). This implies 
$$\underbrace{\overline{\phi}P_+\psi}_{T}\overline{\phi}\overline{h}=\mu \overline{\phi}\overline{h}.$$
As $\phi\in L^\infty$, $\overline{\phi}\overline{h}\in L^2$ and it is not identically zero (as $\phi\not\equiv 0$ and $h$ is non-zero a.e.~by the uniqueness theorem, see \cite{Koosis}), so $\mu\in \sigma_p(T)$.

Proof of \ref{T_res}: We first note that $\mu \not \in \overline{  \Ran_{z\in \C_-}  a(z)}$ implies $\mu\neq 0$ and that $\inf_{z\in\C_-}|a(z)-\mu|>0.$ We want to calculate the resolvent of $T$ at $\mu$. Consider $(T-\mu)g=v$. Since $\psi\neq 0$ a.e.~and $(a-\mu)\vert_{\R}$ is invertible we get (all equalities hold a.e.)
\ben \nonumber
(T-\mu)g=v &\Longleftrightarrow & \psi\overline{\phi}P_+\psi g - \mu\psi g= \psi v 
\ \Longleftrightarrow\ (a-\mu)P_+(\psi g)- \mu P_-(\psi g) = \psi v \\ \label{eq:star}
&\Longleftrightarrow & P_+(\psi g) =\frac{\psi}{a-\mu}v + \frac{\mu}{a-\mu} P_-(\psi g).
\een
Note that, as $\frac{\mu}{a-\mu}\in H^-_\infty$, the last term lies in $H^-_2$.
Applying $P_+$ and $P_-$ to \eqref{eq:star}, we get 
$$P_+(\psi g)= P_+\frac{\psi v}{a-\mu}, \quad\quad 0=P_-\frac{\psi v}{a-\mu}+ \frac{\mu}{a-\mu} P_-(\psi g).$$
Thus, 
$$\psi g = P_+(\psi g) +P_-(\psi g)= P_+\frac{\psi v}{a-\mu} - \frac{a-\mu}{\mu}P_-\frac{\psi v}{a-\mu}$$
and 
$$g =\frac{1}{\psi}  P_+\frac{\psi v}{a-\mu} - \frac{a-\mu}{\psi\mu}P_-\frac{\psi v}{a-\mu}= \frac{v}{a-\mu}-\frac{a}{\mu\psi}P_-\frac{\psi v}{a-\mu}.$$
Formally, we have
\be
g=(T-\mu)^{-1} v =  \frac{v}{a-\mu}-\frac{\overline{\phi}}{\mu}P_-\frac{\psi v}{a-\mu}.
\ee
Since $\phi,\psi\in L^\infty$, the linear operator defined by the r.h.s.~is bounded in $L^2(\R)$.
We check the formal calculation of the resolvent:
\bea
(T-\mu)\left(  \frac{v}{a-\mu}-\frac{\overline{\phi}}{\mu}P_-\frac{\psi v}{a-\mu}\right) &=& \overline{\phi} P_+\frac{\psi v}{a-\mu} - \frac{\mu v}{a-\mu} - \frac{\overline{\phi}}{\mu} \underbrace{P_+ a P_- \frac{\psi v}{a-\mu}}_{=0} +\overline{\phi} P_- \frac{\psi v}{a-\mu}\\
&=&\overline{\phi} \frac{\psi v}{a-\mu} - \frac{\mu v}{a-\mu} \ =\ v.
\eea
Similarly, 
$$ \frac{(T-\mu) v}{a-\mu}-\frac{\overline{\phi}}{\mu}P_-\frac{\psi (T-\mu) v }{a-\mu} = v,$$
so $T-\mu$ is invertible.

Now, \ref{spectrum} follows from \ref{pointspec} and \ref{T_res}, as
$$\sigma(T)\subseteq \overline{  \Ran_{z\in \C_-}  a(z)} \subseteq \overline{\sigma_p(T)} \subseteq \sigma(T),$$
so all three sets must coincide.

Proof of \ref{Toeplitz}:
We again solve $T\og =\mu\og.$ As $\psi\neq 0$ a.e., this is equivalent to $aP_+\psi\og=\mu\psi\og$. Setting $h=\psi\og$ this gives $aP_+h=\mu h$. 

Note that if $h\in L^2$ and $\mu\neq 0$ with $aP_+h=\mu h$, then $h=\psi\frac{\overline{\phi}P_+h}{\mu}\in\psi L^2$, so $\og=h/\psi\in L^2$. Thus $T\og =\mu\og$ is equivalent to $aP_+h=\mu h$. This reduces the problem to considering Toeplitz operators:
\bea
aP_+h=\mu h &\Longleftrightarrow& \left\{\begin{array}{rcl}  (P_+aP_+) P_+ h &=& \mu P_+h, \\ P_-aP_+h &=& \mu P_-h  \end{array}\right.
 \ \Longleftrightarrow\ \left\{\begin{array}{rcl}  (P_+aP_+) P_+ h &=& \mu P_+h, \\ P_-h &=& \frac1\mu P_-aP_+h  \end{array}\right.
\eea
Thus, $P_+h$ determines $P_-h$ uniquely and we only need to consider the first equation in $H^+_2$ which shows equality of the point spectra of $T$ and $T_a$ away from $0$.
\end{proof}

\begin{Example} We illustrate Proposition \ref{PropAppB} \ref{spectrum} with an example: We consider a case where we have no non-zero eigenvalues on the boundary of $\sigma_p(T)$, as $(a-\mu)$ is outer.
Let $\overline{\phi}(k)=(k+i)^{-1}$ and $\psi(k)=(k+i)(k-i)^{-4}$. Then $\phi,\psi\in L^2\cap L^\infty$, $\ophi\in H^-_2$ and $a(z)=(z-i)^{-4}\in H^-_1$, so Assumptions \ref{AssAppB} are satisfied. To determine $\Ran_{z\in\C_-} a(z)$, we consider $a(t), t\in\R$ and take the inside of the curve.

\begin{figure}[ht] 
    \centering\vspace{-180pt}
    \includegraphics[width=4in,height=8in]{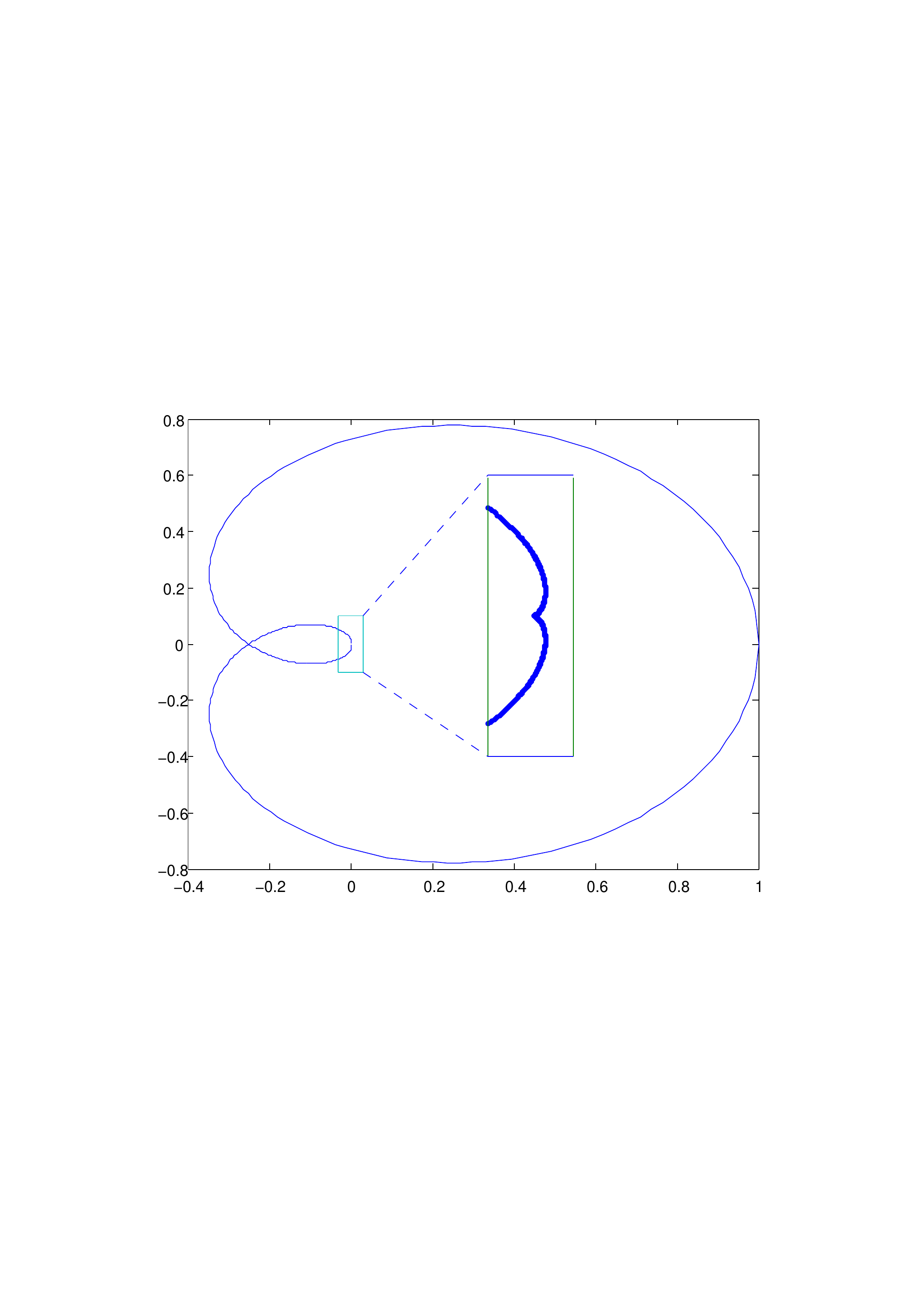}  \vspace{-180pt}
    \caption{The range of $a(t)=(t-i)^{-4}$ for $t\in\R$ with the section around the origin enlarged.}
    \label{fig:1}
\end{figure}

  Let 
$$x+iy:=\frac{1}{(t-i)^4}=\frac{(t+i)^4}{(t^2+1)^4} = \frac{t^4-6t^2+1}{(t^2+1)^4} + i \frac{4t(t^2-1)}{(t^2+1)^4}.$$
We first check that all non-zero points inside the inner curve are in the range.
Now, if $x$ is small and negative and $y=0$, then 
$$\frac{1}{(z-i)^4} = x \Longleftrightarrow (z-i)^4=\frac1x \Longleftrightarrow z-i=\frac{1}{\sqrt[4]{-x}}e^{\frac{i(\pi+2\pi m)}{4}}, \ m\in\Z,$$
so, e.g.~for $m=2$, $z=i+\frac{1}{\sqrt[4]{-x}}e^{\frac{5i\pi}{4}}\in\C_-$, so the point $x$ lies in $\Ran_{z\in\C_-} a(z)$. Similarly, we see that all points between the inner and outer curve lie in $\Ran_{z\in\C_-} a(z)$.
Next, we check the points on the inner curve (corresponding to $|t|>1$): Let $t>1$ and $(t-i)^{-4}=(z-i)^{-4}$. Then $z-i=(t-i)e^{i\frac{2\pi m}{4}}$, $m\in\Z$. With $m=3$, $z=i+(t-i)(-i)=-1+(1-t)i\in\C_-.$ 

Therefore, the boundary of the set $\Ran_{z\in\C_-} a(z)$ consists of the outer curve ($|t|\leq 1$) together with the isolated point 0. For these $\mu$, the function $(a-\mu)$ is outer in $\C_-$. For $\mu=0$ this is clear. For all other such $\mu$, $(a-\mu)$ takes values outside a cone. This implies that $(a-\mu)^k$ is outer for some sufficiently small positive $k$ which implies that $(a-\mu)$ is outer, since any Herglotz function (i.e.~analytic functions on $\C_+$ with positive imaginary part) is outer.

We finally consider the behaviour at $t=\pm 1$ and $t=\pm\infty$. As $t\to\pm\infty$, $x\sim t^{-4}$ and $y\sim 4t^{-5}$, so $y\sim 4x^{5/4}$. At $t=1$, $x\sim\frac{-4-8(t-1)}{16}$, $y\sim\frac{8(t-1)}{16}$, so $y\sim -x-\frac14$, and using symmetry of the range of $a$ w.r.t.~complex conjugation, we get a cone of angle $\pi/2$ at this point. 
\end{Example}

\begin{Example}
The next example shows that in statement \ref{outer} of Proposition \ref{PropAppB}, it is necessary to add the point $\{0\}$, as it is not always contained in the set $\{\mu: (a-\mu) \hbox{ is outer}\}$: Let $\alpha_0\in\R$ and consider 
$$\psi (z) = \frac{(z-\alpha_0)(z+i)}{(z-i)^3}e^{\frac{i}{z}}, \quad \overline{\phi}(z)=\frac{1}{z+i}, \quad \Im z\leq 0.$$
Then $\phi\in H_2^-$, 
$$ a(z)= \frac{z-\alpha_0}{(z-i)^3}e^{\frac{i}{z}}\in H_1^-\cap H^-_\infty$$
and $0\notin\Ran_{z\in\C_-}a(z)$. Due to the singular exponential factor, $a=(a-0)$ is not an outer function.
\end{Example}

\begin{proof}[Proof of Theorem \ref{B2}] 
Proof of \ref{Toep_estimate}:
As $a(z) \in H^-_1\backslash\{0\}$, we have $\phi,\psi\neq 0$ a.e. Moreover
for $g\in L^2$ we have from \eqref{eq:65} and using $\ophi\in H^+_\infty$
\ben g\in\Sc_\alpha^\perp \quad \Longleftrightarrow \quad
P_+ \og = 2\pi i\alpha\ophi P_+(\psi\og)\ \hbox{ and }\  P_- \og=0 \quad \Longleftrightarrow \quad \og = 2\pi i\alpha\ophi P_+(\psi\og).
\een
We rewrite this as
\be\label{kerchar}
T\overline{g}=\overline{\phi} P_+ (\psi \overline{g})=\mu_\alpha \overline{g},\ee
 giving $\Sc^\perp_\alpha=\left(\ker(T-\mu_\alpha)\right)^*$.

Next let $g\in \Sc_\alpha^\perp$ and
set $h=\psi\overline{g}$. Then, as $\psi \in L^\infty$ we have $h\in L^2$ and
\bea
g\in \Sc_\alpha^\perp &\Longleftrightarrow & \psi\overline{\phi} P_+ h = \mu_\alpha h, h\in L^2 \ \Longleftrightarrow aP_+h=\mu_\alpha h 
  \ \Longleftrightarrow \ \left\{\begin{array}{ccc} P_+aP_+h&=&\mu_\alpha P_+h, \\ P_-aP_+h&=&\mu_\alpha P_-h. \end{array}\right.
\eea
For the first equivalence, we note that any $L^2$-solution of $\psi\overline{\phi} P_+ h = \mu_\alpha h$ with $ \mu_\alpha\neq 0$  is divisible by $\psi$ and $h/\psi\in L^2$.

This shows that $P_+h$ uniquely determines $P_-h$ via $P_-h=\frac1\mu(P_-aP_+)P_+h$ and it is sufficient to consider $P_+aP_+h=\mu_\alpha P_+h$ which gives $\Sc^\perp_\alpha\subseteq\overline{\psi^{-1}}[(\mu_\alpha P_+ + P_-aP_+)\ker\left(T_a-\mu_\alpha\right)]^*$. On the other hand, given $h_+\in\ker\left(T_a-\mu_\alpha\right)$, set $\og=\overline{\phi}h_+$.
Then
$$T\og= \overline{\phi} P_+ (\psi\og)= \overline{\phi} P_+ a h_+ = \mu_\alpha \overline{\phi} h_+ = \mu_\alpha\og$$
gives the reverse inclusion, since $\Sc^\perp_\alpha=\left(\ker(T-\mu_\alpha)\right)^*$.

Proof of \ref{Salpha}: Using the characterisation \eqref{kerchar}, we need to study the equation
$$ (T-\mu_\alpha)\og=\left(\overline{\phi} P_+\psi -\mu_\alpha\right)\og =0.$$
We consider the equation pointwise and multiply by $\psi$. Setting $h=\psi\og$, we get $h\in L^2$ and
\be\label{eq:psi}
aP_+h-\mu_\alpha h =0.
\ee
By virtue of \eqref{eq:psi}, the fact that $a$ is divisible by $\psi$ and $\mu_\alpha\neq 0$,
$h/\psi=\og\in L^2$. Now, using $h=P_+h+P_-h$, we find
$$(a-\mu_\alpha)P_+h=\mu_\alpha P_-h.$$
Thus $(a-\mu_\alpha)P_+h\perp H^+_2$, or $P_+h\perp (\overline{a}-\overline{\mu_\alpha})H^+_2$, which implies $P_+h\in H^+_2\ominus (\overline{a}-\overline{\mu_\alpha})H^+_2$. From \eqref{eq:psi}, this implies
$$ h \in \frac{a}{\mu_\alpha} \left(H^+_2\ominus (\overline{a}-\overline{\mu_\alpha})H^+_2\right)$$
and dividing by $\psi$ (which is non-zero a.e.), we get
$$ \og \in \frac{\overline{\phi}}{\mu_\alpha}\left( H^+_2\ominus (\overline{a}-\overline{\mu_\alpha})H^+_2\right) =\overline{\phi} \left(H^+_2\ominus (\overline{a}-\overline{\mu_\alpha})H^+_2\right).$$ 
Taking complex conjugates and using $(H_2^+)^*=H_2^-$ implies one set inclusion. Conversely, let $\og\in \overline{\phi} \left(H^+_2\ominus (\overline{a}-\overline{\mu_\alpha})H^+_2\right).$ Then $\og=\overline{\phi}f_+$ for some $f_+\in H^+_2\ominus (\overline{a}-\overline{\mu_\alpha})H^+_2$. Then
\bea
\left(\overline{\phi} P_+\psi -\mu_\alpha\right)\og &=& \overline{\phi} P_+\psi\overline{\phi}f_+ -\mu_\alpha\overline{\phi}f_+\ =\ \overline{\phi} (P_+a -\mu_\alpha)f_+ \ =\ \overline{\phi} P_+(a -\mu_\alpha)f_+ \ =\ 0,
\eea
as $(a -\mu_\alpha)f_+ \in H^-_2$. Hence $g\in \Sc_\alpha^\perp$ by part (i).


Proof of \ref{index}: Since $(a-\mu_\alpha)\in H^-_\infty$ we have the canonical factorisation 
$$a(z)-\mu_\alpha=B_\alpha(z) \Sigma_\alpha(z)G_\alpha(z).$$ In $\C_-$,   $B_\alpha \Sigma_\alpha$ is an inner function and $G_\alpha$ is an outer function.
As $G_\alpha$ is outer, by Beurling's Theorem, the closure
$$ \overline{(a-\mu_\alpha)H^-_2}= B_\alpha(z) \Sigma_\alpha(z)H^-_2.$$

Thus, by  part \ref{Salpha}, $\Sc^\perp_\alpha = \phi\left( H_2^-\ominus B_\alpha(z) \Sigma_\alpha(z)H^-_2\right)$.
This gives 
\eqref{Blaschkedefect}, since $\phi\neq 0$ a.e.
\end{proof}


\begin{thebibliography}{99}

\bibitem{AGW14} { H.~Abels, G.~Grubb \and I.~Wood}, {\em Extension theory and Krein-type resolvent formulas for nonsmooth boundary value problems}, preprint, arXiv:1008.3281.

\bibitem{AB09} { D.~Alpay \and J.~Behrndt,}  Generalized $Q$-functions and Dirichlet-to-Neumann maps for elliptic differential operators, {\em J. Funct. Anal.},
 {257},  {2009}, {6}, 1666--1694.

\bibitem{AP04} { W.O.~Amrein \and D.B.~Pearson},  $M$ operators: a
  generalisation of Weyl-Titchmarsh theory.   {\em J.~Comp.~Appl. Math},  171:1--26,  2004.


 \bibitem{BL07} { J.~Behrndt \and M.~Langer,} Boundary value problems for elliptic partial  differential operators on bounded domains.  {\em J.~Funct.~Anal. } { 243}, 536--565 (2007).


 \bibitem{BL12} { J.~Behrndt \and M.~Langer,} \newblock Elliptic
   operators, Dirichlet-to-Neumann maps and quasi-boundary triples.
\newblock   {\em London Math Soc. Lecture Note Series}, 404:121--160, 2012.

\bibitem{BLL13a} { J.~Behrndt, M.~Langer \and V.~Lotoreichik,} \newblock
Trace formulae and singular values of resolvent power differences of self-adjoint elliptic operators. \newblock {\em
 J. Lond. Math. Soc.}, 88:319--337, 2013.

\bibitem{BLL13b} { J.~Behrndt, M.~Langer \and V.~Lotoreichik,} \newblock
 Spectral estimates for resolvent differences of self-adjoint elliptic operators.
\newblock {\em Integral Equations Operator Theory}, 77:1--37, 2013. 




 \bibitem{BM13} { J.~Behrndt \and T.~Micheler,} \newblock Elliptic
   differential operators on Lipschitz domains and abstract boundary value problems.
\newblock   {\em arXiv:1307.7501.}

\bibitem{BR12} { J.~Behrndt \and J.~Rohleder,} An inverse problem of {C}alder\'on type with partial data.
{\it Comm. Partial Differential Equations}, { 37}, 2012, 1141--1159.



\bibitem{Borg49} { G.~Borg}, {\em Uniqueness theorems in the spectral theory of
$y'' + (\lambda-q(x))y=0$.} Den 11th Skandinaviske Matematikerkongress, Trondheim, 1949;
pp. 276--287. John Grundt Tanums Forlag, Oslo, 1952. 
%

\bibitem{BS} { A.~B\"ottcher \and B.~Silbermann,}
 \textit{Analysis of Toeplitz operators. 
Second edition.} Prepared jointly with Alexei Karlovich. Springer Monographs in Mathematics. Springer-Verlag, Berlin, 2006.


\bibitem{BGW09} { B.~M.~Brown, G.~Grubb \and I.~Wood}, $M$-functions for closed extensions of adjoint pairs of operators with applications to elliptic boundary value problems, {\sl Math.~Nachr.}, { 3} (2009), 314 - 347.

\bibitem{BHMNW09} { B.~M.~Brown, J.~Hinchcliffe,  M.~Marletta, S.~Naboko \and I.~Wood,} The abstract Titchmarsh-Weyl $M$-function for adjoint operator pairs and its relation to the spectrum, {\sl Int.~Eq.~Oper.~Th}, { 63} (2009), 297 - 320.




\bibitem{BMNW08}{ B.~M.~Brown,  M.~Marletta, S.~Naboko \and I.~Wood,} {
Boundary triplets and $M$-functions for non-selfadjoint
 operators, with applications to elliptic PDEs and block operator
 matrices}. {\em J. London Math. Soc.} (2) { 77} (2008), 700--718.


\bibitem{DHMdS06} { V.~Derkach, S.~Hassi, M.~Malamud, \and H.~de
              Snoo}, Boundary relations and their {W}eyl families,
 {\sl Trans. Amer. Math. Soc.}, { 358}, 2006, 5351--5400.

\bibitem{DHMdS09} { V.~Derkach, S.~Hassi, M.~Malamud, \and H.~de
              Snoo},  Boundary relations and generalized resolvents of symmetric
              operators, {\sl Russ. J. Math. Phys.},  { 16}, 2009, 17--60.

\bibitem{DM91}{ V.~Derkach \and M.~Malamud},  {\em Generalized resolvents
and the boundary value problems for Hermitian operators with gaps}. 
J. Funct. Anal. { 95} (1991), 1--95.



\bibitem{GMZ07} { F.~Gesztesy, M.~Mitrea \and M.~Zinchenko,} { Variations on a theme by 
Jost and Pais}.  {\em J.~Funct.~Anal.} { 253} (2007) 399--448.

%


\bibitem{GM08}
{ F.~Gesztesy \and M.~Mitrea,}
\newblock Generalized {R}obin boundary conditions, {R}obin-to-{D}irichlet maps,
  and {K}rein-type resolvent formulas for {S}chr\"odinger operators on bounded  {L}ipschitz domains.
\newblock In {\em Perspectives in partial differential equations, harmonic
  analysis and applications}, volume~79 of {\em Proc. Sympos. Pure Math.},
  pages 105--173. Amer. Math. Soc., Providence, RI, 2008.

\bibitem{GM09}
{ F.~Gesztesy \and M.~Mitrea,}
\newblock Nonlocal Robin Laplacians
and some remarks on a paper by Filonov on eigenvalue inequalities.
\newblock {\em J. Diff. Eq.},  247:2871--2896, 2009.

\bibitem{GM09b}
{ F.~Gesztesy \and M.~Mitrea,} \newblock
Robin-to-Robin maps and Krein-type resolvent formulas for Schr\"odinger operators on bounded Lipschitz domains.
\newblock {\em Oper. Theory Adv. Appl.}, 191:81--113, Birkh\"auser Verlag, Basel, 2009.


\bibitem{GM11}
{ F.~Gesztesy \and M.~Mitrea,}
\newblock 
A description of all selfadjoint extensions of the Laplacian and Krein-type
resolvent formulas in nonsmooth domains.
\newblock {\em J. Analyse Math.}, 113:53--172, 2011.





\bibitem{Gru08}
{ G.~Grubb,}
\newblock Krein resolvent formulas for elliptic boundary problems in nonsmooth
  domains.
\newblock {\em Rend. Sem. Mat. Univ. Pol. Torino}, 66:13--39, 2008.


\bibitem{Gru14}
{ G.~Grubb, }
\newblock Spectral asymptotics for nonsmooth singular Green operators.
\newblock{\em arXiv:1205.0094. To appear in Comm. Part. Diff. Eq.}





\bibitem{HMM13}{ S.~Hassi, M.~Malamud \and V.~Mogilevskii}, Unitary {E}quivalence of {P}roper {E}xtensions of a {S}ymmetric {O}perator and the {W}eyl {F}unction,
 {\it Integral Equations Operator Theory}, { 77}, 2013, 449--487.

\bibitem{Isakov} { V.~Isakov,} {\em Inverse problems for partial differential 
equations.} Applied Mathematical Sciences, 127. Springer-Verlag, New York, 1998. 


\bibitem{Kato} { T.~Kato,} \textit{ Perturbation theory for linear operators},
Grundlehren der mathematischen Wissenschaften (vol. 132), Springer,
New York, 1976.


\bibitem{Koosis} { P.~Koosis,} {\em Introduction to $H_p$ spaces.} Second edition. 
Cambridge Tracts in Mathematics, 115. Cambridge University Press, Cambridge, 1998.

\bibitem{Koosis2} { P.~Koosis,} {\em The Logarithmic Integral Volume I}. Cambridge Tracts in Mathematics, Cambridge University Press, Cambridge, 1998.

\bibitem{KK04} { N.D.~Kopachevski{\u{\i}} \and S.G.~Kre\u{\i}n,}
 \newblock{ Abstract Green formula for a triple of Hilbert spaces, abstract boundary value and spectral problems.} 
 \newblock{\em Ukr.~Math.~Bull.} (1) 1 , 77--105, 2004.

%
%




\bibitem{Lyantze}{ V.E.~Lyantze \and O.G.~Storozh,} {\it  
Methods of the Theory of Unbounded Operators,} (Russian) 
(Naukova Dumka, Kiev, 1983).

\bibitem{Mal10}
{ M.M.~Malamud,} \newblock
Spectral theory of elliptic operators in exterior domains. \newblock {\em Russ. J. Math. Phys.} 17:96--125, 2010.

\bibitem{MM97}{ M.~Malamud \and V.~Mogilevskii}, {
On extensions of dual pairs of operators}. 
{\em Dopovidi Nation.~Akad. Nauk Ukrainy} { 1} (1997) 30--37.

\bibitem{MM99}  { M.~Malamud \and V.~Mogilevskii}, { On Weyl functions 
and $Q$-function of dual pairs of linear relations.}
{\em Dopovidi Nation.~Akad.~Nauk Ukrainy} { 4 }(1999) 32--37.

\bibitem{MM02} { M.~Malamud \and V.~Mogilevskii}, {Kre\u{\i}n type formula for 
canonical resolvents of dual pairs of linear relations.} {\em Methods Funct.~Anal.~Topology }
(4) { 8} (2002)  72--100.

\bibitem{MS99} { V.A.~Mikhailets \and A.V.~Sobolev,} {
Common eigenvalue problem and periodic Schr\"odinger operators.} 
{\em J. Funct. Anal.} { 165} (1999), no. 1, 150--172. 
%
\bibitem{Marchenko} { V.A.~Marchenko,} {Concerning the theory of a differential operator of the second order.} 
(Russian) {\em Doklady Akad. Nauk SSSR.} (N.S.) { 72} (1950) 457--460.


\bibitem{Peller} { V.~Peller}, {\sl Hankel operators and their applications}. Springer Monographs in Mathematics. Springer, New York, 2003.


\bibitem{Pos08} { A.~Posilicano,}
  {Self-adjoint extensions of restrictions},
  {\em Oper. Matrices}, {2}, {2008}, {4}, 483--506.


%
%
\bibitem{PR10} { A.~Posilicano \and  L.~Raimondi,} { Krein's Resolvent Formula for Self-Adjoint Extensions of Symmetric Second Order Elliptic Differential Operators}.  {\em J. Phys. A: Math. Theor.}, 42, 015204 (11 pp.), 2009.
%
%



\bibitem{Pri50} { I.I.~Privalov,}
 Grani\v{c}nye svo\u{\i}stva analiti\v{c}eskih funkci\u{\i}. (Russian) (Boundary properties of analytic functions),
2nd ed. \textit{Gosudarstv. Izdat. Tehn.-Teor. Lit.}, Moscow-Leningrad, 1950. 336 pp. 


\bibitem{RS05} { M.~Reed \and B.~Simon,} {\sl Methods of modern mathematical physics,
Vol. 4: Analysis of operators}, Academic Press, New York, 2005.

\bibitem{Ryzhov} { V.~Ryzhov,} { A general boundary value problem and its Weyl function.} {\em Opuscula Math. }
(2) 27 (2007) 305--331.

\bibitem{Ryz09} {  V.~Ryzhov,}
{Weyl-{T}itchmarsh function of an abstract boundary value
              problem, operator colligations, and linear systems with
              boundary control}, {\em Complex Anal. Oper. Theory}, {3},     {2009}, {1},    {289--322}.
\bibitem{Sta05} { O.~Staffans,}
     {\em Well-posed linear systems}, {Encyclopedia of Mathematics and its Applications}, {103}, {Cambridge University Press}, {2005}.

%


\end{thebibliography}
\end{document}